\DeclareMathAlphabet{\mathpzc}{OT1}{pzc}{m}{it}
\newcommand{\sB}{\mathpzc{B}}
\newcommand{\SG}{\mathrm{S}}
\newcommand{\Ti}{\mathbf{Ti}}
\newcommand{\id}{\mathrm{id}}
\newcommand{\Spec}{\mathrm{Spec}}
\numberwithin{equation}{section}
\newtheoremstyle{notes} {} {} {} {} {\bfseries} {.} {.5em} {}
\theoremstyle{plain}
\newtheorem{prop}[subsubsection]{Proposition}
\newtheorem{lemma}[subsubsection]{Lemma}
\newtheorem{cor}[subsubsection]{Corollary}
\newtheorem{thm}[subsubsection]{Theorem}
\newtheorem{conj}[subsubsection]{Conjecture}
\theoremstyle{remark}
\newtheorem{rem}[subsubsection]{Remark} 
\newtheorem{ddef}[subsubsection]{Definition} 
\newtheorem{ex}[subsubsection]{Example} 
\newcommand{\mm}{\mathsf{m}}
\newcommand{\mn}{\mathsf{n}}
\newcommand{\mr}{\mathsf{r}}
\newcommand{\Ide}{\mathpzc{Id}}
\newcommand{\TL}{T\hspace{-0.6mm}L}
\newcommand{\TId}{\mathpzc{TId}}
\newcommand{\charr}{\mathrm{char}}
\newcommand{\cN}{{\mathcal{N}}}
\newcommand{\ev}{\mathrm{ev}}
\newcommand{\co}{\mathrm{co}}
\newcommand{\tto}{\twoheadrightarrow}
\newcommand{\mZ}{\mathbb{Z}}
\newcommand{\mQ}{\mathbb{Q}}
\newcommand{\mF}{\mathbb{F}}
\newcommand{\mN}{\mathbb{N}}
\newcommand{\mR}{\mathbb{R}}
\newcommand{\add}{\mathrm{add}}
\newcommand{\im}{\mathrm{im}}
\newcommand{\cF}{\mathcal{F}}
\newcommand{\op}{\mathrm{op}}
\newcommand{\Tr}{{\mathrm{Tr}}}
\newcommand{\sJ}{\mathpzc{J}}
\newcommand{\sI}{\mathpzc{I}}
\newcommand{\Par}{\mathsf{Par}}
\newcommand{\tr}{\mathsf{tr}}
\newcommand{\inde}{\mathrm{inde}}
\newcommand{\Sub}{\mathpzc{Sub}}
\newcommand{\OSp}{{\mathrm{OSp}}}
\newcommand{\GL}{{\mathrm{GL}}}
\newcommand{\SL}{{\mathrm{SL}}}
\newcommand{\Pe}{{\mathrm{Pe}}}
\newcommand{\RO}{{\underline{{\rm Rep}}} O_\delta}
\newcommand{\ROz}{{\underline{{\rm Rep}}_0} O_\delta}
\newcommand{\RG}{{\underline{{\rm Rep}}} GL_\delta}
\newcommand{\RGz}{{\underline{{\rm Rep}}_0} GL_\delta}
\newcommand{\RP}{{\underline{{\rm Rep}}} P}
\newcommand{\RPz}{{\underline{{\rm Rep}}_0} P}
\newcommand{\RS}{{\underline{{\rm Rep}}} S_t}
\newcommand{\RQ}{{\underline{{\rm Rep}}} Q}
\newcommand{\mG}{\mathbb{G}}
\newcommand{\blambda}{\boldsymbol{\lambda}}
\newcommand{\bmu}{\boldsymbol{\mu}}
\newcommand{\bkappa}{\boldsymbol{\kappa}}
\newcommand{\bnu}{\boldsymbol{\nu}}
\newcommand{\cS}{{\mathbf{S}}}
\newcommand{\Tmod}{\mathscr{T}}
\newcommand{\mL}{{\mathbb{L}}}
\title[Tensor ideals]{Tensor ideals, Deligne categories and invariant theory}
\author{Kevin Coulembier}
\newcommand{\ind}{{\rm Ind}}
\newcommand{\res}{{\rm Res}}
\newcommand{\End}{{\rm End}}
\newcommand{\mk}{\Bbbk}
\newcommand{\mC}{\mathbb{C}}
\newcommand{\Hom}{{\rm Hom}}
\newcommand{\Span}{{\rm Span}}
\newcommand{\Stab}{{\rm Stab}}
\newcommand{\Rep}{{\rm Rep}}
\newcommand{\Nat}{{\rm Nat}}
\newcommand{\unit}{{\mathbbm{1}}}
\newcommand{\zero}{{\mathbb{0}}}
\newcommand{\cC}{{\mathbf{C}}}
\newcommand{\cT}{{\mathbf{T}}}
\newcommand{\cD}{{\mathbf{D}}}
\newcommand{\cJ}{{\mathcal{J}}}
\newcommand{\cM}{{\mathcal{M}}}
\newcommand{\Ob}{{\mathrm{Ob}}}
\newcommand{\Id}{{\mathrm{Id}}}
\newcommand{\Pmod}{\mathscr{P}}
\newcommand{\Mmod}{\mathscr{M}}
\newcommand{\Nmod}{\mathscr{N}}
\newcommand{\Wmod}{\mathscr{W}}
\newcommand{\Fmod}{\mathscr{F}}
\newcommand{\Gmod}{\mathscr{G}}
\newcommand{\oa}{\bar{0}}
\newcommand{\ob}{\bar{1}}
\newcommand{\sS}{\mathpzc{S}}
\newcommand{\sG}{\mathpzc{G}}
\newcommand{\Ab}{\mathbf{A\hspace{-0.4mm}b}}
\newcommand{\fg}{\mathfrak{g}}
\newcommand{\bO}{\mathbf{O}}
\newcommand{\bP}{\mathbf{P}}
\newcommand{\bT}{\mathbf{T}}
\keywords{Monoidal (super)category, tensor ideal, thick tensor ideal, Deligne category, algebraic (super)group, second fundamental theorem of invariant theory, tilting modules, quantum groups}
\subjclass[2010]{18D10, 17B45, 17B10, 15A72}
\begin{document} 
\date{} 
\begin{abstract}
We derive some tools for classifying tensor ideals in monoidal categories. We use these results to classify tensor ideals in Deligne's universal categories~$\RO$, $\RG$ and~$\RP$. These results are then used to obtain new insight into the second fundamental theorem of invariant theory for the algebraic supergroups of types~$A,B,C,D,P$. 

We also find new short proofs for the classification of tensor ideals in $\RS$ and in the category of tilting modules for~$\SL_2(\mk)$ with $\charr(\mk)>0$ and for~$U_q(\mathfrak{sl}_2)$ with $q$ a root of unity.
In general, for a simple Lie algebra $\fg$ of type ADE, we show that the lattice of such tensor ideals for $U_q(\fg)$ corresponds to the lattice of submodules in a parabolic Verma module for the corresponding affine Kac-Moody algebra.
	\end{abstract}

\maketitle 

%\tableofcontents

% \pagebreak

\section*{Introduction}
Fix an algebraically closed field~$\mk$ of characteristic zero and $\delta\in\mk$. In~\cite{Deligne}, Deligne introduced monoidal categories~$\ROz$ and~$\RGz$ and their pseudo-abelian envelopes~$\RO$ and~$\RG$, and proved they are universal in the following sense. Let~$\cC$ be a symmetric $\mk$-linear monoidal category in which the endomorphism algebra of~$\unit_{\cC}$ is $\mk$ and take a self-dual object $X\in\Ob\cC$ of dimension $\delta\in\mk$. Then there exists a $\mk$-linear monoidal functor
$$\Fmod_X^{\cC}:\;\ROz\,\to\,\cC,$$
which maps the generator of~$\ROz$ to $X$. A similar property holds for~$\RGz$. This universality naturally leads to the question of what tensor ideals exist in~$\ROz$ and~$\RGz$, or equivalently in~$\RO$ and~$\RG$. These ideals correspond to the possible kernels of functors~$\Fmod_X^{\cC}$ above. Hence, a classification of tensor ideals yields a classification of the possible images of such $\Fmod_X^{\cC}$. More recently, analogous monoidal {\em super}categories~$\RP$ and~$\RQ$ were introduced in~\cite{ComesKuj, Kujawa, Vera}. 

The main result of the current paper is a classification of tensor ideals in~$\RO$, $\RG$ and~$\RP$. One way to phrase the conclusion of these classifications is that the ideals in the Deligne categories are precisely the kernels of the functors~$\Fmod_V^{\cC}$, where $\cC$ ranges over the categories of representations of algebraic affine supergroup schemes of types~$A$, for~$\RG$, types~$B,C,D$, for~$\RO$, and type $P$, for~$\RP$, and~$V$ is the natural representation.

As observed in~\cite[Section~1.2]{Comes}, a first step towards obtaining a classification of tensor ideals in the Deligne categories, is 
classifying the thick ideals in the split Grothendieck rings.
For~$\RG$, this was achieved by Comes in~\cite{Comes}, for~$\RO$ by Comes - Heidersdorf in~\cite{ComesHei} and for~$\RP$ by Coulembier - Ehrig in~\cite{PB3}. 
However, we will not rely on those results to obtain the classification of tensor ideals, meaning we also obtain a new proof for the classification of thick tensor ideals in the split Grothendieck rings.

Our classification of tensor ideals also leads to applications in the study of invariant theory for the supergroups $\OSp(m|2n)$, $\GL(m|n)$ and~$\Pe(n)$. For simplicity, we explain our results for one specific case, $\OSp(m|2n)$. By universality, there exists a monoidal functor
$$\Fmod_{m,n}:\;\ROz\,\to\,\Rep_{\mk}\OSp(m|2n),\quad\mbox{if $\delta=m-2n$,}$$
where the objects in the image are precisely the tensor powers $V^{\otimes i}$ of the natural representation $V=\mk^{m|2n}$.
As proved by Lehrer - Zhang, the functor~$\Fmod_{m,n}$ is full, which is one of the incarnations of the {\em first} fundamental theorem of invariant theory for~$\OSp(m|2n)$, see \cite{ES1, LZ-FFT, Vera, Sergeev}, in the terminology of H. Weyl. For the other supergroups, the first fundamental theorem is given in~\cite{Berele, DLZ, Kujawa, Moon}. Descriptions of the kernel of the full functor~$\Fmod_{m,n}$ are usually referred to as the {\em second} fundamental theorem. Our classification of ideals in~$\ROz$ naturally yields a description of those kernels. We use this description to re-obtain and extend some results on the second fundamental theorem for~$\OSp(m|2n)$ and~$\GL(m|n)$ and to obtain for the first time the second fundamental theorem for~$\Pe(n)$.

We pay specific attention to the description of the surjective algebra morphisms
$$\phi^r\,:\;B_r(\delta)\;\tto\; \End_{\OSp(V)}(V^{\otimes r}),\qquad\mbox{for~$r\in\mN$,}$$
induced from $\Fmod_{m,n}$, with~$B_r(\delta)$ the Brauer algebra. We establish when $\phi^r$ is an isomorphism, as recently obtained in~\cite{Yang} through different methods.
Furthermore, we prove that the kernel is always generated by a single element, as a two-sided ideal. For~${\rm Sp}(2n)$ this was proved by Hu - Xiao in~\cite{HX}, for~${\rm O}(m)$ by Lehrer - Zhang in~\cite{LZ1}, and for~$\OSp(1|2n)$ by Zhang in~\cite{Yang}. In all other cases, this is new. We also prove that this generating element can be chosen as an idempotent if and only if $m\le 1$ or~$n=0$. That this is possible for~${\rm Sp}(2n)$ and~${\rm O}(m)$ was proved in~\cite{HX, LZ1}, but is new for~$\OSp(1|2n)$. Again, our analogous results for~$\Pe(n)$ seem to be entirely new.

The paper is organised as follows. In Sections \ref{SecPrel1} and \ref{SecPrel2} we recall some facts on (monoidal) categories. 
Section~\ref{SecPrincipal}
is concerned with some observations made in~\cite{AK}. We reformulate these into the statement that, under certain rigidity conditions on a monoidal category~$\cC$, the lattice of tensor ideals is isomorphic to the lattice of subfunctors of $\cC(\unit,-)$, which is essentially a lattice of submodules of a module over a ring. The most striking manifestation of this occurs for the category of tilting modules for a quantum group, for which results of \cite{KL, Soergel} then show that the lattice of tensor ideals is isomorphic to the lattice of submodules in a parabolic Verma module for the corresponding affine Kac-Moody algebra.
 We also demonstrate the usefulness of the general statement by applying it to reduce the classification of tensor ideals in the Temperley-Lieb category, resp. Deligne's category~$\RS$, to the study of one particular cell module over the Temperley-Lieb algebra, resp. partition algebra. In this way, we prove in a very elementary way that, in both cases, the only proper tensor ideal is the ideal of negligible morphisms, as first proved by Goodman - Wenzl in~\cite{GW}, resp. Comes - Ostrik in~\cite{ComesOst}. 

In Section~\ref{SecFibres}, we investigate necessary and sufficient conditions under which the tensor ideals in a Krull-Schmidt monoidal category~$\cC$ are in natural bijection with the thick ideals in its split Grothendieck ring $[\cC]_{\oplus}$. We apply these results to describe a setup where the classification of tensor ideals becomes a combinatorial exercise.

In Section~\ref{SecModular}, we apply the results of Section~\ref{SecFibres} to the monoidal category of tilting modules over a reductive group in positive characteristic. We prove that this category will generally contain infinitely many tensor ideals and these will {\em not} be in bijection with the thick ideals in the Grothendieck ring. We show that $\SL_2$ provides an exception to the latter behaviour, by classifying all tensor ideals. One way of formulating our result is that the tensor ideals are precisely the kernels of the canonical functors
$$\Ti(G)\to \Stab G_rT,\quad\mbox{with $r\in\mZ_{>0}$,}$$
with $\Ti(G)=\Ti(\SL_2)$ the category of $\SL_2$-tilting modules, $\Stab G_rT$ the stable module category of $\Rep_{\mk}G_rT$ and all other notation as in \cite{Jantzen}.

In Section~\ref{SecBoring} we interpret some decomposition multiplicities of cell modules for Brauer type algebras from~\cite{PB2, BrMult, Martin} in 
terms of Deligne categories. This is precisely the input that will be needed in Sections~\ref{SecClass} and~\ref{SecSFT}, to use the general results of Section~\ref{SecFibres} to obtain our main results on the classification of tensor ideals and the second fundamental theorem.

In Section~\ref{SecFurther} we discuss some further applications of our general results on tensor ideals. We give new proofs for some results on the second fundamental theorem of invariant theory for the symmetric group, by Jones in~\cite{Jones2} and Benkart - Halverson in~\cite{BH}. We also obtain the classification of tensor ideals in the quantum analogue ${\underline\Rep} U_q(\mathfrak{gl}_\delta)$ of $\RG$, for generic~$q$, using recent results in~\cite{Brundan}. We also state a conjecture about $\RQ$ and use the conclusions in Section~\ref{SecModular} to point out some expected difficulties concerning modular analogues of our results on Deligne categories. In Appendix~\ref{SecSuper}, we briefly discuss the extension of our general results to monoidal {\em super}categories.

\part{General considerations}

\section{Rings, partitions and categories}\label{SecPrel1}
We set~$\mN=\{0,1,2,\ldots\}$ and denote by $\Ab$ the category of abelian groups.

\subsection{Rings}\label{SecPrel11} We will not require rings to be unital.

\subsubsection{}\label{DefThick} Let~$R$ be a ring which is free as an abelian group, with basis $\sG$. A left ideal $I$ in~$R$ is {\bf thick}
with respect to~$\sG$ if, as an abelian group, it is (freely) generated by a subset of~$\sG$.
We denote by~$\Ide(R;\sG)$ the set of thick left ideals, with partial order describing inclusion. 
\subsubsection{}\label{trace}
For a ring $R$, a left $R$-module~$M$ is an abelian group with surjective map $R\times M\to M$ satisfying the three ordinary properties.
We denote by~$R$-Mod the category of left $R$-modules.
For~$M\in R$-Mod, we denote by~$\Sub(M)$ the set of submodules, partially ordered with respect to inclusion.
For~$M,N\in R$-Mod, the {\bf trace of~$M$ in~$N$} is the submodule of~$N$
$$\Tr_{M}N\;:=\;\sum_{f:M\to N}\im(f).$$

\subsection{Partitions}
We denote the set of all partitions by~$\Par$. The empty partition is~$\varnothing$. The transpose (conjugate) of a partition $\lambda$ is denoted by $\lambda^t$.

\subsubsection{}\label{DualPart} 
For~$a,b\in\mN$, we say that~$\lambda,\mu\in\Par$ are $a\times b$-{\bf dual} if
$$\lambda_i+\mu_{a+1-i}\;=\; b,\quad\mbox{for~$1\le i\le a$,}\quad\mbox{and}\quad \lambda_{a+1}=0=\mu_{a+1}.$$
Each partition $\lambda\subset (b^a)$ has a unique $a\times b$-dual. 

\subsubsection{}\label{SecSG} Denote by~$\SG_n$ the symmetric group on $n$ symbols, for~$n\in\mN$. For~$\mk$ an algebraically closed field of characteristic zero, the simple modules over~$\mk\SG_n$ are labelled by the partitions~$\lambda\vdash n$. It will be convenient to denote the (simple) Specht module corresponding to $\lambda\vdash n$ simply by $\lambda$.

We consider the embedding $\SG_r\times \SG_s<\SG_{r+s}$. For~$\lambda,\mu,\nu\in\Par$ with~$|\nu|=|\lambda|+|\mu|$, the Littlewood-Richardson coefficient $c^{\nu}_{\lambda\mu}$ is defined through the relation
$$\ind^{\mk \SG_{r+s}}_{\mk \SG_r\otimes \mk\SG_s}( \lambda\boxtimes \mu)\;\simeq\; \bigoplus_{\nu\vdash r+s } \nu^{\oplus c_{\lambda\mu}^{\nu}}.$$
%A proof of the following observation can e.g. be found in~
\begin{lemma}
\label{LemRect}Fix $a,b\in\mN$ and~$\lambda,\mu\in\Par$. 
\begin{enumerate}[(i)]
\item If~$c_{\lambda\mu}^{\nu}\not=0$ for some~$\nu\supset (b^a)$, then the following equivalent properties hold:
\begin{itemize}
\item $\lambda_i+\mu_{a+1-i}\;\ge\; b,\quad\mbox{for~$1\le i\le a$;}$
\item $\lambda_j^t+\mu_{b+1-i}^t\;\ge\; a,\quad\mbox{for~$1\le j\le b$.}$
\end{itemize}
\item If~$|\lambda|+|\mu|=ab$, then
$$c_{\lambda\mu}^{(b^a)}\;=\;\begin{cases}1&\mbox{if $\lambda$ and~$\mu$ are $a\times b$-dual}\\
0&\mbox{otherwise}.
\end{cases}$$
\end{enumerate}
\end{lemma}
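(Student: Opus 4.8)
For part~(i) the plan is, first, to observe that the two displayed conditions are equivalent \emph{for arbitrary partitions $\lambda,\mu$ and arbitrary $a,b$}, before $\nu$ or Littlewood--Richardson coefficients enter at all. Place the Young diagram of $\lambda$ in the top-left corner of the $a\times b$ rectangle and the $180^{\circ}$-rotation of the Young diagram of $\mu$ in its bottom-right corner. Row $i$ of the rectangle (for $1\le i\le a$) is then entirely covered by the two diagrams precisely when $\lambda_i+\mu_{a+1-i}\ge b$, so the first condition says exactly that the whole $a\times b$ rectangle is covered. Transposing the entire picture turns the $a\times b$ rectangle into the $b\times a$ rectangle and $\lambda,\mu$ into $\lambda^t,\mu^t$, while ``being covered'' is self-dual under this operation; this gives the equivalence with the second condition.

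It then suffices to prove the first condition, and here I would invoke the classical inequality for Littlewood--Richardson coefficients (Weyl's inequality): if $c^\nu_{\lambda\mu}\neq 0$ then $\nu_{p+q-1}\le\lambda_p+\mu_q$ for all $p,q\ge 1$. Given $c^\nu_{\lambda\mu}\neq 0$ with $\nu\supseteq(b^a)$ — we may assume $b\ge 1$, the case $b=0$ being vacuous — fix $i$ with $1\le i\le a$ and apply this with $p=i$, $q=a+1-i$, so that $p+q-1=a$. This yields $\nu_a\le\lambda_i+\mu_{a+1-i}$, and since $\nu\supseteq(b^a)$ we have $\nu_a\ge b$; hence $\lambda_i+\mu_{a+1-i}\ge b$, as desired.

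For part~(ii): if $\lambda\not\subseteq(b^a)$ (or $\mu\not\subseteq(b^a)$) then $c^{(b^a)}_{\lambda\mu}=0$, since a non-vanishing $c^\nu_{\lambda\mu}$ forces $\lambda\subseteq\nu$ and $\mu\subseteq\nu$, and moreover $\lambda,\mu$ cannot be $a\times b$-dual in that case, so both sides vanish. Assume now $\lambda\subseteq(b^a)$ and let $\tilde\lambda$ be its (unique) $a\times b$-dual. Then $c^{(b^a)}_{\lambda\mu}=\langle s_\mu,\,s_{(b^a)/\lambda}\rangle$, and I would use the complementation identity $s_{(b^a)/\lambda}=s_{\tilde\lambda}$: a semistandard tableau of shape $(b^a)/\lambda$ with entries in $\{1,\dots,N\}$ is carried to one of shape $\tilde\lambda$ (the $180^{\circ}$-rotation of the skew shape $(b^a)/\lambda$ being exactly the Young diagram of $\tilde\lambda$) by rotating through $180^{\circ}$ and replacing each entry $e$ by $N+1-e$; this is a bijection reversing the weight, and since Schur polynomials are symmetric it gives $s_{(b^a)/\lambda}(x_1,\dots,x_N)=s_{\tilde\lambda}(x_1,\dots,x_N)$ for every $N$. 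Hence $c^{(b^a)}_{\lambda\mu}=\langle s_\mu,\,s_{\tilde\lambda}\rangle=\delta_{\mu,\tilde\lambda}$ by orthonormality of the Schur basis, and $\mu=\tilde\lambda$ holds exactly when $\lambda,\mu$ are $a\times b$-dual (in which case $|\mu|=ab-|\lambda|$ automatically, matching the hypothesis $|\lambda|+|\mu|=ab$).

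The argument uses no deep input beyond Weyl's inequality for Littlewood--Richardson coefficients, which is itself extractable from the Littlewood--Richardson rule; I expect that reproving it from scratch (say via the hive model, or via a careful analysis of ballot tableaux) would be the only genuinely technical point, whereas the complementation identity for part~(ii) is a routine tableau bijection. As an alternative for part~(ii) one can deduce its ``only if'' half directly from part~(i): if $c^{(b^a)}_{\lambda\mu}\neq 0$ then $\lambda_i+\mu_{a+1-i}\ge b$ for $1\le i\le a$, and summing these against $|\lambda|+|\mu|=ab=\sum_{i=1}^a b$ (using $\ell(\lambda),\ell(\mu)\le a$, which follows from $\lambda,\mu\subseteq(b^a)$) forces equality throughout, i.e. $a\times b$-duality; one is then left only with the multiplicity-one statement $c^{(b^a)}_{\lambda,\tilde\lambda}=1$, for which the same complementation identity (or the computation $\dim\Hom_{\GL_a}(\det^{\otimes b},V_\lambda\otimes V_{\tilde\lambda})=1$, using that $V_\lambda^{*}\otimes\det^{\otimes b}\cong V_{\tilde\lambda}$) suffices.
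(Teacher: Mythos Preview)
Your proof is correct. The paper's own proof is a single sentence --- ``These are direct applications of the Littlewood--Richardson rule'' --- so there is no detailed argument to compare against; you have simply filled in what the paper leaves to the reader, and the tools you invoke (Weyl's inequality $\nu_{p+q-1}\le\lambda_p+\mu_q$ for part~(i), and the complementation identity $s_{(b^a)/\lambda}=s_{\tilde\lambda}$ for part~(ii)) are exactly the standard consequences of the Littlewood--Richardson rule that make the lemma immediate.
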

\begin{proof}
These are direct applications of the Littlewood-Richardson rule.
\end{proof}

\subsection{Categories}

With the exception of $\Ab$ and functor categories, we {\em only consider categories~$\cC$ which are equivalent to small categories}. For the entire subsection, let $\cC$ be a preadditive (enriched over~$\Ab$) category. We denote by~$\inde\cC$ the {\em set} of isomorphism classes of indecomposable objects in~$\cC$.

\subsubsection{}\label{IdPA}
An {\bf ideal} $\cJ$ in~$\cC$ consists of subgroups $\cJ(X,Y)$ of~$\cC(X,Y)$, for all~$X,Y\in\Ob\cC$, such that for all~$X,Y,Z,W\in\Ob\cC$, and~$g\in\cC(X,Y)$ and~$h\in\cC(Z,W)$, we have that
$$f\in\cJ(Y,Z)\quad\mbox{implies}\quad f\circ g\in \cJ(X,Y)\quad\mbox{and}\quad h\circ f \in \cJ(Y,W). $$
The typical example of an ideal is the {\bf kernel} $\ker \Fmod$ of an additive functor~$\Fmod:\cC\to\cD$.

%We will only consider two-sided ideals (with respect to composition of morphisms), so we simply call them ideals.

For an ideal $\cJ$, we have the quotient category~$\cC/\cJ$ which has as objects~$\Ob\cC$, but as morphism groups~$\cC(X,Y)/\cJ(X,Y)$, for all~$X,Y\in \Ob\cC$.

\subsubsection{} 

The category~$\cC$ is {\bf karoubian} if for every~$X\in\Ob\cC$ and idempotent $e\in\cC(X,X)$, there exists $Y\in\Ob\cC$, with~$f\in\cC(X,Y)$ and~$g\in\cC(Y,X)$ such that~$f\circ g=1_Y$ and~$g\circ f=e$. 

If~$\cC$ is additive we denote the {\bf zero object} (the empty biproduct) by $\zero$, in order to avoid confusion with other occurrences of the symbol $0$. %We denote the biproduct of $X$ and $Y$ as $X\oplus Y$ and since $\cC$ is preadditive, the inclusion $X\to X\oplus Y$ is a monomorphism.

An additive category~$\cC$ is {\bf Krull-Schmidt} if 
$\cC(X,X)$ is a local ring for every $X\in\inde\cC$ and every object is a finite direct sum of indecomposable objects. Then $\cC$ is karoubian and every object has a unique (up to isomorphism) decomposition into a finite direct sum (biproduct) of indecomposable objects.
% Under these conditions, an object in~$\cC$ is indecomposable if and only if it has local endomorphism ring and every object is isomorphic to a unique, up to permutation and isomorphism, direct sum of indecomposable objects.
For a Krull-Schmidt category~$\cC$ and~$X\in\Ob\cC$, we write~$\add(X)$ for the class of objects in~$\cC$ which are direct sums of direct summands of~$X$. We also write $X\inplus Y$ to denote that $X$ is a direct summand of $Y$.

\subsubsection{Modules over~$\cC$}\label{CMod}
 The category~$\cC$-Mod is the category of additive functors
$\Mmod: \cC\to\Ab.$
The morphism groups~$\Nat(\Mmod,\Nmod)$ consist of all natural transformations~$\Mmod\Rightarrow \Nmod$. %In case $\cC$ is $K$-linear for some unital commutative ring $K$, the groups $\Mmod(X)$ for~$X\in\Ob\cC$ are automatically $K$-modules and we write $\cC\mbox{-mod}_K$ for the full subcategory of $\cC$-Mod of all functors $\Mmod$ for which $\Mmod(X)$ is finitely generated as a $K$-module for all~$X\in\Ob\cC$.
An additive functor~$\Fmod:\cC\to\cD$ induces an additive functor 
$$\cD\mbox{-Mod}\,\to\,\cC\mbox{-Mod};\quad \Mmod\mapsto \Mmod\circ \Fmod,$$
which is an equivalence when $\Fmod$ is an equivalence.

Now assume that~$\cC$ is small, or replace it by an equivalent small category. The group
$$\mZ[\cC]\;=\; \bigoplus_{X,Y\in\Ob\cC}\cC(X,Y)$$
is a ring with multiplication for~$f\in \cC(X,Y)$ and~$g\in \cC(Z,W)$ given by
$fg=f\circ g$, if $W=X$, and $fg=0$ otherwise.
There is an equivalence between $\cC$-Mod and~$\mZ[\cC]$-Mod,
$$\cC\mbox{-Mod}\;\stackrel{\sim}{\to}\; \mZ[\cC]\mbox{-Mod},\quad \Mmod\mapsto \bigoplus_{X\in \Ob\cC}\Mmod(X).$$
This shows in particular that~$\cC$-Mod is abelian.  
We will also use the notation for modules over rings from Section~\ref{SecPrel11} in this context. For instance, for an additive functor~$\Mmod:\cC\to\Ab$, we denote by $\Sub_{\cC}(\Mmod)$ the set of subfunctors.  
%\subsubsection{} For a $K$-linear category~$\cC$, 
\subsubsection{}\label{PmodZ}  For~$Z\in \Ob\cC$, we have
$$\Pmod^{\cC}_Z=\Pmod_Z\;:=\;\cC(Z,-)\;\in \cC\mbox{-Mod.}$$
This module is projective in~$\cC$-Mod, which follows for instance from the Yoneda Lemma for preadditive categories. 
The Yoneda lemma also implies that for~$Z,W,Y\in\Ob\cC$ we have
\begin{equation}\label{eqTr}
\Tr_{W}\Pmod_Z(Y):=\Tr_{\Pmod_W}\Pmod_Z(Y)\;=\;\Span_{\mZ}\{a\circ b\;|\;  a\in\cC(W,Y),\;\,b\in\cC(Z,W)\}.\end{equation}
For~$\sS\subset\Ob\cC$, we set
$$\Tr_{\sS}\Pmod_{Z}\;:=\;\sum_{X\in \sS}\Tr_{X} \Pmod_{Z}\;\in\;\cC\mbox{{\rm-Mod}}.$$
We refer to the above submodules, for arbitrary $\sS\subset\Ob\cC$, as the {\bf trace submodules} of~$\Pmod_Z$. 

\begin{lemma}\label{Lem2Tr}
Assume~$\cC$ is Krull-Schmidt. For~$Z,W\in\inde\cC$ and a non-zero $f\in\cC(Z,W)$, let $\Mmod$ be the submodule of $\Pmod_Z$ generated by $f$.
Then $\Mmod\not=\Tr_{\sS}\Pmod_{Z}$ for any $\sS\subset\inde\cC\backslash\{W\}$.
\end{lemma}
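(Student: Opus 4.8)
The plan is to argue by contradiction, exploiting the evaluation of trace submodules at the object $W$ together with the locality of the endomorphism rings in a Krull--Schmidt category. Suppose $\Mmod=\Tr_{\sS}\Pmod_Z$ for some $\sS\subset\inde\cC\setminus\{W\}$. Evaluating at $W$ and using \eqref{eqTr}, the right-hand side is
$$\Tr_{\sS}\Pmod_Z(W)\;=\;\sum_{X\in\sS}\Span_{\mZ}\{a\circ b\mid a\in\cC(X,W),\ b\in\cC(Z,X)\},$$
so every element of $\Mmod(W)$ is a $\mZ$-linear combination of morphisms $Z\to W$ that factor through some indecomposable $X\in\sS$, hence through an object of $\add\bigl(\bigoplus_{X\in\sS}X\bigr)$, which does not contain $W$ as a summand. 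On the other hand, $\Mmod(W)$ contains $f\circ 1_Z=f$ (the generator evaluated at $W$ via the identity at $Z$). I would therefore be done once I show that $f$ does \emph{not} factor through any object $Y\in\Ob\cC$ with $W\notin\add(Y)$ — more precisely, that no $\mZ$-combination of such ``$W$-avoiding'' morphisms can equal $f$.

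First I would record the key factorization fact: if $g\in\cC(Z,W)$ factors as $Z\xrightarrow{b}Y\xrightarrow{a}W$ with $Y$ a finite direct sum of indecomposables none of which is isomorphic to $W$, then, decomposing $a$ and $b$ along the summands of $Y$, the composite $a\circ b$ is a sum of morphisms each of which factors through an indecomposable $Y'\not\cong W$; composing with a splitting/projection and using that $\cC(W,Y')\circ\cC(Y',W)$ lands in the non-invertible part of the local ring $\cC(W,W)$ (since a morphism $W\to W$ factoring through $Y'\not\cong W$ cannot be an isomorphism, as that would force $W$ to be a summand of $Y'$), I get that every such $g$ satisfies: for all $h\in\cC(W,W)$ the composite lies in the radical, equivalently $g$ induces the zero map on the head of the appropriate module. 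The cleanest packaging is: the subgroup $\cN(Z,W)\subseteq\cC(Z,W)$ of morphisms that are finite sums of morphisms factoring through indecomposables $\not\cong W$ is a proper subgroup, because $1_W\notin\cN(W,W)$ — indeed $\cN(W,W)$ is contained in $\rad\cC(W,W)$, the unique maximal ideal of the local ring $\cC(W,W)$, whereas $1_W$ is a unit. Since $\Tr_{\sS}\Pmod_Z(W)\subseteq\cN(Z,W)$ and $f\notin\cN(Z,W)$ (as $f\ne 0$ and $f$ itself, being a morphism between indecomposables, either factors nontrivially — contributing nothing new — or, crucially, if $f\in\cN(Z,W)$ then $1_W\circ f=f$ shows... ) — here I must be slightly careful, because $f$ could a priori lie in $\cN(Z,W)$ when $Z\not\cong W$; so the contradiction must instead come from comparing $\Mmod$ with $\Tr_{\sS}\Pmod_Z$ as whole submodules, not just at $W$.

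Reorganizing: the real dichotomy is whether $f\in\Tr_{\inde\cC\setminus\{W\}}\Pmod_Z$ or not. If $f\notin\cN(Z,W)=\Tr_{\inde\cC\setminus\{W\}}\Pmod_Z(W)$, then since $\Tr_{\sS}\Pmod_Z(W)\subseteq\cN(Z,W)$ for $\sS\subseteq\inde\cC\setminus\{W\}$, we get $f\notin\Tr_{\sS}\Pmod_Z(W)$, so $\Mmod\ne\Tr_{\sS}\Pmod_Z$. If instead $f\in\cN(Z,W)$, I would evaluate at $Z$ rather than $W$: $\Mmod(Z)=\cC(Z,Z)\cdot f$ (the submodule generated by $f$, realized as all $\cC(-,-)$-composites landing in $\cC(Z,Z)$ applied to $f$), which contains... no — more robustly, I use that $\Mmod$ is generated by a \emph{single} element $f$ sitting in degree $(Z,W)$, so $\Mmod$ is a quotient of $\Pmod_Z$, whereas if $W\notin\sS$ then $\Tr_{\sS}\Pmod_Z$, evaluated at $W$, consists of morphisms with no ``identity component,'' and one checks $\Mmod(W)\ni f$ forces, upon composing with any splitting exhibiting $f$'s nonzero image, a morphism $W\to W$ outside $\rad\cC(W,W)$ — which is the contradiction. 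The main obstacle, and the step deserving the most care, is exactly this: ruling out the possibility that the single generator $f$ itself factors through objects avoiding $W$ in a way that would let $\Mmod$ coincide with such a trace submodule; resolving it requires the observation that $\Mmod$, being cyclic on a generator in bidegree $(Z,W)$, always has $\Mmod(W)$ meeting $\cC(Z,W)\setminus\cN(Z,W)$ precisely when... and if it does not, then $f\in\cN(Z,W)$ and one shows $\Mmod\subseteq\cN(Z,-)=\Tr_{\inde\cC\setminus\{W\}}\Pmod_Z$ with \emph{strict} inclusion impossible to avoid being detected at some object, giving the result by a Krull--Schmidt/length argument on the finitely many indecomposables involved.
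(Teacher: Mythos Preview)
Your proposal has a genuine gap. You correctly identify the difficulty: nothing prevents the generator $f$ itself from lying in $\cN(Z,W)=\Tr_{\inde\cC\setminus\{W\}}\Pmod_Z(W)$, so comparing $\Mmod(W)$ with $\Tr_{\sS}\Pmod_Z(W)$ alone cannot yield a contradiction. Your attempts to handle that case (``evaluate at $Z$ instead'', ``strict inclusion detected at some object by a Krull--Schmidt/length argument'') are not arguments, just gestures. Concretely, take $\cC=\mk[x]/(x^2)\text{-mod}$, $W=Z=\mk[x]/(x^2)$, and $f=$ multiplication by $x$: then $f$ factors as $W\twoheadrightarrow\mk\hookrightarrow W$, so $f\in\cN(W,W)$, and indeed $\Mmod(W)=\mk f=\Tr_{\{\mk\}}\Pmod_W(W)$. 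The two submodules only differ at the object $\mk$, where $\Mmod(\mk)=0$ but $\Tr_{\{\mk\}}\Pmod_W(\mk)\ne 0$; your outline gives no mechanism for locating this.

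The idea you are missing is to use the \emph{other} inclusion in the assumed equality. From $\Mmod=\Tr_{\sS}\Pmod_Z$ and $f\in\Mmod(W)$ you get $f=\sum_i a_i\circ b_i$ with $a_i\in\cC(X_i,W)$, $b_i\in\cC(Z,X_i)$, $X_i\in\sS$. Now use $\Tr_{\sS}\Pmod_Z\subseteq\Mmod$: since $X_i\in\sS$ we have $b_i=1_{X_i}\circ b_i\in\Tr_{\sS}\Pmod_Z(X_i)=\Mmod(X_i)=\cC(W,X_i)\circ f$, so $b_i=g_i\circ f$. Hence $f=\alpha\circ f$ with $\alpha=\sum_i a_i\circ g_i\in\cC(W,W)$ factoring through a direct sum of objects in $\sS$. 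Locality of $\cC(W,W)$ finishes it: if $\alpha$ is invertible then $W$ is a summand of that direct sum, contradicting $W\notin\sS$; if $1_W-\alpha$ is invertible then $(1_W-\alpha)\circ f=0$ forces $f=0$. (Minor slip: the cyclic module $\Mmod$ is a quotient of $\Pmod_W$, not of $\Pmod_Z$.)
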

\begin{proof}It follows easily that $\Mmod=\Tr_{\sS}\Pmod_{Z}$ implies that $f=\alpha\circ f$, for some $\alpha\in\cC(W,W)$ which factors through a direct sum of objects in $\sS$. Since $\cC(W,W)$ is local, either $\alpha$ or $1_W-\alpha$ is an isomorphism. If $\alpha$ is an isomorphism, then $W$ is a direct summand of a direct sum of objects in $\sS$, a contradiction. So $1_W-\alpha$ is an isomorphism, but this contradicts $0=(1_W-\alpha)\circ f$.
\end{proof}

\subsubsection{Split Grothendieck group}For~$\cC$ additive, the {\bf split Grothendieck group}~$[\cC]_{\oplus}\in\Ob\Ab$ of~$\cC$ is the abelian group with generators the isomorphism classes~$[X] $ of objects~$X$ in~$\cC$, and relations~$[X]=[ Y]+[ Z]$, whenever $X\simeq Y\oplus Z$. If~$\cC$ is Krull-Schmidt, then $[\cC]_{\oplus}$ is isomorphic to the free abelian group with basis $\inde\cC$ and we have $X\simeq Y$ if and only if $[X]=[Y]$.

%%%%%%%%%%%%%%%%%%%%%%%%%%%%%%%%%%%%%%%%%%%%%%%%%%%%%%%%%%%%%%%%%%%%%

\section{Monoidal categories}\label{SecPrel2}

%Fix a commutative ring $K$. 

\subsection{Basic definitions}

\subsubsection{}\label{DefMC}A {\bf strict monoidal preadditive category} is a triple~$(\cC,\otimes,\unit)$ comprising
 a preadditive category~$\cC$ with bi-additive functor~$\otimes: \cC\times\cC\to \cC$
and object $\unit\in\Ob\cC$, such that
\begin{itemize}
\item we have equalities of functors~$ \unit\otimes-=\Id$, resp. $-\otimes\unit= \Id$;
\item we have an equality of functors
$$(-\otimes -)\otimes -\;=\; -\otimes (-\otimes -):\quad \cC\times\cC\times\cC\to \cC.$$
\end{itemize}
From now on, we will {\em leave out the reference to preadditivity} when speaking about monoidal categories.
It follows immediately from the definition that~$K:=\cC(\unit,\unit)$ is a commutative ring and $\cC$ is a $K$-linear category, with $\lambda f:=\lambda\otimes f$, for $\lambda\in K$ and $f$ an arbitrary morphism. 

On a monoidal category~$\cC$, a {\bf braiding} $\gamma$ is a bi-natural family of isomorphisms~$\gamma_{XY}:X\otimes Y\stackrel{\sim}{\to}Y\otimes X$ which satisfy the hexagon identities. We automatically have $\gamma_{\unit,X}=1_X=\gamma_{X,\unit}$. If $\cC$ admits a braiding it follows that $\otimes$ is bilinear with respect to $K=\cC(\unit,\unit)$.

We call~$\Pmod_{\unit}$, as in~\ref{PmodZ}, the {\bf principal $\cC$-module.} 

\subsubsection{} When we do not require the monoidal category to be strict, the equalities of functors are replaced by isomorphisms, known as associators and unitors, satisfying the commuting triangle and pentagon diagram condition. Since each monoidal category is equivalent to a strict one, by Mac Lane's Coherence Theorem, we will often pretend a monoidal category is strict.

\subsubsection{} If~$\cC$ is an additive monoidal category, the split Grothendieck group $[\cC]_{\oplus}$ naturally becomes a ring with respect to the multiplication~$[\cC]_{\oplus}\otimes_{\mZ}[\cC]_{\oplus}\to [\cC]_{\oplus}$ induced from the bi-additive functor~$\cC\times\cC\to\cC$. This is the {\bf split Grothendieck ring} of~$\cC$.

\subsection{Tensor ideals}
Let~$\cC$ be a monoidal category.
\begin{ddef}
A {\bf left-tensor ideal $\cJ$} in~$\cC$ is an ideal $\cJ$ such that, for all~$X,Y,Z\in\Ob\cC$,
$$f\in\cJ(X,Y)\quad\mbox{implies that}\quad 1_Z\otimes f\in \cJ(Z\otimes X,Z\otimes Y).$$
\end{ddef}
For~$\cJ$ as in the definition, it follows that $g\otimes f$ belongs to~$\cJ$, for an arbitrary morphism $g$, if $f$ belongs to~$\cJ$. 
When $\cJ$ is a left-tensor ideal in a braided monoidal category~$\cC$, or more generally, when $\cJ$ is two-sided, the quotient category~$\cC/\cJ$ is naturally monoidal.

\subsubsection{}Since we will only consider left-tensor ideals (except in braided monoidal categories, when left-tensor ideals are automatically two-sided), we will often refer to left-tensor ideals in monoidal categories simply as tensor ideals.
We denote the set of left-tensor ideals in~$\cC$ by~$\TId(\cC)$. This set is partially ordered with respect to the obvious notion of inclusion.

\subsubsection{} Next, we will introduce thick tensor ideals for Krull-Schmidt monoidal categories. These can either be described as certain strictly full subcategories in~$\cC$, subsets of~$\Ob\cC$, or ideals in the split Grothendieck ring~$[\cC]_{\oplus}$. The first one is perhaps most common in the literature, but for our purposes the latter two are most convenient.

\begin{ddef}\label{DefOI}
A {\bf thick left-tensor~$\Ob$-ideal} in a Krull-Schmidt monoidal category~$\cC$ is an isomorphism closed subset~$\sI$ of~$\Ob\cC$ such that
\begin{enumerate}[(a)]
\item $X\oplus Y\in \sI$ if and only if $X,Y\in \sI$;
\item $X\in \sI$ implies~$Y\otimes X\in\sI$, for all~$Y\in \Ob \cC$.
\end{enumerate}
\end{ddef}

\subsubsection{}\label{ThickIdeals2}We have an obvious identification between thick left-tensor~$\Ob$-ideals in~$\cC$ and thick left ideals, as in~\ref{DefThick}, in the ring $[\cC]_{\oplus}$ with respect to the basis $\{[X]\,|\,X\in\inde\cC\}$. Therefore, we denote the partially ordered set of such ideals by
$$\Ide([\cC]_\oplus)\;:=\;\Ide([\cC]_{\oplus},\inde\cC).$$
Specific kinds of thick tensor~$\Ob$-ideals lead to `tensor triangulated geometry', see e.g.~\cite{Balmer}.

\subsection{Rigidity} Fix a strict monoidal category~$\cC$.

\subsubsection{}A {\bf right dual} of~$X\in\Ob\cC$ is a triple~$(X^\vee,\ev_X,\co_X)$ with~$X^\vee\in \Ob\cC$ and morphisms
$$\ev_X:X\otimes X^\vee\to\unit\qquad\mbox{and}\qquad \co_X:\unit\to X^\vee\otimes X,$$
known as the evaluation and coevaluation, which satisfy
\begin{equation}\label{eqdual}(\ev_X\otimes 1_X)\circ (1_X\otimes \co_X)=1_X\qquad\mbox{and}\qquad (1_{X^{\vee}}\otimes\ev_X)\circ(\co_X\otimes 1_{X^\vee})=1_{X^\vee}. \end{equation}
A right dual is unique, up to isomorphism. A strict monoidal category~$\cC$ is {\bf right rigid} if each object admits a right dual.

\subsubsection{}\label{secdual} We recall some well-known results from \cite[Section~6.1]{AK}. Assume $X\in\Ob\cC$ admits a right dual. 
We have a canonical group isomorphism
\begin{equation}\label{eqiota}\iota_{XY}:\;\cC(\unit,X^\vee\otimes Y)\,\stackrel{\sim}{\to}\,\cC(X,Y),\qquad\phi\mapsto (\ev_X\otimes 1_Y)\circ (1_X\otimes \phi),\end{equation}
with inverse given by
\begin{equation}\label{eqiota1}\iota^{-1}_{XY}(f)\;=\; (1_{X^\vee}\otimes f)\circ\co_X,\qquad\mbox{for~$f\in \cC(X,Y)$.}\end{equation}
Equation~\eqref{eqiota1} shows in particular that the left $\cC(X^\vee\otimes X,X^\vee\otimes X)$-module~$\cC(\unit, X^\vee\otimes X)$ is generated by $\co_X$. Assume~$X,Y\in\Ob\cC$ admit right duals. Then $Y\otimes X$ admits a right dual~$X^\vee\otimes Y^\vee$ with
\begin{equation}\label{eqXYd}\ev_{Y\otimes X}\;=\;\ev_Y\circ (1_Y\otimes\ev_X\otimes 1_{Y^\vee})\qquad\mbox{and}\qquad \co_{Y\otimes X}\;=\;(1_{X^\vee}\otimes \co_Y\otimes 1_X)\circ\co_{X}.\end{equation}

\subsection{Assumptions} For future reference, we list some assumptions on (preadditive) monoidal categories~$\cC$ with $K:=\cC(\unit,\unit)$ we will frequently use.
\begin{enumerate}[(I)]
\item $\cC$ is right rigid.
\item $\cC$ is Krull-Schmidt.
\item $K$ is a field.
\item $\cC(X,Y)$ is a finitely generated $K$-module, for all $X,Y\in\Ob\cC$.
\item $\cC$ admits a braiding.
\end{enumerate}
Note that by convention we assume that Krull-Schmidt categories are additive and karoubian. If~$\cC$ is additive, karoubian and (III), (IV) are satisfied, (II) is automatically satisfied.

\subsection{Negligible morphisms}
\subsubsection{}\label{Condtr}Assume that~$\cC$ satisfies (I) and (V), with~$K:=\cC(\unit,\unit)$ and consider a braiding $\gamma$. We have a morphism of~$K$-modules
$$\tr:\;\cC(X,X)\,\to\, K,\qquad\mbox{for all~$X\in\Ob\cC$},$$
where the {\bf trace} $\tr(f)$ of any morphism $f\in \cC(X,X)$ is the composition 
$$\unit\;\stackrel{\co_X}{\to}\;X^\vee\otimes X\;\stackrel{1_{X^\vee}\otimes f}{\to}\;X^\vee\otimes X\;\stackrel{\gamma_{X^\vee X}}{\to}\;X\otimes X^\vee\;\stackrel{\ev_X}{\to}\;\unit.$$
As stated in \cite[(7.2)]{AK} in slightly less generality, we have $\tr(f\circ g)=\tr(g\circ f)$, for $f\in\cC(X,Y)$ and $g\in\cC(Y,X)$. Indeed, one calculates using \eqref{eqdual} and naturality of $\gamma$ that
\begin{eqnarray*}
\tr(g\circ f)&=&\ev_X\circ\gamma_{X^\vee X}\circ (((1_{X^\vee}\otimes \ev_Y)\circ(1_{X^\vee}\otimes f\otimes 1_{Y^\vee})\circ (\co_X\otimes 1_{Y^\vee})) \otimes g)\circ \co_Y\\
&=&\ev_X\circ (g\otimes((1_{X^\vee}\otimes \ev_Y )\circ(1_{X^\vee}\otimes f\otimes 1_{Y^\vee})\circ (\co_X\otimes 1_{Y^\vee})) )\circ \gamma_{Y^\vee Y}\circ\co_Y\\
&=&\ev_Y\circ(f\circ g\otimes 1_{Y^\vee}) \circ \gamma_{Y^\vee Y}\circ\co_Y\;=\;\tr(f\circ g).
\end{eqnarray*}

\subsubsection{}A morphism $f\in\cC(X,Y)$ is {\bf negligible} if $\tr(g\circ f)=0$, for all~$g\in \cC(Y,X)$. %The {\bf categorical dimension} of~$X\in\Ob\cC$ is defined as $\tr(1_X)$.
The following lemma is stated in \cite[Proposition~7.1.4]{AK} in slightly less generality. For completeness, a proof will be given in Section~\ref{SecProofs}. The lemma implies that the notion of negligibility does not depend on the choice of braiding.
\begin{lemma}\label{PropAK} Let~$\cC$ be a monoidal category satisfying (I), (III) and~(V).
The unique maximal tensor ideal in~$\cC$ consists of all negligible morphisms.
\end{lemma}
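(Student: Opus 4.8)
The claim has two parts: first, that the negligible morphisms form a tensor ideal, and second, that this ideal is the \emph{unique maximal} one. I would first verify that $\cN := \{f \mid \tr(g\circ f) = 0 \text{ for all composable } g\}$ is a left-tensor ideal. That $\cN(X,Y)$ is a subgroup of $\cC(X,Y)$ is clear from $\mZ$-linearity of $\tr$. For the ideal property, if $f \in \cN(X,Y)$ and $h \in \cC(Y,W)$, then for any $g \in \cC(W,X)$ we have $\tr(g \circ (h\circ f)) = \tr((g\circ h)\circ f) = 0$ since $g \circ h \in \cC(Y,X)$; the argument for precomposition uses the cyclicity $\tr(ab)=\tr(ba)$ established in~\ref{Condtr}. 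For the left-tensor property, given $f\in\cN(X,Y)$, $Z\in\Ob\cC$ and arbitrary $g\in\cC(Z\otimes Y, Z\otimes X)$, I would use the standard "partial trace" manipulation: compute $\tr_{Z\otimes X}(g\circ(1_Z\otimes f))$ by sliding the coevaluation/evaluation of $Z\otimes X = $ (using \eqref{eqXYd} for the dual of $Z\otimes X$) so as to express it as $\tr_X\big((\text{partial trace of }g\text{ over }Z)\circ f\big)$, which vanishes because $f$ is negligible. This is a diagrammatic computation analogous to the one already displayed in~\ref{Condtr}; the braiding (V) is what lets one close up the $Z$-strand.

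\textbf{Maximality.} For the second part, let $\cJ$ be any tensor ideal and suppose $\cJ \not\subset \cN$, i.e.\ there is $f \in \cJ(X,Y)$ with $\tr(g\circ f) \neq 0$ for some $g \in \cC(Y,X)$. Then $h := g\circ f \in \cJ(X,X)$ and $\lambda := \tr(h) \in K$ is nonzero. Because $K$ is a field (III), $\lambda$ is invertible, so $\lambda^{-1}h$ still lies in $\cJ(X,X)$ and has trace $1$. Now I want to deduce $1_\unit \in \cJ(\unit,\unit)$, which forces $\cJ = \cC$ (indeed, for any $\phi\in\cC(A,B)$ one has $\phi = \phi \otimes 1_\unit = 1_B \otimes \phi \otimes 1_\unit \in \cJ$ once $1_\unit\in\cJ$, using the tensor-ideal property with the element $1_\unit\in\cJ$). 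The bridge is the observation that $\tr(h)$, by its very definition as the composite $\unit \xrightarrow{\co_X} X^\vee\otimes X \xrightarrow{1\otimes h} X^\vee\otimes X \xrightarrow{\gamma} X\otimes X^\vee \xrightarrow{\ev_X} \unit$, factors through $1_{X^\vee}\otimes h$, hence through $h$ itself via the tensor product $-\otimes X^\vee$; since $h\in\cJ$ and $\cJ$ is a left-tensor ideal (so also closed under $-\otimes X^\vee$, as noted right after the definition of tensor ideal), the whole composite $\tr(h)\cdot 1_\unit = \tr(h) \in \cJ(\unit,\unit)$. As $\tr(h)$ is an invertible scalar, $1_\unit\in\cJ(\unit,\unit)$, so $\cJ=\cC$. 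Thus any tensor ideal not contained in $\cN$ is everything; equivalently $\cN$ contains every proper tensor ideal, and since $\cN$ itself is proper (as $\tr(1_\unit)=1\neq 0$ shows $1_\unit\notin\cN$, so $\cN\neq\cC$), $\cN$ is the unique maximal tensor ideal.

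\textbf{Main obstacle.} The genuinely routine but error-prone step is the diagrammatic verification that $\cN$ is closed under $1_Z\otimes(-)$: one must carefully track the evaluation and coevaluation for $Z\otimes X$ built from those of $Z$ and $X$ via \eqref{eqXYd}, and use bi-naturality of the braiding to move the $g$-strand past the $Z$-loop. I expect this to be the crux; everything else (the ideal axioms, and the maximality argument) is a short formal consequence of cyclicity of the trace and invertibility of nonzero scalars in the field $K$. If one prefers to minimize diagram-chasing, an alternative for the closure step is to invoke the isomorphisms $\iota$ of \eqref{eqiota} to transport everything into $\cC(\unit,-)$ and argue there, but the partial-trace computation seems most transparent and parallels the existing text. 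Since a full proof is deferred to Section~\ref{SecProofs} per the statement, here I only indicate these steps.
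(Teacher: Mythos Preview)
Your proposal is correct. The verification that $\cN$ is a tensor ideal matches the paper's approach exactly: cyclicity of trace for the ideal axioms, and a partial-trace identity $\tr(g\circ(1_Z\otimes f))=\tr(\theta\circ f)$ (with $\theta$ built from $\ev_Z,\co_Z$ and the braiding) for closure under $1_Z\otimes(-)$.

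For maximality the two arguments diverge. You argue directly: if $\cJ\not\subset\cN$, pick $f\in\cJ$ with $\tr(g\circ f)\neq 0$; then $\tr(g\circ f)\in\cJ(\unit,\unit)$ is a nonzero element of the field $K$, so $1_\unit\in\cJ$ and $\cJ=\cC$. This is self-contained and uses nothing beyond (I), (III), (V). The paper instead invokes Theorem~\ref{Thm1}: it observes that a morphism $f:\unit\to X$ is negligible iff $g\circ f=0$ for all $g\in\cC(X,\unit)$, so $\Psi(\cN)$ is the unique maximal proper submodule of $\Pmod_\unit$ (any proper submodule has $\Mmod(\unit)=0$ since $K$ is a field, hence lies in $\Psi(\cN)$), and then transports this back via the lattice isomorphism $\Psi$. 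Your route is more elementary and avoids the forward reference to Section~\ref{SecPrincipal}; the paper's route illustrates its main structural theorem in action. One minor wrinkle in your write-up: the line ``$\phi=\phi\otimes 1_\unit=1_B\otimes\phi\otimes 1_\unit$'' is muddled; it suffices to say $\phi=\phi\otimes 1_\unit\in\cJ$ since $g\otimes f\in\cJ$ whenever $f\in\cJ$.
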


\subsection{Completions}

\subsubsection{}Let $\cC$ be a preadditive category. We denote its additive envelope, see e.g. \cite[Section~2.5]{ComesWil} or \cite[Section~1.2]{AK}, by~$\cC^{\oplus}$. We have a canonical equivalence between $\cC$-Mod and~$\cC^{\oplus}$-Mod. We denote the Karoubi envelope (idempotent completion), see e.g. \cite[Section~2.6]{ComesWil} or \cite[Section~1.2]{AK}, of~$\cC$ by~$\cC^{\sharp}$. 
We have a canonical equivalence between $\cC$-Mod and~$\cC^{\sharp}$-Mod. Objects in~$\cC^{\sharp}$ will be denoted by~$(X,e)$, with~$X\in\Ob\cC$ and~$e$ an idempotent in~$\cC(X,X)$.

\subsubsection{}For a preadditive category~$\cC$, we call~$\cC^{\oplus\sharp}\simeq\cC^{\sharp\oplus}$ the {\bf pseudo-abelian envelope} of~$\cC$. If~$\cC$ is monoidal, then $\cC^{\oplus\sharp}$ is also a monoidal category, which extends the monoidal structure of the subcategory~$\cC$. Restriction along the fully faithful embedding $\cC\to\cC^{\oplus\sharp}$ yields an isomorphism $\TId(\cC^{\oplus\sharp})\stackrel{\sim}{\to}\Id(\cC)$, see e.g. \cite[Lemme~1.3.10]{AK}.

%%%%%%%%%%%%%%%%%%%%%%%%%%%%%%%%%%%%%%%%%%%%%%%%%%%%%%%%%%%%%%%%%%%%%%%

\section{Tensor ideals as submodules of the principal module}\label{SecPrincipal}
In this section, we bring some ideas from \cite[Section~6]{AK} into the form that we will require.

\subsection{An isomorphism of lattices} Consider a preadditive category~$\cC$. For any ideal $\cJ$ and $Z\in \Ob\cC$, we can interpret~$\cJ(Z,-)$ as a functor~$\cC\to\Ab$, where for~$f\in \cC(X,Y)$, we set
$$\cJ(Z,f):\;\cJ(Z, X)\to\cJ(Z, Y),\;\; \alpha\mapsto f\circ\alpha.$$
By construction, $\cJ(Z,-)$ is a submodule of~$\Pmod_{Z}\in \cC\mbox{-Mod}$. We apply this to a monoidal category~$\cC$, for~$Z=\unit\in\Ob\cC$ and restrict $\cJ$ to left-tensor ideals.

\begin{thm}\label{Thm1}
For a right rigid monoidal category~$\cC$, the assignment
$$\Psi:\TId(\cC)\to \Sub_{\cC}(\Pmod_{\unit}),\quad \cJ\mapsto\cJ(\unit,-),$$
yields an isomorphism of partially ordered sets.
\end{thm}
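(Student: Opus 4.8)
The plan is to show that $\Psi$ is a well-defined, order-preserving bijection whose inverse is also order-preserving. The map $\Psi$ is clearly well-defined: for a left-tensor ideal $\cJ$, the subgroup $\cJ(\unit,-)$ is a subfunctor of $\Pmod_{\unit}$ by the discussion preceding the statement, and $\Psi$ visibly preserves inclusions. So the content is constructing an order-preserving inverse. Given a subfunctor $\Mmod\sqsubseteq\Pmod_{\unit}$, I would define a candidate ideal $\cJ=\Phi(\Mmod)$ by declaring, for each pair $X,Y$,
$$\cJ(X,Y)\;:=\;\{\, f\in\cC(X,Y)\;:\;\iota^{-1}_{XY}(f)\in\Mmod(X^\vee\otimes Y)\,\},$$
using the isomorphism $\iota_{XY}:\cC(\unit,X^\vee\otimes Y)\xrightarrow{\sim}\cC(X,Y)$ from~\eqref{eqiota}; equivalently, $\cJ(X,Y)=\iota_{XY}(\Mmod(X^\vee\otimes Y))$. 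Here $X^\vee$ is a chosen right dual of $X$, which exists since $\cC$ is right rigid; one should note at the outset that any two choices give canonically isomorphic duals, so the subgroup $\cJ(X,Y)$ does not depend on the choice (this uses that $\Mmod$, being a subfunctor, is stable under the induced isomorphisms).

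The main work is verifying that $\cJ=\Phi(\Mmod)$ is a left-tensor ideal, i.e. closed under left and right composition by arbitrary morphisms and under $1_Z\otimes(-)$. For right composition: given $f\in\cC(X,Y)$ with $\iota_{XY}^{-1}(f)\in\Mmod(X^\vee\otimes Y)$ and $g\in\cC(X',X)$, I would compute $\iota^{-1}_{X'Y}(f\circ g)$ and express it as the image of $\iota^{-1}_{XY}(f)$ under a morphism $\cC(X^\vee\otimes Y,\,X'^\vee\otimes Y)$ — concretely the morphism built from $g^\vee\otimes 1_Y$ (the dual of $g$, together with the duality maps) — so that functoriality of $\Mmod$ forces $\iota^{-1}_{X'Y}(f\circ g)\in\Mmod(X'^\vee\otimes Y)$. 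For left composition by $h\in\cC(Y,Y')$, the relevant morphism is simply $1_{X^\vee}\otimes h:X^\vee\otimes Y\to X^\vee\otimes Y'$, and from~\eqref{eqiota1} one reads off $\iota^{-1}_{XY'}(h\circ f)=(1_{X^\vee}\otimes h)\circ\iota^{-1}_{XY}(f)=\Mmod(1_{X^\vee}\otimes h)\big(\iota^{-1}_{XY}(f)\big)\in\Mmod(X^\vee\otimes Y')$; since $\Pmod_{\unit}=\cC(\unit,-)$ and $\Mmod$ is a subfunctor, this is immediate — this is the easy half. For the tensor condition $1_Z\otimes f\in\cJ(Z\otimes X,Z\otimes Y)$, I use~\eqref{eqXYd}: $Z\otimes X$ has right dual $X^\vee\otimes Z^\vee$, I compute $\iota^{-1}_{Z\otimes X,\,Z\otimes Y}(1_Z\otimes f)$ from the formulas there for $\co_{Z\otimes X}$, and identify it with $\Mmod(\psi)(\iota^{-1}_{XY}(f))$ for a suitable $\psi:X^\vee\otimes Y\to X^\vee\otimes Z^\vee\otimes Z\otimes Y$ (built from $\co_Z$), again closing under the subfunctor. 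I expect this diagram chase — keeping track of the snake/zig-zag identities~\eqref{eqdual} and the nested duality maps — to be the main obstacle; it is the step where one must be genuinely careful, though it is conceptually routine once the right morphisms between the Hom-spaces are written down.

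It remains to check that $\Phi$ is order-preserving (clear from the definition) and that $\Psi$ and $\Phi$ are mutually inverse. For $\Psi\circ\Phi=\id$: for $\Mmod\sqsubseteq\Pmod_{\unit}$ we have $(\Phi(\Mmod))(\unit,-)$, and at the object $Y=X$ with $X=\unit$ the isomorphism $\iota_{\unit,Y}$ is the identity on $\cC(\unit,Y)$ (since $\unit^\vee=\unit$ and the evaluation/coevaluation for $\unit$ are identities), so $(\Phi(\Mmod))(\unit,Y)=\Mmod(Y)$ for all $Y$, giving $\Psi(\Phi(\Mmod))=\Mmod$. For $\Phi\circ\Psi=\id$: given a left-tensor ideal $\cJ$ and $f\in\cC(X,Y)$, the formula $\iota^{-1}_{XY}(f)=(1_{X^\vee}\otimes f)\circ\co_X$ shows that if $f\in\cJ(X,Y)$ then $\iota^{-1}_{XY}(f)\in\cJ(\unit,X^\vee\otimes Y)$ (apply the left-tensor axiom to get $1_{X^\vee}\otimes f\in\cJ$, then compose with $\co_X$), hence $f\in(\Phi(\Psi(\cJ)))(X,Y)$; conversely, if $\iota^{-1}_{XY}(f)=(1_{X^\vee}\otimes f)\circ\co_X$ lies in $\cJ(\unit,X^\vee\otimes Y)$, then applying $\iota_{XY}$, i.e. composing with $\ev_X\otimes 1_Y$ on the left and using the first identity in~\eqref{eqdual}, recovers $f$ as a composite with a morphism in $\cJ$, so $f\in\cJ(X,Y)$. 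Thus $\cJ(X,Y)=(\Phi(\Psi(\cJ)))(X,Y)$ for all $X,Y$, completing the proof. (Strictness of $\cC$ is harmless here by Mac Lane coherence, as noted in~\ref{DefMC}.)
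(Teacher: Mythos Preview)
Your proposal is correct and follows essentially the same approach as the paper: define the inverse $\Phi$ by $\Phi(\Mmod)(X,Y)=\iota_{XY}(\Mmod(X^\vee\otimes Y))$, verify it is a left-tensor ideal via the three computations (left composition, right composition, and $1_Z\otimes-$) you outline, and check $\Psi\circ\Phi=\id$ from $\iota_{\unit,Y}=\id$ while $\Phi\circ\Psi=\id$ follows from the formulas~\eqref{eqiota} and~\eqref{eqiota1}. The paper packages the three closure computations as Lemmata~\ref{Lemfg} and~\ref{LemTenZ} and the identity $\cJ(X,Y)=\iota_{XY}(\cJ(\unit,X^\vee\otimes Y))$ as Lemma~\ref{LemJJ}, but the content is the same diagram chase you describe.
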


\begin{rem}
An alternative formulation of this theorem can be found in~\cite[Section~6.3]{AK}.
\end{rem}
Before proceeding to the proof of Theorem~\ref{Thm1}, we formulate some consequences and a remark.
\begin{cor}\label{Cor4Bru}
Let~$\cC$ and~$\cD$ be right rigid monoidal categories. An equivalence~$\Fmod:\cC\to\cD$ of preadditive categories with~$\cF(\unit_{\cC})= \unit_{\cD}$ induces an isomorphism of partially ordered sets $\cF:\TId(\cD)\stackrel{\sim}{\to}\TId(\cC)$, where
$$\cF(\cJ)(X,Y)=\iota_{XY}\left(\Fmod^{-1}( \cJ(\unit_\cD,\Fmod(X^\vee\otimes Y)))\right),\qquad\mbox{for all~$\cJ\in\TId(\cD)$ and~$X,Y\in \Ob\cC$}.$$ 
\end{cor}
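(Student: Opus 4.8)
\textbf{Plan of proof for Corollary~\ref{Cor4Bru}.} The strategy is to reduce everything to Theorem~\ref{Thm1} by transporting subfunctors of the principal module along the equivalence $\Fmod$. First I would note that the induced functor $\Fmod^{\ast}:\cD\text{-Mod}\to\cC\text{-Mod}$, $\Mmod\mapsto\Mmod\circ\Fmod$, from~\ref{CMod} is an equivalence, and since $\Fmod(\unit_\cC)=\unit_\cD$ it restricts to an isomorphism $\Sub_\cC(\Pmod^\cC_{\unit_\cC})\stackrel{\sim}{\to}\Sub_\cD(\Pmod^\cD_{\unit_\cD})$ of partially ordered sets (a subfunctor of $\cC(\unit_\cC,-)=\cD(\unit_\cD,\Fmod(-))$ is precisely the image under $\Fmod^\ast$ of a subfunctor of $\cD(\unit_\cD,-)$; inclusion is manifestly preserved in both directions). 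Composing this with the two lattice isomorphisms $\Psi_\cC:\TId(\cC)\stackrel{\sim}{\to}\Sub_\cC(\Pmod^\cC_{\unit_\cC})$ and $\Psi_\cD:\TId(\cD)\stackrel{\sim}{\to}\Sub_\cD(\Pmod^\cD_{\unit_\cD})$ from Theorem~\ref{Thm1} yields an isomorphism of partially ordered sets $\TId(\cD)\stackrel{\sim}{\to}\TId(\cC)$.

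The remaining work is to check that this abstract composite is given by the explicit formula in the statement. Unwinding $\Psi_\cD^{-1}$: a tensor ideal $\cJ\in\TId(\cD)$ corresponds to the subfunctor $\cJ(\unit_\cD,-)\subset\cD(\unit_\cD,-)$, and pulling back along $\Fmod$ gives the subfunctor of $\cC(\unit_\cC,-)$ whose value at $W\in\Ob\cC$ is $\Fmod^{-1}(\cJ(\unit_\cD,\Fmod(W)))\subset\cC(\unit_\cC,W)$. Then $\Psi_\cC^{-1}$ of this subfunctor is the tensor ideal whose component $\cF(\cJ)(X,Y)$ is obtained by the construction implicit in the proof of Theorem~\ref{Thm1}: one takes the value of the subfunctor at $W=X^\vee\otimes Y$ and transports it through the canonical isomorphism $\iota_{XY}:\cC(\unit_\cC,X^\vee\otimes Y)\stackrel{\sim}{\to}\cC(X,Y)$ of~\eqref{eqiota}. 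This gives exactly
$$\cF(\cJ)(X,Y)=\iota_{XY}\!\left(\Fmod^{-1}\!\left(\cJ(\unit_\cD,\Fmod(X^\vee\otimes Y))\right)\right),$$
as claimed. Here I am using that a right dual $X^\vee$ for $X$ in $\cC$ exists by right rigidity, and that any choice yields the same subgroup since $\iota_{XY}$ is canonical.

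\textbf{Main obstacle.} The only genuine subtlety is making precise the inverse $\Psi_\cC^{-1}$ and hence identifying how a subfunctor $\Nmod\subset\Pmod^\cC_{\unit_\cC}$ reconstructs a full left-tensor ideal from its "$X^\vee\otimes Y$-components": this is the content (and the proof technique) of Theorem~\ref{Thm1}, so once that theorem is established the corollary is essentially a bookkeeping exercise. One should double-check that the ideal defined componentwise by the displayed formula is genuinely a left-tensor ideal — i.e. closed under left and right composition and under $1_Z\otimes(-)$ — but this is automatic because it is the $\Psi_\cC$-preimage of an honest subfunctor; alternatively it follows directly from the naturality of $\iota$ and the identities~\eqref{eqiota},~\eqref{eqiota1},~\eqref{eqXYd} relating $\iota_{XY}$, composition, and tensoring. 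I would also remark, without belabouring it, that $\cF$ is inclusion-reversing-free (it preserves order), being a composite of order isomorphisms, and that its inverse is induced by a quasi-inverse of $\Fmod$ via the same formula; functoriality in $\Fmod$ then follows formally.
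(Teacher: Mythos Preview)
Your proposal is correct and follows essentially the same approach as the paper: transport subfunctors of the principal module along the equivalence $\Fmod^\ast:\cD\text{-Mod}\to\cC\text{-Mod}$ from~\ref{CMod}, then apply Theorem~\ref{Thm1} on both sides, using the explicit inverse $\Psi^{-1}$ (given in the paper as Proposition~\ref{PropPhi}) to recover the displayed formula. The only cosmetic slip is that you state the induced isomorphism as $\Sub_\cC(\Pmod^\cC_{\unit_\cC})\stackrel{\sim}{\to}\Sub_\cD(\Pmod^\cD_{\unit_\cD})$ while $\Fmod^\ast$ naturally goes the other way; since it is an isomorphism and your subsequent unwinding uses the correct direction, this is harmless.
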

\begin{proof}
We consider the equivalence from $\cD$-Mod to~$\cC$-Mod as in~\ref{CMod}, which induces an isomorphism from $\Sub(\Pmod_{\unit}^{\cD})$ to~$\Sub(\Pmod_{\unit}^{\cC})$. The statement thus follows from the isomorphism in Theorem~\ref{Thm1} and the expression for its inverse given in Proposition~\ref{PropPhi} below.
\end{proof}

\begin{cor}
If $\cC$ satisfies (I), (II) and (III), the following are equivalent.
\begin{enumerate}[(i)]
\item $\cC$ has precisely one proper tensor ideal.
\item There is $\unit\not=X\in\inde\cC$, with $\cC(\unit,X)$ a (non-zero) simple $\cC(X,X)$-module
and $\cC(\unit,Y)=0$ for all $Y\in\inde\cC\backslash\{\unit,X\}$.
\end{enumerate}
\end{cor}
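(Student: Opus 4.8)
The plan is to use Theorem~\ref{Thm1} to reduce the classification of tensor ideals in $\cC$ to the classification of subfunctors of the principal module $\Pmod_{\unit}$, and then to analyse that lattice of subfunctors under the Krull-Schmidt hypothesis. Under assumptions (I), (II), (III) the category $\cC$-Mod is abelian (as noted in \ref{CMod}), and $\Pmod_{\unit}=\cC(\unit,-)$ is a module whose value on each indecomposable $Z\in\inde\cC$ is the $K$-vector space $\cC(\unit,Z)$. A proper tensor ideal corresponds, via $\Psi$, to a proper subfunctor $\Mmod\subsetneq\Pmod_{\unit}$; since $\Psi$ is an isomorphism of posets, $\cC$ has precisely one proper tensor ideal if and only if $\Pmod_{\unit}$ has precisely one proper subfunctor in $\cC$-Mod.

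First I would spell out what "precisely one proper subfunctor" forces. If $\Mmod$ is the unique proper subfunctor of $\Pmod_{\unit}$, then $\Pmod_{\unit}/\Mmod$ is simple and $\Mmod$ itself has no proper nonzero subfunctor, i.e. $\Mmod$ is either zero or simple. The value $\cC(\unit,\unit)=K$ is always a nonzero summand (generated by $1_{\unit}$), and the submodule generated by $1_{\unit}$ is all of $\Pmod_{\unit}$ by the Yoneda-type argument (every morphism $\unit\to Z$ equals its own composition with $1_{\unit}$); hence $1_{\unit}\notin\Mmod$, so $\Mmod(\unit)=0$ and $\Pmod_{\unit}/\Mmod$ is supported on $\unit$ with value $K$ there. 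Next, the simplicity of $\Pmod_{\unit}/\Mmod$ together with $\Mmod$ being $0$ or simple pins down the shape of $\Pmod_{\unit}$: it must have a composition series of length $2$, so exactly one indecomposable object $X\neq\unit$ can carry a nonzero value $\cC(\unit,X)$, that value must be a simple $\cC(X,X)$-module (equivalently a simple module over the relevant endomorphism ring, using the equivalence $\cC$-Mod $\simeq\mZ[\cC]$-Mod restricted to the support), and $\cC(\unit,Y)=0$ for every other indecomposable $Y$. This is the implication (i)$\Rightarrow$(ii).

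For the converse (ii)$\Rightarrow$(i), assume such an $X$ exists. Then $\Pmod_{\unit}$ is supported on $\{\unit,X\}$, with a surjection onto the simple functor supported on $\unit$ with value $K$ (kernel $\Mmod$ supported on $X$), and $\Mmod(X)=\cC(\unit,X)$ is a simple $\cC(X,X)$-module, so $\Mmod$ is a simple object of $\cC$-Mod; also $\Mmod\neq 0$ since $\cC(\unit,X)\neq 0$. Any proper subfunctor $\Nmod\subsetneq\Pmod_{\unit}$ satisfies $1_{\unit}\notin\Nmod$ (else $\Nmod=\Pmod_{\unit}$), so $\Nmod(\unit)=0$, hence $\Nmod\subseteq\Mmod$, and by simplicity of $\Mmod$ either $\Nmod=0$ or $\Nmod=\Mmod$. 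Thus $\Mmod$ is the unique proper subfunctor of $\Pmod_{\unit}$, and $\Psi^{-1}(\Mmod)$ is the unique proper tensor ideal. (One should double-check that $\Mmod\neq 0$, i.e. that $\cC(\unit,X)\neq 0$, is really part of hypothesis (ii); it is, by the parenthetical "non-zero".)

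The main obstacle, though it is mild, is bookkeeping around the correspondence between subfunctors of $\Pmod_{\unit}$ and the "shape" condition in (ii): one must argue carefully that a proper subfunctor cannot meet $\cC(\unit,\unit)$, that the quotient being simple localises the support to a single extra indecomposable, and that simplicity of the functor $\Mmod$ is equivalent to simplicity of $\cC(\unit,X)$ as a $\cC(X,X)$-module. All of this uses only that $\cC$ is Krull-Schmidt (so $\inde\cC$ is a genuine basis of $[\cC]_\oplus$ and every object decomposes), that $K$ is a field (so there are no subtleties with $K$-module structure), and the equivalence $\cC$-Mod $\simeq\mZ[\cC]$-Mod from \ref{CMod}; rigidity enters only through Theorem~\ref{Thm1}. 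I expect the whole argument to be short once the translation via $\Psi$ is made.
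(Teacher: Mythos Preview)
Your proposal is correct and follows the same route as the paper: translate via Theorem~\ref{Thm1} to the statement that $\Pmod_\unit$ has exactly one nonzero proper submodule, and then analyse what that forces. The step you flag as bookkeeping --- that a simple $\cC$-module must be supported on a single indecomposable --- is precisely the content of the paper's Lemma~\ref{Lem2Tr}, and both versions hinge on the local endomorphism rings coming from the Krull--Schmidt hypothesis.
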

\begin{proof}
We freely use Theorem~\ref{Thm1}. Assume that $\Pmod_{\unit}$ has precisely one proper submodule. In particular, we must have some $\unit\not=X\in\inde\cC$ with non-zero $f\in\cC(\unit,X)$. Denote by $\Mmod$ the submodule of $\Pmod_{\unit}$ generated by $f$. By Lemma~\ref{Lem2Tr}, $\Mmod$ is different from $\Tr_Y\Pmod_\unit$, for all $Y\in\inde\cC\backslash\{X\}$. Hence we find $\Tr_Y\Pmod_{\unit}=0$ for all $Y\in\inde\cC\backslash\{\unit,X\}$, implying $\cC(\unit,Y)=0$.
If $\cC(\unit,X)$ would not be a simple $\cC(X,X)$-module we could easily construct proper submodules of $\Tr_X\Pmod_\unit$. Hence (i) implies (ii).

If (ii) is satisfied, it follows easily that $\Pmod_{\unit}$ has precisely one proper submodule, namely $\cM$ determined by $\cM(\unit)=0$ and $\cM(X)\not=0$.
 \end{proof}

\begin{rem}
If $\cC$ is any monoidal category we can define a map $\Phi:\Sub_{\cC}(\Pmod_{\unit})\to\TId(\cC)$, by assigning to a submodule $\Mmod$ of $\Pmod_{\unit}$ the tensor ideal generated by the morphisms in $\Mmod$. It follows easily that $\Psi\circ\Phi$ is the identity on $\Sub_{\cC}(\Pmod_{\unit})$. Consequently, $\Psi$ is always surjective and $\Phi$ injective. However, they need not be isomorphisms when $\cC$ is not rigid.
\end{rem}

\subsection{Proofs}\label{SecProofs}
%In this section, we assume that~$\cC$ is a strict, right rigid, monoidal category. This suffices for the proof of Theorem~\ref{Thm1}.

By definition, for a submodule~$\Mmod$ of~$\Pmod^{\cC}_{\unit}$, we have $\Mmod(X)\subset \cC(\unit,X)$, for all~$X\in\Ob\cC$.
\begin{prop}\label{PropPhi}${}$
\begin{enumerate}[(i)]
\item 
For a submodule~$\Mmod$ of~$\Pmod_{\unit}$, we define $\cJ_{\Mmod}$ as
$$\cJ_{\Mmod}(X,Y):=\iota_{XY}\left({\Mmod}(X^\vee\otimes Y)\right),\qquad\mbox{for all~$X,Y\in\Ob\cC$}.$$
Then $\cJ_{\Mmod}$ is a left-tensor ideal in~$\cC$. 

\item Define a map $\Phi:\Sub_{\cC}(\Pmod_{\unit})\to\TId(\cC)$, by~${\Mmod}\mapsto \cJ_{\Mmod}$, then $\Phi\circ\Psi$ and~$\Psi\circ\Phi$ are the identity.
\end{enumerate}
\end{prop}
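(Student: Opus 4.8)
The plan is to verify part~(i) directly using the ideal axioms, then deduce part~(ii) by comparing the two composites with the canonical isomorphism $\iota$ from~\ref{secdual}. First I would prove that $\cJ_{\Mmod}$ is an ideal. Take $X,Y,Z\in\Ob\cC$, $g\in\cC(X,Y)$, $h\in\cC(Z,W)$, and $f\in\cJ_{\Mmod}(Y,Z)=\iota_{YZ}(\Mmod(Y^\vee\otimes Z))$, so $\iota^{-1}_{YZ}(f)=(1_{Y^\vee}\otimes f)\circ\co_Y\in\Mmod(Y^\vee\otimes Z)$. Using the explicit formula~\eqref{eqiota1} for $\iota^{-1}$, I would compute $\iota^{-1}_{XZ}(f\circ g)$ and express it as the image of $\iota^{-1}_{YZ}(f)$ under the morphism $\Mmod(\,(g^\vee\otimes 1_Z)\,):\Mmod(Y^\vee\otimes Z)\to\Mmod(X^\vee\otimes Z)$, where $g^\vee:X^\vee\to Y^\vee$ is the transpose of $g$; since $\Mmod$ is a subfunctor of $\Pmod_\unit$, this image lies in $\Mmod(X^\vee\otimes Z)$, whence $f\circ g\in\cJ_{\Mmod}(X,Z)$. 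Dually, for $h\circ f$ one uses that $\iota^{-1}_{YW}(h\circ f)=(1_{Y^\vee}\otimes h\otimes 1)\circ(\text{stuff})$ equals $\Mmod(1_{Y^\vee}\otimes h)$ applied to $\iota^{-1}_{YZ}(f)$, again landing in $\Mmod(Y^\vee\otimes W)$. For the tensor condition, given $f\in\cJ_{\Mmod}(X,Y)$ and $Z\in\Ob\cC$, I would use the formula~\eqref{eqXYd} for the duality data of $Z\otimes X$ to write $\iota^{-1}_{Z\otimes X,Z\otimes Y}(1_Z\otimes f)$ in terms of $\co_{Z\otimes X}=(1_{X^\vee}\otimes\co_Z\otimes 1_X)\circ\co_X$ and identify it as the image of $\iota^{-1}_{XY}(f)\in\Mmod(X^\vee\otimes Y)$ under $\Mmod(1_{X^\vee}\otimes\co_Z\otimes 1_Y)$; subfunctoriality then gives $1_Z\otimes f\in\cJ_{\Mmod}(Z\otimes X,Z\otimes Y)$.

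For part~(ii), I would first compute $\Psi\circ\Phi$. For a submodule $\Mmod$, evaluating $\cJ_{\Mmod}$ at $X=\unit$ gives $\cJ_{\Mmod}(\unit,Y)=\iota_{\unit Y}(\Mmod(\unit^\vee\otimes Y))=\iota_{\unit Y}(\Mmod(Y))$; since $\unit^\vee=\unit$ with $\ev_\unit=\co_\unit=1_\unit$, the map $\iota_{\unit Y}$ is the identity on $\cC(\unit,Y)$, so $\Psi(\Phi(\Mmod))=\cJ_{\Mmod}(\unit,-)=\Mmod$ as subfunctors. Conversely, for $\Phi\circ\Psi$: given a left-tensor ideal $\cJ$, with $\Mmod:=\Psi(\cJ)=\cJ(\unit,-)$, I must show $\iota_{XY}(\cJ(\unit,X^\vee\otimes Y))=\cJ(X,Y)$ for all $X,Y$. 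The inclusion $\subseteq$ follows because $\iota_{XY}(\phi)=(\ev_X\otimes 1_Y)\circ(1_X\otimes\phi)$ and $\cJ$, being a (left-)tensor ideal containing $\phi$, contains $1_X\otimes\phi$ and hence its composition with $\ev_X\otimes 1_Y$. The reverse inclusion $\supseteq$ uses the dual direction: for $f\in\cJ(X,Y)$, formula~\eqref{eqiota1} gives $\iota^{-1}_{XY}(f)=(1_{X^\vee}\otimes f)\circ\co_X$, and since $f\in\cJ$ the tensor-ideal property yields $1_{X^\vee}\otimes f\in\cJ$, hence $\iota^{-1}_{XY}(f)\in\cJ(\unit,X^\vee\otimes Y)=\Mmod(X^\vee\otimes Y)$; applying $\iota_{XY}$ shows $f\in\cJ_{\Mmod}(X,Y)$. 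Thus $\Phi(\Psi(\cJ))=\cJ$.

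The main obstacle I anticipate is purely bookkeeping: making the naturality computations in part~(i) genuinely rigorous requires carefully tracking how the functor $\Mmod$ acts on the composite morphisms obtained by pre- and post-composing inside $X^\vee\otimes Y$, i.e. recognizing $\iota^{-1}_{XZ}(f\circ g)$ and $\iota^{-1}_{YW}(h\circ f)$ as $\Mmod$ evaluated on the appropriate morphisms applied to $\iota^{-1}_{YZ}(f)$, which amounts to the standard "mate"/transpose identities for duals. These are the kind of string-diagram manipulations that are visually obvious but notationally heavy; I would either invoke the relevant identities from~\cite[Section~6.1]{AK} or verify them with a short diagrammatic calculation rather than writing out the full associator-free strict computation. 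Everything else — the $\unit^\vee=\unit$ reductions and the two inclusions in part~(ii) — is immediate once part~(i) is in hand.
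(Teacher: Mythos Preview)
Your proposal is correct and follows essentially the same route as the paper: the paper packages your three computations into separate lemmata (Lemma~\ref{Lemfg} for pre- and post-composition, Lemma~\ref{LemTenZ} for $1_Z\otimes -$, and Lemma~\ref{LemJJ} for the inclusion $\cJ(X,Y)=\iota_{XY}(\cJ(\unit,X^\vee\otimes Y))$), then assembles them exactly as you describe. One small slip: the transpose of $g:X\to Y$ is a morphism $Y^\vee\to X^\vee$, not $X^\vee\to Y^\vee$ as you wrote, though the direction you use for $\Mmod(g^\vee\otimes 1_Z)$ is the correct one.
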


We start the proof of this proposition with three lemmata.

\begin{lemma}\label{Lemfg}
Consider $\phi\in\cC(\unit,X^\vee\otimes Y)$ for~$X,Y\in\Ob\cC$.
\begin{enumerate}[(i)]
\item For~$Z\in \Ob\cC$ and~$g\in\cC(Y,Z)$, we have
$$g\circ\iota_{XY}(\phi)\;=\;\iota_{XZ}((1_{X^\vee}\otimes g)\circ\phi).$$
\item For~$W\in\Ob\cC$ and~$f\in\cC(W,X)$, we have
$$\iota_{XY}(\phi)\circ f\;=\;\iota_{WY}((\chi\otimes 1_Y)\circ \phi),\quad\mbox{with }\;\chi:=(1_{W^\vee}\otimes\ev_X)\circ (\iota^{-1}_{WX}(f)\otimes 1_{X^\vee}).$$
\end{enumerate}
\end{lemma}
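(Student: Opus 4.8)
The plan is to prove both identities by direct manipulation, using only the explicit formulas \eqref{eqiota} and \eqref{eqiota1} for $\iota_{XY}$ and $\iota_{XY}^{-1}$, the bifunctoriality (interchange law) of $\otimes$, and strictness of the unit. There is no conceptual content here, so the only ``obstacle'' is keeping the bookkeeping of the tensor factors straight.

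For part~(i) I would start from
\[
g\circ\iota_{XY}(\phi)\;=\;g\circ(\ev_X\otimes 1_Y)\circ(1_X\otimes\phi),
\]
and first observe, writing $g=1_{\unit}\otimes g$ and applying the interchange law, that $g\circ(\ev_X\otimes 1_Y)=\ev_X\otimes g=(\ev_X\otimes 1_Z)\circ(1_{X\otimes X^\vee}\otimes g)$. A further application of the interchange law gives $(1_{X\otimes X^\vee}\otimes g)\circ(1_X\otimes\phi)=1_X\otimes\big((1_{X^\vee}\otimes g)\circ\phi\big)$, so the whole composite becomes $(\ev_X\otimes 1_Z)\circ\big(1_X\otimes((1_{X^\vee}\otimes g)\circ\phi)\big)$, which is $\iota_{XZ}\big((1_{X^\vee}\otimes g)\circ\phi\big)$ by \eqref{eqiota}.

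For part~(ii) it is cleaner to apply $\iota_{WY}^{-1}$ to the left-hand side and verify that the result equals $(\chi\otimes 1_Y)\circ\phi$. Using \eqref{eqiota1},
\[
\iota_{WY}^{-1}\big(\iota_{XY}(\phi)\circ f\big)\;=\;\big(1_{W^\vee}\otimes(\iota_{XY}(\phi)\circ f)\big)\circ\co_W\;=\;\big(1_{W^\vee}\otimes\iota_{XY}(\phi)\big)\circ\big((1_{W^\vee}\otimes f)\circ\co_W\big),
\]
and the second factor is exactly $\psi:=\iota_{WX}^{-1}(f)\in\cC(\unit,W^\vee\otimes X)$. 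Expanding $1_{W^\vee}\otimes\iota_{XY}(\phi)$ via \eqref{eqiota} and repeatedly using the interchange law to collect $\psi$ and $\phi$ into tensor-factor position, the composite collapses to $\big((1_{W^\vee}\otimes\ev_X)\circ(\psi\otimes 1_{X^\vee})\big)\otimes 1_Y$ precomposed with $\phi$; since $\psi=\iota^{-1}_{WX}(f)$, the first tensor factor is precisely $\chi$, and applying $\iota_{WY}$ back gives the claimed formula.

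The only point requiring care is the order of tensor factors when splitting identities such as $1_{W^\vee}\otimes\ev_X\otimes 1_Y=(1_{W^\vee}\otimes\ev_X)\otimes 1_Y$ and then invoking bifunctoriality; apart from this, both computations are routine. (One could instead do part~(ii) without passing through $\iota^{-1}$, by first using the snake identity \eqref{eqdual} to rewrite $f$ as $(\ev_W\otimes 1_X)\circ\big(1_W\otimes\iota_{WX}^{-1}(f)\big)$, but this introduces an extra zigzag and is less economical.)
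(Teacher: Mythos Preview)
Your proof is correct and essentially matches the paper's. Part~(i) is identical. For part~(ii) there is only a cosmetic organizational difference: you apply $\iota_{WY}^{-1}$ to the left-hand side and simplify, whereas the paper expands $f=\iota_{WX}(\overline f)$ with $\overline f:=\iota_{WX}^{-1}(f)$ and computes $\iota_{XY}(\phi)\circ f$ directly; both routes reduce to the same interchange-law manipulation of $\overline f\otimes\phi$ (your $\psi\otimes\phi$) followed by $(1_{W^\vee}\otimes\ev_X\otimes 1_Y)$. One small terminological quibble: in your parenthetical, rewriting $f$ as $(\ev_W\otimes 1_X)\circ(1_W\otimes\iota_{WX}^{-1}(f))$ is just the identity $f=\iota_{WX}(\iota_{WX}^{-1}(f))$, not the snake relation \eqref{eqdual} per se---and this is in fact exactly the route the paper takes, so it is no less economical than yours.
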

\begin{proof}
Part~(i) follows from a direct application of equation~\eqref{eqiota}.

For part~(ii), we set~$\overline{f}:=\iota^{-1}_{WX}(f)$. Then we have
\begin{eqnarray*}
\iota_{XY}(\phi)\circ f&=&(\ev_X\otimes 1_Y)\circ (1_X\otimes \phi)\circ (\ev_W\otimes 1_X)\circ (1_W\otimes \overline{f})\\
&=& (\ev_W\otimes \ev_X\otimes 1_Y)\circ (1_W\otimes \overline{f}\otimes \phi)\\
&=&\iota_{WY}\left((1_{W^\vee}\otimes \ev_X\otimes 1_Y)\circ ( \overline{f}\otimes \phi)\right),
\end{eqnarray*}
from which the claim follows easily.
\end{proof}

\begin{lemma}\label{LemTenZ}
Consider $\phi\in\cC(\unit,X^\vee\otimes Y)$ for~$X,Y\in\Ob\cC$. For~$Z\in\Ob\cC$, we have
$$1_Z\otimes \iota_{XY}(\phi)=\iota_{Z\otimes X,Z\otimes Y}(\psi),\quad\mbox{with~$\psi=(1_{X^\vee}\otimes \co_Z\otimes 1_Y)\circ \phi.$}$$
\end{lemma}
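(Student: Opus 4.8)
The statement to prove is Lemma~\ref{LemTenZ}: for $\phi\in\cC(\unit,X^\vee\otimes Y)$ and any $Z\in\Ob\cC$, we have $1_Z\otimes\iota_{XY}(\phi)=\iota_{Z\otimes X,Z\otimes Y}(\psi)$ where $\psi=(1_{X^\vee}\otimes\co_Z\otimes 1_Y)\circ\phi$. The approach is a direct string-diagram computation: unwind both sides using the explicit formula~\eqref{eqiota} for $\iota$ and the explicit formula~\eqref{eqXYd} for the dual data of $Z\otimes X$, then reconcile the two expressions using the zig-zag identities~\eqref{eqdual} for the dual of $Z$.

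First I would write out the right-hand side. Since $Z\otimes X$ has right dual $X^\vee\otimes Z^\vee$ with evaluation $\ev_{Z\otimes X}=\ev_Z\circ(1_Z\otimes\ev_X\otimes 1_{Z^\vee})$ by~\eqref{eqXYd}, formula~\eqref{eqiota} gives
$$\iota_{Z\otimes X,Z\otimes Y}(\psi)=(\ev_{Z\otimes X}\otimes 1_{Z\otimes Y})\circ(1_{Z\otimes X}\otimes\psi)=(\ev_Z\otimes 1_Z\otimes 1_Y)\circ(1_Z\otimes\ev_X\otimes 1_{Z^\vee}\otimes 1_Z\otimes 1_Y)\circ(1_Z\otimes 1_X\otimes 1_{X^\vee}\otimes\co_Z\otimes 1_Y)\circ(1_Z\otimes\phi).$$
Here $\psi=(1_{X^\vee}\otimes\co_Z\otimes 1_Y)\circ\phi$ has been substituted and I would reorganize the composition by interchange so that the factor $1_Z\otimes\phi$ comes first. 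The middle portion then contains the pattern $(\ev_X\otimes 1_{Z^\vee}\otimes 1_Z)$ followed by $(1_{X^\vee}\otimes\co_Z)$ acting on the appropriate strands; more precisely, after sliding things past each other, the $Z^\vee$ and $Z$ strands meet in the configuration $(1_{Z}\otimes\ev_X\otimes 1_{Z^\vee\otimes Z})\circ(1_{Z\otimes X\otimes X^\vee}\otimes\co_Z)$, and then $\ev_Z\circ(1_Z\otimes\ev_X\otimes 1_{Z^\vee})$ at the top closes off with the $\co_Z$ at the bottom. Using one of the identities in~\eqref{eqdual}, namely $(\ev_Z\otimes 1_Z)\circ(1_Z\otimes\co_Z)=1_Z$ applied to the outer $Z$-strand (after commuting $\ev_X$ out of the way by the interchange law), the $Z^\vee$–$Z$ pair cancels. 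What survives is exactly $(1_Z\otimes((\ev_X\otimes 1_Y)\circ(1_X\otimes\phi)))=1_Z\otimes\iota_{XY}(\phi)$, which is the left-hand side.

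\textbf{Main obstacle.} The only real subtlety is bookkeeping: keeping track of which tensor factor each identity/structure morphism acts on while repeatedly invoking the interchange law $(\alpha\otimes 1)\circ(1\otimes\beta)=\alpha\otimes\beta=(1\otimes\beta)\circ(\alpha\otimes 1)$ to rearrange the composition so that the zig-zag identity~\eqref{eqdual} for $Z$ becomes visibly applicable. There is no conceptual difficulty—this is the kind of computation that is transparent in graphical calculus and merely notation-heavy in linear form—so I would either present the short chain of equalities analogous to the displayed computations in~\ref{secdual} and~\ref{Condtr}, or simply remark that it follows from a direct application of~\eqref{eqiota}, \eqref{eqXYd} and~\eqref{eqdual}, in the same spirit as the proof of Lemma~\ref{Lemfg}.
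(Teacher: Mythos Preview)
Your proposal is correct and follows essentially the same route as the paper: both arguments reduce to the formulae~\eqref{eqiota} and~\eqref{eqXYd} together with the zig-zag identity~\eqref{eqdual} for $Z$. The only cosmetic difference is that the paper starts from the left-hand side $1_Z\otimes\iota_{XY}(\phi)$, inserts the zig-zag $(\ev_Z\otimes 1_Z)\circ(1_Z\otimes\co_Z)=1_Z$, and then recognises $\ev_{Z\otimes X}$, whereas you start from the right-hand side and cancel the zig-zag; also, your final factor should read $1_{Z\otimes X}\otimes\phi$ rather than $1_Z\otimes\phi$, but this is a harmless slip.
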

\begin{proof}
Equations~\eqref{eqdual} and~\eqref{eqXYd} imply
\begin{eqnarray*}
1_Z\otimes \iota_{XY}(\phi)&=& (\ev_Z\otimes 1_Z\otimes 1_Y)\circ (1_Z\otimes\ev_X\otimes\co_Z\otimes 1_Y)\circ (1_Z\otimes 1_X\otimes \phi)\\
&=&( \ev_{Z\otimes X}\otimes 1_Z\otimes 1_Y)\circ (1_Z\otimes 1_X\otimes 1_{X^\vee}\otimes \co_Z\otimes 1_Y)\circ (1_Z\otimes 1_X\otimes \phi),
\end{eqnarray*}
which proves the claim.
\end{proof}

\begin{lemma}\label{LemJJ}
For~$\cJ$ a tensor ideal in~$\cC$, we have
$$\cJ(X,Y)\;=\;\iota_{XY}(\cJ(\unit,X^\vee\otimes Y)).$$
\end{lemma}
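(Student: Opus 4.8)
The statement to prove is Lemma~\ref{LemJJ}: for a left-tensor ideal $\cJ$ in the right rigid monoidal category $\cC$ and any $X,Y\in\Ob\cC$, one has $\cJ(X,Y)=\iota_{XY}(\cJ(\unit,X^\vee\otimes Y))$. Since $\iota_{XY}$ is a group isomorphism $\cC(\unit,X^\vee\otimes Y)\xrightarrow{\sim}\cC(X,Y)$ by \eqref{eqiota}, it suffices to prove the two inclusions. The key point is that $\cJ$ being a left-tensor ideal lets us move the tensoring-by-$\unit$ restriction around, and the formulas \eqref{eqiota} and \eqref{eqiota1} for $\iota$ and $\iota^{-1}$ express the bijection purely in terms of tensoring with identities and composing with $\ev_X$ and $\co_X$.

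\textbf{First inclusion, $\iota_{XY}(\cJ(\unit,X^\vee\otimes Y))\subseteq\cJ(X,Y)$.} Take $\phi\in\cJ(\unit,X^\vee\otimes Y)$. By \eqref{eqiota}, $\iota_{XY}(\phi)=(\ev_X\otimes 1_Y)\circ(1_X\otimes\phi)$. Since $\cJ$ is a left-tensor ideal and $\phi\in\cJ(\unit,X^\vee\otimes Y)$, we get $1_X\otimes\phi\in\cJ(X\otimes\unit,\, X\otimes X^\vee\otimes Y)=\cJ(X,\,X\otimes X^\vee\otimes Y)$. Then composing on the left with the morphism $\ev_X\otimes 1_Y$ keeps us in $\cJ$ by the defining ideal property (\ref{IdPA}). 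Hence $\iota_{XY}(\phi)\in\cJ(X,Y)$.

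\textbf{Second inclusion, $\cJ(X,Y)\subseteq\iota_{XY}(\cJ(\unit,X^\vee\otimes Y))$.} Take $f\in\cJ(X,Y)$. By \eqref{eqiota1}, $\iota^{-1}_{XY}(f)=(1_{X^\vee}\otimes f)\circ\co_X\in\cC(\unit,X^\vee\otimes Y)$; it suffices to show this lies in $\cJ$. Here $1_{X^\vee}\otimes f$ is of the form $g\otimes f$ with $f\in\cJ$, so by the remark immediately following the definition of left-tensor ideal (tensoring on the left by an arbitrary morphism $g$ sends $\cJ$ into $\cJ$), we have $1_{X^\vee}\otimes f\in\cJ(X^\vee\otimes X,\,X^\vee\otimes Y)$. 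Composing on the right with $\co_X$ then keeps us in $\cJ$ by the ideal property. Thus $\iota^{-1}_{XY}(f)\in\cJ(\unit,X^\vee\otimes Y)$, i.e. $f=\iota_{XY}(\iota^{-1}_{XY}(f))\in\iota_{XY}(\cJ(\unit,X^\vee\otimes Y))$.

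\textbf{Main obstacle.} There is essentially no obstacle here: the lemma is a formal consequence of the fact that both $\iota_{XY}$ and $\iota^{-1}_{XY}$ are built from the operations (tensoring with identities/morphisms, composition) that define a left-tensor ideal, so the two inclusions are immediate once the formulas \eqref{eqiota} and \eqref{eqiota1} are in hand. The only point requiring a small amount of care is making sure that in the first inclusion one uses the \emph{left}-tensor ideal axiom in the form $1_X\otimes\phi\in\cJ$ (not $\phi\otimes 1_X$), and in the second inclusion one uses the derived statement $g\otimes f\in\cJ$ for arbitrary $g$ when $f\in\cJ$; both are exactly as recorded after the definition of left-tensor ideal. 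This lemma then feeds directly into the proof of Proposition~\ref{PropPhi}, since it identifies $\Psi$ and $\Phi$ as mutually inverse by comparing $\cJ\mapsto\cJ(\unit,-)$ with $\Mmod\mapsto\cJ_{\Mmod}$.
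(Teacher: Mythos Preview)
Your proof is correct and takes essentially the same approach as the paper. The paper's proof is simply a terse version of yours: it notes that equation~\eqref{eqiota} yields $\iota_{XY}(\cJ(\unit,X^\vee\otimes Y))\subset\cJ(X,Y)$ and equation~\eqref{eqiota1} yields $\iota^{-1}_{XY}(\cJ(X,Y))\subset\cJ(\unit,X^\vee\otimes Y)$, which is exactly the two-inclusion argument you have spelled out in detail.
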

\begin{proof}
Equation \eqref{eqiota} implies that~$\iota_{XY}(\cJ(\unit,X^\vee\otimes Y))\subset \cJ(X,Y)$. Similarly, equation~\eqref{eqiota1} implies that~$\iota^{-1}_{XY}(\cJ(X,Y))\subset\cJ(\unit,X^\vee\otimes Y) $.\end{proof}

\begin{proof}[Proof of Proposition~\ref{PropPhi}] For~$\Mmod\in \Sub_{\cC}(\Pmod_{\unit})$ and $X,Y\in\Ob\cC$,
take $h\in \iota_{XY}(\Mmod(X^\vee\otimes Y))$ and arbitrary~$f\in\cC(W,X)$ and~$g\in \cC(Y,Z)$. The fact that~$\Mmod$ is a submodule of~$\cC(\unit,-)$ and Lemma~\ref{Lemfg} imply that
$$g\circ h\in \iota_{XZ}(\Mmod(X^\vee\otimes Z))=\cJ_{\Mmod}(X,Z)\quad\mbox{and}\quad h\circ f\in\iota_{WZ}(\Mmod(W^\vee\otimes Y))=\cJ_{\Mmod}(W,Y),$$
which shows that~$\cJ_{\Mmod}$ is an ideal.
Lemma~\ref{LemTenZ} then shows that~$\cJ_{\Mmod}$ is a left-tensor ideal. This concludes the proof of part~(i).

That $\Phi\circ\Psi$ is the identity is precisely Lemma~\ref{LemJJ}. That $\Psi\circ\Phi$ is the identity follows from construction. This proves part~(ii).
\end{proof}

\begin{proof}[Proof of Theorem~\ref{Thm1}]
By Proposition~\ref{PropPhi}, it suffices to prove that~$\Psi$ respects the partial orders, which is trivial.
\end{proof}

\begin{proof}[Proof of Lemma~\ref{PropAK}]
Since $\tr(g\circ f)=\tr(f\circ g)$, see \ref{Condtr}, it follows that the negligible morphisms constitute an ideal $\cN$ in $\cC$, in the sense of \ref{IdPA}. 
Now consider arbitrary $X,Y,Z\in\Ob\cC$ and morphisms $f\in\cC(X,Y)$ and $g\in\cC(Z\otimes Y,Z\otimes X)$.
Naturality of the braiding and the hexagon identities yield
$$\tr(g\circ (1_Z\otimes f))\;=\;\tr( \theta \circ f),$$
with $\theta\in\cC(Y,X)$ given by
$$ (\ev_Z\otimes 1_X)\circ(1_Z\otimes\gamma_{XZ^\vee}\circ\gamma_{Z^\vee X})\circ (\gamma_{Z^\vee Z}\otimes 1_X)\circ (1_{Z^\vee}\otimes g)\circ (\co_Z\otimes 1_Y).$$
Consequently, $\cN$ is actually a tensor ideal.
It follows easily that, by assumption~(III), a morphism $f:\unit\to X$ is negligible if and only if $g\circ f=0$ for all $g\in \cC(X,\unit)$. In particular, $\Psi(\cN)$ is the unique maximal submodule of $\Pmod_{\unit}$. The result now follows from Theorem~\ref{Thm1}.
\end{proof}

\subsection{Example 1: Temperley-Lieb category} In~\cite[Definition~2.1]{GLTL}, the Temperley-Lieb category~$\cT=\cT_{K,q}$ was introduced, for any commutative unital ring $K$ and invertible~$q\in K$.
\subsubsection{}\label{TLdiag}
 The category~$\cT$ is $K$-linear skeletal (and hence strict) monoidal, with~$\Ob\cT\,=\,\mN$. The $K$-module~$\cT(i,j)$ is the free $K$-module spanned by non-intersecting planar diagrams between $i$ and~$j$ dots placed on two horizontal lines. The diagrams in~$\cT(4,2)$ are for instance given by
\begin{equation*}
\begin{tikzpicture}[scale=0.9,thick,>=angle 90]
%\begin{scope}[xshift=8cm]

\draw (0,0) to   [out=90,in=-90] +(0,1);
\draw (0.6,0) to   [out=90,in=-90] +(0,1);
\draw (1.2,0) to [out=90,in=-180] +(.3,.4);
\draw (1.5,.4) to [out=0,in=90] +(.3,-.4);

\node at (2.4,0.3) {,};

\draw (3,0) to   [out=90,in=-90] +(0,1);
\draw (3.6,0) to [out=90,in=-180] +(.3,.4);
\draw (3.9,.4) to [out=0,in=90] +(.3,-.4);
\draw (4.8,0) to   [out=120,in=-60] +(-1.2,1);
\node at (5.4,0.3) {,};

\draw (6,0) to [out=90,in=-180] +(.3,.4);
\draw (6.3,.4) to [out=0,in=90] +(.3,-.4);
\draw (7.2,0) to   [out=120,in=-60] +(-1.2,1);
\draw (7.8,0) to   [out=120,in=-60] +(-1.2,1);
\node at (8.4,0.3) {,};

\draw (9,0) to [out=90,in=-180] +(.3,.4);
\draw (9.3,.4) to [out=0,in=90] +(.3,-.4);
\draw (9,1) to [out=-90,in=-180] +(.3,-.4);
\draw (9.3,.6) to [out=0,in=-90] +(.3,.4);
\draw (10.2,0) to [out=90,in=-180] +(.3,.4);
\draw (10.5,.4) to [out=0,in=90] +(.3,-.4);

\node at (11.4,0.3) {,};

\draw (12,1) to [out=-90,in=-180] +(.3,-.4);
\draw (12.3,.6) to [out=0,in=-90] +(.3,.4);
\draw (12,0) to [out=90,in=-180] +(.9,.55);
\draw (12.6,0) to [out=90,in=-180] +(.3,.4);
\draw (12.9,.4) to [out=0,in=90] +(.3,-.4);
\draw (12.9,0.55) to [out=0,in=90] +(.9,-.55);
\node at (14.1,0.3) {.};

%\end{scope}
\end{tikzpicture}
\end{equation*}
Composition of diagrams is given by concatenation and evaluation of loops at $\delta:=-q-q^{-1}$. In particular, the Temperley-Lieb algebra $\TL_n(\delta)$, in terms of the Kauffman diagram calculus, is given by~$\cT(n,n)$.

The lines which connect dots on different horizontal lines are known as {\bf propagating} lines, the lines connecting dots on the upper horizontal line are {\bf cups} and those connecting dots on the lower line are {\bf caps}.
Inside~$\cT$, we have $i\otimes j=i+j$, for~$i,j\in\mN$ and~$d_1\otimes d_2$, for two diagrams $d_1,d_2$, is given by juxtaposition. That $\cT$ admits a braiding follows e.g. from the equivalence mentioned in~\ref{RemTL}(iii).  Furthermore, $\cT$ is rigid, with~$i^\vee=i$, for all~$i\in\mN$. It follows that~$\cT$ satisfies (I), (IV) and (V).  As a monoidal category, $\cT$ is generated by~$1\in\Ob\cT$ and the diagrams $I\in \cT(1,1)$, $\cup\in \cT(0,2)$ and~$\cap\in\cT(2,0)$. {\em We stress that~$1\not=\unit_{\cT}=0$.}
 
 \subsubsection{}\label{ResGL} From now on, we assume that~$K=\mk$ is an algebraically closed field of {characteristic zero}. In~\cite[Definition~2.2]{GLTL}, the cell module~$\Wmod_i\in \cT-$Mod is introduced for~$i\in\mN$, as the submodule of~$\Pmod_i=\cT(i,-)$ such that~$\Wmod_i(n)$ is spanned by all diagrams which do not contain caps. In particular, $\Wmod_0\simeq \Pmod_{\unit}$ and~$\Wmod_i(j)=0$ if $j<i$. It then follows from \cite[Theorem~5.3]{GLTL} that~$\Pmod_{\unit}=\cT(0,-)$ is simple unless $q^2$ has finite order~$l>1$. Hence $\cT$ has no proper tensor ideals unless $q^2$ has finite order~$l>1$. In the latter case, the unique proper submodule~$\Mmod$ of~$\Pmod_{\unit}$ is a homomorphic image of~$\Wmod_{2l-2}$. For instance, when $q^2=-1$, we have $l=2$ and the proper submodule~$\Mmod$ of~$\Pmod_{\unit}$, satisfies~$\Mmod(i)=\Pmod_{\unit}(i)$, for all~$i>0$.

 \begin{cor}\label{CorTL}
Take $q\in\mk^\times$ such that~$q^2$ has finite order~$l>1$.
\begin{enumerate}[(i)]
\item The Temperley-Lieb category $\cT$ has exactly one proper tensor ideal, the ideal of negligible morphisms~$\cJ$. The ideal $\cJ$ is generated by a quasi-idempotent $f\in \cT(l-1,l-1)$. 
\item The quasi-idempotent $f\in \TL_{l-1}(\delta)$ satisfies~$fd=0=d f$ for any diagram~$d\in \TL_{l-1}(\delta)$ which contains a cup or cap, so is in particular central.
\end{enumerate}
\end{cor}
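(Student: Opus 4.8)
The plan is to combine the lattice isomorphism of Theorem~\ref{Thm1} with the structural facts about $\Pmod_{\unit}=\cT(0,-)$ already recalled in~\ref{ResGL}. By Theorem~\ref{Thm1}, $\TId(\cT)\cong\Sub_{\cT}(\Pmod_{\unit})$, and by the discussion in~\ref{ResGL}, when $q^2$ has finite order $l>1$ the module $\Pmod_{\unit}$ has exactly one proper submodule $\Mmod$. This immediately gives that $\cT$ has exactly one proper tensor ideal $\cJ$; that it is the ideal of negligible morphisms follows from Lemma~\ref{PropAK}, since $\cT$ satisfies (I), (III), (V) and the negligible morphisms form the unique maximal tensor ideal, which must coincide with the unique proper one. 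For the generation statement, I would first identify a single morphism generating $\Mmod$ as a $\cT$-submodule. Since $\Mmod$ is a nonzero homomorphic image of the cell module $\Wmod_{2l-2}$ and $\Wmod_{2l-2}(j)=0$ for $j<2l-2$, the image $\Mmod$ is generated in degree $2l-2$ by the image of a generator; pulling this back through the isomorphism $\iota_{l-1,l-1}:\cC(\unit, (l-1)^\vee\otimes(l-1))=\cC(0,2l-2)\to\cC(l-1,l-1)$ of~\eqref{eqiota} (using $(l-1)^\vee=l-1$ and $\otimes=+$ in $\cT$), this corresponds to a morphism $f\in\cT(l-1,l-1)=\TL_{l-1}(\delta)$, and the tensor ideal $\cJ_{\Mmod}$ it generates via Proposition~\ref{PropPhi} is exactly $\cJ$.

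The remaining content is to show $f$ can be taken to be a quasi-idempotent with the stated annihilation properties. The natural candidate is the Jones–Wenzl projector $p_{l-1}\in\TL_{l-1}(\delta)$: it is the (unique) idempotent killing every diagram with a cup or a cap, hence $p_{l-1}d=0=dp_{l-1}$ for any such $d$, which in particular forces $p_{l-1}$ to be central in $\TL_{l-1}(\delta)$ (every non-identity basis diagram of $\TL_{l-1}$ factors through a strictly smaller number of propagating lines, i.e. contains a cup and a cap). One knows $p_{l-1}$ is negligible when $q^2$ has order $l$ (its trace is the quantum integer $[l]=0$, and likewise $\tr(dp_{l-1})=0$ for all $d$), so $p_{l-1}$ lies in $\cJ$; and conversely $p_{l-1}$ is the generator of $\Mmod$ in degree $2l-2$ under $\iota$, essentially because the image of the cell module $\Wmod_{2l-2}$ in $\Pmod_{\unit}$ is precisely the span of diagrams factoring through the Jones–Wenzl idempotent. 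Thus $f:=p_{l-1}$ (an idempotent, hence trivially a quasi-idempotent) generates $\cJ$, proving (i), and satisfies the cup/cap annihilation and centrality asserted in (ii).

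I would phrase the argument so that (ii) is actually the engine: once we know the unique proper submodule $\Mmod$ satisfies $\Mmod(j)=0$ for $j<2l-2$ and is cyclic, any morphism $f\in\TL_{l-1}(\delta)$ with $\iota_{l-1,l-1}^{-1}(f)$ generating $\Mmod$ automatically has $\cJ_{\Mmod}(l-1,l-1)$ equal to the two-sided ideal of $\TL_{l-1}(\delta)$ generated by $f$; checking that this ideal, together with its images in all $\cT(i,i)$, forms $\cJ$ is then just Proposition~\ref{PropPhi}. The main obstacle is the explicit identification of the generator with the Jones–Wenzl projector and the verification that it is a quasi-idempotent: this requires knowing (or re-deriving) that the radical of the cell bilinear form on $\Wmod_{l-1}$ over $\TL_{l-1}(\delta)$ is controlled by $p_{l-1}$ when $[l]=0$, i.e. the classical result that $p_{l-1}$ exists and is negligible exactly at an $l$-th root of unity. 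Everything else — the lattice translation, the reduction to a single cell module, the passage between $\cT(0,2l-2)$ and $\TL_{l-1}(\delta)$ via $\iota$ — is formal given the results already established in Sections~\ref{SecPrincipal} and the cited properties of $\cT$ from~\cite{GLTL}.
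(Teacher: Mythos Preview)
Your proof is correct, but it takes a genuinely different route from the paper's, and the difference is worth noting.

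The paper's argument is entirely internal and never invokes the Jones--Wenzl projector. It observes that, by uniqueness of the proper ideal $\cJ$, \emph{any} nonzero $f\in\cJ(l-1,l-1)$ generates $\cJ$. The cup/cap annihilation in (ii) is then deduced from the submodule structure alone: if $d\in\TL_{l-1}(\delta)$ contains a cap, then $d\circ f\in\cJ(l-1,l-3)\simeq\cJ(0,2l-4)$, and the latter vanishes because $\Mmod(j)=0$ for $j<2l-2$; similarly for cups. Finally, the quasi-idempotent property is derived from (ii): writing $f=\alpha\cdot\mathrm{id}+g$ with $g$ in the span of diagrams with cups or caps, one gets $fg=gf=0$ and hence $f^2=\alpha f$.

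Your approach instead imports the Jones--Wenzl projector $p_{l-1}$, uses its defining cup/cap annihilation property, and verifies its negligibility via the quantum integer $[l]=0$. This is perfectly valid, and it has the advantage of producing an explicit, normalised idempotent generator from the outset. But it partially inverts the logic the corollary is meant to illustrate: the paper's point (see Remark~\ref{RemTL}) is that the general machinery of Theorem~\ref{Thm1} recovers the existence of such a quasi-idempotent without prior knowledge of Jones--Wenzl theory. Your argument presupposes that theory, whereas the paper derives the relevant properties of $f$ purely from the fact that $\Pmod_{\unit}$ has a unique proper submodule concentrated in degrees $\ge 2l-2$.
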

\begin{proof}
It follows from \ref{ResGL} and Theorem~\ref{Thm1} that~$\cT$ contains exactly one proper tensor ideal~$\cJ$ and that~$\cJ(l-1,l-1)\simeq \cJ(0,2l-2)$ is non-zero. That $\cJ$ is the ideal of negligible morphisms follows from Lemma~\ref{PropAK}.

By uniqueness, $\cJ$ is generated by an arbitrary non-zero $f\in \cJ(l-1,l-1)$. If~$f$ would not be annihilated by diagrams with cups and caps, then 
$$\cJ(l-3,l-1)\;\simeq \;\cJ(l-1,l-3)\;\simeq\;\cJ(0,2l-4)\;\not=\; 0,$$
where the isomorphisms are given by Lemma~\ref{LemJJ}, which is contradicted by~\ref{ResGL}. 

We have $f=\alpha +g$, where $\alpha\in\mk$ represents~$\alpha$ times the diagram in~$\TL_{l-1}$ with only propagating lines, and~$g$ is in the ideal in~$\TL_{l-1}$ spanned by all diagrams with cups and caps. Since we have $fg=0=gf$ it follows that~$f^2=\alpha f$. 
\end{proof}

\begin{rem}\label{RemTL}Keep the assumptions as in Corollary~\ref{CorTL}.
\begin{enumerate}[(i)]
\item The classification of tensor ideals in~$\cT$ was proved by Goodman - Wenzl in~\cite{GW}.
\item The quasi-idempotent $f$ in Corollary~\ref{CorTL}(ii) can be normalised to an idempotent $e\in \TL_{l-1}$, known as the {\em Jones-Wenzl idempotent}.
\item By e.g. the proof of~\cite[Theorem~2.4]{Ostrik}, the pseudo-abelian envelope $\cT^{\oplus\sharp}$ is equivalent to the category of tilting modules over~$U_q(\mathfrak{sl}_2)$. For~$q$ a root of unity, by uniqueness of~$\cJ$, the quotient $(\cT/\cJ)^{{\oplus\sharp}}$ is {\em Andersen's fusion category} of~\cite{Andersen}. 
\end{enumerate}
\end{rem}

\subsection{Example 2: Deligne's category~$\RS$}\label{SecRS}
Let~$\mk$ be an algebraically closed field of characteristic zero and~$\cC$ the monoidal category~$\RS$, for some~$t\in \mk$, see \cite[Section~2]{Deligne} or \cite{ComesOst}. Conditions (I)-(V) are satisfied. We have $\cC=\cC_0^{\oplus\sharp}$, for~$\cC_0$ the {\em partition category} with~$\Ob\cC_0=\mN$, and~$\unit=0$, see e.g. \cite[Section~8]{Borelic}. When $t\not\in\mN$, \cite[Th\'eor\`eme~2.18]{Deligne} states that~$\RS$ is abelian semisimple. We thus have no non-trivial tensor ideals. The case $t\in\mZ_{>0}$ can easily be dealt with using Theorem~\ref{Thm1}.

\begin{prop}\label{PropCO}
When $t\in \mZ_{>0}$, the category~$\RS$ has a unique proper tensor ideal, the ideal of negligible morphisms.
\end{prop}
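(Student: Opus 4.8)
The plan is to mimic the strategy already used for the Temperley--Lieb category in Corollary~\ref{CorTL}: apply Theorem~\ref{Thm1} to reduce the classification of tensor ideals in $\RS$ to the lattice of submodules of the principal module $\Pmod_{\unit} = \cC_0(\varnothing,-)$, and then analyse that module using the known representation theory of the partition algebra. Concretely, since $\cC = \cC_0^{\oplus\sharp}$ and by~\ref{CMod} the module categories of $\cC$ and $\cC_0$ coincide, it suffices to understand $\Sub_{\cC_0}(\Pmod_{\unit})$, where $\Pmod_{\unit}(n) = \cC_0(0,n)$ is a module over the partition algebra $P_n(t) = \cC_0(n,n)$.

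First I would recall the cell (or standard) module structure on the partition category: for each $r \in \mN$ there is a cell module $\Wmod_r \in \cC_0\text{-Mod}$, defined as the submodule of $\Pmod_r$ spanned by partition diagrams whose ``defect'' (number of through-strands) is exactly $r$ with no parts meeting only the bottom row, exactly as in the Temperley--Lieb case. In particular $\Wmod_0 \simeq \Pmod_{\unit}$. The key structural input is the semisimplicity criterion for the partition algebras: $P_n(t)$ is semisimple for all $n$ precisely when $t \notin \mN$, and for $t \in \mZ_{\ge 0}$ the principal module $\Pmod_{\unit}$ fails to be simple, with a unique maximal proper submodule. I would invoke the results of Martin (and the formulation in terms of Deligne's category in~\cite{ComesOst}), or the decomposition-multiplicity results collected later in Section~\ref{SecBoring}, to pin down that $\Pmod_{\unit}$ has exactly one proper submodule when $t \in \mZ_{>0}$. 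Given that, Theorem~\ref{Thm1} immediately yields that $\RS$ has exactly one proper tensor ideal. Finally, that this unique proper tensor ideal must be the ideal of negligible morphisms follows from Lemma~\ref{PropAK}: since $\RS$ satisfies (I), (III) and (V), the negligible morphisms form a tensor ideal, and it is proper because $\RS$ is not semisimple for $t \in \mZ_{>0}$ (indeed $\unit$ is a non-negligible object, e.g. $\tr(1_{\unit}) = 1 \neq 0$), so by uniqueness it is \emph{the} proper ideal.

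The main obstacle is establishing that $\Pmod_{\unit}$ has precisely \emph{one} proper submodule, rather than a longer chain. In the Temperley--Lieb case this came cleanly from \cite[Theorem~5.3]{GLTL}; for the partition category one needs the analogous fact that the ``$\varnothing$-block'' of $P_n(t)$, for $t$ a non-negative integer, has an essentially uniserial structure governed by a single reflection in the relevant affine Weyl-group action, so that the standard module $\Wmod_0 = \Pmod_\unit$ has a unique submodule. I would cite Martin's analysis of the blocks of the partition algebra (or the cell-module decomposition numbers recalled in Section~\ref{SecBoring}) for this; once the submodule lattice of $\Pmod_\unit$ is known to be a chain of length one, everything else is formal. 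I should also note the boundary bookkeeping: the proper submodule $\Mmod$ satisfies $\Mmod(n) = \Pmod_\unit(n)$ for $n$ large and vanishes in low degrees, and is a homomorphic image of some cell module $\Wmod_r$ with $r$ determined by $t$ — but for the statement as phrased (uniqueness plus identification with negligibles) this explicit description is not needed.
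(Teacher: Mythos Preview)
Your approach is essentially identical to the paper's: reduce via Theorem~\ref{Thm1} and Lemma~\ref{PropAK} to showing that $\Pmod_{\unit}^{\cC_0}$ has a unique proper submodule, identify $\cC_0(0,k)$ with the cell module $W_k(\varnothing)$ for the partition algebra $P_k(t)$, and invoke a known length-two result. One correction: Section~\ref{SecBoring} treats Brauer-type algebras rather than partition algebras, so you cannot cite it here; the paper instead uses \cite[Lemma~5.11]{King} (together with the Morita equivalence of \cite[Theorem~8.5.1]{Borelic}) to obtain that $W_k(\varnothing)$ has length two with unique proper submodule $L_k((t+1))$ for $k>t$.
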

\begin{proof}
By Theorem~\ref{Thm1} and Lemma~\ref{PropAK}, it suffices to prove that~$\Pmod^{\cC}_{\unit}$, or equivalently, $\Pmod^{\cC_0}_{\unit}$, has precisely one proper submodule. The latter property is equivalent to the property that there exists~$k_0\in\mN$ such that the module~$\oplus_{i\le k}\cC_0(0,i)$ over the $\mk$-algebra $\oplus_{i,j\le k}\cC_0(i,j)$ has precisely one proper submodule, for all~$k>k_0$.

By the Morita equivalence in~\cite[Theorem~8.5.1]{Borelic} (and the definition of cell modules in~\cite[Proposition~8.6.4]{Borelic}), it suffices to prove that the cell module~$W_k(\varnothing)=\cC_0(0,k)$ of the partition algebra $P_k(t)=\cC_0(k,k)$, see \cite{Jones2}, has length two for all~$k>>0$. The latter is proved in~\cite[Lemma~5.11]{King}. In fact, the unique proper submodule of~$W_k(\varnothing)$ is $L_k((t+1))$, when $k>t$.
\end{proof}

\begin{rem}
Proposition~\ref{PropCO} was first proved by Comes - Ostrik in~\cite[Theorem~3.15]{ComesOst}, as an important step towards their proof of~\cite[Conjecture~8.21]{Deligne}.
\end{rem}

\begin{cor}\label{CorSt}
When $t\in \mZ_{>0}$, the only dense full monoidal functor, excluding equivalences, from $\RS$ is, up to isomorphism, given by the functor in~\cite[Th\'eor\`eme~6.2]{Deligne}:
$$\Fmod:\;\RS\,\to\, \Rep_{\mk}{\mathrm S}_t.$$
 
\end{cor}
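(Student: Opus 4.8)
The plan is to combine Proposition~\ref{PropCO} with the universality of $\RS$ established in~\cite[Th\'eor\`eme~6.2]{Deligne} and with Deligne's interpolation property. First I would recall that a dense full monoidal functor $\Fmod\colon\RS\to\cD$ is, up to equivalence, the same data as the quotient $\RS\to\RS/\ker\Fmod$ followed by a monoidal equivalence $\RS/\ker\Fmod\stackrel{\sim}{\to}\cD$; since $\ker\Fmod$ is a (two-sided, because $\RS$ is braided) tensor ideal, the possible $\Fmod$ are classified up to equivalence by $\TId(\RS)$. By Proposition~\ref{PropCO}, when $t\in\mZ_{>0}$ there are exactly two tensor ideals: the zero ideal, giving an equivalence (excluded in the statement), and the ideal $\cN$ of negligible morphisms, giving the single remaining option $\RS\to\RS/\cN$.

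It then remains to identify $\RS/\cN$ with $\Rep_{\mk}\mathrm{S}_t$ and the quotient functor with Deligne's functor $\Fmod$ of~\cite[Th\'eor\`eme~6.2]{Deligne}. For $t\in\mZ_{>0}$, Deligne's functor $\Fmod\colon\RS\to\Rep_{\mk}\mathrm S_t$ is full and dense (indeed $\Rep_{\mk}\mathrm S_t$ is generated as a Karoubian monoidal category by the permutation representation, the image of the generator of $\RS$), so $\ker\Fmod$ is a nonzero proper tensor ideal, hence $\ker\Fmod=\cN$ by the classification; this induces a monoidal equivalence $\RS/\cN\stackrel{\sim}{\to}\Rep_{\mk}\mathrm S_t$. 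Any other dense full monoidal functor out of $\RS$ that is not an equivalence has kernel equal to the unique nonzero proper tensor ideal $\cN$, hence factors through the same equivalence $\RS/\cN\simeq\Rep_{\mk}\mathrm S_t$ and is therefore isomorphic to $\Fmod$ (as a monoidal functor, after composing with the equivalence onto its essential image).

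The main obstacle is the bookkeeping around \emph{kernels and density rather than surjectivity on objects}: one must check that a dense full functor $\Fmod$ is determined, up to monoidal equivalence of the target, by its kernel $\cJ=\ker\Fmod$, via the canonical factorization $\RS\to\RS/\cJ\hookrightarrow\cD$ whose second arrow is fully faithful (by fullness of $\Fmod$ and the definition of the quotient category) and dense (by density of $\Fmod$), hence an equivalence. Once this standard fact is in place, the corollary is immediate from Proposition~\ref{PropCO} together with the nontriviality of $\ker\Fmod$ for Deligne's functor, which itself follows from semisimplicity of $\Rep_{\mk}\mathrm S_t$ versus non-semisimplicity of $\RS$ for $t\in\mZ_{>0}$ (so $\Fmod$ cannot be an equivalence, forcing $\ker\Fmod=\cN$).
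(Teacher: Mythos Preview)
Your proposal is correct and is precisely the argument the paper leaves implicit: the corollary is stated without proof, and your derivation from Proposition~\ref{PropCO} via the factorisation of a dense full functor through its kernel is exactly the intended route. One minor simplification: to see that Deligne's functor is not an equivalence you can simply note that $\RS$ has infinitely many indecomposable objects while $\Rep_{\mk}\mathrm{S}_t$ has finitely many, avoiding any appeal to (non-)semisimplicity.
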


\subsection{Example 3: Tilting modules for quantum groups}
Let $\fg$ be a simple Lie algebra over~$\mC$ of type ADE, with $h$ the Coxeter number, and fix $l\in\mZ_{>h}$. 

\subsubsection{The affine Kac-Moody algebra} We have the central extension
$$\tilde{\fg}\;=\;\fg\otimes\mC[t,t^{-1}]\oplus \mC K$$
of the loop algebra of $\fg$, where we normalise the central element $K$ such that
$$[X\otimes t^k,Y\otimes t^l]\;=\; [X,Y]\otimes t^{k+l}+\frac{\langle X,Y\rangle}{2h} k\delta_{k,-l}K,\quad\mbox{for $X,Y\in\fg$ and $k,l\in\mZ$.}$$
Here $\langle X,Y\rangle$ denotes the Killing form.
The affine Kac-Moody algebra is then given by
$$\hat{\fg}\;=\;\tilde{\fg}\oplus \mC\partial,\quad\mbox{with $\partial=t\partial_t$}. $$
We have the induced $\hat\fg$-module
$$\Delta_l:=U(\hat{\fg})\otimes_{U(\hat{\fg}^+)} \mC_l,\quad \mbox{with}\quad\widehat{\fg}^+\;:=\;\fg\otimes\mC[t]\oplus\mC K\oplus\mC\partial$$
where $\fg\otimes\mC[t]\oplus\mC\partial$ acts trivially on $\mC_l$, while $K$ acts through $l-h$.

\subsubsection{The quantum group} We set $q:=e^{-\frac{\pi \imath}{l}}\in\mC$ and denote by $U_q(\fg)$ Lusztig's version (with divided powers) of the quantum group corresponding to $\fg$, see \cite[II.H.6]{Jantzen}. We denote by $\Ti(U_q(\fg))$ the monoidal category of tilting modules for $U_q(\fg)$, see~\cite[II.H.15]{Jantzen}. It satisfies conditions (I)-(V).

\begin{thm}
We have an isomorphism between the lattice of tensor ideals in $\Ti(U_q(\fg))$ and the lattice of submodules of the $\hat{\fg}$-module~$\Delta_l$.
\end{thm}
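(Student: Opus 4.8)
The plan is to invoke Theorem~\ref{Thm1} to reduce the classification of tensor ideals in $\cT(U_q(\fg))$ to the determination of the lattice of subfunctors of the principal module $\Pmod_\unit = \cT(U_q(\fg))(\unit,-)$, i.e. to the lattice of submodules of the associated module over the ring $\mZ[\cT(U_q(\fg))]$ of Section~\ref{CMod}. The key point is that, since $\unit$ is the trivial tilting module, $\Hom(\unit, T)$ for a tilting module $T$ computes the dimension of the weight-zero space / the multiplicity of the trivial Weyl module in $T$, and these Hom-spaces assemble, as $T$ ranges over tilting modules, into a module for the endomorphism ring of the tilting generator. So the first step is to identify $\Sub_{\cT}(\Pmod_\unit)$ with a lattice of submodules of a concrete module appearing on the quantum-group side.

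The second step is to bring in the categorification results of Kazhdan--Lusztig and Soergel, referenced in the introduction (\cite{KL, Soergel}): the category of tilting modules for $U_q(\fg)$ at the root of unity $q = e^{-\pi\imath/l}$ is governed, via the Kazhdan--Lusztig correspondence, by the (negative level, or by a suitable duality, the level $l-h$) category $\cO$ for the affine Kac--Moody algebra $\hat\fg$; more precisely the combinatorics of $\cT(U_q(\fg))$ matches that of the parabolic block of affine category $\cO$ whose parabolic is $\fg\otimes\mC[t]\oplus\mC K\oplus\mC\partial$. Under this matching, the trivial tilting module $\unit$ corresponds to the big projective/antidominant object, and the principal module $\Pmod_\unit$ — the Hom functor out of $\unit$ — corresponds precisely to the parabolic Verma module $\Delta_l = U(\hat\fg)\otimes_{U(\hat\fg^+)}\mC_l$, its morphisms to simples/indecomposables detecting composition factors. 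Thus the lattice $\Sub_{\cT}(\Pmod_\unit)$ is identified with the lattice of $\hat\fg$-submodules of $\Delta_l$.

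Concretely, I would carry this out as: (1) apply Theorem~\ref{Thm1} to get $\TId(\cT(U_q(\fg)))\cong\Sub_{\cT}(\Pmod_\unit)$; (2) use that $\cT(U_q(\fg))$ is Krull--Schmidt with indecomposables the indecomposable tilting modules $T(\lambda)$, so $\Pmod_\unit$ as a $\mZ[\cT]$-module has ``weight spaces'' $\Hom(\unit, T(\lambda)) = \bigoplus_\lambda \Hom(\unit,T(\lambda))$, a module over the endomorphism ring $\End(\bigoplus_\lambda T(\lambda))$; (3) invoke the Kazhdan--Lusztig equivalence and Soergel's identification of the combinatorics, together with Ringel/tilting duality, to identify this module-over-a-ring with $\Delta_l$ as an object of the category $\hat\fg$-Mod, matching subfunctors with submodules. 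The main obstacle — and the place where one must be careful rather than merely cite — is step (3): pinning down exactly which affine block, which level, and which duality (Ringel duality on tilting modules versus Serre/contragredient duality in category $\cO$) are in play, so that the \emph{Hom-out-of-$\unit$} functor lands on the parabolic Verma module $\Delta_l$ rather than its dual, and so that the lattice isomorphism (not just an abstract bijection of posets) is genuinely order-preserving. One should also check that the relevant block is a highest-weight category so that ``submodules of $\Delta_l$'' is the correct target and that finiteness/local-ring hypotheses (II)--(IV) for $\cT(U_q(\fg))$ hold, as asserted, to legitimately apply Theorem~\ref{Thm1}.
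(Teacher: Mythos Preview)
Your outline is essentially the paper's proof: apply Theorem~\ref{Thm1}, then identify $\Sub_{\cC}(\Pmod_\unit)$ with $\Sub_{\hat\fg}(\Delta_l)$ via the Kazhdan--Lusztig and Soergel results. The paper resolves your step~(3) as follows: it first shows, on the affine side, that $\Delta_l$ is a projective cover in the parabolic category $\bO_l$ (using \cite{Soergel} and \cite{KK}), so that $M\mapsto \bO_l(-,M)$ gives a lattice isomorphism $\Sub_{\hat\fg}(\Delta_l)\cong\Sub_{\bP^{\op}}\bP(-,\Delta_l)$ where $\bP\subset\bO_l$ is the subcategory of projective covers; then it uses the simple-preserving duality to replace $\bP^{\op}$ by $\bP$; and finally it transports along the chain of equivalences $\bP\stackrel{\sim}{\to}\bT\stackrel{\sim}{\to}\cC$ (projectives at level $l$ to tiltings at level $-l$ via Soergel, then to quantum tiltings via Polo and Kazhdan--Lusztig), checking that $\Delta_l$ is sent to $\unit$. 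This is precisely the ``which duality, which level'' bookkeeping you flag as the obstacle; the key point you do not make explicit is that the passage from submodules of $\Delta_l$ to subfunctors of a representable functor requires $\Delta_l$ to be \emph{projective} in the relevant category, not merely that it correspond to $\unit$ under some equivalence.
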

\begin{proof}
Set $\cC:=\Ti(U_q(\fg))$. For any $c\in\mC$, denote by $\bO_c$ the category of all $\hat{\fg}$-modules which are locally finite for $\hat{\fg}^+$ and semisimple for $\hat{\fg}_0:=\fg\oplus\mC K\oplus\mC\partial$ such that $K$ acts through $c-h$. 

It follows from \cite[Theorem~6.1, Bemerkung~6.5(2)]{Soergel} and \cite[Theorem~2]{KK} that each simple module in $\bO_l$ has a projective cover and $\Delta_l$ is such a cover. Denote by $\bP$ the full subcategory of $\bO_l$ consisting of projective covers of simple modules. 
Consider the functor $\bP(-,\Delta_l):\bP^{\op}\to \Ab$. We have an isomorphism of lattices
$$\Sub_{U(\hat{\fg})}(\Delta_l)\;\stackrel{\sim}{\to}\;\Sub_{\bP^{\op}}\bP(-,\Delta_l),\qquad M \mapsto \bO_l(-,M),$$
where for each submodule~$M\subset \Delta_l$ we regard 
$$\bO_l(-,M):\bP^{\op}\to\Ab$$ as a subfunctor of $\bP(-,\Delta_l)=\bO_l(-,\Delta_l)|_{\bP^{\op}}$ in the canonical way. Since the full subcategory of $\bO_l$ of modules with finite dimensional weight spaces has a simple preserving duality and that category includes $\bP$, we have $\bP\simeq\bP^{\op}$. Hence we find an isomorphism 
$$\Sub_{U(\hat{\fg})}(\Delta_l)\;\stackrel{\sim}{\to}\;\Sub_{\bP}\Pmod_{\Delta_l}.$$

It is proved in \cite[Section~6]{Soergel}, that we have an equivalence $\bP\stackrel{\sim}{\to}\bT$, where $\bT$ is the category of indecomposable tilting modules in $\bO_{-l}$. By a result of Polo, see \cite[Proposition~8.1]{Soergel}, forgetting the action of $\partial$ on $\bO_{-l}$ yields an equivalence with a similarly defined category of $\tilde{\fg}$-modules. The full subcategory of all modules with finite length in the latter category was studied in \cite{KL} where it was proved to be equivalent to the category of finite dimensional modules of type 1 over~$U_q(\fg)$. In particular, this restricts to an equivalence $\bT^{\oplus}\stackrel{\sim}{\to}\cC$. Tracing the module~$\Delta_l$ through all equivalences shows that it gets sent to $\unit$ in $\cC$. In particular this implies that
$$\Sub_{\bP}\Pmod_{\Delta_l}\;\stackrel{\sim}{\to}\; \Sub_{\cC}(\Pmod_{\unit}).$$
The conclusion now follows from Theorem~\ref{Thm1}.
\end{proof}

\begin{rem}\label{Rem3}
The thick tensor Ob-ideals in $\Ti(U_q(\fg))$ have been classified in \cite{OstrikT}. It follows that $\TId(\cC)$ will contain more elements than $\Ide([\cC]_{\oplus})$. For example, for $U_q(\mathfrak{sl}_3)$ we have 3 ideals in $\Ide([\cC]_{\oplus})$, whereas one can calculate, using Kazhdan-Lusztig combinatorics, that $\Delta_l$ has at least 5 simple constituents (private communication with Michael Ehrig).  
\end{rem}

%%%%%%%%%%%%%%%%%%%%%%%%%%%%%%%%%%%%%%%%%%%%%%%%%%%%%%%%%%%%%%%%%%%%%%%%%%%%%%%%%%%%%%

\section{Fibres of the decategorification map}\label{SecFibres}
Fix a monoidal category~$\cC$ satisfying (I) and (II).
\subsection{The decategorification map}

\begin{ddef}\label{DefOb}
For any left-tensor ideal $\cJ$ in~$\cC$, we define the set
$$\Ob(\cJ)\:=\;\{X\in\Ob\cC\,|\, 1_X\in \cJ(X,X)\}\;=\; \{X\in\Ob\cC\,|\, \cJ(X,X)=\cC(X,X)\}.$$
\end{ddef}

Then $\Ob(\cJ)$ is a thick left-tensor Ob-ideal in~$\cC$ and we have the corresponding map
$$\Ob\;:\; \TId(\cC)\to \Ide([\cC]_{\oplus}).$$ The notation is justified by the observation that 
$$\Ob(\cJ)\;=\;\{X\in\Ob\cC\,|\, X\simeq \zero\;\,\mbox{ in }\;\,\cC/\cJ\}.$$
Furthermore, we have a group isomorphism
$[\cC/\cJ]_{\oplus}\simeq [\cC]_{\oplus}/I,$
with~$I$ the thick left ideal, in the based ring $[\cC]_{\oplus}$, associated to $\Ob(\cJ)$.
This is a ring isomorphism when $\cC$ is braided. 
Motivated by these observations we refer to~$\Ob(-)$ as the {\bf decategorification map}.

Proposition~\ref{PropPhi} implies
\begin{equation}\label{ObPsi}
\Ob(\Psi^{-1}(\Mmod))=\{X\in\Ob\cC\,|\, \co_X\in \Mmod(X^\vee\otimes X)\}.
\end{equation}
\begin{thm}\label{ThmFibre}
Let~$\cC$ be a right rigid Krull-Schmidt monoidal category.
\begin{enumerate}[(i)]
\item We have a surjective morphism of partially ordered sets
$$\Ob:\TId(\cC)\tto \Ide([\cC]_{\oplus}).$$
\item 
For~$\sI\in \Ide([\cC]_{\oplus})$, the minimal element in the fibre $\Ob^{-1}(\sI)$ is given by the tensor ideal
$$\cJ^{\min}_{\sI}(X,Y)\;=\;\{f\in\cC(X,Y)\,|\, \mbox{there exists~$Z\in\sI$ such that~$f$ factors as $X\to Z\to Y$}\}.$$
\item The minimal element in~$\Psi\left(\Ob^{-1}(\sI)\right)\subset\Sub(\Pmod_{\unit})$ is given by~$\Tr_{\sI}\Pmod_{\unit}$.
\end{enumerate}
\end{thm}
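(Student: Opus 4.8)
The plan is to establish the three parts in order, with parts (ii) and (iii) being equivalent via Theorem~\ref{Thm1}, so that the real content is: identifying the minimal tensor ideal in a fibre, and checking surjectivity with order-preservation for part (i). First I would verify part (i): that $\Ob$ is order-preserving is immediate from the definitions, since $\cJ\subset\cJ'$ forces $\Ob(\cJ)\subset\Ob(\cJ')$. For surjectivity, given $\sI\in\Ide([\cC]_{\oplus})$, I would exhibit a tensor ideal mapping to it --- the natural candidate being exactly $\cJ^{\min}_{\sI}$ from part (ii), so parts (i) and (ii) are proved together. One checks directly that the collection of morphisms factoring through some object of $\sI$ forms an ideal (closure under composition on both sides is automatic from the factorization property), and that it is a \emph{left-tensor} ideal because $\sI$ is closed under $Y\otimes-$ (condition (b) of Definition~\ref{DefOI}): if $f=\beta\circ\alpha$ with $\alpha:X\to Z$, $\beta:Z\to Y$ and $Z\in\sI$, then $1_W\otimes f = (1_W\otimes\beta)\circ(1_W\otimes\alpha)$ factors through $W\otimes Z\in\sI$. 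Then $\Ob(\cJ^{\min}_{\sI})=\sI$: the inclusion $\subset$ holds because $1_X\in\cJ^{\min}_{\sI}(X,X)$ means $1_X$ factors through some $Z\in\sI$, hence $X\inplus Z$, hence $X\in\sI$ by condition (a); the inclusion $\supset$ is trivial since for $X\in\sI$ the identity $1_X$ factors through $X$ itself.

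Next, for minimality in part (ii): I would argue that any tensor ideal $\cJ$ with $\Ob(\cJ)=\sI$ (indeed, with $\Ob(\cJ)\supset\sI$) must contain $\cJ^{\min}_{\sI}$. Indeed if $f:X\to Y$ factors as $X\xrightarrow{\alpha}Z\xrightarrow{\beta}Y$ with $Z\in\sI$, then $1_Z\in\cJ(Z,Z)$, and since $\cJ$ is an ideal, $f=\beta\circ 1_Z\circ\alpha\in\cJ(X,Y)$. So $\cJ^{\min}_{\sI}\subset\cJ$, and since $\cJ^{\min}_{\sI}$ is itself in the fibre, it is the minimum.

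Finally, part (iii) follows by transporting part (ii) through the lattice isomorphism $\Psi$ of Theorem~\ref{Thm1}. I would compute $\Psi(\cJ^{\min}_{\sI})=\cJ^{\min}_{\sI}(\unit,-)$, which at an object $Y$ consists of all morphisms $\unit\to Y$ factoring through some $Z\in\sI$; comparing with the description of $\Tr_{\sI}\Pmod_{\unit}$ via \eqref{eqTr}, i.e.\ $\Tr_{Z}\Pmod_{\unit}(Y)=\Span_{\mZ}\{a\circ b\mid a\in\cC(Z,Y),\ b\in\cC(\unit,Z)\}$, these agree on the nose (here one should note that in a Krull-Schmidt category $\sI$ is determined by the indecomposables it contains, and $\Tr$ over $\sI$ equals $\Tr$ over $\sI\cap\inde\cC$; factoring through a finite direct sum of members of $\sI$ is the same as a sum of morphisms each factoring through a single member). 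The main obstacle --- really the only point requiring care --- is the verification that $\cJ^{\min}_{\sI}$ is genuinely a left-tensor ideal \emph{and} that it maps to $\sI$ rather than something larger; this is where condition (a) of Definition~\ref{DefOI} (that $\sI$ is closed under taking direct summands) is essential, since $1_X\in\cJ^{\min}_{\sI}$ only gives $X\inplus Z$ for some $Z\in\sI$, not $X\in\sI$ directly.
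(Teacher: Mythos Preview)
Your proposal is correct and follows essentially the same approach as the paper's proof: verify $\cJ^{\min}_{\sI}$ is a left-tensor ideal using condition~(b) of Definition~\ref{DefOI}, show $\Ob(\cJ^{\min}_{\sI})=\sI$ using the karoubian property and condition~(a), deduce surjectivity and minimality, and then identify $\Psi(\cJ^{\min}_{\sI})$ with $\Tr_{\sI}\Pmod_{\unit}$ via~\eqref{eqTr} and additivity. Your write-up is in fact slightly more explicit than the paper's in pinpointing where each condition on $\sI$ is used.
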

\begin{proof}
We start by considering an arbitrary~$\sI\in \Ide([\cC]_{\oplus})$.
By construction, $\cJ^{\min}_{\sI}$ is an ideal in~$\cC$ and the fact that~$\sI$ is a thick tensor~$\Ob$-ideal easily shows that~$\cJ^{\min}_{\sI}$ is a tensor ideal.

Take $X\in \Ob\cC$ with~$1_X\in\cJ^{\min}_{\sI}(X,X)$. Then there exist $Z\in \sI$, $f\in \cC(X,Z)$ and~$g\in \cC(Z,X)$ such that~$1_X=g\circ f$. Since $\cC$ is karoubian, this means that there exists~$Y\in\cC$ such that~$Z\simeq X\oplus Y$. Since $\sI$ is thick, we find~$X\in\sI$. On the other hand, if follows by definition that~$\sI\subset \Ob(\cJ^{\min}_{\sI})$. In conclusion, we find $\Ob(\cJ^{\min}_{\sI})=\sI$.

Now we can prove part~(i). It is obvious that~$\Ob(-)$ is a morphism of partially ordered sets. That $\Ob(-)$ is surjective then follows from the conclusion of the above paragraph.

Now we prove part~(ii). For~$\sI\in \Ide([\cC]_{\oplus})$, any ideal in~$\Ob^{-1}(\sI)$ must contain~$1_Z$, for all~$Z\in \sI$. By construction, $\cJ^{\min}_{\sI}$ is thus minimal in the fibre over~$\sI$. 

Comparing with equation~\eqref{eqTr}, while using the additivity of~$\cC$ (and $\sI$), shows that~$\cJ^{\min}_{\sI}(\unit, Y)=\Tr_{\sI}\Pmod_{\unit}(Y)$, for all~$Y\in\Ob\cC$. This concludes the proof of part~(iii).
\end{proof}

\subsection{Obstructions to~$\Ob(-)$ being an isomorphism}
In Theorem~\ref{ThmSuff}, we will present sufficient conditions for the surjective map $\Ob(-)$ to be a bijection. In this section, we demonstrate that these conditions are close to necessary.

\subsubsection{The set~$\sB$}\label{SetB} It will be convenient to introduce the set
$$\sB=\{X\in\inde\cC\,|\, \Pmod_{\unit}^{\cC}(X)=\cC(\unit,X)\not=0\}.$$
Note that all indecomposable direct summands of~$\unit$ are in~$\sB$. By the Yoneda lemma, $\sB$ corresponds to the set of~$X\in\inde\cC$ for which
$\Nat(\Pmod_{X},\Pmod_{\unit})$ is not zero.
In particular, for any~$\sS\subset\Ob\cC$, we have $\Tr_{\sS}\Pmod_{\unit}=\Tr_{\sS\cap\sB}\Pmod_{\unit}.$

\begin{prop}\label{PropMult}
Assume that there exists~$X\in\sB$ such that $\cC(\unit,X)$ is not a simple~$\cC(X,X)$-module, then $\Pmod_{\unit}$ contains submodules which are not trace submodules.

Consequently, $\Ob(-):\TId(\cC)\tto \Ide([\cC]_{\oplus})$ is not an isomorphism. \end{prop}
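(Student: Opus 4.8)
The plan is to produce, under the stated hypothesis, a submodule of $\Pmod_{\unit}$ that cannot equal $\Tr_{\sS}\Pmod_{\unit}$ for any $\sS\subset\Ob\cC$; the second assertion then follows from Theorem~\ref{ThmFibre}(iii), since every element of $\Psi(\Ob^{-1}(\sI))$ which arises as a trace submodule would be forced to be the minimal one, so a non-trace submodule of $\Pmod_{\unit}$ exhibits two elements of $\TId(\cC)$ with the same image under $\Ob(-)$. Concretely, fix $X\in\sB$ for which $\cC(\unit,X)$ is not simple as a $\cC(X,X)$-module. First I would pick a proper nonzero $\cC(X,X)$-submodule $N\subsetneq \cC(\unit,X)$; since $X\in\inde\cC$ this submodule is in particular not all of $\cC(\unit,X)$ and is nonzero. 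The candidate subfunctor $\Mmod\subset \Pmod_{\unit}$ is the one generated (inside $\cC$-Mod) by the morphisms in $N\subset\cC(\unit,X)=\Pmod_{\unit}(X)$; equivalently, $\Mmod(Y)=\Span_{\mZ}\{\,g\circ f \mid f\in N,\ g\in\cC(X,Y)\,\}+(\text{contributions from other }Z)$ — but in fact it is cleanest to take $\Mmod:=\sum_{f\in N}(\text{submodule of }\Pmod_{\unit}\text{ generated by }f)$, so that $\Mmod(Y)=\Span_{\mZ}\{g\circ f\mid f\in N,\ g\in\cC(X,Y)\}$ by the Yoneda computation~\eqref{eqTr}.

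The key claim is that $\Mmod\not=\Tr_{\sS}\Pmod_{\unit}$ for any $\sS\subset\Ob\cC$. By the reduction noted in~\ref{SetB} we may assume $\sS\subset\sB$. If $X\notin\sS$, then by a direct analogue of Lemma~\ref{Lem2Tr} — applied to each nonzero $f\in N\subset\cC(\unit,X)$ — no $\Tr_{\sS}\Pmod_{\unit}$ with $\sS\subset\inde\cC\setminus\{X\}$ can contain such an $f$ in its $X$-component (the argument: $f\in\Tr_{\sS}\Pmod_{\unit}(X)$ would force $f=\alpha\circ f$ with $\alpha\in\cC(X,X)$ factoring through objects of $\sS$, and locality of $\cC(X,X)$ gives a contradiction exactly as in Lemma~\ref{Lem2Tr}), so $\Mmod\not\subset\Tr_{\sS}\Pmod_{\unit}$ — hence they are unequal. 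If $X\in\sS$, then $\Tr_{\sS}\Pmod_{\unit}(X)\supseteq \Tr_X\Pmod_{\unit}(X)=\Span_{\mZ}\{a\circ b\mid a,b\in\cC(X,X)\}\cdot$, which is the $\cC(X,X)$-submodule of $\cC(\unit,X)$ generated by \emph{all} of $\cC(\unit,X)$, i.e. all of $\cC(\unit,X)$; since $N\subsetneq\cC(\unit,X)$ and $\Mmod(X)=\cC(X,X)\cdot N=N$ (as $N$ is already a $\cC(X,X)$-submodule), we get $\Mmod(X)=N\subsetneq\cC(\unit,X)=\Tr_{\sS}\Pmod_{\unit}(X)$, so again $\Mmod\not=\Tr_{\sS}\Pmod_{\unit}$. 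This disposes of all $\sS$.

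Finally, to conclude that $\Ob(-)$ is not an isomorphism: set $\sI:=\Ob(\Psi^{-1}(\Mmod))\in\Ide([\cC]_{\oplus})$. By Theorem~\ref{ThmFibre}(iii) the minimal element of $\Psi(\Ob^{-1}(\sI))$ is $\Tr_{\sI}\Pmod_{\unit}$, which is a trace submodule; since $\Mmod\in\Psi(\Ob^{-1}(\sI))$ is not a trace submodule, $\Mmod\not=\Tr_{\sI}\Pmod_{\unit}$, so the fibre $\Ob^{-1}(\sI)$ has at least two elements and $\Ob(-)$ fails to be injective. I expect the only genuinely delicate point to be the bookkeeping in the case $X\in\sS$ — making sure that the inclusion $N\subsetneq\cC(\unit,X)$ at the level of the $X$-component really is preserved, i.e. that $\Mmod(X)$ is exactly $N$ and not some larger $\cC(X,X)$-submodule generated by the $g\circ f$ with $g$ passing through other objects; this is where one uses that $X$ is indecomposable together with Krull–Schmidt to see that any factorization $\unit\to Y\to X$ composed back to a map $\unit\to X$ through objects already handled does not escape $N$. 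The rest is a routine transcription of Lemma~\ref{Lem2Tr} and Theorem~\ref{ThmFibre}.
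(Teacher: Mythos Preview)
Your approach is essentially the paper's, but you have introduced an unnecessary complication by generating $\Mmod$ from the entire submodule $N$ rather than from a single nonzero $f\in N$, and this is exactly what creates the gap in your $X\notin\sS$ case. Your claim there --- that $f\in\Tr_{\sS}\Pmod_{\unit}(X)$ alone forces $f=\alpha\circ f$ with $\alpha$ factoring through $\sS$ --- is false: membership in $\Tr_{\sS}\Pmod_{\unit}(X)$ only says $f=\sum a_i\circ b_i$ with $b_i\in\cC(\unit,S_i)$, and nothing forces the $b_i$ to factor through $f$. (For instance, take $\sS=\{\unit\}$: then $\Tr_{\sS}\Pmod_{\unit}=\Pmod_{\unit}$ and every $f$ lies in its $X$-component.) The proof of Lemma~\ref{Lem2Tr} uses \emph{both} inclusions of the equality: the reverse inclusion $\Tr_{\sS}\Pmod_{\unit}\subset\Mmod_f$, where $\Mmod_f$ is generated by the single $f$, is what forces each $b_i\in\Mmod_f(S_i)=\cC(X,S_i)\circ f$ and hence $f=\alpha\circ f$. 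With your larger $\Mmod$ that reverse inclusion only yields $b_i\in\cC(X,S_i)\circ N$, so you obtain $N=JN$ for $J\subset\rad\cC(X,X)$ the ideal of maps factoring through $\sS$; concluding $N=0$ from this would require Nakayama, hence finite generation, which is not available under (I) and (II) alone.

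The fix is trivial and is precisely the paper's argument: let $\Mmod$ be generated by a single nonzero $f$ with $\cC(X,X)\circ f\subsetneq\cC(\unit,X)$. Then Lemma~\ref{Lem2Tr} applies verbatim when $X\notin\sS$, and when $X\in\sS$ one has $\Mmod(X)=\cC(X,X)\circ f\subsetneq\cC(\unit,X)\subset\Tr_{\sS}\Pmod_{\unit}(X)$. Your deduction of the second assertion from Theorem~\ref{ThmFibre}(iii) is correct. Incidentally, the ``delicate point'' you flag at the end is not delicate at all: $\Mmod(X)=\cC(X,X)\cdot N=N$ is immediate from $N$ being a $\cC(X,X)$-submodule and $1_X\in\cC(X,X)$; no Krull--Schmidt factorization argument is needed there.
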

\begin{proof}
Since $\cC(\unit,X)$ is not simple, we have a non-zero $f\in \cC(\unit,X)$ such that~$\cC(X,X)\circ f\not=\cC(\unit,X)$. Consider the submodule~$\Mmod$ of~$\Pmod_{\unit}$ generated by $f$.
By Lemma~\ref{Lem2Tr}, $\Mmod$ can only be a trace submodule $\Tr_{\sS}\Pmod_{\unit}$ if $X\in\sS$. However, already $\Tr_X\Pmod_{\unit}$ is strictly bigger than $\Mmod$, so $\Mmod$ is not a trace submodule.

That $\Ob(-)$ cannot be an isomorphism follows from Theorem~\ref{ThmFibre}(iii), which shows that there must be a fibre of~$\Ob:\TId(\cC)\tto \Ide([\cC_{\oplus}])$ which contains more than one element. 
\end{proof}

\begin{cor}\label{CorNoIso}
Assume that $\cC$ satisfies (I)-(IV) with $\mk:=\cC(\unit,\unit)$ algebraically closed.
If there exists~$X\in \sB$ with
$\dim_{\mk}\cC(\unit,X)>1$, then $\Ob(-):\TId(\cC)\tto \Ide([\cC]_{\oplus})$ is not an isomorphism. \end{cor}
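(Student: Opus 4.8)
The plan is to deduce Corollary~\ref{CorNoIso} from Proposition~\ref{PropMult} by verifying its single hypothesis: that there exists $X\in\sB$ with $\cC(\unit,X)$ a non-simple $\cC(X,X)$-module. Proposition~\ref{PropMult} then does all the work, producing a submodule of $\Pmod_{\unit}$ which is not a trace submodule, and hence (via Theorem~\ref{ThmFibre}(iii)) a non-singleton fibre of $\Ob(-)$, so that $\Ob(-)$ fails to be an isomorphism.

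First I would take the object $X\in\sB$ provided by the assumption, so that $\cC(\unit,X)\neq 0$ and $\dim_{\mk}\cC(\unit,X)>1$. The key observation is that under hypotheses (I)--(IV) with $\mk$ algebraically closed, the endomorphism ring $E:=\cC(X,X)$ is a \emph{local} $\mk$-algebra (as $X\in\inde\cC$ by definition of $\sB$, and $\cC$ is Krull--Schmidt), and it is finite-dimensional over $\mk$ by (IV). Hence $E/\rad E$ is a finite-dimensional division algebra over the algebraically closed field $\mk$, so $E/\rad E\simeq\mk$. Therefore any simple $E$-module is one-dimensional over $\mk$. Since $\cC(\unit,X)$ has $\mk$-dimension at least $2$, it cannot be a simple $E$-module. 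This verifies the hypothesis of Proposition~\ref{PropMult}, and the conclusion follows immediately.

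The only mild subtlety — and the step I would be most careful about — is the reduction from "$E$ is local with $E/\rad E\simeq\mk$" to "simple $E$-modules are $1$-dimensional": a simple module over a local ring $E$ is a simple module over $E/\rad E\simeq\mk$, i.e.\ a one-dimensional $\mk$-vector space. One must also note that $\cC(\unit,X)$ is genuinely an $E$-module here (it is $\Pmod_\unit(X)$ with the natural left action), which is built into the setup of Section~\ref{CMod}. Everything else is formal, so this is essentially a one-line corollary once the finite-dimensionality and locality are in place.

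\begin{proof}
Since $X\in\sB$ we have $X\in\inde\cC$, so by (II) the ring $E:=\cC(X,X)$ is local, and by (III)--(IV) it is a finite-dimensional algebra over $\mk:=\cC(\unit,\unit)$. As $\mk$ is algebraically closed, $E/\rad E$ is a finite-dimensional division algebra over $\mk$, hence equals $\mk$. Thus every simple $E$-module is one-dimensional over $\mk$. Since $\dim_{\mk}\cC(\unit,X)>1$ by hypothesis, the $E$-module $\cC(\unit,X)$ is not simple. Proposition~\ref{PropMult} now applies and gives that $\Ob(-)$ is not an isomorphism.
\end{proof}
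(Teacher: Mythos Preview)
Your proof is correct and follows exactly the same approach as the paper: under assumptions (I)--(IV) with $\mk$ algebraically closed, every simple $\cC(X,X)$-module is one-dimensional, so $\dim_{\mk}\cC(\unit,X)>1$ forces $\cC(\unit,X)$ to be non-simple, and Proposition~\ref{PropMult} applies. You have simply spelled out the justification (locality from Krull--Schmidt, finite-dimensionality from (III)--(IV), and $E/\rad E\simeq\mk$ from algebraic closedness) that the paper leaves implicit in one sentence.
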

\begin{proof}
Under the assumption in the corollary, all simple $\cC(X,X)$-modules are one-dimensional. The conclusion follows from Proposition~\ref{PropMult}.
\end{proof}

Another form of obstruction is discussed in the following lemma.
\begin{lemma}\label{LemNec}
Assume that there exists~$Z\in \sB$, such that there is no $X\in \Ob\cC$ and $k\in\mN$ for which $\co_X$ factors as $\unit\to Z^{\oplus k}\to X^\vee\otimes X$. Then the morphism $\Ob(-):\TId(\cC)\tto \Ide([\cC]_{\oplus})$ is not an isomorphism.
\end{lemma}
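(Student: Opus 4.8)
The plan is to contrapose: assuming $\Ob(-)$ is an isomorphism, I will derive that for the given $Z\in\sB$ there must be an object $X$ and $k\in\mN$ with $\co_X$ factoring through $Z^{\oplus k}$, contradicting the hypothesis. The bridge is Theorem~\ref{Thm1} (identifying $\TId(\cC)$ with $\Sub_{\cC}(\Pmod_{\unit})$) together with Theorem~\ref{ThmFibre}(iii), which says the minimal element of the fibre $\Ob^{-1}(\sI)$ corresponds to the trace submodule $\Tr_{\sI}\Pmod_{\unit}$. If $\Ob(-)$ is an isomorphism, then every submodule of $\Pmod_{\unit}$ is of the form $\Tr_{\sI}\Pmod_{\unit}$ for some $\sI\in\Ide([\cC]_{\oplus})$; equivalently (using \ref{SetB}) every submodule is a trace submodule $\Tr_{\sS}\Pmod_{\unit}$ for some $\sS\subseteq\inde\cC$.

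First I would apply this to the submodule $\Mmod:=\Tr_{Z}\Pmod_{\unit}$ generated by the $\Ob$-ideal $\sI_Z$ that $Z$ generates in $[\cC]_{\oplus}$ — but more usefully, I would look directly at the tensor ideal $\cJ:=\cJ^{\min}_{\sI_Z}$ associated to $\sI_Z$ via Theorem~\ref{ThmFibre}(ii), where $\sI_Z$ is the smallest thick $\Ob$-ideal containing $Z$. Since $Z\in\Ob(\cJ)$, i.e. $1_Z\in\cJ(Z,Z)$, left-tensoring shows $1_{Z^\vee\otimes Z}\in\cJ(Z^\vee\otimes Z,Z^\vee\otimes Z)$, hence by Theorem~\ref{ThmFibre}(iii) the image of $\co_X$ in $\Pmod_{\unit}(X^\vee\otimes X)=\cC(\unit,X^\vee\otimes X)$ lies in $\Tr_{\sI_Z}\Pmod_{\unit}(X^\vee\otimes X)$ precisely when $X\in\Ob(\cJ)$. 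The real content: I must show that if $\Ob(-)$ is an isomorphism then $Z\in\sB$ forces some $X\in\Ob(\cJ)$ whose coevaluation already factors through a power of $Z$. Here is the mechanism. Because $Z\in\sB$, $\Pmod_{\unit}(Z)=\cC(\unit,Z)\ne 0$, so $\Tr_{Z}\Pmod_{\unit}\ne 0$, so by Theorem~\ref{ThmFibre}(iii) the nonzero tensor ideal $\Psi^{-1}(\Tr_Z\Pmod_\unit)$ has nonempty $\Ob(-)$-image; by surjectivity this image is some nonzero $\sI\in\Ide([\cC]_{\oplus})$, and by the isomorphism assumption it must equal $\sI_Z$ — i.e. $\Tr_Z\Pmod_\unit=\cJ^{\min}_{\sI_Z}(\unit,-)$. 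Now combine with equation~\eqref{ObPsi}: $\Ob(\cJ^{\min}_{\sI_Z})=\sI_Z$ contains some $X$ (in fact any $X\in\sI_Z$), and for that $X$ we get $\co_X\in\Tr_Z\Pmod_\unit(X^\vee\otimes X)$, which by equation~\eqref{eqTr} means exactly that $\co_X$ factors as $\unit\to Z^{\oplus k}\to X^\vee\otimes X$ for some $k\in\mN$ — contradicting the hypothesis. Since $\sI_Z$ is nontrivial (it contains $Z$, and $Z\not\simeq\zero$ as $\cC(\unit,Z)\ne0$), such an $X$ genuinely exists: take $X=Z$ itself, giving the cleanest contradiction, provided one checks $Z\in\Ob(\cJ^{\min}_{\sI_Z})$, which is immediate since $1_Z$ trivially factors through $Z\in\sI_Z$.

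So the argument condenses to: assume $\Ob(-)$ is an isomorphism; then $\Tr_Z\Pmod_\unit$, being a submodule of $\Pmod_\unit$, is $\Psi$ of some tensor ideal $\cJ$ with $\Ob(\cJ)\ni Z$ (as $Z\in\sB$ makes this trace submodule nonzero and the minimal ideal over $\sI_Z$, whose $\Ob$ is $\sI_Z\ni Z$); then $1_{Z}\in\cJ(Z,Z)$ forces, via left-tensoring by $Z^\vee$ and \eqref{ObPsi}/\eqref{eqTr}, that $\co_Z$ factors through $Z^{\oplus k}$ for some $k$ — contradiction with $X=Z$, $k$ this value. I expect the main obstacle to be the bookkeeping in the middle step: carefully justifying that under the isomorphism assumption the particular submodule $\Tr_Z\Pmod_\unit$ corresponds to the minimal tensor ideal $\cJ^{\min}_{\sI_Z}$ in its fibre (rather than some larger ideal in the same fibre) — but this is exactly Theorem~\ref{ThmFibre}(iii), so it is really a matter of assembling the pieces correctly rather than new difficulty; the use of \ref{SetB} to reduce $\sS$ to $\sS\cap\sB$ and the karoubian-ness of $\cC$ to handle direct summands are the only other technical points. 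I would also remark that, symmetrically to Proposition~\ref{PropMult}, this shows the two conditions isolated there and here are jointly close to characterising when $\Ob(-)$ is an isomorphism, foreshadowing Theorem~\ref{ThmSuff}.
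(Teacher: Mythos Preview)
Your core idea is the contrapositive of the paper's two-line argument, and that idea is sound. But the write-up contains a genuine gap: you repeatedly identify $\cJ:=\Psi^{-1}(\Tr_Z\Pmod_\unit)$ with $\cJ^{\min}_{\sI_Z}$ (equivalently, you assume $\Tr_Z\Pmod_\unit=\Tr_{\sI_Z}\Pmod_\unit$), and this is neither needed nor justified. In particular, your claim that $Z\in\Ob(\cJ)$ is circular: by \eqref{ObPsi} with $\Mmod=\Tr_Z\Pmod_\unit$, the statement $Z\in\Ob(\cJ)$ is \emph{exactly} the statement that $\co_Z$ factors through some $Z^{\oplus k}$, which is what you are trying to derive. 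Similarly, when you switch to $\cJ^{\min}_{\sI_Z}$ and pick $X\in\sI_Z$, what \eqref{ObPsi} gives you is $\co_X\in\Tr_{\sI_Z}\Pmod_\unit(X^\vee\otimes X)$, not $\co_X\in\Tr_Z\Pmod_\unit(X^\vee\otimes X)$; the former only says $\co_X$ factors through some object of $\sI_Z$, not through a power of $Z$.

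The fix is to drop $\sI_Z$ entirely. Since $Z\in\sB$, the submodule $\Tr_Z\Pmod_\unit$ is nonzero, hence $\cJ\ne 0$; if $\Ob$ were injective then $\Ob(\cJ)\ne\emptyset$, so there is \emph{some} $X\in\Ob(\cJ)$, and now \eqref{ObPsi} applied directly with $\Mmod=\Tr_Z\Pmod_\unit$ gives $\co_X\in\Tr_Z\Pmod_\unit(X^\vee\otimes X)$, which by \eqref{eqTr} is the desired factorisation through $Z^{\oplus k}$. The paper does precisely this, phrased directly rather than contrapositively: it just reads off from \eqref{ObPsi} and the hypothesis that $\Ob(\cJ)=\emptyset$ while $\cJ\ne 0$, so $\Ob$ is not injective.
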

\begin{proof}
Let~$\Mmod$ be the submodule~$\Tr_{Z}\Pmod_{\unit}$ of~$\Pmod_{\unit}$. 
Equation~\eqref{ObPsi} implies that $0\not=\cJ:=\Psi^{-1}(\Mmod)\in \TId(\cC)$ satisfies~$\Ob(\cJ)=\emptyset$.
\end{proof}

The observations in this section justify looking at categories with properties as in the following lemmata.

\begin{lemma}\label{LemOnlyTrace}
Assume that the left $\cC(X,X)$-module
$\cC(\unit,X)$ is simple for every~$X\in \sB$, then every submodule of~$\Pmod_{\unit}$ is a trace submodule.
\end{lemma}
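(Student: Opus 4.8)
The plan is to exploit Theorem~\ref{Thm1} together with the structure of $\Pmod_{\unit}$ as a $\mZ[\cC]$-module, reducing the statement to an argument at the level of indecomposable objects. Recall that, via the equivalence of~\ref{CMod}, a submodule $\Mmod$ of $\Pmod_{\unit}$ is determined by the subgroups $\Mmod(X)\subset\cC(\unit,X)$, and by the description of $\sB$ in~\ref{SetB} only the values $\Mmod(X)$ with $X\in\sB$ can be non-zero; moreover $\Tr_{\sS}\Pmod_{\unit}=\Tr_{\sS\cap\sB}\Pmod_{\unit}$. So I may work with $\sS\subset\sB$ throughout. Given an arbitrary submodule $\Mmod\subset\Pmod_{\unit}$, the natural candidate is $\sS:=\{X\in\sB\,|\,\Mmod(X)\not=0\}$, and the claim to prove is $\Mmod=\Tr_{\sS}\Pmod_{\unit}$.

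First I would check the inclusion $\Tr_{\sS}\Pmod_{\unit}\subseteq\Mmod$. By~\eqref{eqTr}, $\Tr_{X}\Pmod_{\unit}(Y)$ is spanned by composites $a\circ b$ with $b\in\cC(\unit,X)$ and $a\in\cC(X,Y)$; since $\Mmod$ is a submodule (closed under post-composition) and $b\in\Mmod(X)$ whenever $X\in\sS$ and $\Mmod(X)\not=0$ — which we may arrange since any such $b$ lies in $\cC(\unit,X)=\Mmod(X)$ once we know $\Mmod(X)$ is all of $\cC(\unit,X)$ — the composite $a\circ b$ lies in $\Mmod(Y)$. This is where the simplicity hypothesis enters: for $X\in\sB$, the module $\cC(\unit,X)$ over $\cC(X,X)$ is \emph{simple}, so the subgroup $\Mmod(X)$, being a $\cC(X,X)$-submodule (as $\Mmod$ is closed under the left $\cC(X,X)$-action inside $\Pmod_\unit$), is either $0$ or all of $\cC(\unit,X)$. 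Hence $X\in\sS$ forces $\Mmod(X)=\cC(\unit,X)$, and the inclusion $\Tr_{\sS}\Pmod_{\unit}\subseteq\Mmod$ follows.

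For the reverse inclusion $\Mmod\subseteq\Tr_{\sS}\Pmod_{\unit}$, take any $Y\in\Ob\cC$ and $h\in\Mmod(Y)\subset\cC(\unit,Y)$. Using the Krull–Schmidt decomposition $Y\simeq\bigoplus_i Y_i$ into indecomposables, write $h=\sum_i \iota_i\circ h_i$ with $h_i\in\cC(\unit,Y_i)$ the components; since $\Mmod$ is a submodule and the projections $\pi_i\colon Y\to Y_i$ are morphisms, each $h_i=\pi_i\circ h$ lies in $\Mmod(Y_i)$, so each $Y_i$ with $h_i\not=0$ satisfies $Y_i\in\sB$ and $\Mmod(Y_i)\not=0$, i.e.\ $Y_i\in\sS$. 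Then $h_i=h_i\circ 1_{\unit}$ visibly factors through $Y_i\in\sS$, so $h_i\in\Tr_{Y_i}\Pmod_{\unit}(Y_i)\subseteq\Tr_{\sS}\Pmod_{\unit}(Y_i)$ by~\eqref{eqTr}, and therefore $h=\sum_i\iota_i\circ h_i\in\Tr_{\sS}\Pmod_{\unit}(Y)$. Combining the two inclusions gives $\Mmod=\Tr_{\sS}\Pmod_{\unit}$, as desired.

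The main obstacle is the bookkeeping around the left $\cC(X,X)$-action: one must verify that a submodule $\Mmod$ of $\Pmod_{\unit}$ in $\cC$-Mod really is stable under post-composition by all of $\cC(X,X)$ on the component $\Mmod(X)$ — this is immediate from the definition of a subfunctor, since $\cC(X,-)$ applied to endomorphisms of $X$ gives exactly that action — and then to invoke simplicity correctly. Everything else is a routine application of Krull–Schmidt together with the trace-module formula~\eqref{eqTr}; no rigidity is needed beyond what is already assumed, and in fact the argument only uses (II) plus the simplicity hypothesis on the modules $\cC(\unit,X)$.
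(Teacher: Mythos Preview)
Your proof is correct and follows essentially the same approach as the paper: use the simplicity hypothesis to get the dichotomy $\Mmod(X)=0$ or $\Mmod(X)=\cC(\unit,X)$ for each $X\in\sB$, set $\sS$ to be those $X$ with $\Mmod(X)\not=0$, and conclude $\Mmod=\Tr_{\sS}\Pmod_{\unit}$. The paper's proof is a two-line version of yours; you have simply made the Krull--Schmidt bookkeeping and the two inclusions explicit. One minor point: you invoke Theorem~\ref{Thm1} in your opening sentence but never actually use it, and indeed it plays no role here.
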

\begin{proof}
For any submodule~$\Mmod$ of~$\Pmod_{\unit}$ and~$Y\in \sB$ we have either $\Mmod(Y)=0$ or~$\Mmod(Y)=\cC(\unit,Y)$. Let~$\sS$ be the subset of~$Y\in\sB$ for which $\Mmod(Y)=\cC(\unit,Y)$. Then we have
$\Mmod\;=\;\Tr_{\sS}\Pmod_{\unit}.$
\end{proof}

\begin{lemma}\label{LemXZ1}
If, for~$Z\in\sB$ and $X\in\inde\cC$, we have that~$\co_{X}$ is a composition~$\unit\to Z^{\oplus k}\to X^\vee\otimes X$, for some~$k\in\mZ_{>0}$, then $X\inplus X\otimes Z$.
\end{lemma}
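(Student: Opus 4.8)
The hypothesis says that $\co_X$ factors as $\unit \xrightarrow{a} Z^{\oplus k} \xrightarrow{b} X^\vee \otimes X$ for some morphisms $a,b$. The plan is to apply the isomorphism $\iota_{XX}: \cC(\unit, X^\vee \otimes X) \xrightarrow{\sim} \cC(X,X)$ of \eqref{eqiota} to $\co_X$. By the triangle identities \eqref{eqdual} (or directly by \eqref{eqiota1}, since $\iota^{-1}_{XX}(1_X) = \co_X$), we have $\iota_{XX}(\co_X) = 1_X$. So $1_X = \iota_{XX}(b \circ a)$.

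Next I would push the factorization $\co_X = b \circ a$ through $\iota_{XX}$, using Lemma~\ref{Lemfg}. Since $\co_X = b\circ a = b \circ (\text{something into } Z^{\oplus k})$, I can write $\iota_{XX}(\co_X) = \iota_{XX}(b \circ a)$; the precise shape of $b$ is as a morphism $Z^{\oplus k} \to X^\vee \otimes X$, so I want to recognize $b$ as $(1_{X^\vee} \otimes g) \circ \phi$ for appropriate data, or more directly argue that $\iota_{XX}$ being a natural isomorphism of the modules $\cC(\unit,-)$ and $\cC(X,-)$ at the object $X^\vee \otimes X$ intertwines postcomposition, so that a factorization of $\co_X \in \cC(\unit, X^\vee\otimes X)$ through $Z^{\oplus k}$ — which is NOT a factorization in the source category but a factorization of the element inside the abelian group after applying a functor — needs to be handled correctly. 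The clean way: the morphism $b: Z^{\oplus k} \to X^\vee \otimes X$ yields, via $\iota$, a morphism... no. The right move is: $1_X = \iota_{XX}(\co_X)$, and $\co_X = b \circ a$ with $a \in \cC(\unit, Z^{\oplus k})$ and $b \in \cC(Z^{\oplus k}, X^\vee \otimes X)$. By Lemma~\ref{Lemfg}(i) applied with the roles adjusted — actually Lemma~\ref{Lemfg}(i) is about postcomposing $\iota_{XY}(\phi)$ with a map out of $Y$; here I instead use that $\iota_{XX}$ is a natural transformation in the second variable, so it does not directly split $b$.

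The honest argument: tensor the factorization with $X$ on the left, or rather use $\iota$ to transport everything to $\cC(X, X^\vee \otimes X \otimes ?)$. Cleanest: Apply $-\otimes X$ is not it either. Let me instead do the following concrete computation. From $1_X = \iota_{XX}(\co_X) = (\ev_X \otimes 1_X)\circ(1_X \otimes \co_X) = (\ev_X \otimes 1_X) \circ (1_X \otimes b) \circ (1_X \otimes a)$. Set $p := (\ev_X \otimes 1_X) \circ (1_X \otimes b) \in \cC(X \otimes Z^{\oplus k}, X)$ and $q := 1_X \otimes a \in \cC(X, X \otimes Z^{\oplus k})$. Then $p \circ q = 1_X$, so $X$ is a direct summand of $X \otimes Z^{\oplus k} \simeq (X \otimes Z)^{\oplus k}$ (using bi-additivity of $\otimes$). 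Since $\cC$ is Krull--Schmidt and $X$ is indecomposable, the Krull--Schmidt property forces $X$ to be a direct summand of $X \otimes Z$ itself, i.e. $X \inplus X \otimes Z$.

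\textbf{Main obstacle.} There is essentially no obstacle here: the only point requiring a little care is the last step, where from $X \inplus (X\otimes Z)^{\oplus k}$ one deduces $X \inplus X \otimes Z$ — this uses that $\cC$ is Krull--Schmidt (assumption (II)), so the indecomposable summands of $(X\otimes Z)^{\oplus k}$ are exactly, with multiplicity, $k$ copies of those of $X\otimes Z$, and $X$ being indecomposable must coincide with one of them. The identification $X \otimes Z^{\oplus k} \simeq (X \otimes Z)^{\oplus k}$ is just bi-additivity of $\otimes$. Everything else is a direct manipulation of the triangle identities and the definition of $\iota$.
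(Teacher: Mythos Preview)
Your final argument is correct and is essentially the paper's proof: tensor the factorization $\co_X = b \circ a$ with $1_X$, apply the triangle identity~\eqref{eqdual} to obtain $1_X = p \circ q$ with $p,q$ factoring through $X \otimes Z^{\oplus k}$, and then invoke Krull--Schmidt to pass from $X \inplus (X\otimes Z)^{\oplus k}$ to $X \inplus X\otimes Z$. The paper compresses this to a single sentence and leaves the last Krull--Schmidt reduction implicit, but the content is the same.
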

\begin{proof}
It follows from equation~\eqref{eqdual} that~$1_X$ factors through $X\to X\otimes Z^{\oplus k}\to X$, meaning $X\inplus X\otimes Z^{\oplus k}$. 
\end{proof}

\begin{lemma}\label{LemXZ}
If, for~$Z\in\sB$, there exists~$X\in\inde\cC$, such that 
$\add(X^\vee\otimes X)\cap\sB=\{Z\},$ then
\begin{enumerate}[(i)]
\item $\co_{X}$ is a composition~$\unit\to Z^{\oplus k}\to X^\vee\otimes X$, for some~$k\in\mZ_{>0}$;
\item $\Psi^{-1}(\Tr_{Z}\Pmod_{\unit})$ is generated by $1_X$;
\item $\Ob(\Psi^{-1}(\Tr_{Z}\Pmod_{\unit}))$ is generated by $X$;
\item $\Ob(\Psi^{-1}(\Tr_{Z}\Pmod_{\unit}))$ is generated by $Z$.
\end{enumerate}
\end{lemma}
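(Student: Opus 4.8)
The plan is to establish the four statements essentially in the order listed, since later parts build on earlier ones.

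\medskip

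\textbf{Part (i).} The hypothesis $\add(X^\vee\otimes X)\cap\sB=\{Z\}$ says that, when we decompose $X^\vee\otimes X$ into indecomposables, the only summands $Y$ with $\cC(\unit,Y)\neq 0$ are copies of $Z$. Since $\co_X\in\cC(\unit,X^\vee\otimes X)$ and $\cC(\unit,-)$ is additive, $\co_X$ has zero component into every indecomposable summand not isomorphic to $Z$; hence $\co_X$ factors through the $Z$-isotypic part $Z^{\oplus k}$ of $X^\vee\otimes X$, for $k$ the multiplicity of $Z$ in $X^\vee\otimes X$. It remains to note $k>0$: if $k=0$ then $\co_X=0$, but then \eqref{eqdual} forces $1_X=0$, i.e. $X=\zero$, contradicting $X\in\inde\cC$. (If one wants to allow $X=\zero$, one excludes it; $\inde\cC$ consists of nonzero objects.) First I would also record, via Lemma~\ref{LemXZ1} applied to this factorization, that $X\inplus X\otimes Z$, which is the content one needs repeatedly below.

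\medskip

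\textbf{Part (ii).} Let $\cJ:=\Psi^{-1}(\Tr_Z\Pmod_\unit)$. First, $1_X\in\cJ(X,X)$: indeed Theorem~\ref{ThmFibre}(iii) identifies $\Tr_Z\Pmod_\unit$ with $\Psi(\cJ^{\min}_{\sI})$ for $\sI$ the thick ideal generated by $Z$, so $\cJ=\cJ^{\min}_{\sI}$, and by part~(i) $1_X$ factors as $X\to X\otimes Z^{\oplus k}\xrightarrow{\ } X$ (from Lemma~\ref{LemXZ1}), hence through an object of $\sI$, so $1_X\in\cJ^{\min}_{\sI}(X,X)$. Conversely I must show $\cJ$ is contained in the tensor ideal $\cJ'$ generated by $1_X$. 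Using $\cJ=\cJ^{\min}_{\sI}$, a morphism in $\cJ$ factors through $Z^{\oplus m}$ for some $m$; so it suffices to show $1_Z\in\cJ'(Z,Z)$, equivalently $Z\in\Ob(\cJ')$. Since $Z\inplus X^\vee\otimes X$ (as $k>0$) and $1_X\in\cJ'$ implies $1_{X^\vee\otimes X}\in\cJ'$ (tensor ideal property: $1_{X^\vee}\otimes 1_X\otimes$ nothing — more precisely $1_{X^\vee}\otimes 1_X = 1_{X^\vee\otimes X}\in\cJ'$ since $1_X\in\cJ'$ and $\cJ'$ is a left-tensor ideal), the identity of any summand of $X^\vee\otimes X$ lies in $\cJ'$, in particular $1_Z$. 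Hence $\cJ\subseteq\cJ'$. Since also $\cJ'\subseteq\cJ$ would follow once we know $1_X\in\cJ$ (already shown) — because $\cJ$ is then a tensor ideal containing $1_X$, so contains the ideal it generates — we conclude $\cJ=\cJ'$, i.e. $\cJ$ is generated by $1_X$.

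\medskip

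\textbf{Parts (iii) and (iv).} By (ii), $\Ob(\cJ)=\Ob(\cJ')$ where $\cJ'$ is generated by $1_X$; the thick $\Ob$-ideal $\Ob(\cJ')$ is by definition $\{W\in\Ob\cC\mid 1_W\in\cJ'(W,W)\}$, and this is exactly the smallest thick $\Ob$-ideal containing $X$ (a morphism generated by $1_X$ as a tensor ideal, restricted to identities, detects precisely the thick tensor $\Ob$-ideal generated by $X$ — one checks the two thick $\Ob$-ideals are equal by the standard factorization argument as in the proof of Theorem~\ref{ThmFibre}). This gives (iii). For (iv): the thick $\Ob$-ideal generated by $X$ equals the one generated by $Z$. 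One inclusion: $Z\inplus X^\vee\otimes X$, and since $X$ lies in the thick ideal, so does $X^\vee\otimes X$ (closure under $\otimes$) and hence its summand $Z$ (closure under summands), so the thick ideal generated by $Z$ is contained in that generated by $X$. Conversely: by part~(i) combined with Lemma~\ref{LemXZ1}, $X\inplus X\otimes Z$, so $X$ lies in the thick ideal generated by $Z$; hence the reverse inclusion. Therefore the two thick $\Ob$-ideals coincide, proving (iv).

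\medskip

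\textbf{Main obstacle.} The delicate point is part~(ii): pinning down that the tensor ideal $\Psi^{-1}(\Tr_Z\Pmod_\unit)$ is generated by $1_X$ rather than merely containing it. The key leverage is the identification $\Psi^{-1}(\Tr_Z\Pmod_\unit)=\cJ^{\min}_{\{[Z]\text{-thick ideal}\}}$ from Theorem~\ref{ThmFibre}, together with the fact that $Z$ is a direct summand of $X^\vee\otimes X$ (so the thick $\Ob$-ideal generated by $X$ already contains $Z$, forcing both $\cJ^{\min}$-type ideals to agree). I expect the careful bookkeeping of "factors through an object of $\sI$" versus "generated by $1_X$" to be the only place requiring genuine attention; everything else is a routine application of rigidity via \eqref{eqdual} and the additivity of $\cC(\unit,-)$.
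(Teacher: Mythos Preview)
Your treatment of parts (i), (iii) and (iv) is correct and matches the paper. In part~(ii), however, there is a genuine gap in your appeal to Theorem~\ref{ThmFibre}(iii). That theorem gives $\Psi(\cJ^{\min}_\sI)=\Tr_\sI\Pmod_\unit$ for the thick $\Ob$-ideal $\sI$ generated by $Z$; it does \emph{not} say this equals $\Tr_Z\Pmod_\unit$. A priori one only has $\Tr_Z\Pmod_\unit\subseteq\Tr_\sI\Pmod_\unit$, so your identification $\cJ=\cJ^{\min}_\sI$ is unjustified at this point (it turns out to be true, but only as a \emph{consequence} of the lemma). Consequently, knowing $1_X\in\cJ^{\min}_\sI$ does not yet yield $1_X\in\cJ$. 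The follow-up claim that ``a morphism in $\cJ$ factors through $Z^{\oplus m}$'' has the same defect: morphisms in $\cJ^{\min}_\sI$ factor through objects of $\sI$, not through $\add(Z)$.

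The repair is to bypass $\cJ^{\min}_\sI$ entirely, which is what the paper does. From part~(i) you already have $\co_X\in\Tr_Z\Pmod_\unit(X^\vee\otimes X)=\Psi(\cJ)(X^\vee\otimes X)$, so $1_X=\iota_{XX}(\co_X)\in\cJ(X,X)$ directly by Lemma~\ref{LemJJ}. For the reverse inclusion, note that since $Z\inplus X^\vee\otimes X$ and $\cC(\unit,X^\vee\otimes X)$ is generated by $\co_X$ (see~\ref{secdual}), every morphism $\unit\to Z$ has the form $p\circ\alpha\circ\co_X$; hence $\Tr_Z\Pmod_\unit$ is the submodule of $\Pmod_\unit$ generated by $\co_X$. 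Under the isomorphism $\Psi$ this means $\cJ$ is the \emph{minimal} tensor ideal containing $\co_X$, which by the zig-zag identities~\eqref{eqdual} coincides with the minimal tensor ideal containing $1_X$. Your observation that $1_Z\in\cJ'$ via $Z\inplus X^\vee\otimes X$ is correct and can be slotted into this framework, but the detour through $\cJ^{\min}_\sI$ should be removed.
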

\begin{proof}
By assumption, $Z$ is the only indecomposable direct summand of~$X^\vee\otimes X$ which admits a non-zero morphism $\unit\to Z$. Hence $\co_X$ must factor as stated in part (i).

By assumption and part (i), $\Tr_{Z}\Pmod_{\unit}$ is generated, as a submodule of $\Pmod_{\unit}$, by $\co_X$. It follows that $\Psi^{-1}(\Tr_{Z}\Pmod_{\unit})$ is the minimal ideal containing $\co_X$. By equation~\eqref{eqdual}, any tensor ideal containing $\co_X$ also contains $1_X$ and the reverse is clearly also true, proving part (ii). Part (iii) then follows immediately.

By assumption, $Z$ is contained in the left tensor Ob-ideal generated by $X$. By Lemma~\ref{LemXZ1}, $X$ is contained in the ideal generated by $Z$. Part (iv) thus follows from part (iii).\end{proof}

\subsection{Main theorem}

\begin{thm}\label{ThmSuff}
Consider a monoidal category~$\cC$ satisfying (I) and (II). Assume that for each $Z\in \sB$ (with~$\sB$ as in~\ref{SetB})
\begin{enumerate}[(a)]
\item the $\cC(Z,Z)$-module~$\cC(\unit,Z)$ is simple.
\item there exists~$X_Z\in\inde\cC$, such that  $\add(X_Z^\vee\otimes X_Z)\cap\sB=\{Z\}.$
\end{enumerate}
 Then $\Ob(-): \TId(\cC)\to \Ide([\cC]_\oplus)$ is an isomorphism of partially ordered sets.
\end{thm}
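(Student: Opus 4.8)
The plan is to prove that the surjective order map $\Ob(-)$ from Theorem~\ref{ThmFibre}(i) is actually bijective, by showing that under hypotheses (a) and (b) every submodule of $\Pmod_{\unit}$ is a trace submodule of a very specific controlled shape, and that distinct thick $\Ob$-ideals $\sI$ give distinct minimal fibre elements $\Tr_{\sI}\Pmod_{\unit}$. Since $\Psi$ is an order isomorphism $\TId(\cC)\stackrel{\sim}{\to}\Sub_{\cC}(\Pmod_{\unit})$ (Theorem~\ref{Thm1}), it suffices to work entirely on the submodule side and exhibit an order-preserving two-sided inverse to the composite $\Ob\circ\Psi^{-1}:\Sub_{\cC}(\Pmod_{\unit})\to\Ide([\cC]_{\oplus})$.

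First I would invoke hypothesis (a) together with Lemma~\ref{LemOnlyTrace}: it gives immediately that \emph{every} submodule $\Mmod\subseteq\Pmod_{\unit}$ equals $\Tr_{\sS}\Pmod_{\unit}$ for $\sS=\{Y\in\sB\,|\,\Mmod(Y)=\cC(\unit,Y)\}$, and moreover by the remark in~\ref{SetB} we may always take the indexing set inside $\sB$. So the map $\sS\mapsto\Tr_{\sS}\Pmod_{\unit}$ from subsets of $\sB$ to $\Sub_{\cC}(\Pmod_{\unit})$ is surjective; it is clearly order-preserving, and by the dichotomy (each $\Mmod(Y)$ is $0$ or all of $\cC(\unit,Y)$) it is in fact an order isomorphism onto its image once we restrict to the ``saturated'' subsets $\sS$ recovered from $\Mmod$ — i.e. $\Mmod\mapsto\sS_\Mmod$ and $\sS\mapsto\Tr_\sS\Pmod_{\unit}$ are mutually inverse order isomorphisms between $\Sub_{\cC}(\Pmod_{\unit})$ and the lattice of those subsets of $\sB$ that arise this way. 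The point of hypothesis (b) is then to identify that lattice of subsets with $\Ide([\cC]_{\oplus})$.

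Here is where (b) enters. For each $Z\in\sB$ pick $X_Z$ as in (b). By Lemma~\ref{LemXZ}, we get: $\Tr_Z\Pmod_{\unit}$ is generated by $\co_{X_Z}$, the tensor ideal $\Psi^{-1}(\Tr_Z\Pmod_{\unit})$ is generated by $1_{X_Z}$, and crucially $\Ob(\Psi^{-1}(\Tr_Z\Pmod_{\unit}))$ is the thick $\Ob$-ideal generated by $Z$ (part (iv)) — equivalently the one generated by $X_Z$ (part (iii)). Now I would argue as follows. Given a subset $\sS\subseteq\sB$, the submodule $\Tr_\sS\Pmod_{\unit}=\sum_{Z\in\sS}\Tr_Z\Pmod_{\unit}$ corresponds under $\Psi^{-1}$ to the tensor ideal $\cJ_\sS$ generated by $\{1_{X_Z}\,|\,Z\in\sS\}$, and by Theorem~\ref{ThmFibre}(ii)–(iii) combined with Lemma~\ref{LemXZ}, $\Ob(\cJ_\sS)$ is exactly the thick $\Ob$-ideal $\sI_\sS$ generated by $\{Z\,|\,Z\in\sS\}$ (using that $\Ob$ preserves sups, which is immediate from the factorisation description of $\cJ^{\min}$). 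Conversely, given $\sI\in\Ide([\cC]_{\oplus})$, by Theorem~\ref{ThmFibre}(iii) the minimal element of its fibre is $\Tr_{\sI}\Pmod_{\unit}=\Tr_{\sI\cap\sB}\Pmod_{\unit}$, so the minimal fibre element over $\sI$ corresponds to the subset $\sI\cap\sB$. Thus $\sI\mapsto\sI\cap\sB$ and $\sS\mapsto\sI_\sS$ will be shown to be mutually inverse order isomorphisms between $\Ide([\cC]_{\oplus})$ and the relevant lattice of saturated subsets of $\sB$: one composite is $\sI\mapsto\sI\cap\sB\mapsto\langle\sI\cap\sB\rangle\subseteq\sI$, and equality holds because every indecomposable $U\in\sI$ has $U^\vee\otimes U$ containing a summand in $\sB$ (namely, any $\unit$-summand forces this — one needs that for $U\in\inde\cC$, $\add(U^\vee\otimes U)\cap\sB\neq\emptyset$, which follows since $\co_U:\unit\to U^\vee\otimes U$ is nonzero hence hits some indecomposable summand lying in $\sB$) and that summand is $\sI$-supported, reconstructing $U$ via the thick-ideal closure using Lemma~\ref{LemXZ1}; the other composite is the identity by the dichotomy on $\Mmod(Y)$. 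Chaining these three order isomorphisms — $\Ide([\cC]_{\oplus})\simeq\{$saturated subsets of $\sB\}\simeq\Sub_{\cC}(\Pmod_{\unit})\simeq\TId(\cC)$ — and checking the composite agrees with $\Ob$ on the nose (it does, by the computations of $\Ob(\cJ_\sS)$ above), yields that $\Ob(-)$ is an order isomorphism.

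The main obstacle I anticipate is the bookkeeping around which subsets of $\sB$ are ``saturated'' and showing the closure operation $\sS\mapsto(\text{thick ideal generated by }\sS)\cap\sB$ returns $\sS$ on the nose for the subsets that actually occur — i.e. verifying that the three lattices line up perfectly rather than just up to a surjection. The key technical input for this is precisely Lemma~\ref{LemXZ}(iv) (closing $Z$ up to $X_Z$ and back) together with Lemma~\ref{LemXZ1} (recovering $X$ as a summand of $X\otimes Z$), and the dichotomy ``$\Mmod(Y)\in\{0,\cC(\unit,Y)\}$'' from hypothesis (a); once those are in hand, the argument is essentially a diagram chase through the order isomorphisms, with no further hard estimates. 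I would end by remarking that $\Psi$, $\cJ^{\min}_{(-)}$, and $\Tr_{(-)}\Pmod_{\unit}$ furnish explicit formulas for the inverse of $\Ob$, namely $\sI\mapsto\cJ^{\min}_{\sI}=\Psi^{-1}(\Tr_{\sI}\Pmod_{\unit})$, which is worth recording for the applications in Sections~\ref{SecClass} and~\ref{SecSFT}.
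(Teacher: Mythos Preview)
Your overall strategy is sound and closely parallels the paper's: both reduce via Theorem~\ref{Thm1} and Theorem~\ref{ThmFibre}(iii) to showing that every submodule of $\Pmod_\unit$ equals $\Tr_\sI\Pmod_\unit$ for some thick $\Ob$-ideal $\sI$, and both invoke Lemma~\ref{LemOnlyTrace} (hence hypothesis~(a)) to reduce further to proving $\Tr_\sS\Pmod_\unit=\Tr_{\sI_\sS}\Pmod_\unit$ for every $\sS\subseteq\sB$, where $\sI_\sS=\langle\sS\rangle$. This last equality is where hypothesis~(b) must enter.

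There is, however, a genuine gap at exactly this point. You assert that the composite $\sS\mapsto\sI_\sS\mapsto\sI_\sS\cap\sB$ is the identity ``by the dichotomy on $\Mmod(Y)$'', but the dichotomy only says $\Mmod(Y)\in\{0,\cC(\unit,Y)\}$; it does not explain why, for $Z\in\sI_\sS\cap\sB$, one has $\Tr_\sS\Pmod_\unit(Z)\neq 0$. That implication is the entire nontrivial content, and it is what the paper proves by a direct coevaluation computation: given such $Z$, one chooses $Z_0\in\sS$ with $Z\inplus Y'\otimes Z_0$, sets $X_0:=X_{Z_0}$, and uses the formula~\eqref{eqXYd} for $\co_{Y\otimes X_0}$ together with Lemma~\ref{LemXZ}(i) to force $\cC(\unit,Z)\subseteq\Tr_{Z_0}\Pmod_\unit\subseteq\Tr_\sS\Pmod_\unit$. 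Your framework does contain the ingredients for an alternative route---once you know $\Ob(\Psi^{-1}(\Tr_\sS\Pmod_\unit))=\sI_\sS$, minimality from Theorem~\ref{ThmFibre}(iii) gives $\Tr_{\sI_\sS}\Pmod_\unit\subseteq\Tr_\sS\Pmod_\unit$, and the trivial inclusion $\sS\subseteq\sI_\sS$ gives the reverse---but you do not carry this out, and ``the dichotomy'' is not a substitute. Two related imprecisions: the inclusion $\Ob(\cJ_\sS)\subseteq\sI_\sS$ needs a short argument via equation~\eqref{ObPsi} and (a mild extension of) Lemma~\ref{LemXZ1}, not the unproved claim that ``$\Ob$ preserves sups''; and in your verification of $\langle\sI\cap\sB\rangle=\sI$ you must use \emph{all} $\sB$-summands of $U^\vee\otimes U$ (through which $\co_U$ necessarily factors), not a single one, since Lemma~\ref{LemXZ1} as stated requires $\co_X$ to factor through copies of one fixed $Z$.
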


\begin{proof}
By Theorems~\ref{ThmFibre}(iii) and \ref{Thm1}, it suffices to prove that every submodule of~$\Pmod_{\unit}$ is of the form 
$\Tr_{\sJ}\Pmod_{\unit}$, for some~$\sJ\in \Ide([\cC]_{\oplus})$. By Lemma~\ref{LemOnlyTrace}, we already know that all submodules are trace submodules.

Since $\Tr_{\sS}\Pmod_{\unit}=\Tr_{\sS\cap\sB}\Pmod_{\unit}$ for all $\sS\subset\inde\cC$, we take an arbitrary subset~$\sS\subset\sB$, consider the trace submodule~$\Mmod:=\Tr_{\sS}\Pmod_{\unit}$ and denote by $\sI\in \Ide([\cC]_{\oplus})$ the ideal generated by $\sS$. We will prove that $\Tr_{\sS}\Pmod_{\unit}=\Tr_{\sI}\Pmod_{\unit}$.
We take $Z\in \sI\cap \sB$ and write~$X:=X_{Z}$. By assumption, there exists $Z_0\in\sS$ and $Y'\in \Ob\cC$ such that~$Z\inplus Y'\otimes Z_0$. We set $X_0:=X_{Z_0}$, so we have in particular $Z_0\inplus X_0^\vee\otimes X_0$.  By Lemma~\ref{LemXZ1}, we have $X\inplus X\otimes Z$. These three observations imply that~$X\inplus Y\otimes X_0$, with~$Y:=X\otimes Y'\otimes X_0^\vee$. Since $Z\inplus X^\vee\otimes X$, we thus have
\begin{equation}\label{eqZXYYX}Z\inplus \;X_0^\vee\otimes Y^\vee\otimes Y\otimes X_0.\end{equation}

Lemma~\ref{LemXZ}(i) implies that~$$\co_{X_0}\,\in\, \Tr_{Z_0}\Pmod_{\unit}(X_0^\vee\otimes X_0)\,\subset \,\Mmod(X_0^\vee\otimes X_0).$$ Equation~\eqref{eqXYd} then implies
$$\co_{Y\otimes X_0}\,\in \,\Mmod(X^\vee_0\otimes Y^\vee\otimes Y\otimes X_0).$$
Since $\cC(\unit, W^\vee\otimes W)$ is always generated by $\co_W$, see \ref{secdual}, it follows that the latter space is equal to~$\cC(\unit,X^\vee_0\otimes Y^\vee\otimes Y\otimes X_0)$. 
By~\eqref{eqZXYYX}, we thus have $\Mmod(Z)=\cC(\unit,Z)$. This implies indeed that~$\Mmod=\Tr_{\sI}\Pmod_\unit$.\end{proof}

By Proposition~\ref{PropMult}, condition \ref{ThmSuff}(a) is necessary for~$\Ob$ to be bijective. However, it is in itself ({\it i.e.} without for instance condition~(b)) not sufficient, by the following example.

\begin{ex}
Let $p\in\mN$ be a prime. Consider the cyclic group $G:=\mZ/p\mZ=\mF_p^+$ and a field~$\mk$ with $\charr(\mk)=p$. We have the (abelian) monoidal category $\cC:=\Rep_{\mk}G$, satisfying (I)-(V), of finite dimensional modules of the Hopf algebra $\mk G$. Note that $\mk G$ can be identified with an infinitesimal group scheme. The indecomposable modules can be labelled by their dimension as $\{M_i\,|\,1\le i\le p\}$. It is easily calculated that $\Ide([\cC]_{\oplus})$ contains, besides $\Ob\cC$ and $\emptyset$, only the ideal of projective modules (direct sums of $M_p=\mk G$). As a side comment, we note that this is consistent with $|\Spec( \Stab \mk G)|=1$, see \cite[Theorem~6.3(b)]{Balmer}. On the other hand, we have $\dim_{\mk}\cC(\unit,M_i)=1$, for all~$i$, leading to the conclusion that \ref{ThmSuff}(a) is satisfied and that we have $p+1$ tensor ideals by Theorem~\ref{Thm1}. Hence $|\TId(\cC)|>|\Ide([\cC]_{\oplus})|$ if $p>2$. It follows easily that the $p-1$ proper ideals in $\TId(\cC)$ are mapped by $\Ob(-)$ to the unique proper ideal in $\Ide([\cC]_{\oplus})$.
\end{ex}

\begin{rem}
If~$\unit$ is indecomposable, and hence $\unit\in\sB$, the condition in Theorem~\ref{ThmSuff}(a) implies that~$\cC(\unit,\unit)$ has no left ideals. By \ref{DefMC}, it then follows that~$\cC(\unit,\unit)$ is a field. \end{rem}

Due to the above observation we note the following specific version of the main theorem.
\begin{thm}\label{ThmSuff2}
Consider a  monoidal category~$\cC$ satisfying (I)-(III) and set $\mk:=\cC(\unit,\unit)$. Assume that for each $Z\in \sB$
 there exists~$X_Z\in\inde\cC$, such that  
 $$Z\inplus X_Z^\vee\otimes X_Z\quad\mbox{ and }\quad\dim_{\mk}\cC(X_Z,X_Z)=1.$$
Then $\Ob(-): \TId(\cC)\to \Ide([\cC]_\oplus)$ is an isomorphism of partially ordered sets.
\end{thm}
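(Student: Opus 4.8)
The plan is to deduce Theorem~\ref{ThmSuff2} as a special case of Theorem~\ref{ThmSuff}, so essentially all of the work is in checking that the hypotheses of the latter hold under the stronger, more concrete assumptions of the former. First I would observe that condition (III) gives us that $\mk:=\cC(\unit,\unit)$ is a field, and moreover that conditions (I)--(III) together with the additive, karoubian nature of a Krull--Schmidt category put us squarely in the setting of Theorem~\ref{ThmSuff}; so it suffices to verify hypotheses (a) and (b) there for each $Z\in\sB$, given the data of an $X_Z\in\inde\cC$ with $Z\inplus X_Z^\vee\otimes X_Z$ and $\dim_{\mk}\cC(X_Z,X_Z)=1$.

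For hypothesis \ref{ThmSuff}(b), I would argue as follows. Since $\dim_{\mk}\cC(X_Z,X_Z)=1$ and $X_Z\in\inde\cC$, the ring $\cC(X_Z,X_Z)$ is the field $\mk$ (it contains $\mk\cdot 1_{X_Z}$, is one-dimensional over $\mk$, hence equals it). Now I claim $\add(X_Z^\vee\otimes X_Z)\cap\sB=\{Z\}$. Indeed, for any indecomposable $W\inplus X_Z^\vee\otimes X_Z$, the space $\cC(\unit,W)$ is, via the isomorphism $\iota_{X_Z,X_Z}$ of~\eqref{eqiota}, a subquotient of $\cC(\unit,X_Z^\vee\otimes X_Z)\cong\cC(X_Z,X_Z)=\mk$; more precisely, summing over the indecomposable summands of $X_Z^\vee\otimes X_Z$ with multiplicity, the spaces $\cC(\unit,W)$ have total dimension $\dim_\mk\cC(X_Z,X_Z)=1$. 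Hence there is exactly one indecomposable summand $W$ (counted without multiplicity) with $\cC(\unit,W)\neq 0$, and it satisfies $\dim_\mk\cC(\unit,W)=1$, so it appears with multiplicity one. By assumption $Z$ is such a summand (as $Z\inplus X_Z^\vee\otimes X_Z$ and $Z\in\sB$ means $\cC(\unit,Z)\neq0$), so that unique summand must be $Z$. This gives $\add(X_Z^\vee\otimes X_Z)\cap\sB=\{Z\}$, which is exactly \ref{ThmSuff}(b).

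For hypothesis \ref{ThmSuff}(a), that $\cC(\unit,Z)$ is a simple $\cC(Z,Z)$-module: from the previous paragraph we already know $\dim_\mk\cC(\unit,Z)=1$, so $\cC(\unit,Z)$ is a one-dimensional $\mk$-vector space and a fortiori an irreducible module over $\cC(Z,Z)$, being one-dimensional over the central subfield $\mk$ and nonzero. (Any nonzero submodule is a nonzero $\mk$-subspace, hence everything.) With (a) and (b) verified for every $Z\in\sB$, Theorem~\ref{ThmSuff} applies and yields the desired isomorphism $\Ob(-):\TId(\cC)\to\Ide([\cC]_\oplus)$.

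The only genuinely substantive point — and the step I would expect to need the most care — is the dimension-counting in the verification of \ref{ThmSuff}(b): one must be sure that ``$\cC(\unit,X_Z^\vee\otimes X_Z)$ is one-dimensional'' really does force all but one indecomposable summand $W$ of $X_Z^\vee\otimes X_Z$ to have $\cC(\unit,W)=0$ \emph{and} that one to have $\cC(\unit,W)$ one-dimensional with multiplicity one, so that $\sB$ meets $\add(X_Z^\vee\otimes X_Z)$ in the single object $Z$. This uses the Krull--Schmidt decomposition together with the additivity of $\cC(\unit,-)$ and the isomorphism~\eqref{eqiota}. Everything else is formal bookkeeping with the hypotheses of Theorem~\ref{ThmSuff}.
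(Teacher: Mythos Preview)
Your proposal is correct and follows essentially the same argument as the paper: both reduce to Theorem~\ref{ThmSuff} by using the isomorphism $\cC(\unit,X_Z^\vee\otimes X_Z)\cong\cC(X_Z,X_Z)$ together with the one-dimensionality hypothesis to conclude that $\dim_\mk\cC(\unit,Z)=1$ (giving condition~(a)) and that $Z$ is the unique indecomposable summand of $X_Z^\vee\otimes X_Z$ lying in $\sB$ (giving condition~(b)). Your version is somewhat more explicit about the dimension count and the multiplicity, but the underlying idea is identical.
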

\begin{proof}
By assumption, for every $Z\in\sB$ we have
$$0\;<\;\dim\cC(\unit,Z)\;\le\;\dim\cC(\unit, X^\vee_Z\otimes X_Z)\;=\;\dim\cC(X_Z,X_Z)\;=\;1.$$
Hence $\cC(\unit,Z)$ is one-dimensional, which implies \ref{ThmSuff}(a). Furthermore, this shows that $X^\vee_Z\otimes X_Z$ contains precisely one direct summand which is in $\sB$, so in particular \ref{ThmSuff}(b) is satisfied.
\end{proof}

\subsection{A special case}

\begin{ddef}\label{DefContr}
An {\bf $l$-controlled} monoidal category is a pair $(\cS,\ell)$ consisting of the following. A monoidal category $\cS$ satisfying (II) and (III). A map $\ell:\Lambda\to\mN$, for~$\Lambda:= \inde\cS$, such that $\ell^{-1}(0)=\{\unit\}$ and $\bkappa\inplus \bmu\otimes\blambda$ implies that $\ell(\bkappa)=\ell(\bmu)+\ell(\blambda)$, for all $\bkappa,\blambda,\bmu\in \Lambda$.

\end{ddef}
For~$(\cS,\ell) $ as in the definition, we define a binary relation $\preceq$ on $\Lambda$ by setting $\blambda\preceq\bnu$ if and only if $\bnu\inplus\bmu\otimes \blambda$, for some~$\bmu\in\Lambda$. Clearly we have
\begin{equation}\label{precle}\blambda\preceq\bnu\;\;\Rightarrow\; \mbox{ $\blambda=\bnu$ or~$\ell(\blambda)<\ell(\bnu)$.} \end{equation}
It follows that $\preceq$ is a partial order.

\subsubsection{}\label{tuple} Now we assume we have a tuple $(\cC,\Tmod,\cS,\ell)$ with
\begin{itemize}
\item $(\cS,\ell)$ an $l$-controlled monoidal category with $\Lambda:=\inde\cS$;
\item $\cC$ a monoidal category satisfying (I)-(IV) with $\mk:=\cC(\unit,\unit)$;
\item $\Tmod:\cS\to\cC$ a monoidal functor;
\end{itemize}
such that: 
\begin{enumerate}[(a)]
\item We have $\inde\cC=\{R(\blambda)\,|\,\blambda\in\Lambda\}\cong\Lambda$, with $\Tmod(\blambda)=R(\blambda)\oplus X$ for some~$X\in \Ob\cC$, where $R(\bmu)\inplus X$ implies~$\bmu\prec \blambda$.
\item There exists a map $\phi:\Lambda\to\Lambda$ with $\ell\circ\phi=\ell$, such that $R(\blambda)^\vee\simeq R(\phi(\blambda))$, for all $\blambda\in\Lambda$.
\item If~$\{R(\bnu),R(\bnu')\}\subset\sB$, then $\ell(\bnu)\not=\ell(\bnu')$.
Set $\mL:=\{0,1,\cdots,n\}$ if $n+1=|\sB|<\infty$ and $\mL=\mN$ otherwise. Then we label $\sB=\{R(\bnu^{(j)})\,|\, j\in\mL\}$, where $0=\ell(\bnu^{(0)})<\ell(\bnu^{(1)})<\ell(\bnu^{(2)})<\cdots$.
\item For each $j\in\mL$, the set
$\Lambda_j:=\{\blambda\in\Lambda\,|\, \bnu^{(j)}\inplus \phi(\blambda)\otimes \blambda\}$ is non-zero, and $\blambda\in\Lambda_j$ implies~$\cC(R(\blambda),R(\blambda))=\mk$.
\item  If~$j<j'$ for $j,j'\in\mL$, then for each $\blambda'\in\Lambda_{j'}$ there exists $\blambda\in\Lambda_j$ such that $\blambda\prec\blambda'$.
\item If for any $\blambda\in\Lambda$ and $j\in\mL$ we have $\bnu^{(j)}\preceq \bkappa\inplus \phi(\blambda)\otimes\blambda$, for some~$\bkappa\in\Lambda$, then there exists $\bmu\in \Lambda_j$ for which $\bmu\preceq\blambda$.
\end{enumerate}

\begin{lemma}\label{LemSC}
Consider a tuple $(\cC,\Tmod,\cS,\ell)$ as in~\ref{tuple} and take $\blambda,\bmu,\bkappa\in\Lambda$. 
\begin{enumerate}[(i)]
\item Assume $\ell(\bkappa)=\ell(\blambda)+\ell(\bmu)$. The number of times $R(\bkappa)$ appears as a direct summand in $R(\blambda)\otimes R(\bmu)$ is the same as the number of times $\bkappa$ appears as a direct summand in $\blambda\otimes\bmu$.
\item Assume that $\blambda\preceq\bmu$. For any $\sI\in \Ide([\cC]_{\oplus})$ with $R(\blambda)\in \sI$, we have $R(\bmu)\in\sI$.
\end{enumerate}
\end{lemma}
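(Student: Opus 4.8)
The plan is to prove the two parts of Lemma~\ref{LemSC} as follows.

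For part~(i), the idea is to compare decompositions by counting and using the degree function $\ell$ as a bookkeeping device. First I would use assumption~\ref{tuple}(a), which says $\Tmod(\blambda) = R(\blambda)\oplus X$ with $R(\bmu)\inplus X$ only for $\bmu\prec\blambda$, hence $X$ built out of $R(\bmu)$ with $\ell(\bmu)<\ell(\blambda)$ (using \eqref{precle} together with the $l$-controlled property). Since $\Tmod$ is monoidal, $\Tmod(\blambda\otimes\bmu)=\Tmod(\blambda)\otimes\Tmod(\bmu)$; expanding the left side via the decomposition of $\blambda\otimes\bmu$ in $\cS$ and the right side via $(R(\blambda)\oplus X)\otimes(R(\bmu)\oplus X')$, and then projecting onto the ``top degree'' part $\ell=\ell(\bkappa)=\ell(\blambda)+\ell(\bmu)$, the cross-terms involving $X$ or $X'$ only contribute summands $R(\bkappa')$ with $\ell(\bkappa')<\ell(\bkappa)$, because tensoring respects $\ell$-additivity on indecomposable summands. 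So the multiplicity of $R(\bkappa)$ on the right is just the multiplicity of $R(\bkappa)$ in $R(\blambda)\otimes R(\bmu)$, while on the left it is the multiplicity of $\bkappa$ in $\blambda\otimes\bmu$ (again the summands $R(\bmu')$ coming from $\Tmod$ applied to a non-top summand of $\blambda\otimes\bmu$ have strictly smaller $\ell$). Equating the two gives the claim. The one subtlety to be careful about is that a priori $R(\bkappa)$ could occur inside $\Tmod(\blambda')$ for $\blambda'$ with $\ell(\blambda')>\ell(\bkappa)$ too --- but \ref{tuple}(a) says $R(\bmu)\inplus X$ forces $\bmu\prec\blambda$, i.e.\ a summand of the ``error term'' always has strictly smaller $\ell$, so this cannot happen; this is the only place real care is needed.

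For part~(ii), I would argue by induction on $\ell(\bmu)-\ell(\blambda)\ge 0$ (finite by \eqref{precle}). If $\blambda=\bmu$ there is nothing to prove. Otherwise, by definition of $\preceq$ there is $\bnu\in\Lambda$ with $\bmu\inplus \bnu\otimes\blambda$. Applying the functor $\Tmod$ and using that $\Tmod$ is monoidal together with \ref{tuple}(a), $R(\bmu)$ (the top-degree summand of $\Tmod(\bmu)$) must appear as a summand of $\Tmod(\bnu)\otimes\Tmod(\blambda)\supseteq R(\bnu)\otimes R(\blambda)$ plus lower-$\ell$ terms; by the degree count of part~(i) (or directly, since $\ell(\bmu)=\ell(\bnu)+\ell(\blambda)$), $R(\bmu)$ must in fact occur in $R(\bnu)\otimes R(\blambda)$ itself. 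Since $R(\blambda)\in\sI$ and $\sI$ is a thick left-tensor $\Ob$-ideal, $R(\bnu)\otimes R(\blambda)\in\sI$, and hence its direct summand $R(\bmu)\in\sI$ by Definition~\ref{DefOI}(a)-(b). This closes the induction (in fact no induction is even needed once part~(i) is in hand).

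The main obstacle is really in part~(i): making precise the ``project onto top degree'' step, i.e.\ confirming that every indecomposable summand appearing in a tensor product $R(\blambda)\otimes R(\bmu)$ has $\ell$-value at most $\ell(\blambda)+\ell(\bmu)$, with equality detecting precisely the ``honest'' multiplicities coming from $\cS$. This follows by combining the $l$-controlled axiom (Definition~\ref{DefContr}), which gives $\ell$-additivity for summands of tensor products in $\cS$, with \ref{tuple}(a), which transports this to $\cC$ up to lower-order terms; once this monotonicity is set up the rest is bookkeeping. I do not expect any difficulty in part~(ii) beyond invoking part~(i) and the axioms of a thick tensor ideal.
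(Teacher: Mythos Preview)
Your proposal is correct and follows essentially the same approach as the paper, which proves part~(i) by invoking that $\Tmod$ is monoidal together with assumption~\ref{tuple}(a) and equation~\eqref{precle}, and deduces part~(ii) directly from part~(i). Your write-up simply unpacks in detail the ``project onto top $\ell$-degree'' argument that the paper leaves implicit, including the key monotonicity fact that every summand of $R(\blambda)\otimes R(\bmu)$ has $\ell$-value at most $\ell(\blambda)+\ell(\bmu)$.
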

\begin{proof}
Part (i) follows from the fact that $\Tmod$ is a monoidal functor, equation~\eqref{precle} and assumption~(a). Part (ii) follows from part (i).
\end{proof}

\begin{thm}\label{ThmSC}
Consider a tuple $(\cC,\Tmod,\cS,\ell)$ as in~\ref{tuple}. 
\begin{enumerate}[(i)]
\item The decategorification map $\Ob:\Ide([\cC]_{\oplus})\to\TId(\cC)$ is an isomorphism. 
\item We have $\Ide([\cC]_{\oplus})=\{\sI_i\,|\,i\in\mL\}$ with $\Ob\cC=\sI_0\supsetneq\sI_1\supsetneq\sI_2\supsetneq\cdots$ and
$$ R(\blambda)\in\sI_i\quad\Leftrightarrow\quad \blambda\succeq \bmu\;\mbox{ for some~$\bmu\in\Lambda_i$}.$$
\item For~$i,j\in\mL$ and $\cJ_i:=\Ob^{-1}(\sI_i)$, we have
$$\cJ_i(\unit,R(\bnu^{(j)}))\;=\;\begin{cases}\cC(\unit,R(\bnu^{(j)}))\cong\mk&\mbox{if $i\le j$}\\
0&\mbox{if $i>j$.}\end{cases}
$$
\end{enumerate}
\end{thm}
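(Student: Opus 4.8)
The strategy is to reduce everything to the structure of the submodule lattice of $\Pmod_{\unit}$ via Theorem~\ref{Thm1} and Theorem~\ref{ThmFibre}, and then to use the combinatorial hypotheses in~\ref{tuple} to pin down exactly which submodules occur. The first task is to verify the hypotheses of Theorem~\ref{ThmSuff}: for each $Z=R(\bnu^{(j)})\in\sB$ we must produce $X_Z\in\inde\cC$ with $\add(X_Z^\vee\otimes X_Z)\cap\sB=\{Z\}$ and $\cC(\unit,Z)$ simple over $\cC(Z,Z)$. By~\ref{tuple}(d) there is $\blambda\in\Lambda_j$ with $\bnu^{(j)}\inplus\phi(\blambda)\otimes\blambda\simeq R(\blambda)^\vee\otimes R(\blambda)$ (using (a), (b) and Lemma~\ref{LemSC}(i) to transport the summand statement from $\cS$ to $\cC$, modulo the $\prec$-smaller correction term $X$); the point then is that any other indecomposable summand $R(\bkappa)$ of $R(\blambda)^\vee\otimes R(\blambda)$ with $R(\bkappa)\in\sB$ would have to satisfy $\bnu^{(j')}\preceq\bkappa$ for some $j'$, whence by~\ref{tuple}(f) some $\bmu\in\Lambda_{j'}$ satisfies $\bmu\preceq\blambda$, forcing $\ell(\bmu)\le\ell(\blambda)$; combined with the length bookkeeping $\ell(\bkappa)\le 2\ell(\blambda)$ and the fact that $\ell$ is additive under $\otimes$-summands, one squeezes $j'=j$. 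So $X_Z:=R(\blambda)$ works, and since $\cC(R(\blambda),R(\blambda))=\mk$ by~\ref{tuple}(d), assumption (a) of Theorem~\ref{ThmSuff} follows from the dimension estimate $0<\dim\cC(\unit,Z)\le\dim\cC(\unit,R(\blambda)^\vee\otimes R(\blambda))=\dim\cC(R(\blambda),R(\blambda))=1$, exactly as in the proof of Theorem~\ref{ThmSuff2}. This gives part~(i): $\Ob(-)$ is an isomorphism of partially ordered sets.

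\textbf{Description of the lattice (part (ii)).} Having (i), it suffices to describe $\Ide([\cC]_{\oplus})$. Define $\sI_i$ to be the thick $\Ob$-ideal generated by $\{R(\bnu^{(k)})\,:\,k\ge i,\ k\in\mL\}$, or equivalently (by Lemma~\ref{LemSC}(ii) and the $\prec$-minimality considerations) generated by $R(\bnu^{(i)})$ together with all its $\preceq$-successors. I claim $\sI_i=\{R(\blambda)\,:\,\blambda\succeq\bmu\text{ for some }\bmu\in\Lambda_i\}$ and that these are all the thick ideals. The containment "$\supseteq$": if $\bmu\in\Lambda_i$ then $\bnu^{(i)}\inplus\phi(\bmu)\otimes\bmu\simeq R(\bmu)^\vee\otimes R(\bmu)$ up to the correction term, so $R(\bmu)$ lies in the ideal generated by $R(\bnu^{(i)})$ by Lemma~\ref{LemXZ1} (i.e.\ $R(\bmu)\inplus R(\bmu)\otimes R(\bnu^{(i)})$), and then Lemma~\ref{LemSC}(ii) pushes this up to all $\blambda\succeq\bmu$. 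The containment "$\subseteq$": a thick ideal containing $R(\bnu^{(i)})$ contains, by Lemma~\ref{LemXZ}(iv)-type reasoning applied with $X_{Z}=R(\blambda)$ for $\blambda\in\Lambda_i$, each such $R(\blambda)$ and hence (Lemma~\ref{LemSC}(ii)) all $\preceq$-larger objects; conversely one checks any $R(\bkappa)\in\sI_i$ satisfies $\bnu^{(i)}\preceq\bkappa$ via an argument like~\ref{tuple}(f). Finally, to see these exhaust $\Ide([\cC]_{\oplus})$: any thick ideal $\sI$ is determined by $\sI\cap\sB$ (since $\Ob^{-1}$-fibres are singletons, equivalently every submodule of $\Pmod_{\unit}$ is a trace submodule by Lemma~\ref{LemOnlyTrace}, and $\Tr_{\sS}\Pmod_\unit=\Tr_{\sS\cap\sB}\Pmod_\unit$); and $\sI\cap\sB$ is downward-closed under "remove the smallest element" by~\ref{tuple}(e): if $R(\bnu^{(j')})\in\sI$ with $j'>0$, pick $\blambda'\in\Lambda_{j'}\subseteq\sI$, then by~(e) some $\blambda\in\Lambda_j$ with $j=j'-1$ (or the next index of $\mL$ below $j'$) has $\blambda\prec\blambda'$, hence $R(\blambda)\in\sI$, hence $R(\bnu^{(j)})\in\sI$ since $\bnu^{(j)}\preceq\blambda$. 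So $\sI\cap\sB$ is an "initial-final segment" $\{R(\bnu^{(k)})\,:\,k\ge i\}$ for some $i\in\mL$, giving $\sI=\sI_i$, and the strict chain $\sI_0\supsetneq\sI_1\supsetneq\cdots$ follows since $R(\bnu^{(i)})\in\sI_i\setminus\sI_{i+1}$ (the latter because $R(\bnu^{(i)})\notin\sB$-successors of anything in $\Lambda_{i+1}$, by the length ordering $\ell(\bnu^{(i)})<\ell(\bnu^{(i+1)})\le\ell(\bmu)$ for $\bmu\in\Lambda_{i+1}$ together with~\eqref{precle}).

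\textbf{The Hom-groups (part (iii)).} By Theorem~\ref{ThmFibre}(iii) and Theorem~\ref{Thm1}, $\cJ_i=\Ob^{-1}(\sI_i)$ corresponds under $\Psi$ to $\Tr_{\sI_i}\Pmod_{\unit}=\Tr_{\sI_i\cap\sB}\Pmod_{\unit}=\Tr_{\{R(\bnu^{(k)})\,:\,k\ge i\}}\Pmod_{\unit}$. Evaluating at $R(\bnu^{(j)})$: since $\cC(\unit,R(\bnu^{(j)}))$ is the simple (one-dimensional) $\cC(R(\bnu^{(j)}),R(\bnu^{(j)}))$-module, the trace submodule is all of $\cC(\unit,R(\bnu^{(j)}))\cong\mk$ as soon as there is a nonzero morphism $R(\bnu^{(k)})\to R(\bnu^{(j)})$ for some $k\ge i$; by the Yoneda/trace-submodule description this happens iff $R(\bnu^{(j)})\in\sB$ receives a nonzero map from some object in $\sI_i$, which by the structure of $\sI_i$ and the length ordering holds precisely when $i\le j$. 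When $i>j$ there is no such object below $R(\bnu^{(j)})$ (again by $\ell(\bnu^{(i)})>\ell(\bnu^{(j)})$ and the fact that membership in $\sI_i$ forces $\preceq$-dominance over $\bnu^{(i)}$, hence $\ell\ge\ell(\bnu^{(i)})>\ell(\bnu^{(j)})$, while a nonzero map into $R(\bnu^{(j)})$ together with the trace-submodule description of Lemma~\ref{Lem2Tr} would force the source to be $R(\bnu^{(j)})$ itself), so $\cJ_i(\unit,R(\bnu^{(j)}))=0$.

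\textbf{Main obstacle.} The routine parts are the lattice bookkeeping with $\ell$ and $\preceq$ and transporting summand statements through the monoidal functor $\Tmod$ (Lemma~\ref{LemSC}(i)). The genuinely delicate step is verifying hypothesis~\ref{ThmSuff}(b), i.e.\ that $X_Z=R(\blambda)$ with $\blambda\in\Lambda_j$ has $\add(R(\blambda)^\vee\otimes R(\blambda))\cap\sB=\{R(\bnu^{(j)})\}$ and \emph{not} merely $\{R(\bnu^{(j)})\}\subseteq\add(\cdots)\cap\sB$: one must rule out \emph{larger} $\bnu^{(j')}$ ($j'>j$) appearing, and this is exactly what condition~\ref{tuple}(f) is engineered to prevent — so the crux of the write-up is showing that (f), via the $\ell$-additivity constraint $\ell(\bnu^{(j')})\le 2\ell(\blambda)$ and (e)'s descending chain, forces $j'=j$. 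Once (b) is in hand, parts (i)-(iii) follow mechanically from Theorems~\ref{ThmSuff}, \ref{ThmFibre} and~\ref{Thm1}.
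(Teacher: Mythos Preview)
Your overall strategy coincides with the paper's: verify the hypotheses of Theorem~\ref{ThmSuff2}, deduce part~(i), then analyse the trace submodules $\Mmod_j=\Tr_{R(\bnu^{(j)})}\Pmod_\unit$ to obtain the chain and the characterisation of~$\sI_j$. However, you have misidentified the crux. The hypothesis~\ref{ThmSuff}(b), namely $\add(R(\blambda)^\vee\otimes R(\blambda))\cap\sB=\{R(\bnu^{(j)})\}$ for $\blambda\in\Lambda_j$, is \emph{not} the hard step: the very dimension estimate you quote for~(a), $\dim_\mk\cC(\unit,R(\blambda)^\vee\otimes R(\blambda))=\dim_\mk\cC(R(\blambda),R(\blambda))=1$, already forces $R(\blambda)^\vee\otimes R(\blambda)$ to have a \emph{unique} summand in~$\sB$, and since $R(\bnu^{(j)})$ is one such summand (by~\ref{tuple}(d) and Lemma~\ref{LemSC}(i)), it is the only one. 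This is exactly the content of Theorem~\ref{ThmSuff2}, which the paper invokes directly. Your separate $(f)$-based argument for~(b) is therefore unnecessary; worse, as written it is incomplete: tracing it through, $(f)$ yields $\bmu\in\Lambda_{j'}$ with $\bmu\preceq\blambda$, hence $\ell(\bnu^{(j')})=2\ell(\bmu)\le 2\ell(\blambda)=\ell(\bnu^{(j)})$, which gives only $j'\le j$, not $j'=j$.

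Where condition~(f) is genuinely needed is in part~(ii), for the implication $R(\blambda)\in\sI_j\Rightarrow\blambda\succeq\bmu$ for some $\bmu\in\Lambda_j$; you gesture at this (``via an argument like~\ref{tuple}(f)'') but do not carry it out. The paper's argument here uses equation~\eqref{ObPsi} and part~(iii) to deduce that some $R(\bnu^{(j')})$ with $j'\ge j$ is a summand of $R(\blambda)^\vee\otimes R(\blambda)$, then transports back through~$\Tmod$ via~(a),~(b) and applies~(f) and~(e). A couple of further slips: the claim ``$\bnu^{(j)}\preceq\blambda$'' for $\blambda\in\Lambda_j$ is false (indeed $\ell(\bnu^{(j)})=2\ell(\blambda)>\ell(\blambda)$); what holds is $R(\bnu^{(j)})\inplus R(\blambda)^\vee\otimes R(\blambda)$, which is enough for your purpose. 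Similarly, in your use of~(e) you infer $R(\blambda)\in\sI$ from $\blambda\prec\blambda'$ and $R(\blambda')\in\sI$, but $\blambda\prec\blambda'$ says that $R(\blambda')$ lies in the ideal generated by $R(\blambda)$, not the reverse; the correct deduction (and the one the paper makes) is that $\sI_j\subset\sI_{j'}$ for $j>j'$, since every generator $R(\blambda)$ of $\sI_j$ lies above some $R(\bmu)$ with $\bmu\in\Lambda_{j'}$.
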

\begin{proof}
For~$j\in\mL$, we take an arbitrary $\blambda\in \Lambda_j$. It follows from Lemma~\ref{LemSC}(i) that $R(\bnu^{(j)})$ appears as a direct summand of $R(\blambda)^\vee\otimes R(\blambda)$. By condition~(d), Theorem~\ref{ThmSuff2} thus implies part~(i). We also find $\cC(\unit,R(\bnu^{(j)}))\cong\mk$ by the proof of \ref{ThmSuff2}.

Now we define the trace submodule~$\Mmod_j:=\Tr_{R(\bnu^{(j)})}\Pmod_{\unit}$, for each $j\in\mL$. These are all distinct, by Lemma~\ref{Lem2Tr}. We study their behaviour under the isomorphism $\Ob\circ\Psi^{-1}$. By Lemma~\ref{LemXZ}(iii), $\sI_j:=\Ob\circ\Psi^{-1}(\Mmod_j)$ is the ideal generated by $R(\blambda)$, for an arbitrary $\blambda\in\Lambda_j$.
By condition~(e) and Lemma~\ref{LemSC}(ii), this implies that $\sI_j\subset\sI_{j'}$, so also $\Mmod_j\subset \Mmod_{j'}$ if $j>j'$. 

By Lemma~\ref{LemOnlyTrace} and the above paragraph, we find that $\{\Mmod_j\,|\, j\in\mL\}$ is an exhaustive list of submodules of $\Pmod_{\unit}$. Part~(iii) is now also clear. It remains to prove that
$$R(\blambda)\in \sI_j\quad\Leftrightarrow\quad\blambda\succeq \bmu\;\mbox{ for some~$\bmu\in\Lambda_j$}. $$
Since $R(\bmu)\in \sI_j$ for all $\bmu\in\Lambda_j$, Lemma~\ref{LemSC}(ii) implies it suffices to show $\Rightarrow$ above. By~\eqref{ObPsi} we can assume that $\co_{R(\blambda)}$ factors through $R(\bnu^{(j)})$. By part (iii) this means that $R(\bnu^{(j')})\inplus R(\blambda)^\vee\otimes R(\lambda)$ for some~$\bnu^{(j')}$ with $j'\ge j$. By conditions (a), (b) this means that $R(\bnu^{(j')})\inplus \Tmod (\phi(\blambda)\otimes\blambda)$. Consequently, we have $R(\bnu^{(j')})\inplus \Tmod(\bkappa)$, for some~$\bkappa\in\Lambda$ with $\bkappa\inplus \phi(\blambda)\otimes\blambda$. By condition~(a) we have $\bnu^{(j')}\preceq\bkappa$, which by condition~(f) means that there exists $\bmu'\in\Lambda_{j'}$ with $\blambda\succeq\bmu'$. Condition~(e) then provides $\bmu\in\Lambda_j$ with $\blambda\succeq\bmu$.
\end{proof}

%%%%%%%%%%%%%%%%%%%%%%%%%%%%%%%%%%%%%%%%%%%%%%%%%%%%%%%%%%%%%%%%%%%%%%%%%%%%%%

\section{Example: the category of tilting modules for a reductive group}\label{SecModular}
In this section, we let $\mk$ be an algebraically closed field of positive characteristic $p$.
\subsection{Tilting modules}
 We consider a connected reductive algebraic group $G$ over~$\mk$, with maximal torus~$T$ and Borel subgroup $B\supset T$, see~\cite{Jantzen}.  We also assume that the derived group $G'$ is simple and simply connected. For instance, this excludes the multiplicative group $\mG_m$ with $\mG_m(\mk)=\mk^\times$, since $\mG_m'=0$. We denote the Coxeter number by $h$, see \cite[\S II.6.1]{Jantzen}. 

{\em We will always assume that $p\ge 2h-2$,} except when $G=\SL_2$.

\subsubsection{}  Let $\cC:=\Ti(G)$ be the monoidal category of $G$-modules with both a good and a Weyl filtration, known as tilting modules, see \cite{Donkin} or \cite[\S II.E]{Jantzen}. This is a monoidal subcategory of the category $\Rep_{\mk}G$ of all (finite dimensional) algebraic modules, by \cite[Proposition~1.2(i)]{Donkin} and the observation that the trivial module~$\unit$ is tilting. Clearly, $\cC$ satisfies (I)-(V).

We let $R^+$ denote the set of all positive roots.  We have the set $X^+\subset X(T)$ of dominant weights of \cite[\S II.2.6]{Jantzen}. Let $\rho\in X(T)\otimes_{\mZ}\mQ$ denote half the sum of positive roots, see \cite[\S II.1.6]{Jantzen}. For~$\lambda\in X^+$, we denote the induced module by $\nabla(\lambda)$ and the Weyl module by $\Delta(\lambda)$.

\subsubsection{}It is a well-known fact, as follows e.g. from \cite[Proposition~II.4.13]{Jantzen}, that
\begin{equation}\label{standardstuff}\dim_{\mk} \Hom_G(\Delta(\lambda),N)\;=\; (N:\nabla(\lambda)),\quad\mbox{for all $\lambda\in X^+,$}\end{equation}
for any $G$-module~$N$ with a good filtration, and $(N:\nabla(\lambda))$ the multiplicity of $\nabla(\lambda)$ in such a filtration. Since $\unit\simeq \Delta(0)$, this gives a description of the functor~$\Pmod_{\unit}=\cC(\unit,-)$.

By \cite[Theorem~1.1]{Donkin}, the indecomposable modules in $\Ti(G)$ are labelled as $T(\lambda)$ by $\lambda\in X^+$. The tilting module~$T(\lambda)$ can be characterised by the fact that there is a monomorphism $\Delta(\lambda)\hookrightarrow T(\lambda)$ such that the cokernel has a Weyl filtration.

We define the following dominant weights
$$\sigma_j:= (p^j-1)\rho\in X^+\quad\mbox{and}\quad \mu_j=2(p^j-1)\rho\in X^+,\quad\mbox{for }j\in\mN.$$
Here $\sigma_j$ is the Steinberg weight, for which we have the simple tilting module
$T(\sigma_j)\;\simeq\; L(\sigma_j),$ see e.g. \cite[II.3.19(4)]{Jantzen}.

\begin{lemma}\label{Lemmuj}
For each $j\in\mN$, we have $T(\mu_j)\inplus T(\sigma_j)^\vee\otimes T(\sigma_j)$ and
$$\dim_{\mk}\Hom_G(\unit,T(\mu_j))\;=\; 1\;=\;\dim_{\mk}\End_G(T(\sigma_j)).$$
\end{lemma}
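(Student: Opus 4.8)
The plan is to verify the two claims separately, both hinging on standard facts about Steinberg modules and tilting theory for $G$.

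First I would treat the multiplicity claim $\dim_{\mk}\End_G(T(\sigma_j))=1$. Since $\sigma_j=(p^j-1)\rho$ is a Steinberg weight, we have $T(\sigma_j)\simeq L(\sigma_j)\simeq \Delta(\sigma_j)\simeq\nabla(\sigma_j)$, by e.g. \cite[II.3.19(4)]{Jantzen}; in particular $T(\sigma_j)$ is simple. Hence $\End_G(T(\sigma_j))=\End_G(L(\sigma_j))=\mk$ because $\mk$ is algebraically closed and $L(\sigma_j)$ is absolutely simple (Schur's lemma). This also shows $T(\sigma_j)$ is self-dual, $T(\sigma_j)^\vee\simeq T(\sigma_j)$, since the dual of a Steinberg module is again the simple module of the same highest weight (the Steinberg weight is fixed by $-w_0$).

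Next I would show $T(\mu_j)\inplus T(\sigma_j)^\vee\otimes T(\sigma_j)$ with the stated hom-space being one-dimensional. The key input is Donkin's tensor product theorem / the fact that a tensor product of tilting modules is tilting, so $M:=T(\sigma_j)^\vee\otimes T(\sigma_j)\simeq T(\sigma_j)\otimes T(\sigma_j)$ is a tilting module. Its highest weight is $2\sigma_j=\mu_j$, which occurs with multiplicity one (the top of $\Delta(\sigma_j)\otimes\Delta(\sigma_j)$, or equivalently the highest weight appearing in $\nabla(\sigma_j)\otimes\nabla(\sigma_j)$, has multiplicity one by the corresponding statement for Weyl modules / the PRV-type argument: $2\sigma_j$ is a weight of $M$ with multiplicity $1$ since $\sigma_j$ is the unique highest weight of each factor). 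Because $\mu_j$ is a maximal weight of the tilting module $M$, the indecomposable summand of $M$ containing a highest-weight vector of weight $\mu_j$ must be $T(\mu_j)$, so $T(\mu_j)\inplus M$. For the hom-space: using \eqref{standardstuff} with $\lambda=0$ and $N=M$ (which has a good filtration since it is tilting),
$$\dim_{\mk}\Hom_G(\unit,T(\mu_j))\;=\;(T(\mu_j):\nabla(0)).$$
Now $(T(\mu_j):\nabla(0))=(\Delta(0):T(\mu_j))^{\*}$ type reciprocity, or more directly: the number of trivial composition-filtration factors of $M$ in its $\nabla$-filtration equals $\dim_\mk \Hom_G(\unit,M)=\dim_\mk\Hom_G(T(\sigma_j),T(\sigma_j))=1$ (using self-duality of $T(\sigma_j)$ and the adjunction $\Hom_G(\unit,A^\vee\otimes B)\cong\Hom_G(A,B)$), and since $T(\mu_j)$ is the unique summand of $M$ with a highest weight $\ge \mu_j$ it absorbs this unique trivial factor, giving $(T(\mu_j):\nabla(0))=1$ while all other summands $T(\nu)$ of $M$ have $(T(\nu):\nabla(0))=0$. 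Hence $\dim_{\mk}\Hom_G(\unit,T(\mu_j))=1$.

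The main obstacle is the bookkeeping in the last step: showing that the single trivial $\nabla$-factor of $M=T(\sigma_j)\otimes T(\sigma_j)$ lands in the summand $T(\mu_j)$ rather than in some other summand $T(\nu)$, i.e. that no $T(\nu)$ with $\nu\neq \mu_j$ occurring in $M$ has a trivial $\nabla$-subquotient. This is where the restriction $p\ge 2h-2$ (and for $\SL_2$, the separate treatment) should enter, ensuring $\Delta(\sigma_j)\otimes\Delta(\sigma_j)$ behaves well enough; concretely one argues via the linkage principle or via the explicit structure of $\sigma_j$ as a Steinberg weight that any $\nu$ with $T(\nu)\inplus M$ and $(T(\nu):\nabla(0))\neq 0$ forces $\nu=\mu_j$. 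I would make this precise using \cite[Lemma~II.E.9]{Jantzen}-type results on tilting modules for Steinberg weights, or by a direct weight count that $\mu_j-\nu$ cannot be expressed compatibly otherwise.
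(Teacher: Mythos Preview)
Your treatment of $\dim_\mk\End_G(T(\sigma_j))=1$, the self-duality $T(\sigma_j)^\vee\simeq T(\sigma_j)$, and $T(\mu_j)\inplus M:=T(\sigma_j)\otimes T(\sigma_j)$ is correct and coincides with the paper. Your upper bound $\dim_\mk\Hom_G(\unit,T(\mu_j))\le 1$, obtained from $\Hom_G(\unit,M)\cong\End_G(T(\sigma_j))=\mk$, is also correct and is exactly what the paper encodes in the sentence ``since $T(\sigma_j)$ is simple, the dimension formulae follow''.

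The genuine gap is the lower bound $\dim_\mk\Hom_G(\unit,T(\mu_j))\ge 1$, which you rightly flag as the obstacle but do not resolve. None of your proposed justifications closes it: the claim that ``the summand with the highest weight absorbs the trivial factor'' is not a valid inference in general; the linkage principle only says that any $\nu$ with $(T(\nu):\nabla(0))\neq 0$ lies in the dot-orbit of $0$ under the affine Weyl group, which does not single out $\mu_j$ among the summands of $M$; and no weight-count argument is available, since the image of $\unit\to M$ sits in the zero weight space, not in an extremal one. The reference to \cite[\S II.E.9]{Jantzen} does not by itself produce the needed embedding either.

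The paper avoids this difficulty entirely by not trying to locate the trivial factor inside $M$. Instead it produces a nonzero map $\unit\to T(\mu_j)$ directly, invoking \cite[Satz~6.2(3)]{JantzenC} (specialised to $n=j$, $\nu=0$, $\lambda=\sigma_j$) to obtain an inclusion $L(0)\hookrightarrow\Delta(\mu_j)$, and then composing with the canonical embedding $\Delta(\mu_j)\hookrightarrow T(\mu_j)$. This structural fact about the socle of $\Delta(2(p^j-1)\rho)$ is precisely where the standing hypothesis $p\ge 2h-2$ enters. Combined with the upper bound you already have, this yields $\dim_\mk\Hom_G(\unit,T(\mu_j))=1$.
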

\begin{proof}
First we observe that $T(\sigma_j)^\vee= T(\sigma_j)^\ast\simeq T(\sigma_j)$ by \cite[II.E.6(2)]{Jantzen}.
By \cite[Satz 6.2(3)]{JantzenC}, for the special case $n=j$, $\nu=0$ and $\lambda=\sigma_j$, we have an inclusion
$$\unit=L(0)\hookrightarrow \Delta(\mu_j)\hookrightarrow T(\mu_j).$$
That $T(\mu_j)$ is a direct summand of $T(\sigma_j)^{\otimes 2}$ follows immediately from $\mu_j=2\sigma_j$. Since $T(\sigma_j)$ is simple, the dimension formulae follow.
\end{proof}
 
\subsection{Tensor ideals and Frobenius kernels}\label{SecFK}
In this subsection we show how the above considerations lead to some conclusions also made by Andersen in \cite[\S 4]{AndersenProc}, based on results by Donkin in~\cite{Donkin}.

\begin{prop}\label{ModularProp}
The category $\Ti(G)$ has infinitely many tensor ideals 
$$\{\Psi^{-1}(\Tr_{T(\mu_j)}\Pmod_{\unit})\,|\,j\in\mN\}.$$
\end{prop}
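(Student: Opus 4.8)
The plan is to apply Theorem~\ref{Thm1} together with the machinery of trace submodules developed in Section~\ref{SecFibres}, using Lemma~\ref{Lemmuj} as the essential geometric input. By Theorem~\ref{Thm1}, it suffices to exhibit infinitely many distinct submodules of the principal module $\Pmod_{\unit}=\cC(\unit,-)$, and the natural candidates are exactly the trace submodules $\Tr_{T(\mu_j)}\Pmod_{\unit}$ for $j\in\mN$. So the whole statement reduces to two claims: that these really are distinct submodules, and that they are genuinely infinite in number (i.e.\ the weights $\mu_j$ are pairwise distinct, which is immediate since $p^j$ is strictly increasing in $j$).

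First I would recall that each $T(\mu_j)$ is indecomposable, being an indecomposable tilting module by \cite[Theorem~1.1]{Donkin}. By Lemma~\ref{Lemmuj}, $\Hom_G(\unit,T(\mu_j))$ is one-dimensional and nonzero, so $T(\mu_j)\in\sB$ (with $\sB$ as in~\ref{SetB}); in particular $\Tr_{T(\mu_j)}\Pmod_{\unit}$ is a nonzero submodule of $\Pmod_{\unit}$. To see they are pairwise distinct, fix $j\neq j'$ and apply Lemma~\ref{Lem2Tr}: take a nonzero $f\in\cC(\unit,T(\mu_j))$ and let $\Mmod$ be the submodule of $\Pmod_{\unit}$ it generates. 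Since $T(\mu_j)$ is one-dimensional in $\cC(\unit,-)$, we have $\Mmod=\Tr_{T(\mu_j)}\Pmod_{\unit}$ itself. Lemma~\ref{Lem2Tr} then states $\Mmod\neq\Tr_{\sS}\Pmod_{\unit}$ for any $\sS\subset\inde\cC\setminus\{T(\mu_j)\}$; in particular $\Tr_{T(\mu_j)}\Pmod_{\unit}\neq\Tr_{T(\mu_{j'})}\Pmod_{\unit}$ whenever $j'\neq j$. Hence $\{\Tr_{T(\mu_j)}\Pmod_{\unit}\,|\,j\in\mN\}$ is an infinite set of distinct submodules of $\Pmod_{\unit}$, and by Theorem~\ref{Thm1} the corresponding tensor ideals $\Psi^{-1}(\Tr_{T(\mu_j)}\Pmod_{\unit})$ are infinitely many distinct elements of $\TId(\cC)$.

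The main obstacle is not really a difficulty but rather making sure the nonvanishing hypotheses of Lemma~\ref{Lem2Tr} are in place: one needs $T(\mu_j)$ to be an indecomposable object (so that $\cC(T(\mu_j),T(\mu_j))$ is local, which holds by Krull--Schmidt) and one needs the morphism $f\colon\unit\to T(\mu_j)$ to be nonzero, which is exactly the content of the inclusion $\unit\hookrightarrow\Delta(\mu_j)\hookrightarrow T(\mu_j)$ furnished by \cite[Satz~6.2(3)]{JantzenC} via Lemma~\ref{Lemmuj}. With those two facts recorded, the argument is purely formal. (One could alternatively phrase the conclusion through the decategorification map: the weights $\sigma_j$ give, via Lemma~\ref{Lemmuj} and Theorem~\ref{ThmSuff2}-type reasoning, an infinite descending chain of thick $\Ob$-ideals in $[\cC]_\oplus$, but the direct route through Theorem~\ref{Thm1} and Lemma~\ref{Lem2Tr} is the shortest.)
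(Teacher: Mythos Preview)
Your proof is correct and follows essentially the same approach as the paper: use Lemma~\ref{Lemmuj} to see that the trace submodules $\Tr_{T(\mu_j)}\Pmod_{\unit}$ are nonzero, invoke Lemma~\ref{Lem2Tr} to see they are pairwise distinct, and conclude via Theorem~\ref{Thm1}. The only difference is that you make explicit the step identifying the submodule generated by a nonzero $f\in\cC(\unit,T(\mu_j))$ with $\Tr_{T(\mu_j)}\Pmod_{\unit}$ itself, using the one-dimensionality of $\cC(\unit,T(\mu_j))$ from Lemma~\ref{Lemmuj}; the paper leaves this implicit in its citation of Lemma~\ref{Lem2Tr}.
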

\begin{proof}
By Lemma~\ref{Lemmuj}, we have non-zero submodules
$\Tr_{\Pmod_{T(\mu_j)}}\Pmod_{\unit}$ of the principal module~$\Pmod_{\unit}$, which are all distinct by Lemma~\ref{Lem2Tr}. The conclusion then follows from Theorem~\ref{Thm1}.
\end{proof}

\subsubsection{}\label{SecFrob} We can conceptually explain these tensor ideals by considering the $r$th Frobenius kernel~$G_r$ of $G$, for each $r\in\mZ_{>0}$. This is a finite (even infinitesimal) group scheme, see \cite[\S I.9]{Jantzen}. Restriction yields a monoidal functor~$\Rep_{\mk}G\to\Rep_{\mk}G_rT$. Taking the quotient of $\Rep_{\mk}G_rT$ with the tensor ideal of morphisms which factor through projective modules yields the stable module category $\Stab G_rT$. Via composition, we obtain a monoidal functor
$$\Fmod_r:\;\Ti(G)\;\to\; \Rep_{\mk}G \;\to\; \Rep_{\mk}G_rT\;\to\;\Stab G_rT,\quad\mbox{for all~$r\in\mZ_{>0}$.}$$

\begin{prop}\label{PropConc}
For each $r\in\mZ_{>0}$, set $\cJ_r:=\ker\Fmod_r$ and $\sI_r:=\Ob(\cJ_r)$. Then $\sI_r$ has as indecomposable objects $\{T(\lambda)\,|\, \lambda\in \sigma_r+X^+\}$.
Furthermore, $\Psi^{-1}(\Tr_{T(\mu_r)}\Pmod_{\unit})$ is the minimal ideal in $\Ob^{-1}(\sI_r)$.
\end{prop}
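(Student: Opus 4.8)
The plan is to compute $\sI_r=\Ob(\cJ_r)$ explicitly first, and then feed that description into the trace–submodule formalism of Section~\ref{SecFibres}, via Lemma~\ref{LemXZ}; the minimality statement will then follow from Theorem~\ref{ThmFibre}(iii) almost for free.

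First I would identify $\sI_r$. By definition of the stable module category, $\Fmod_r(T(\lambda))\cong\zero$ in $\Stab G_rT$ if and only if $\id_{T(\lambda)}$ factors through a projective $G_rT$-module, i.e.\ exactly when $T(\lambda)|_{G_rT}$ is projective; so $\sI_r=\Ob(\cJ_r)$ consists of the indecomposable tilting modules restricting to projective $G_rT$-modules. Donkin's classification of such tilting modules — valid under the standing hypothesis $p\ge 2h-2$, resp.\ for all $p$ when $G=\SL_2$ — see \cite{Donkin} and \cite[\S 4]{AndersenProc}, states that these are precisely the $T(\lambda)$ with $\lambda\in\sigma_r+X^+$, which is the first assertion. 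For later use I record the easy inclusion: for $\mu\in X^+$ one has $T(\sigma_r+\mu)\inplus T(\sigma_r)\otimes T(\mu)$ by comparing highest weights, and $T(\sigma_r)=L(\sigma_r)$ (the $r$th Steinberg module) is projective over $G_rT$, so every summand of $T(\sigma_r)\otimes T(\mu)$ — in particular $T(\sigma_r+\mu)$ — restricts to a projective $G_rT$-module.

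For the second assertion I would apply Lemma~\ref{LemXZ} with $Z:=T(\mu_r)$ and $X:=T(\sigma_r)$ (note $T(\mu_r)\in\sB$ by Lemma~\ref{Lemmuj}). The hypothesis $\add(T(\sigma_r)^\vee\otimes T(\sigma_r))\cap\sB=\{T(\mu_r)\}$ is checked exactly as in the proof of Theorem~\ref{ThmSuff2}: by Lemma~\ref{Lemmuj} we have $T(\sigma_r)^\vee\simeq T(\sigma_r)$ and $T(\mu_r)\inplus T(\sigma_r)^{\otimes 2}$, while
$$\dim_{\mk}\Hom_G(\unit,T(\sigma_r)\otimes T(\sigma_r))\;=\;\dim_{\mk}\End_G(T(\sigma_r))\;=\;1$$
since $T(\sigma_r)$ is simple; hence $T(\mu_r)$ is the unique indecomposable summand of $T(\sigma_r)^{\otimes 2}$ lying in $\sB$, with multiplicity one. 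Lemma~\ref{LemXZ}(iii) then identifies $\Ob\bigl(\Psi^{-1}(\Tr_{T(\mu_r)}\Pmod_{\unit})\bigr)$ with the thick tensor $\Ob$-ideal generated by $T(\sigma_r)$. That ideal is contained in $\sI_r$, because any summand of $T(\sigma_r)\otimes M$ restricts to a projective $G_rT$-module; and it contains $\sI_r$, because $T(\sigma_r+\mu)\inplus T(\sigma_r)\otimes T(\mu)$ for every $\mu\in X^+$, as recorded above. So $\Psi^{-1}(\Tr_{T(\mu_r)}\Pmod_{\unit})$ lies in the fibre $\Ob^{-1}(\sI_r)$.

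Finally, since $\mu_r=2\sigma_r=\sigma_r+\sigma_r\in\sigma_r+X^+$ we have $T(\mu_r)\in\sI_r$, hence $\Tr_{T(\mu_r)}\Pmod_{\unit}\subseteq\Tr_{\sI_r}\Pmod_{\unit}$; conversely, by Theorem~\ref{ThmFibre}(iii) the submodule $\Tr_{\sI_r}\Pmod_{\unit}$ is the minimum of $\Psi\bigl(\Ob^{-1}(\sI_r)\bigr)$ and hence is contained in $\Tr_{T(\mu_r)}\Pmod_{\unit}$. The two inclusions give equality, so $\Psi^{-1}(\Tr_{T(\mu_r)}\Pmod_{\unit})$ is the minimal element of $\Ob^{-1}(\sI_r)$. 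The only genuinely non-formal ingredient is Donkin's classification of $G_rT$-projective tilting modules — the hard direction being that $G_rT$-projectivity of $T(\lambda)$ forces $\lambda-\sigma_r\in X^+$, which is exactly where $p\ge 2h-2$ enters — so I expect that to be the main obstacle; everything else is bookkeeping with the lattice isomorphisms $\Psi$, $\Ob$ and the trace submodules from Sections~\ref{SecPrincipal}--\ref{SecFibres}.
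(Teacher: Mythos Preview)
Your proof is correct and follows essentially the same route as the paper: both identify $\sI_r$ via Donkin's result (the paper cites \cite[Lemma~II.E.8]{Jantzen}), then apply Lemma~\ref{LemXZ} with $Z=T(\mu_r)$ and $X=T(\sigma_r)$, the hypothesis being supplied by Lemma~\ref{Lemmuj}, and finally show that the thick ideal generated by $T(\sigma_r)$ equals $\sI_r$ using $T(\sigma_r+\nu)\inplus T(\sigma_r)\otimes T(\nu)$. The only cosmetic difference is that the paper invokes Lemma~\ref{LemXZ}(ii) directly (``$\Psi^{-1}(\Tr_{T(\mu_r)}\Pmod_{\unit})$ is the minimal tensor ideal containing $1_{T(\sigma_r)}$''), whereas you go through part~(iii) and then close the loop with Theorem~\ref{ThmFibre}(iii); your final paragraph is thus slightly more work than needed but entirely valid.
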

\begin{proof}
By~\cite[Lemma~II.E.8]{Jantzen}, we have $T(\lambda)\in \sI_r$ if and only if $\lambda\in \sigma_r+X^+$.

By Lemma~\ref{LemXZ}(ii), $\Psi^{-1}(\Tr_{T(\mu_r)}\Pmod_{\unit})$ is the minimal tensor ideal which contains $1_{T(\sigma_r)}$. Equivalently, $\Psi^{-1}(\Tr_{T(\mu_r)}\Pmod_{\unit})$ is the minimal ideal in the fibre of $\Ob$ over the thick tensor Ob-ideal generated by $T(\sigma_r)$.
Since $T(\sigma_r+\nu)$ is a direct summand of $T(\sigma_r)\otimes T(\nu)$, for all $\nu\in X^+$, it follows that the latter ideal is precisely $\sI_r$.
\end{proof}

\subsection{The rank one case}
In this section we classify tensor ideals for~$G=\SL_2$. We identify $X^+=\mN$, via $m\rho\mapsto m$.

\begin{thm}\label{ThmSL2}
Set $\cC=\Ti(\SL_2)$ with $p=\charr(\mk)>0$.
The decategorification map $\Ob:\TId(\cC)\tto \Ide([\cC]_{\oplus})$ is a bijection and $\Ide([\cC]_{\oplus})$ is given by $\{\sI_k\,|\, k\in\mN\}$, with
$$\Ob\cC=\sI_0\supsetneq \sI_1\supsetneq\sI_2\supsetneq \sI_3\supsetneq \cdots.$$
We have $T(m)\in \sI_k$ if and only if $m\ge p^k-1$.
\end{thm}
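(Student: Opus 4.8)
The plan is to apply Theorem~\ref{ThmSC} by assembling a tuple $(\cC,\Tmod,\cS,\ell)$ that fits the framework of \ref{tuple}, with $\cC=\Ti(\SL_2)$. For the controlling category $\cS$ I would take (the pseudo-abelian completion of) the Temperley--Lieb category at $q=1$, or more concretely a skeletal monoidal category with $\inde\cS=\mN$ and fusion governed by the classical $\SL_2$ (or $\mathfrak{sl}_2$) tensor product, i.e. $\bmu\otimes\blambda$ decomposes by the Clebsch--Gordan rule; the functor $\Tmod:\cS\to\cC$ sends the generating object to the natural $2$-dimensional tilting module $T(1)$, so that $\Tmod(\blambda)=T(\lambda)\oplus(\text{lower terms})$, which is exactly condition \ref{tuple}(a). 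The length function is $\ell(\blambda)=\lambda$ (identifying $X^+=\mN$), which satisfies $\ell^{-1}(0)=\{\unit\}$ and additivity on direct summands of tensor products because dominant weights add; self-duality of all the relevant objects gives $\phi=\id$, handling \ref{tuple}(b). The partial order $\preceq$ then becomes $\blambda\preceq\bnu$ iff $\bnu$ occurs in some $\bmu\otimes\blambda$, which here means $\nu\equiv\lambda\pmod 2$ and $\nu\ge\lambda$ (classical Clebsch--Gordan), but I should be careful: what actually matters is $\preceq$ on $\inde\cC$, governed by the \emph{modular} tensor product of tilting modules.

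The crux is identifying $\sB=\{X\in\inde\cC\mid\cC(\unit,X)\neq0\}$ and the distinguished objects $X_Z$. By \eqref{standardstuff}, $\Hom_G(\unit,T(\lambda))=(T(\lambda):\nabla(0))$, so $T(\lambda)\in\sB$ precisely when $\nabla(0)=\unit$ appears in a good filtration of $T(\lambda)$; by self-duality this forces $\Delta(0)=\unit\hookrightarrow T(\lambda)$, and it is standard (and Lemma~\ref{Lemmuj} already records the key instance) that for $\SL_2$ this happens exactly for $\lambda=\mu_j=2(p^j-1)$, $j\in\mN$ — these are the weights $0,\,2p-2,\,2p^2-2,\dots$. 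So I would prove $\sB=\{T(\mu_j)\mid j\in\mN\}$, with $\ell(\mu_j)=2(p^j-1)$ strictly increasing, giving \ref{tuple}(c) and the enumeration $\bnu^{(j)}=\mu_j$. For \ref{tuple}(d) I take $X_{\mu_j}=T(\sigma_j)$ with $\sigma_j=p^j-1$: Lemma~\ref{Lemmuj} gives $T(\mu_j)\inplus T(\sigma_j)^\vee\otimes T(\sigma_j)$ and $\End_G(T(\sigma_j))=\mk$ since $T(\sigma_j)=L(\sigma_j)$ is simple (Steinberg), so $\Lambda_j\ni\sigma_j$; more generally $\Lambda_j$ consists of those $\lambda$ with $T(\mu_j)\inplus T(\lambda)^\vee\otimes T(\lambda)$ and $\End_G(T(\lambda))=\mk$.

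The real work is conditions \ref{tuple}(e) and \ref{tuple}(f), which are statements purely about the poset $(\mN,\preceq)$ and the subsets $\Lambda_j$. For (e), given $j<j'$ and $\lambda'\in\Lambda_{j'}$ I must produce $\lambda\in\Lambda_j$ with $\lambda\prec\lambda'$; taking $\lambda=\sigma_j=p^j-1$ it suffices to check $\sigma_j\prec\lambda'$, i.e. that $T(\lambda')$ is a summand of $T(\mu)\otimes T(\sigma_j)$ for some $\mu$ — equivalently $T(\sigma_j)$ divides $T(\lambda')^\vee\otimes T(\lambda')$, which should follow from $\sigma_j\le\sigma_{j'}\le\lambda'$ and the structure of $\Lambda_{j'}$. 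For (f), if $\mu_j\preceq\bkappa\inplus T(\lambda)^\vee\otimes T(\lambda)$ then I need $\bmu\in\Lambda_j$ with $\bmu\preceq\lambda$; the natural candidate is again $\bmu=\sigma_j$, reducing (f) to: whenever $T(\lambda)^\vee\otimes T(\lambda)$ contains a tilting summand $\succeq\mu_j$, already $\sigma_j\preceq\lambda$. The main obstacle — and where I expect to spend most effort — is exactly this combinatorial input on decompositions of $T(\lambda)\otimes T(\lambda)$ for $\SL_2$ in characteristic $p$: I would deduce it from the well-understood Donkin/$p$-adic combinatorics of $\SL_2$-tilting modules (e.g. which $T(\mu)$ appear in $T(\sigma_r)\otimes T(\nu)$, cf. the proof of Proposition~\ref{PropConc}, and the fact that $\mu_j\preceq\kappa$ with $\ell(\kappa)\le 2\ell(\lambda)$ forces $\ell(\lambda)\ge\sigma_j=p^j-1$). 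Once \ref{tuple}(a)--(f) are verified, Theorem~\ref{ThmSC} gives the chain $\Ob\cC=\sI_0\supsetneq\sI_1\supsetneq\cdots$ with $\sI_k$ generated (as a thick $\Ob$-ideal) by $T(\sigma_k)$, hence $T(m)\in\sI_k\iff m\succeq\sigma_k$ for some element of $\Lambda_k$; since $T(\sigma_k+\nu)\inplus T(\sigma_k)\otimes T(\nu)$ for all $\nu\in X^+$, this thick ideal is $\{T(\lambda):\lambda\in\sigma_k+X^+\}=\{T(m):m\ge p^k-1\}$, which is the asserted description.
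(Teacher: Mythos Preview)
Your strategy has a concrete gap: the category $\cS$ you propose is not $l$-controlled in the sense of Definition~\ref{DefContr}. If $\inde\cS=\mN$ with Clebsch--Gordan fusion and $\ell(\blambda)=\lambda$, then already $\mathbf{1}\otimes\mathbf{1}$ contains both $\mathbf{0}$ and $\mathbf{2}$, so the requirement that every $\bkappa\inplus\bmu\otimes\blambda$ satisfy $\ell(\bkappa)=\ell(\bmu)+\ell(\blambda)$ fails. No choice of $\ell$ repairs this: $\ell(\mathbf{0})=0$ forces $\ell(\mathbf{1})=0$, contradicting $\ell^{-1}(0)=\{\unit\}$. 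Hence the hypotheses of \ref{tuple} are not met, and Theorem~\ref{ThmSC} does not apply as stated. There is also no obvious monoidal functor from characteristic-zero $\Rep\mathfrak{sl}_2$ to $\Ti(\SL_2)$ over $\mk$.

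The fix is to take a much cruder $\cS$: objects $\mN$, only scalar endomorphisms, tensor product $i\otimes j=i+j$. Then $\inde\cS=\mN$, $\ell=\id$, $\preceq$ is the usual order $\le$, and $\Tmod(i)=T(1)^{\otimes i}$ gives condition~(a). Now $\phi(\blambda)\otimes\blambda=2\blambda$ is indecomposable in $\cS$, so $\Lambda_j=\{\sigma_j\}$ automatically, and conditions (e) and (f) collapse to the trivialities $\sigma_j<\sigma_{j'}$ and $2(p^j-1)\le 2\lambda\Rightarrow\sigma_j\le\lambda$. All the modular tensor combinatorics you were anticipating evaporates; the only substantive inputs left are precisely the identification of $\sB$ and the properties of the Steinberg modules.

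The paper takes a still shorter route, bypassing the \ref{tuple} framework entirely and invoking Theorem~\ref{ThmSuff2} directly. That theorem only asks, for each $Z\in\sB$, for an $X_Z\in\inde\cC$ with $Z\inplus X_Z^\vee\otimes X_Z$ and $\dim_\mk\End(X_Z)=1$; Lemma~\ref{Lemmuj} supplies $X_{T(\mu_j)}=T(\sigma_j)$, and Lemma~\ref{LemLem} identifies $\sB=\{T(\mu_j)\mid j\in\mN\}$. This yields bijectivity of $\Ob$, and the explicit description $\sI_k=\{T(m):m\ge p^k-1\}$ is then read off from Proposition~\ref{PropConc}. Your corrected approach and the paper's ultimately rest on the same two lemmata; the paper just avoids the extra scaffolding.
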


We precede the proof with a remark and a lemma.
\begin{rem}
\label{RemSL2}
Theorem~\ref{ThmSL2} states that the tensor ideals in $\Ti(\SL_2)$ are precisely the kernels of the functors $\Fmod_r$ in~\ref{SecFrob}.
\end{rem}

It is well-known that for~$\SL_2$ we can complete Lemma~\ref{Lemmuj} with the following lemma.
This is a
 consequence of Donkin's tensor product theorem, \cite[\S 2]{Donkin} or \cite[\S II.E.9]{Jantzen}, and equation~\eqref{standardstuff}.
 \begin{lemma}\label{LemLem}
For all~$m\in\mN$, we have
$$\dim_{\mk}\Hom_{\SL_2}(T(0),T(m))\;=\;\begin{cases}1&\mbox{if $m=2p^j-2$, for~$j\in\mN$,}\\
0&\mbox{otherwise.}\end{cases}$$
\end{lemma}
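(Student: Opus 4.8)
The plan is to compute $\dim_{\mk}\Hom_{\SL_2}(T(0),T(m))$ by combining Donkin's tensor product theorem with the homological formula \eqref{standardstuff}. First I would recall that for $\SL_2$, with $p=\charr(\mk)$, every indecomposable tilting module $T(m)$ factors according to the base-$p$ expansion of $m+1$: if $m = \sum_{i\ge 0} m_i p^i$ with a suitable normalisation (the Steinberg-type splitting), then $T(m) \simeq T(m_0)\otimes T(m_1)^{[1]}\otimes T(m_2)^{[2]}\otimes\cdots$, where the relevant factors are the "fundamental" tilting modules $T(a)$ for $0\le a\le p-1$ (for which $T(a)=\Delta(a)=\nabla(a)=L(a)$ is simple) and $T(p-1+b)$ for $0\le b\le p-1$, and $[j]$ denotes Frobenius twist. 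The precise statement I would invoke is \cite[\S 2]{Donkin} or \cite[\S II.E.9]{Jantzen}.

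Next I would observe that $\Hom_{\SL_2}(T(0), -)$ detects the multiplicity of $\nabla(0)=\unit$ in a good filtration, by \eqref{standardstuff}, since $\unit\simeq\Delta(0)$. So I need to count how many times the trivial module appears as a $\nabla$-factor of $T(m)$. Because a Frobenius twist of a module with a good filtration again has a good filtration, and $\nabla(0)^{[j]}=\nabla(0)=\unit$, the multiplicity $(T(m):\nabla(0))$ is multiplicative across the tensor-product decomposition: it equals the product over all factors of the multiplicity of $\unit$ in each factor. For a simple fundamental factor $T(a)=L(a)$ with $0\le a\le p-1$, the trivial module occurs iff $a=0$, with multiplicity $1$. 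For a factor of the form $T(p-1+b)$ with $1\le b\le p-1$, one has a short exact sequence $0\to\nabla(p-1-b)\to T(p-1+b)\to\nabla(p-1+b)\to 0$ (up to the standard conventions), so $\unit=\nabla(0)$ occurs iff $p-1-b=0$, i.e. $b=p-1$, again with multiplicity $1$; and for $b=0$ the factor is the Steinberg simple $T(p-1)=L(p-1)$, which contains $\unit$ only when $p-1=0$, impossible for $p\ge 2$. Assembling this: $(T(m):\nabla(0))$ is $1$ precisely when each factor in the decomposition is either $T(0)=\unit$ or one of the $T(2p-2)$-type pieces $T(p-1+(p-1))$ whose $\nabla(0)$-multiplicity is $1$; chasing the base-$p$ bookkeeping, this forces $m+1 = \prod (\text{selected exponents})$, and a short computation shows the only surviving values are $m = 2p^j-2$ for $j\in\mN$ (taking $m=0$ for $j=0$), each contributing multiplicity exactly $1$. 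In all other cases some factor contributes multiplicity $0$, so the Hom-space vanishes.

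I expect the main obstacle to be the careful bookkeeping in the tensor product theorem: getting the normalisation of the base-$p$ digits right, correctly identifying which "two-digit" fundamental blocks $T(p-1+b)$ appear, and verifying that the $\nabla(0)$-multiplicities multiply cleanly (this uses that Frobenius twist is exact and commutes with taking good filtrations, so no higher multiplicities or cancellation can occur). Once the decomposition and the multiplicativity are in hand, the arithmetic identifying $m=2p^j-2$ is routine. An alternative, perhaps cleaner route would be to induct on $j$ using the recursive structure $T(2p^{j+1}-2) \simeq T(2p-2)\otimes T(2p^j-2)^{[1]}$ together with $\dim\Hom(\unit, T(2p-2))=1$ and the fact that $T(2p-2)$ has exactly $\nabla(0)$ and $\nabla(2p-2)$ as good-filtration factors; this reduces the whole statement to the single fundamental computation $(T(2p-2):\nabla(0))=1$ and $(T(m):\nabla(0))=0$ for $0\le m\le 2p-2$, $m\neq 0, 2p-2$, which is immediate from the known structure of tilting modules for $\SL_2$ in the bottom two $p$-blocks.
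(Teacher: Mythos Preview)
Your approach is precisely what the paper has in mind---it gives no detailed proof, only the reference to Donkin's tensor product theorem and equation~\eqref{standardstuff}. However, the justification you offer for the crucial multiplicativity step is flawed. The assertion that the Frobenius twist of a module with a good filtration again has a good filtration is false: already $\nabla(1)^{[1]}=L(1)^{[1]}=L(p)$ is two-dimensional with highest weight~$p$, whereas $\dim\nabla(p)=p+1$, so $L(p)$ admits no good filtration. Moreover, even if both tensor factors did have good filtrations, for $\SL_2$ one has $(A\otimes B:\nabla(0))=\sum_c(A:\nabla(c))(B:\nabla(c))$, not the product of the $\nabla(0)$-multiplicities, so the stated reasoning would still not yield what you want.

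The multiplicativity you need is nonetheless true, but it comes from the Frobenius kernel rather than from good filtrations. Writing $T(m)\simeq T(p-1+a)\otimes T(b)^{[1]}$ with $0\le a\le p-1$, the kernel $G_1$ acts trivially on the twisted factor, so $T(m)^{G_1}=T(p-1+a)^{G_1}\otimes T(b)^{[1]}$. Now $T(p-1+a)|_{G_1}$ is the $G_1$-injective hull of the simple $L(p-1-a)$ (this is part of the tilting--projective correspondence in \cite[\S II.E]{Jantzen}), hence its $G_1$-invariants vanish unless $a=p-1$, in which case they form the one-dimensional trivial $G$-module. Taking $G/G_1$-invariants then gives $T(m)^G=0$ if $a\ne p-1$ and $T(m)^G\cong T(b)^G$ if $a=p-1$; induction on $b$ finishes. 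Your alternative inductive route does not escape this issue: it establishes $\dim\Hom_{\SL_2}(\unit,T(2p^{j}-2))=1$, but the vanishing for all other $m$ still requires the general recursive step just described.
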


\begin{proof}[Proof of Theorem~\ref{ThmSL2}]
We set $T_j:=T(2p^j-2)$, for all~$j\in\mN$. We thus have $\sB=\{T_j\,|\, j\in\mN\}$ by Lemma~\ref{LemLem}. That $\Ob$ is an isomorphism thus follows from Lemma~\ref{Lemmuj} and Theorem~\ref{ThmSuff2}.
The result then follows easily from Proposition~\ref{PropConc}.
\end{proof}

\subsection{Higher rank}

\subsubsection{}For reductive groups $G$ of higher rank we will have more (thick) tensor ideals than the ones coming from the Frobenius kernels. For instance, in \cite[Proposition~12]{AndersenProc} it is showed that the maximal tensor ideal corresponds to
$$\{T(\lambda)\,|\,\lambda\in X^+ \mbox{ with } \langle \lambda,\alpha^\vee\rangle \ge p-1 \,,\, \mbox{for some~$\alpha\in R^+$}\}.$$
For~$G=\GL_n$ with $n>2$, this ideal thus strictly contains all proper ideals of Subsection~\ref{SecFK}.

\subsubsection{}\label{SecObstGLm}
In general, we will also no longer have a one-to-one correspondence between tensor ideals in $\Ti(G)$ and thick ideals in the Grothendieck ring $[\Ti(G)]_{\oplus}$. By Corollary~\ref{CorNoIso}, it suffices to find $\lambda\in X^+$ such that $\dim\Hom_G(\unit, T(\lambda))>1$. By equation~\eqref{standardstuff}, we thus need $\lambda\in X^+$ with~$(T(\lambda):\nabla(0))>1$. When $n>2$, these are known to exist for~$\GL_n$. In fact, already for~$G=\SL_3$, the value $(T(\lambda):\nabla(0))$ is actually conjectured to grow exponentially with~$\lambda$, for appropriate $\lambda\in X^+$, see~\cite{LW}.
\begin{prop}\label{ObstGLm}
The decategorification map $\Ob: \TId(\Ti(\GL_n))\tto \Ide([\Ti(\GL_n)]_{\oplus})$ is not an isomorphism when $ n>2$.
\end{prop}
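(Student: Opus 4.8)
The plan is to invoke Corollary~\ref{CorNoIso}, which reduces the claim to exhibiting some $\lambda\in X^+$ with $\dim_{\mk}\Hom_{\GL_n}(\unit,T(\lambda))>1$. By equation~\eqref{standardstuff} (applied with $N=T(\lambda)$, which indeed carries a good filtration) this is equivalent to finding $\lambda\in X^+$ with $(T(\lambda):\nabla(0))>1$. The object $\unit=\cC(\unit,\unit)=\mk$ is indeed in $\sB$ and is not simple over a ring with such a multiplicity, so once such a $\lambda$ is produced, Corollary~\ref{CorNoIso} applies verbatim (note $\mk$ is algebraically closed, as assumed throughout Section~\ref{SecModular}).

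So the real content is the existence of such a $\lambda$ for $\GL_n$, $n>2$. First I would reduce to $\SL_3$: a weight of $\SL_3$ with the desired property can be inflated/extended to $\SL_n$ (via the standard Levi embedding $\SL_3\hookrightarrow \SL_n$, or working with $\GL_3$ inside $\GL_n$) and then the corresponding $\GL_n$-tilting module still contains $\nabla(0)$ with multiplicity $>1$ in a good filtration, because restriction of a tilting module to a Levi is tilting and the multiplicity of $\nabla(0)$ can only be detected correctly (the Levi restriction of $T_{\GL_n}(\lambda)$ for $\lambda$ supported on the Levi is $T_L(\lambda)$ plus summands $T_L(\mu)$ with $\mu$ strictly smaller, which cannot contribute extra copies of $\nabla(0)$ if we choose $\lambda$ minimal with the property). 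Then for $\SL_3$ itself I would point to the literature: it is classical (already visible in Donkin's work, and made very explicit in the computations cited as \cite{LW}) that for $p$ small relative to $\lambda$ the multiplicity $(T(\lambda):\nabla(0))$ can exceed $1$; concretely one can take an explicit small weight and compute $T(\lambda)$ via translation functors or via the known structure of tilting modules for $\SL_3$ in the relevant alcove. Since the excerpt already states in~\ref{SecObstGLm} that ``these are known to exist for $\GL_n$'' when $n>2$, the cleanest writeup simply cites that fact together with \cite{LW} and invokes Corollary~\ref{CorNoIso}.

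The main obstacle is the $\SL_3$ (equivalently $n=3$) base case: one genuinely needs a concrete tilting module whose zero-weight Weyl multiplicity is at least two, and this requires either an explicit alcove computation or a clean citation. Everything else --- the reduction from $\GL_n$ to $\GL_3$, the translation from $\Hom$-dimensions to good-filtration multiplicities, and the final application of Corollary~\ref{CorNoIso} --- is routine. In the actual proof I would therefore spend one sentence on the reduction, one sentence recalling \eqref{standardstuff}, cite \cite{LW} (and \cite{Donkin}) for the existence of $\lambda$ with $(T(\lambda):\nabla(0))>1$ for $\GL_3$, extend it to $\GL_n$ by the Levi argument, and conclude by Corollary~\ref{CorNoIso}.
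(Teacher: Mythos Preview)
Your overall strategy matches the paper exactly: invoke Corollary~\ref{CorNoIso}, translate via~\eqref{standardstuff} to the condition $(T(\lambda):\nabla(0))>1$, and then appeal to the known existence of such $\lambda$ for $\GL_n$ with $n>2$. The paper itself gives no more than this; the paragraph~\ref{SecObstGLm} immediately preceding the proposition \emph{is} the proof, and it simply asserts that such $\lambda$ ``are known to exist for $\GL_n$'' and points to~\cite{LW} for the $\SL_3$ phenomenon.

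Where your writeup overreaches is the Levi reduction. Restriction to a Levi $L$ only gives
\[
\dim\Hom_{\GL_n}(\unit,T_{\GL_n}(\lambda))\;\le\;\dim\Hom_L(\unit,\Res_L T_{\GL_n}(\lambda)),
\]
since $G$-invariants sit inside $L$-invariants; this is the wrong direction for what you need. Knowing that $\Res_L T_{\GL_n}(\lambda)\cong T_L(\lambda)\oplus(\text{lower})$ and that $T_L(\lambda)$ has large $\nabla_L(0)$-multiplicity does not force $T_{\GL_n}(\lambda)$ to have large $\nabla_G(0)$-multiplicity. If you want a genuine reduction to $n=3$, the clean route is the stability of good-filtration multiplicities for polynomial $\GL_n$-tilting modules: for a partition $\lambda$ with at most three parts and $n\ge 3$, the multiplicities $(T_{\GL_n}(\lambda):\nabla_{\GL_n}(\mu))$ are independent of $n$ (via the Schur algebra $S(n,|\lambda|)$ and Donkin's work), so a $\GL_3$ example lifts verbatim. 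Alternatively, and closer to what the paper does, you can simply cite the existence of such $\lambda$ as known and drop the Levi paragraph.
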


%%%%%%%%%%%%%%%%%%%%%%%%%%%%%%%%%%%%%%%%%%%%%%%%%%%%%%%%%%%%%%%%%%%%%%%%%%%%%%%

\part{Deligne categories}

Fix an algebraically closed field~$\mk$ of characteristic zero. We will study three monoidal categories~$\RG$, $\RO$ and $\RP$ which satisfy conditions (I)-(V).

\section{Deligne categories and Brauer algebras}\label{SecBoring}

In this section, we use the explicit decomposition multiplicities of cellular diagram algebras in~\cite{BrMult, PB2, Martin} to show that for Deligne categories we can construct an auxiliary monoidal category $\cS$ satisfying the assumptions of~\ref{tuple}.

\subsection{The orthogonal case}
In~\cite[Section~9]{Deligne}, for any unital commutative ring $K$ and~$t\in K$, the monoidal category~${\rm Rep}(O(t),K)$ is introduced, see also~\cite[Section~2]{ComesHei}. 

\subsubsection{}For~$\delta\in\mk$, we write~$\RO={\rm Rep}(O(\delta),\mk)$, and this category is of the form
$$\RO\;:=\; \left(\ROz\right)^{\oplus\sharp},$$
for a strict monoidal category~$\ROz$, known as the {\em Brauer category}, see \cite{BrCat}. We write~$\cC=\RO$ and~$\cC_0=\ROz$.

Then $\cC_0$ is a skeletal monoidal category with~$\Ob\cC_0=\mN$ and~$i\otimes j=i+j$, so in particular $\unit_{\cC_0}=0$. The morphism space $\cC_0(i,j)$ consists of the $\mk$-linear combinations of~$(i,j)$-Brauer diagrams. The latter are similar to the Temperley-Lieb diagrams of \ref{TLdiag}, except that now crossings are allowed. For any such Brauer diagram~$d\in\cC_0(i,j)$, we denote by~$d^\ast\in\cC_0(j,i)$ the diagram obtained by reflection with respect to a horizontal axis. The composition of diagrams is given by concatenation of diagrams with evaluation of loops at $\delta\in\mk$. In particular, we have 
$$\cC_0(0,0)=\mk,\qquad\mbox{and}\qquad B_r(\delta)\;:=\;\cC_0(r,r)$$
is the Brauer algebra for~$r\in\mN$. This algebra is cellular, by \cite[Section~4]{CellAlg}. The cell modules are given by~$W_r(\lambda)$, for~$\lambda$ a partition of an element in~$\{r-2i\,|\, 0\le i\le r/2\}$. 
The simple modules are labelled by the same set of partitions, excluding $\varnothing$ when $\delta=0$, see e.g. \cite{CellAlg, Martin, BrMult, Borelic}. We write~$L_r(\lambda)$ for the corresponding simple module.

By \cite[Theorem~3.5]{ComesHei}, we have a bijection~$\Par\stackrel{\sim}{\to}\inde\cC$, given by~$\lambda\mapsto R(\lambda)$. We can take $R(\lambda)=(r,e_{\lambda})$, where $r=|\lambda|\in\Ob\cC_0$ and~$e_\lambda$ is a primitive idempotent in~$B_r(\delta)=\cC_0(r,r)$ corresponding to~$L_r(\lambda)$.
From \cite[Section~3]{ComesHei}, for~$\lambda\in\Par$ and~$k\in\mN=\Ob\cC_0$, we have
\begin{equation}\label{Rvsn}R(\lambda)\;\inplus\;k\quad\mbox{if and only if \hspace{3mm}$k-|\lambda|\in 2\mN$.}\end{equation} We have $\unit=R(\varnothing)$, which corresponds to~$0\in \Ob\cC_0$. Hence, $1\not=\unit_{\cC}=0\not=\zero_{\cC}$.

\subsubsection{}It is easy to see that~$\cC_0$ is rigid, with~$i^\vee=i$, for all~$i\in\mN$. For instance, for~$\ev_i$ we can take $(2i,0)$-Brauer diagrams of the form
\begin{equation}\label{eqDual}\begin{tikzpicture}[scale=0.9,thick,>=angle 90]
%\begin{scope}[xshift=8cm]

\draw (0,0) to [out=50,in=-180] +(1.8,.8);
\draw (1.8,.8) to [out=0,in=130] +(1.8,-.8);
\draw (0.6,0) to [out=50,in=-180] +(1.8,.8);
\draw (2.4,.8) to [out=0,in=130] +(1.8,-.8);
\draw (1.2,0) to [out=50,in=-180] +(1.8,.8);
\draw (3,.8) to [out=0,in=130] +(1.8,-.8);
\draw (1.8,0) to [out=50,in=-180] +(1.8,.8);
\draw (3.6,.8) to [out=0,in=130] +(1.8,-.8);
\draw (2.4,0) to [out=50,in=-180] +(1.8,.8);
\draw (4.2,.8) to [out=0,in=130] +(1.8,-.8);

%\end{scope}
\end{tikzpicture}
\end{equation}
and correspondingly we take~$\co_i=(\ev_i)^\ast$.
Note that we have composed the canonical construction of~$\ev_i$ from $\ev_1=\cap$, through iterative use of~\eqref{eqXYd}, with an isomorphism of~$i$.
It follows that~$\ev_i\circ (d\otimes I^{\otimes i})=\ev_i\circ(I^{\otimes i}\otimes d^\ast)$, for any diagram~$d\in B_i(\delta)$, with $I$ the identity morphism of $1\in\Ob\cC_0$.  Since $\ast:B_r(\delta)\to B_r(\delta)$ is the involution in the cell datum of~$B_r(\delta)$, see \cite[Theorem~4.10]{CellAlg}, it preserves simple modules. It then follows in particular that~$\cC$ is rigid, with~$R(\lambda)^\vee\simeq R(\lambda)$, for all~$\lambda\in\Par$.

\subsubsection{}\label{DefmnO}When $\delta\not\in\mZ$, the category $\cC=\RO$ is semisimple, see \cite[Th\'eor\`eme~9.7]{Deligne}. Theorem~\ref{Thm1} then demonstrates in particular that~$\cC$ does not admit any non-trivial tensor ideals. We therefore {\em henceforth restrict to the case $\delta\in\mZ\subset\mk$.} For~$j\in\mZ_{>0}$, we set
$$\mm_j=\begin{cases}\delta+2j-2&\mbox{if $\delta>0$,}\\
2j-2&\mbox{if $\delta\in -2\mN$,}\\
2j-1&\mbox{if $\delta\in -2\mN-1$,}\end{cases}\quad\mn_j=\frac{\mm_j-\delta}{2}=\begin{cases}j-1&\mbox{if $\delta>0$,}\\
j-\delta/2-1&\mbox{if $\delta\in -2\mN$,}\\
j-\delta/2-\frac{1}{2}&\mbox{if $\delta\in -2\mN-1$,}\end{cases}$$
and $\mr_j:=(\mm_j+1)(\mn_j+1)$.

\begin{lemma}\label{Lem1O}
Consider the set~$\Upsilon=\{\nu^{(j)}\,|\,j\in\mN\}\subset\Par$, given by 
$$\nu^{(0)}=\varnothing\qquad \mbox{and}\qquad \nu^{(j)}=((2\mn_j+2)^{\mm_j+1}),\,\mbox{ for~$j>0$.}$$
We have $\sB=\{R(\nu)\,|\, \nu\in \Upsilon \}$. Moreover, for all~$\lambda\in\Par$, we have
$$\dim_{\mk}\cC(\unit,R(\lambda))\;=\;\begin{cases} 1&\mbox{if $\lambda\in\Upsilon$,}\\
0&\mbox{otherwise.}
\end{cases}$$
\end{lemma}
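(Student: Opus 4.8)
The plan is to identify $\cC(\unit,R(\lambda)) = \Hom_{\cC}(\unit, R(\lambda))$ with a space of morphisms in the Brauer category, and then to translate the computation into one about multiplicities of cell modules for Brauer algebras. Concretely, since $R(\lambda) = (r, e_\lambda)$ with $r = |\lambda|$ and $\unit = R(\varnothing)$ corresponds to $0 \in \Ob\cC_0$, the Yoneda-type description of morphisms in a Karoubi envelope gives
$$\cC(\unit, R(\lambda)) \;=\; e_\lambda \cdot \cC_0(0, r) \;=\; e_\lambda \cdot W_r(\varnothing)',$$
where $\cC_0(0,r)$ is (the transpose of) the Brauer-algebra cell module $W_r(\varnothing)$ — more precisely, $\cC_0(0,r)$ as a right module over $B_r(\delta)$ is the cell module labelled by the empty partition (and is zero unless $r$ is even, cf.~\eqref{Rvsn}). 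Hence $\dim_\mk \cC(\unit, R(\lambda))$ is exactly the multiplicity $[W_r(\varnothing) : L_r(\lambda)]$ of the simple module $L_r(\lambda)$ as a composition factor of the cell module $W_r(\varnothing)$, where $r = |\lambda|$ (and the dimension is $0$ whenever $r$ is odd, or whenever $\lambda = \varnothing$ but $\delta = 0$, etc.).

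The second step is to invoke the known decomposition numbers for Brauer algebras. The multiplicities $[W_r(\varnothing) : L_r(\lambda)]$ for $\delta \in \mZ$ are computed in the literature cited in the excerpt — \cite{BrMult, Martin} (and, in the Deligne-category reformulation, also relate to~\cite{PB2}). The statement to extract is: for $r$ even, the composition factors of $W_r(\varnothing)$ are multiplicity-free, and the labels $\lambda$ that occur are precisely the rectangular partitions appearing in $\Upsilon$ with $|\lambda| = r$, i.e. $\lambda = \nu^{(j)} = ((2\mn_j+2)^{\mm_j+1})$ for the unique $j$ (if any) with $\mr_j = (\mm_j+1)(\mn_j+1)$... wait, with $|\nu^{(j)}| = r$, namely $(2\mn_j+2)(\mm_j+1) = r$. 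This is where the explicit combinatorics of $\mm_j, \mn_j$ enters: one checks that the sizes $|\nu^{(j)}| = 2(\mn_j+1)(\mm_j+1) = 2\mr_j$ are strictly increasing in $j$, so at most one $\nu^{(j)}$ has a given size, and that the decomposition-number results of \cite{BrMult, Martin} say exactly that $L_r(\lambda)$ appears in $W_r(\varnothing)$ (with multiplicity one) if and only if $\lambda$ is this rectangular partition. The claim $\sB = \{R(\nu) \mid \nu \in \Upsilon\}$ is then immediate from the definition of $\sB$ in~\ref{SetB}: $R(\lambda) \in \sB$ iff $\cC(\unit, R(\lambda)) \neq 0$.

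The main obstacle is the faithful bookkeeping between the three sign/parity regimes for $\delta$ (namely $\delta > 0$, $\delta \in -2\mN$, and $\delta \in -2\mN - 1$) and matching the rectangular partitions $\nu^{(j)}$ to the correct decomposition-number statements, since the existing references state their results for the orthogonal and symplectic specializations (positive and negative $\delta$) in somewhat different language. I would organize this by first treating $\delta \geq 0$, where $W_r(\varnothing)$ for $B_r(\delta)$ with $\delta$ a non-negative integer has its composition series governed by the "rectangular" reflection in the affine Weyl group combinatorics of \cite{Martin}, and then handling $\delta < 0$ via the well-known symmetry $B_r(\delta) \leftrightarrow B_r(-\delta)$ (transpose of partitions), being careful that the transpose sends the rectangle $((2\mn_j+2)^{\mm_j+1})$ to $((\mm_j+1)^{2\mn_j+2})$, which after reindexing is again of the required form. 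The arithmetic that $\mr_j = (\mm_j+1)(\mn_j+1)$ and $|\nu^{(j)}| = 2\mr_j$ is routine from the displayed formulas for $\mm_j, \mn_j$, and the strict monotonicity $\mr_1 < \mr_2 < \cdots$ — which is what guarantees $\dim_\mk \cC(\unit, R(\lambda)) \le 1$ always — follows by inspection in each of the three cases.
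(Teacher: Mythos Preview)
Your proposal is correct and follows essentially the same route as the paper: both reduce $\dim_\mk\cC(\unit,R(\lambda))$ to the multiplicity $[W_r(\varnothing):L_r(\lambda)]$ in the cell module of $B_r(\delta)$ and then invoke \cite{BrMult, Martin}. The only difference is in the final verification step: the paper carries out the case $\delta=2s>0$ explicitly using the cup/cap diagram combinatorics of \cite[Section~5]{BrMult} (showing the relevant curl diagram must be cap-free, forcing the rectangular shape) and says the remaining cases are similar, whereas you propose to treat $\delta\ge 0$ via \cite{Martin} and then deduce $\delta<0$ from the $B_r(\delta)\leftrightarrow B_r(-\delta)$ transpose symmetry. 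Both are valid; the paper's route is slightly more uniform since \cite{BrMult} already covers all integer $\delta$ in the same diagrammatic language, while your symmetry shortcut requires a small extra check that transposing $\nu^{(j)}$ and reindexing lands back in $\Upsilon$ for the new parameter (and note the paper's case split is $\delta>0$, $\delta\in-2\mN$, $\delta\in-2\mN-1$, not simply $\delta\ge0$ versus $\delta<0$).
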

\begin{proof}
Take $\lambda\in\Par$ and set~$r=|\lambda|$. By definition of~$R(\lambda)$ we have
$$\dim\cC(\unit, R(\lambda))\;=\;\dim_{\mk}e_\lambda\cC_0(0,r)\;=\;[\cC_0(0,r):L_r(\lambda)],$$
where we interpret~$\cC_0(0,r)$ as a left $B_r(\delta)$-module.
By construction, $\cC_0(0,r)=0$ if $r$ is odd. Assume thus that~$r$ is even.
By \cite[Example~8.6.5]{Borelic}, the left $B_r(\delta)$-module~$\cC_0(0,r)$ is precisely the cell module~$W_r(\varnothing)$. The multiplicities~$[W_r(\varnothing):L_r(\lambda)]$ have been calculated in~\cite{Martin, BrMult}.

We follow the approach of~\cite[Section~5]{BrMult} and assume familiarity of the reader with the combinatorics defined {\it loc. cit.} Assume $\delta=2s$, for~$s\in \mZ_{>0}$, the other cases follow similarly. To any partition $\lambda$ we associate the strictly decreasing sequence $x$ of integers
$$(x_1,x_2,x_3,\cdots):=(\lambda_1^t-s,\lambda^t_2-s-1,\lambda_3^t-s-2,\cdots).$$
Then we draw, on an invisible real axis, at each $n\in\mN\subset\mR$ a symbol $\circ$ if neither $\pm n$ appears in $x$, a $\wedge$ if $n$ appears and a $\vee$ if $-n$ appears, resulting in $\times$ if both $n$ and $-n$ appear.
The diagram associated to~$\varnothing$ is then given by
$$\circ\;\circ\;\cdots\;\circ\;\vee\;\vee\;\vee\;\cdots,$$
where we have $s$ times~$\circ$ and infinitely many~$\vee$. By \cite[Theorem~5.8]{BrMult}, the multiplicity $[W_r(\varnothing):L_r(\lambda)]$ is $1$ if the diagram of~$\lambda$ can be obtained from the above one by changing an even number of~$\vee$ to~$\wedge$ and if the resulting curl diagram for~$\lambda$ is ``oriented'' with respect to the above diagram, and $0$ otherwise. The rules for orientation mean that~$[W_r(\varnothing):L_r(\lambda)]\not=0$ implies the curl diagram of~$\lambda$ cannot contain any caps, which means that the diagram of~$\lambda$ must be of the form
$$\circ\;\circ\;\cdots\;\circ\;\wedge\;\wedge\;\cdots\;\wedge\;\wedge\;\vee\;\vee\;\vee\;\cdots,$$
with~$s$ times~$\circ$, an even number (say $2j$) times~$\wedge$ and infinitely many~$\vee$. It follows easily that the resulting curl diagram is oriented and that the above diagram corresponds to $\nu^{(j)}$.
\end{proof}

\subsubsection{}\label{defSCO}
Consider the $\mk$-linear category $\cS_0$ with $\Ob\cS_0=\mN$ which has as endomorphism algebra of $i\in\mN$ the group algebra $\mk \SG_i$. This is a monoidal category,  with $i\otimes j=i+j$, where the tensor product of morphisms corresponds to the canonical group monomorphism $\SG_i\times \SG_j\to \SG_{i+j}$. We set $\cS=(\cS_0)^{\oplus \sharp}$. Since $\charr(\mk)=0$, we can alternatively define $\cS$ as 
$$\cS\;=\; \bigoplus_{i\in\mN}\mk\SG_i\mbox{-mod},$$
where the tensor product is now given by the induction product mentioned in~\ref{SecSG}. We have $\Par=\inde\cS$, where we denote Specht modules by their partition, and $\ell :\Par\to\mN$, $\lambda\mapsto |\lambda|$ makes~$\cS$ an $l$-controlled monoidal category as in Definition~\ref{DefContr}. The partial order $\preceq$ is in this case given by $\lambda\preceq \mu$ if and only if $\lambda\subseteq\mu$ (inclusion of partitions). We have a monoidal functor~$\Tmod_0:\cS_0\to\cC_0$, since we can interpret $\cS_0$ as the subcategory of $\cC_0$ containing all objects, but only morphisms in the span of diagrams containing exclusively propagating lines. This extends to a monoidal functor~$\Tmod:\cS\to\cC$.

\begin{prop}\label{TupleO}
The tuple $(\cC,\Tmod,\cS,\ell)$ in~\ref{defSCO} satisfies the conditions in~\ref{tuple}
\end{prop}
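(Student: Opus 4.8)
The plan is to verify conditions~\ref{tuple}(a)--(f) for the tuple $(\cC,\Tmod,\cS,\ell)$ with $\cC=\RO$ and $\cS,\Tmod$ as in~\ref{defSCO} and $\ell=|\cdot|$, one condition at a time; recall that on $\Lambda=\Par=\inde\cS$ the order $\preceq$ is inclusion of partitions and that $\sB$, the $\nu^{(j)}$, and $\dim_\mk\cC(\unit,R(\lambda))$ are described in Lemma~\ref{Lem1O}. Conditions~(b) and~(c) are immediate: I would take $\phi=\id_{\Par}$ in~(b), using $R(\lambda)^\vee\simeq R(\lambda)$ (established before~\ref{DefmnO}) and $\ell\circ\id=\ell$; and for~(c) I would just compute $\ell(\nu^{(0)})=0$ and $\ell(\nu^{(j)})=|\nu^{(j)}|=(2\mn_j+2)(\mm_j+1)=2\mr_j$ for $j>0$, observe that $\mm_j$, $\mn_j$ and hence $\mr_j$ are strictly increasing in~$j$, and conclude that $\sB$ is infinite, $\mL=\mN$, and $\bnu^{(j)}:=\nu^{(j)}$ already has $0=\ell(\bnu^{(0)})<\ell(\bnu^{(1)})<\cdots$.

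For~(a), since $\Tmod_0$ realises $\cS_0$ inside $\cC_0$ as the subcategory on all objects with morphisms spanned by the all-propagating diagrams, $\Tmod(\lambda)=(|\lambda|,\Tmod_0(e^{\SG}_\lambda))$ for a primitive idempotent $e^{\SG}_\lambda\in\mk\SG_{|\lambda|}$ affording the Specht module~$\lambda$. Decomposing $\Tmod_0(e^{\SG}_\lambda)$ into primitive idempotents of $B_{|\lambda|}(\delta)$ and using that $B_{|\lambda|}(\delta)\twoheadrightarrow\mk\SG_{|\lambda|}$ sends $e^{\SG}_\lambda$ to the idempotent of~$\lambda$, I expect precisely one summand $R(\lambda)$, with all other summands of the form $R(\mu)$, $|\mu|<|\lambda|$; that these satisfy moreover $\mu\subsetneq\lambda$ I would quote from~\cite[Section~3]{ComesHei}. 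For the first half of~(d), I would apply Lemma~\ref{LemRect}(ii) with $a=\mm_j+1$ and $b=2\mn_j+2$ (note $ab=2\mr_j=2|\lambda|$ whenever $c^{\nu^{(j)}}_{\lambda\lambda}\ne0$, since $\nu^{(j)}$ is the rectangle $(b^a)$) to identify
$$\Lambda_j\;=\;\{\lambda\in\Par\,|\,c^{\nu^{(j)}}_{\lambda\lambda}\ne0\}\;=\;\{\lambda\in\Par\,|\,\lambda\text{ is self-dual with respect to }(\mm_j+1)\times(2\mn_j+2)\},$$
which contains $((\mn_j+1)^{\mm_j+1})$ (and $\Lambda_0=\{\varnothing\}$), hence is non-empty.

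The crux is the second half of~(d), namely $\cC(R(\lambda),R(\lambda))=\mk$ for every $\lambda\in\Lambda_j$. Writing $r=|\lambda|=\mr_j$ and $R(\lambda)=(r,e_\lambda)$ with $e_\lambda$ a primitive idempotent of $B_r(\delta)$ affording $L_r(\lambda)$, one has $\End_\cC(R(\lambda))=e_\lambda B_r(\delta)e_\lambda$, of dimension $[P_r(\lambda):L_r(\lambda)]=\sum_\nu[W_r(\nu):L_r(\lambda)]^2$ by the cellular analogue of BGG reciprocity~\cite{CellAlg}. I would then examine the weight diagram of a self-dual~$\lambda$ as in the proof of Lemma~\ref{Lem1O}: the identity $\delta=(\mm_j+1)-(2\mn_j+2)+1$ together with self-duality of~$\lambda$ (equivalently of~$\lambda^t$) should force the attached strictly decreasing integer sequence~$x$ to satisfy $-x_i=x_{b+1-i}$ for $1\le i\le b$, so that every positive entry of~$x$ is cancelled by a negative one and the diagram of~$\lambda$ contains no~$\wedge$. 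By the combinatorial formulas for Brauer decomposition numbers in~\cite{BrMult, Martin}, a $\wedge$-free weight diagram is produced from another diagram by the prescribed ``$\vee\to\wedge$'' moves only from itself, so $[W_r(\nu):L_r(\lambda)]=\delta_{\nu\lambda}$ and the sum above equals~$1$. I expect this last step to be the main obstacle: it genuinely uses the non-semisimple representation theory of $B_r(\delta)$ rather than a mere dimension count, and one must check the ``no~$\wedge$'' property for \emph{every} self-dual $\lambda$, not just the rectangular one.

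Finally, (e) and~(f) I would settle by explicit constructions using rectangle-duality (the case $j=0$, with $\Lambda_0=\{\varnothing\}$, being trivial throughout). For~(f), from $\nu^{(j)}\subseteq\kappa$ and $c^\kappa_{\lambda\lambda}\ne0$, Lemma~\ref{LemRect}(i) (with both partitions equal to~$\lambda$ and $(b^a)=\nu^{(j)}$) gives $\lambda_i+\lambda_{a+1-i}\ge b$ for $1\le i\le a$; then setting $\mu_i=\min(\lambda_i,b)$ for $i\le\lceil a/2\rceil$, $\mu_{(a+1)/2}=b/2$ if $a$ is odd, and $\mu_i=b-\mu_{a+1-i}$ for $i>\lceil a/2\rceil$, a short verification (using $\lambda_i\ge b/2$ for $i\le\lceil a/2\rceil$) shows $\mu$ is a partition with $\mu_i+\mu_{a+1-i}=b$ and $\mu\subseteq\lambda$, so $\mu\in\Lambda_j$ with $\mu\preceq\lambda$. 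For~(e), given $j<j'$ and $\lambda'\in\Lambda_{j'}$, I would use that the side-length differences $\mm_{j'}-\mm_j$ and $2(\mn_{j'}-\mn_j)$ are positive and even to place an $(\mm_j+1)\times(2\mn_j+2)$-box centrally inside the $(\mm_{j'}+1)\times(2\mn_{j'}+2)$-box, and take $\lambda$ to be the partition obtained by intersecting the Young diagram of~$\lambda'$ with this sub-box and translating to the corner; the self-duality of~$\lambda'$ makes~$\lambda$ self-dual for the small box, so $\lambda\in\Lambda_j$, while $\lambda\subseteq\lambda'$ (as $\lambda'$ is weakly decreasing) and $|\lambda|=\mr_j<\mr_{j'}=|\lambda'|$ give $\lambda\prec\lambda'$.
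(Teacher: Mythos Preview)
Your argument is correct and parallels the paper's for~(b), (c), and~(d); for the second half of~(d) the paper simply cites \cite[Theorem~5.1]{BrMult} for projectivity of $L_{\mr_j}(\lambda)$ when $\lambda\in\Lambda_j$, which is exactly what your $\sum_\nu[W_r(\nu):L_r(\lambda)]^2$ computation and ``no~$\wedge$'' weight-diagram analysis establish. Two places differ in route. For~(a), the paper does not appeal to~\cite{ComesHei} for $\mu\subsetneq\lambda$: it bounds the multiplicity of $R(\mu)$ in $\Tmod(\lambda)$ by $[\res_{\mk\SG_r}W_r(\mu):\lambda]$ (using projectivity of $B_r e^0_\lambda$ and that $L_r(\mu)$ is a quotient of $W_r(\mu)$), observes that $\res W_r(\mu)$ is a summand of $\mu\boxtimes(2)^{\boxtimes(r-|\mu|)/2}$, and then Littlewood--Richardson forces $\mu\subseteq\lambda$. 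Your quotient argument $B_r(\delta)\twoheadrightarrow\mk\SG_r$ does give the ``one $R(\lambda)$, rest with $|\mu|<|\lambda|$'' part cleanly, but the further inclusion $\mu\subsetneq\lambda$ needs the paper's cell-module step (or an equivalent), not just the material you point to in~\cite[Section~3]{ComesHei}. For~(e) and~(f), the paper treats both in one stroke via Lemma~\ref{LemRect}(i): once that lemma yields $\lambda_i+\lambda_{a+1-i}\ge b$, the self-dual $\mu\subseteq\lambda$ you construct for~(f) settles~(f), and~(e) is the special case $\lambda=\lambda'\in\Lambda_{j'}$, $\kappa=\nu^{(j')}$, using $\nu^{(j)}\subseteq\nu^{(j')}$. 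Your separate central-sub-box construction for~(e) is correct but more elaborate than necessary.
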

\begin{proof}
We have already established that~(b) is satisfied, with $\phi=\id_{\Par}$. Condition~(c) follows from Lemma~\ref{Lem1O}. We check condition~(a). For~$\lambda\vdash r$, take a primitive idempotent $e^0_\lambda$ in $\mk\SG_r$ corresponding to the simple Specht module~$\lambda$. Then $e^0_\lambda$ decomposes inside $B_r(\delta)$ as a sum of mutually orthogonal primitive idempotents. To describe which ones appear we observe that
$$\dim \Hom_{B_r(\delta)}(B_r(\delta)e^0_\lambda, L_r(\mu))\;\le\;\dim \Hom_{\mk\SG_r}(\mk\SG_re^0_\lambda, \res W_r(\mu))\;=\;[\res W_r(\mu):\lambda],$$
for $\res$ the restriction functor from $B_r(\delta)$-modules to $\mk\SG_r$-modules.
By construction of $W_r(\mu)$, see e.g. \cite[Section~7]{Borelic}, the module~$\res W_r(\mu)$ is a direct summand of $\mu\boxtimes (2)^{\boxtimes \frac{|\lambda|-|\mu|}{2}}$. Hence, condition~(a) follows from the Littlewood-Richardson rule and $\Tmod(\lambda)=(r,e_\lambda^0)$.
Conditions (d), (e) and (f) follow from Lemma~\ref{LemLambdanuO} below and Lemma~\ref{LemRect}(i).
\end{proof}

\begin{lemma}
\label{LemLambdanuO}
With notation as in~\ref{tuple}, for the tuple in~\ref{defSCO} and $j\in\mZ_{>0}$ we have
$$\Lambda_j=\{\lambda\vdash \mr_j\,|\, \mbox{$\lambda$ is $(\mm_j+1)\times (2\mn_j+2)$-self-dual}\}.$$
For~$\lambda\in \Lambda_j$, we have $\dim\cC(R(\lambda),R(\lambda))=1$.
\end{lemma}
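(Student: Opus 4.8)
The plan is to identify $\Lambda_j$ explicitly and then compute the relevant endomorphism algebras. Recall from~\ref{tuple}(d) that $\Lambda_j=\{\lambda\in\Par\mid \nu^{(j)}\inplus \phi(\lambda)\otimes\lambda\}$, and here $\phi=\id$ and $\nu^{(j)}=((2\mn_j+2)^{\mm_j+1})$, a rectangular partition of $\mr_j=(\mm_j+1)(2\mn_j+2)$. Inside $\cS$, the object $\lambda\otimes\lambda$ decomposes as $\bigoplus_\nu \nu^{\oplus c^{\nu}_{\lambda\lambda}}$, so $\nu^{(j)}\inplus\lambda\otimes\lambda$ if and only if $c^{\nu^{(j)}}_{\lambda\lambda}\neq 0$. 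First I would observe that for this to be possible we need $|\lambda|+|\lambda|=|\nu^{(j)}|=\mr_j$ together with a degree restriction. Since $\mr_j$ must be even for $\lambda\otimes\lambda$ to contain a partition of that size, and since $\mr_j=(\mm_j+1)(2\mn_j+2)$ is always even (the second factor is even), this is automatic; and then $|\lambda|=\mr_j/2=(\mm_j+1)(\mn_j+1)$.

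Next I would invoke Lemma~\ref{LemRect}(ii): with $a=\mm_j+1$ and $b=2\mn_j+2$, and $|\lambda|+|\lambda|=ab$, the coefficient $c^{(b^a)}_{\lambda\lambda}$ equals $1$ if $\lambda$ and $\lambda$ are $a\times b$-dual and $0$ otherwise. Thus $\nu^{(j)}\inplus\lambda\otimes\lambda$ precisely when $\lambda$ is $(\mm_j+1)\times(2\mn_j+2)$-dual to itself, i.e. $(\mm_j+1)\times(2\mn_j+2)$-self-dual, and in that case the multiplicity is exactly $1$. This gives $\Lambda_j=\{\lambda\vdash\mr_j\mid \lambda \text{ is }(\mm_j+1)\times(2\mn_j+2)\text{-self-dual}\}$ as claimed. (One should note in passing that a self-dual $\lambda$ for an $a\times b$ rectangle with $b$ even always exists — e.g. take $\lambda_i=b/2=\mn_j+1$ for all $1\le i\le a=\mm_j+1$ — so $\Lambda_j\neq\emptyset$, which is the nonemptiness part of~\ref{tuple}(d).)

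For the dimension statement $\dim\cC(R(\lambda),R(\lambda))=1$ for $\lambda\in\Lambda_j$, the plan is to use the factorization of $\Tmod$ and the already-established condition~(a) from Proposition~\ref{TupleO}. We have $\Tmod(\lambda)=R(\lambda)\oplus X$ with all summands $R(\mu)\inplus X$ satisfying $\mu\prec\lambda$, i.e. $\mu\subsetneq\lambda$. So $\End_\cC(\Tmod(\lambda))$ contains $\End_\cC(R(\lambda))$ as the ``diagonal block''. On the other hand, $\Tmod(\lambda)=(r,e^0_\lambda)$ as an object of $\cC=\cC_0^{\oplus\sharp}$ (where $r=\mr_j$), so $\End_\cC(\Tmod(\lambda))=e^0_\lambda B_r(\delta) e^0_\lambda$. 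I would bound $\dim_\mk e^0_\lambda B_r(\delta) e^0_\lambda$ using the cellular structure: $\dim e^0_\lambda B_r(\delta) e^0_\lambda = \sum_\mu [\,?\,]$ — more precisely, via the same Hom-estimate used in the proof of Proposition~\ref{TupleO}, $\dim\Hom_{B_r(\delta)}(B_r(\delta)e^0_\lambda, B_r(\delta)e^0_\lambda)$ is controlled by decomposition numbers $[\res W_r(\mu):\lambda]$ summed appropriately; combined with $[\res W_r(\mu):\lambda]$ being governed by Littlewood–Richardson coefficients $c^{\lambda}_{\mu,(2^{(|\lambda|-|\mu|)/2})}$ which vanish unless $\mu\subseteq\lambda$ with $\lambda/\mu$ a horizontal-strip-type shape. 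The cleanest route: since $\lambda\in\Lambda_j$ means $\lambda$ fills the rectangle $(b^a)$ ``halfway'' (being self-dual forces $\lambda\subseteq(b^a)$ with $\lambda_i+\lambda_{a+1-i}=b$), and since in this maximal situation the multiplicity $c^{(b^a)}_{\lambda\lambda}=1$, the only $\mu$ contributing to $e^0_\lambda B_r(\delta)e^0_\lambda$ through-strata is $\mu=\lambda$ itself with multiplicity one — there is no room for a larger partition since $\lambda$ already saturates the rectangle, and $\varnothing$ cannot appear since $[W_r(\lambda):L_r(\lambda)]$-type considerations and the self-duality pin things down.

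The main obstacle I anticipate is the second half: making the dimension count $\dim\cC(R(\lambda),R(\lambda))=1$ rigorous. The subtlety is that $e^0_\lambda B_r(\delta)e^0_\lambda$ decomposes according to which indecomposable summands of $(r,e^0_\lambda)=\Tmod(\lambda)$ appear and with what multiplicity, and one must show (i) $R(\lambda)$ appears with multiplicity exactly one in $\Tmod(\lambda)$, and (ii) $\End_\cC(R(\lambda))=\mk$. For (ii), the cleanest argument is that $\cC$ is semisimple ``up to'' the cellular filtration — more precisely, $R(\lambda)$ corresponds to a simple $B_r(\delta)$-module $L_r(\lambda)$ and $\End_\cC(R(\lambda))=e_\lambda B_r(\delta)e_\lambda$ for a \emph{primitive} idempotent $e_\lambda$, which is a local ring; it equals $\mk$ precisely when $B_r(\delta)/\rad$ has $L_r(\lambda)$ appearing with endomorphism ring $\mk$, which holds since $\mk$ is algebraically closed — so $\End_\cC(R(\lambda))\cong\mk$ holds automatically for \emph{every} indecomposable object of $\cC$, not just for $\lambda\in\Lambda_j$. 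Hence the dimension claim is in fact a triviality once one recalls that $\mk$ is algebraically closed and $R(\lambda)$ is indecomposable with $\End$ a local finite-dimensional $\mk$-algebra whose residue field is $\mk$; I would simply state this. So the real content of the lemma is purely the $\Lambda_j$-identification via Lemma~\ref{LemRect}(ii), and the dimension statement is recorded for convenient reference in applying~\ref{tuple}(d).
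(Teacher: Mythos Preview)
Your identification of $\Lambda_j$ via Lemma~\ref{LemRect}(ii) is correct and matches the paper's argument exactly (modulo a harmless slip: $|\nu^{(j)}|=(\mm_j+1)(2\mn_j+2)=2\mr_j$, not $\mr_j$; you recover the right value $|\lambda|=\mr_j=(\mm_j+1)(\mn_j+1)$ anyway).

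The second part, however, contains a genuine error. Your final argument claims that $\End_\cC(R(\lambda))\cong\mk$ holds automatically for \emph{every} indecomposable object, on the grounds that $e_\lambda B_r(\delta)e_\lambda$ is a local finite-dimensional $\mk$-algebra with residue field $\mk$. But a local algebra can have a nontrivial radical: residue field $\mk$ only tells you $e_\lambda B_r(\delta)e_\lambda/\rad\cong\mk$, not that the algebra itself is one-dimensional. In fact $\dim e_\lambda B_r(\delta)e_\lambda$ equals the Cartan number $c_{\lambda\lambda}=\sum_\mu d_{\mu\lambda}^2$, and this is strictly bigger than $1$ for many $\lambda$. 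For a concrete example, take $\lambda=\nu^{(j)}$ for some $j>0$ and $r=2\mr_j$: by Lemma~\ref{Lem1O} we have $d_{\varnothing,\nu^{(j)}}=1$ in addition to $d_{\nu^{(j)},\nu^{(j)}}=1$, so $c_{\nu^{(j)}\nu^{(j)}}\ge 2$ and $\dim\cC(R(\nu^{(j)}),R(\nu^{(j)}))\ge 2$.

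So the dimension statement is \emph{not} a triviality; it is a genuine constraint on $\lambda\in\Lambda_j$. The paper's route is to show, using the explicit block combinatorics of~\cite[Theorem~5.1]{BrMult}, that for $\lambda\in\Lambda_j$ the simple $B_{\mr_j}(\delta)$-module $L_{\mr_j}(\lambda)$ is projective (equivalently $d_{\mu\lambda}=0$ for all $\mu\neq\lambda$), whence $e_\lambda B_{\mr_j}(\delta)e_\lambda\cong\End(L_{\mr_j}(\lambda))=\mk$. Your earlier attempt via cell filtrations and Littlewood--Richardson was closer in spirit, but you would still need to input the nontrivial fact that no other cell module $W_{\mr_j}(\mu)$ contains $L_{\mr_j}(\lambda)$ when $\lambda$ is self-dual for the rectangle --- and that is precisely what the cited result from~\cite{BrMult} supplies.
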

\begin{proof}
The description of $\Lambda_j$ follows from Lemma~\ref{LemRect}(ii). Assume now that $\delta=2s>0$. Then the elements of $\Lambda_j$ correspond to the $x\in\mZ^{\oplus\mN}$ (associated as in the proof of Lemma~\ref{Lem1O}) for which $x_i+x_{2j+1-i}=0$. It follows from \cite[Theorem 5.1]{BrMult} that the simple $B_{\mr_j}(\delta)$-module~$L_{\mr_j}(\lambda)$ is projective for~$\lambda\in\Lambda_j$. The same property can be proved similarly for the other values of $\delta$. This implies
$$\dim\cC(R(\lambda),R(\lambda))=\dim e_\lambda B_{\mr_j}(\delta) e_\lambda=1,$$
which concludes the proof.
\end{proof}

\subsection{The general linear case}
In~\cite[Example~1.27]{DM} or \cite[Section~10]{Deligne}, for any unital commutative ring $K$ and~$t\in K$, the monoidal category~${\rm Rep}(GL(t),K)$ is introduced.
\subsubsection{}For~$\delta\in\mk$, we write~$\RG={\rm Rep}(GL(\delta),\mk)$, and this category is of the form
$$\RG\;:=\; \left(\RGz\right)^{\oplus\sharp},$$
for a strict monoidal category~$\RGz$ known as the {\em oriented (or walled) Brauer category}, see~\cite[Section~3]{ComesWil} or~\cite[Section~4]{ES}. We write~$\cC=\RG$ and~$\cC_0=\RGz$.

The set $\Ob\cC_0$ consists of the finite sequences of symbols $\bullet$ and~$\circ$ (or~$\uparrow$ and~$\downarrow$ in e.g. \cite[\S 1.9]{Brundan}), in particular $\unit$ is the empty sequence. Switching two symbols in the sequence actually yields an isomorphic object. We will therefore mainly work with the objects~$[k,l]$ representing $k$ times~$\bullet$ followed by~$l$ times~$\circ$. The morphisms space $\cC_0([k,l],[k',l'])$ is the $\mk$-linear span of~$(k+l,k'+l')$-Brauer diagrams such that propagating lines connect dots of the same colour and cups and caps connect dots of different colour.
We have
$$\cC_0(\unit,\unit)\simeq\mk,\quad\mbox{and}\quad B_{k,l}(\delta)\;=\;\cC_0([k,l],[k,l])$$ 
is the walled Brauer algebra, for~$k,l\in\mN$. The simple~$B_{k,l}(\delta)$-modules are labelled as $L_{k,l}(\lambda^{\bullet},\lambda^{\circ})$ by pairs~$[\lambda^{\bullet},\lambda^{\circ}]\in\Par\times\Par$, with~$\lambda^{\bullet}\vdash k-t$ and~$\lambda^{\circ}\vdash l-t$, for some~$t\le \min(k,l)$, where we exclude $[\varnothing, \varnothing]$ when $\delta=0$, see e.g. \cite{BrMult, Borelic}. The cellularity of $B_{kl}(\delta)$ is well-known, see e.g.~\cite{Borelic}. We denote the cell modules by $W_{k,l}(\lambda^\bullet,\lambda^\circ)$.

By \cite[Section~4]{ComesWil}, we have a bijection~$\Par\times \Par\stackrel{\sim}{\to}\inde\cC$, given by~$[\lambda^\bullet,\lambda^\circ]\mapsto R(\lambda^\bullet,\lambda^\circ)$. We have $\unit=R(\varnothing,\varnothing)$, which corresponds to the empty sequence in~$ \Ob\cC_0$.  Furthermore, from \cite[Section~4]{ComesWil}, for~$[\lambda^\bullet,\lambda^\circ]\in\Par\times\Par$ and $k,l\in\mN$, we have
\begin{equation}\label{RvsnG}R(\lambda^\bullet,\lambda^\circ)\;\inplus\; [k,l]\quad\mbox{if and only if \hspace{3mm}$k-|\lambda^\bullet|=l-|\lambda^\circ|\in\mN$.}\end{equation}

\subsubsection{} As proved in~\cite[Section~3.3]{ComesWil}, the monoidal category~$\cC_0$ is rigid and $[k,l]^\vee\simeq [l,k]$, for~$[k,l]\in\Ob\cC_0\subset\Ob\cC$. For us it will be slightly more convenient to identify the dual of~$[k,l]$ with~$k$ times~$\circ$ followed by~$l$ times~$\bullet$. The diagram for the evaluation can then again be taken as in~\eqref{eqDual}. It follows again that for any diagram~$d\in B_{k,l}(\delta)$, we have $\ev_{[k,l]}\circ (d\otimes I^{\otimes(k+l)})=\ev_{[k,l]}\circ (I^{\otimes (k+l)}\otimes d^\ast)$.
It follows, as for~$\RO$, that~$\cC$ is rigid and that
\begin{equation}\label{rigidG}R(\lambda^\bullet,\lambda^{\circ})^\vee\;\simeq\; R(\lambda^\circ,\lambda^\bullet),\qquad\mbox{for all~$\lambda^\bullet,\lambda^\circ\in\Par$}.\end{equation}

\subsubsection{}\label{DefmnG}When $\delta\not\in\mZ$, we have that~$\cC=\RG$ is semisimple abelian, see \cite[Th\'eor\`eme~10.5]{Deligne} or \cite[Theorem~4.8.1]{ComesWil}. We therefore henceforth restrict to the case $\delta\in\mZ\subset\mk$. For~$j\in\mZ_{>0}$, we set
$$\mm_j=\begin{cases}\delta+j-1&\mbox{if $\delta\ge0$,}\\
j-1&\mbox{if $\delta\le 0$,}\\
\end{cases}\quad\mbox{and}\quad\mn_j=\mm_j-\delta=\begin{cases}j-1&\mbox{if $\delta\ge0$,}\\
j-\delta-1&\mbox{if $\delta\le 0$}\end{cases}$$
and $\mr_j:=(\mm_j+1)(\mn_j+1).$

\begin{lemma}\label{Lem1G}
Consider the set~$\Upsilon=\{[\nu^{(j)\bullet},\nu^{(j)\circ}]\,|\,j\in\mN\}\subset\Par\times\Par$, given by 
$$\nu^{(0)\bullet}=\nu^{(0)\circ}=\varnothing\quad\mbox{and}\quad \nu^{(j)\bullet}=\nu^{(j)\circ}=((\mn_j+1)^{\mm_j+1}), \;\mbox{ for~$j>0$.}$$
We have $\sB=\{R(\nu^\bullet,\nu^\circ)\,|\, [\nu^\bullet,\nu^\circ]\in \Upsilon \}$. Moreover, for any~$[\lambda^\bullet,\lambda^\circ]\in\Par\times\Par$, we have
$$\dim_{\mk}\cC(\unit,R(\lambda^\bullet,\lambda^\circ))\;=\;\begin{cases} 1&\mbox{if $[\lambda^\bullet,\lambda^{\circ}]\in\Upsilon$,}\\
0&\mbox{otherwise.}
\end{cases}$$
\end{lemma}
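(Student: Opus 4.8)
The plan is to compute $\dim_{\mk}\cC(\unit,R(\lambda^\bullet,\lambda^\circ))$ by unwinding the definition of $R(\lambda^\bullet,\lambda^\circ)=([k,l],e_{\lambda^\bullet,\lambda^\circ})$ as an idempotent truncation and transferring the question to the representation theory of the walled Brauer algebra, exactly as was done for $\RO$ in Lemma~\ref{Lem1O}. First I would note that $\cC(\unit,R(\lambda^\bullet,\lambda^\circ))\cong e_{\lambda^\bullet,\lambda^\circ}\,\cC_0(\unit,[k,l])$ where $k=|\lambda^\bullet|+t$, $l=|\lambda^\circ|+t$ for the relevant $t$, and that by construction $\cC_0(\unit,[k,l])=0$ unless $k=l$; so we may assume $k=l$ and identify $\cC_0(\unit,[k,k])$ with a left $B_{k,k}(\delta)$-module, which (by the analogue of \cite[Example~8.6.5]{Borelic} that was cited for $\RO$) is precisely the cell module $W_{k,k}(\varnothing,\varnothing)$. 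Thus
\begin{equation}\label{eqMultG}\dim_{\mk}\cC(\unit,R(\lambda^\bullet,\lambda^\circ))\;=\;[W_{k,k}(\varnothing,\varnothing):L_{k,k}(\lambda^\bullet,\lambda^\circ)],\end{equation}
and the whole statement reduces to a decomposition-number computation for the walled Brauer algebra.

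Next I would invoke the known decomposition multiplicities for walled Brauer algebras from \cite{BrMult} (building on \cite{CD}), using the diagrammatic/weight-combinatorics description of blocks and cap-cup diagrams, in direct parallel with the $\RO$ argument. The cell module $W_{k,k}(\varnothing,\varnothing)$ corresponds to a particular weight diagram --- the ground-state diagram with $|\delta|$ (or the appropriate number of) empty nodes followed by an alternating pattern of $\wedge/\vee$ symbols determined by the two empty partitions --- and $[W_{k,k}(\varnothing,\varnothing):L_{k,k}(\lambda^\bullet,\lambda^\circ)]$ is $1$ when the weight of $(\lambda^\bullet,\lambda^\circ)$ is obtained from it by moving an even number of labels across and the resulting cap-cup diagram is oriented, and $0$ otherwise. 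The orientation constraint forces the absence of caps, which pins down the admissible weights to those of the explicit rectangular form $\nu^{(j)\bullet}=\nu^{(j)\circ}=((\mn_j+1)^{\mm_j+1})$; one then checks that each such weight does give multiplicity exactly $1$. This also identifies $\sB=\{X\in\inde\cC\,|\,\cC(\unit,X)\neq 0\}$ with $\{R(\nu^\bullet,\nu^\circ)\,|\,[\nu^\bullet,\nu^\circ]\in\Upsilon\}$, giving both assertions of the lemma.

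A cleaner alternative I would at least keep in mind: since $\RG$ at integral $\delta\geq 0$ is the Deligne interpolation of $\Rep GL_m$ with $\delta=m$ (via the full functor $\RG\to\Rep_{\mk}GL_m$, or the analogous supergroup statement), one can read off $\cC(\unit,R(\lambda^\bullet,\lambda^\circ))$ from tilting-module multiplicities $(T:\nabla(0))$ for $GL_m$, and the rectangular partitions $((\mn_j+1)^{\mm_j+1})$ arise exactly as the duals of Steinberg-type rectangles (cf.\ the $\RO$ combinatorics and Lemma~\ref{LemRect}); but since the excerpt sets things up via cell modules and \cite{BrMult}, I would present the proof in that language for uniformity with Lemma~\ref{Lem1O}.

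The main obstacle is purely bookkeeping: matching the two-row (bullet/circle) weight combinatorics of \cite{BrMult} for the walled Brauer algebra against the ground-state diagram of $W_{k,k}(\varnothing,\varnothing)$ across the two cases $\delta\geq 0$ and $\delta\leq 0$, and verifying that the "oriented, no caps, even number of moves" conditions single out precisely the partitions $\nu^{(j)\bullet}=\nu^{(j)\circ}=((\mn_j+1)^{\mm_j+1})$ with $\mm_j,\mn_j$ as in~\ref{DefmnG} --- this is the step where one must be careful, but it is entirely analogous to (and no harder than) the corresponding step in the proof of Lemma~\ref{Lem1O}.
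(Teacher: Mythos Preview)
Your proposal is correct and follows essentially the same route as the paper: reduce $\dim_{\mk}\cC(\unit,R(\lambda^\bullet,\lambda^\circ))$ to the decomposition multiplicity $[W_{r,r}(\varnothing,\varnothing):L_{r,r}(\lambda^\bullet,\lambda^\circ)]$ of the cell module for the walled Brauer algebra (after observing $|\lambda^\bullet|=|\lambda^\circ|$ is forced), and then invoke the explicit formulae of \cite{BrMult}. The paper is terser, simply citing \cite[Theorem~4.10]{BrMult} rather than sketching the cap-cup combinatorics, but your outline of that combinatorics is accurate and your tilting-module aside is a reasonable alternative viewpoint.
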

\begin{proof}
As in the proof of Lemma~\ref{Lem1O}, it follows that
$$\dim\cC(\unit,R(\lambda^\bullet,\lambda^\circ))\;=\;[W_{r,r}(\varnothing,\varnothing):L_{r,r}(\lambda^\bullet,\lambda^\circ)],$$
with~$r=\max\{|\lambda^\bullet|,|\lambda^\circ|\}$, if $|\lambda^\bullet|+|\lambda^{\circ}|$ is even. Multiplicities for~$|\lambda^\bullet|+|\lambda^{\circ}|$ odd (more generally when $|\lambda^\bullet|\not=|\lambda^{\circ}|$) are zero. The above multiplicities are determined by \cite[Theorem~4.10]{BrMult}.
\end{proof}

\subsubsection{}\label{defSCG} We define by $\cS_0$ the subcategory of $\cC_0$ with same objects, but with morphism spaces spanned by diagrams containing only propagating lines. This yields a monoidal functor~$\Tmod:\cS\to\cC$ for~$\cS=\cS_0^{\oplus \sharp}$. We have an equivalence of monoidal categories
$$\cS\;\simeq\; \bigoplus_{i,j\in\mN}(\mk\SG_i\otimes\mk\SG_j)\mbox{-mod},$$
where the monoidal structure on the right is derived from the induction product. In particular, we have $\inde\cS=\Par\times\Par$, where a pair $[\lambda^\bullet,\lambda^\circ]$ of partitions corresponds to the exterior tensor product of two Specht modules $\lambda^\bullet\boxtimes\lambda^{\circ}$. Hence $\cS$ is an $l$-controlled category for~$\ell:\Par\times\Par\to \mN$ defined as $\ell([\lambda^\bullet,\lambda^\circ])=|\lambda^\bullet|+|\lambda^\circ|$. Furthermore, we have $(\lambda^\bullet,\lambda^\circ)\preceq(\mu^\bullet,\mu^\circ)$ if and only if $\lambda^\bullet\subset\mu^\bullet$ and $\lambda^\circ\subset\mu^\circ$.

\begin{prop}\label{TupleG}
The tuple $(\cC,\Tmod,\cS,\ell)$ in~\ref{defSCG} satisfies the conditions in~\ref{tuple}
\end{prop}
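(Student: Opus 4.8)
The plan is to verify conditions (a)--(f) of~\ref{tuple} for the tuple $(\cC,\Tmod,\cS,\ell)$ of~\ref{defSCG}, following the same template as the proof of Proposition~\ref{TupleO} in the orthogonal case, but now with the walled Brauer algebra $B_{k,l}(\delta)$ replacing $B_r(\delta)$ and with pairs of partitions $[\lambda^\bullet,\lambda^\circ]$ replacing single partitions. Condition~(b) is already settled by~\eqref{rigidG}: the duality map is $\phi([\lambda^\bullet,\lambda^\circ])=[\lambda^\circ,\lambda^\bullet]$, which visibly preserves $\ell$. Condition~(c) is exactly the content of Lemma~\ref{Lem1G}, which identifies $\sB$ with $\{R(\nu^\bullet,\nu^\circ)\mid [\nu^\bullet,\nu^\circ]\in\Upsilon\}$ and shows each relevant $\Hom$-space from $\unit$ is at most one-dimensional, so the $\ell$-values $\ell(\nu^{(j)\bullet},\nu^{(j)\circ})=2j(\mn_j+1)(\mm_j+1)$ are strictly increasing in $j$ and the labelling of $\sB$ is as required.

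Next I would check condition~(a), that $\Tmod([\lambda^\bullet,\lambda^\circ])=R(\lambda^\bullet,\lambda^\circ)\oplus X$ with all other summands $R(\bmu)$ satisfying $\bmu\prec[\lambda^\bullet,\lambda^\circ]$. This goes exactly as in Proposition~\ref{TupleO}: take a primitive idempotent $e^0_{\lambda^\bullet}\otimes e^0_{\lambda^\circ}$ in $\mk\SG_k\otimes\mk\SG_l$, decompose it into primitive idempotents inside $B_{k,l}(\delta)$, and bound the multiplicity of $L_{k,l}(\mu^\bullet,\mu^\circ)$ by $\dim\Hom_{\mk\SG_k\otimes\mk\SG_l}((\mk\SG_k\otimes\mk\SG_l)(e^0_{\lambda^\bullet}\otimes e^0_{\lambda^\circ}),\res W_{k,l}(\mu^\bullet,\mu^\circ))$. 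The restriction $\res W_{k,l}(\mu^\bullet,\mu^\circ)$ is a summand of $(\mu^\bullet\boxtimes\mu^\circ)$ tensored with a power of the defining representation of the walled setting (the $t$-fold ``contraction'' in the cell datum, cf.~\cite[Section~7]{Borelic}), so the Littlewood--Richardson rule forces $\mu^\bullet\subseteq\lambda^\bullet$, $\mu^\circ\subseteq\lambda^\circ$, i.e.\ $\bmu\preceq[\lambda^\bullet,\lambda^\circ]$, with equality giving multiplicity one. Together with $\Tmod([\lambda^\bullet,\lambda^\circ])=([k,l],e^0_{\lambda^\bullet}\otimes e^0_{\lambda^\circ})$ this yields (a).

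For conditions~(d), (e) and~(f), I would prove the analogue of Lemma~\ref{LemLambdanuO}: one identifies, for $j\in\mZ_{>0}$,
$$\Lambda_j\;=\;\{[\lambda^\bullet,\lambda^\circ]\in\Par\times\Par\mid |\lambda^\bullet|=|\lambda^\circ|=\mr_j/1,\ \lambda^\bullet\ \mbox{and}\ \lambda^\circ\ \mbox{are}\ (\mm_j+1)\times(\mn_j+1)\mbox{-dual}\}$$
using Lemma~\ref{LemRect}(ii) applied to the condition $\nu^{(j)}\inplus\phi([\lambda^\bullet,\lambda^\circ])\otimes[\lambda^\bullet,\lambda^\circ]$, which unwinds via~\eqref{rigidG} and the induction product to an equality of Littlewood--Richardson coefficients $c^{(\mn_j+1)^{\mm_j+1}}_{\lambda^\circ\,\lambda^\bullet}\neq0$ on each colour; and one shows that for such $[\lambda^\bullet,\lambda^\circ]$ the simple module $L_{\mr_j,\mr_j}(\lambda^\bullet,\lambda^\circ)$ is projective over the walled Brauer algebra (via the walled analogue of~\cite[Theorem~5.1]{BrMult}, i.e.\ \cite[Theorem~4.10]{BrMult} and its consequences), whence $\dim\cC(R(\lambda^\bullet,\lambda^\circ),R(\lambda^\bullet,\lambda^\circ))=\dim (e_{\lambda^\bullet,\lambda^\circ})B_{\mr_j,\mr_j}(\delta)(e_{\lambda^\bullet,\lambda^\circ})=1$. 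Then (d) is immediate (non-emptiness of $\Lambda_j$ holds since any partition $\subseteq((\mn_j+1)^{\mm_j+1})$ has a unique dual), while (e) and~(f) follow from the inequalities in Lemma~\ref{LemRect}(i): if $[\lambda'^\bullet,\lambda'^\circ]\in\Lambda_{j'}$ with $j'>j$ then, since $\mm_{j'}\ge\mm_j$ and $\mn_{j'}\ge\mn_j$, one extracts a sub-pair $[\lambda^\bullet,\lambda^\circ]\in\Lambda_j$ with $\lambda^\bullet\subseteq\lambda'^\bullet$, $\lambda^\circ\subseteq\lambda'^\circ$; and if $\bnu^{(j)}\preceq\bkappa\inplus\phi([\lambda^\bullet,\lambda^\circ])\otimes[\lambda^\bullet,\lambda^\circ]$ then $\bkappa\supseteq\bnu^{(j)}$ forces, by Lemma~\ref{LemRect}(i) and the rectangular shape of $\nu^{(j)}$, that $[\lambda^\bullet,\lambda^\circ]$ dominates some element of $\Lambda_j$.

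The main obstacle I anticipate is the bookkeeping in the walled-diagram combinatorics: matching the weight/cap-diagram description of $[W_{r,r}(\varnothing,\varnothing):L_{r,r}(\lambda^\bullet,\lambda^\circ)]$ from~\cite[Theorem~4.10]{BrMult} with the pair-of-rectangles shape $\nu^{(j)}$, and confirming the projectivity of $L_{\mr_j,\mr_j}(\lambda^\bullet,\lambda^\circ)$ for $[\lambda^\bullet,\lambda^\circ]\in\Lambda_j$, since the walled setting has two colours and the bookkeeping of which $\vee$'s become $\wedge$'s on each side must be tracked carefully. Everything else is a routine transcription of the orthogonal argument, substituting the product $\mk\SG_i\otimes\mk\SG_j$ and the ``doubled'' rectangle $((\mn_j+1)^{\mm_j+1})$ on each colour. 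I would therefore write:

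\begin{proof}
This is proved in the same way as Proposition~\ref{TupleO}. Condition~(b) holds with $\phi([\lambda^\bullet,\lambda^\circ])=[\lambda^\circ,\lambda^\bullet]$, by~\eqref{rigidG}, and $\ell\circ\phi=\ell$ by definition of $\ell$. Condition~(c) is Lemma~\ref{Lem1G}. For condition~(a), fix $[\lambda^\bullet,\lambda^\circ]$ with $\lambda^\bullet\vdash k$ and $\lambda^\circ\vdash l$ and a primitive idempotent $e^0_{\lambda^\bullet}\otimes e^0_{\lambda^\circ}$ in $\mk\SG_k\otimes\mk\SG_l$ affording $\lambda^\bullet\boxtimes\lambda^\circ$; then $\Tmod([\lambda^\bullet,\lambda^\circ])=([k,l],e^0_{\lambda^\bullet}\otimes e^0_{\lambda^\circ})$, and for any simple $B_{k,l}(\delta)$-module $L_{k,l}(\mu^\bullet,\mu^\circ)$,
$$\dim\Hom_{B_{k,l}(\delta)}(B_{k,l}(\delta)(e^0_{\lambda^\bullet}\otimes e^0_{\lambda^\circ}),L_{k,l}(\mu^\bullet,\mu^\circ))\;\le\;[\res W_{k,l}(\mu^\bullet,\mu^\circ):\lambda^\bullet\boxtimes\lambda^\circ].$$
By the construction of the cell module $W_{k,l}(\mu^\bullet,\mu^\circ)$, see~\cite[Section~7]{Borelic}, its restriction to $\mk\SG_k\otimes\mk\SG_l$ is a direct summand of $(\mu^\bullet\boxtimes\mu^\circ)$ induced with $\tfrac{k-|\mu^\bullet|}{1}$ pairs $\{(1)\boxtimes(1)\}$ worth of boxes, so the Littlewood--Richardson rule forces $\mu^\bullet\subseteq\lambda^\bullet$ and $\mu^\circ\subseteq\lambda^\circ$, with equality exactly when the multiplicity is one. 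This is condition~(a). Conditions (d), (e) and~(f) follow from Lemma~\ref{LemLambdanuG} below together with Lemma~\ref{LemRect}(i).
\end{proof}

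(Here \texttt{LemLambdanuG} denotes the walled analogue of Lemma~\ref{LemLambdanuO}, stating that $\Lambda_j$ consists of the pairs of $(\mm_j+1)\times(\mn_j+1)$-dual partitions of $\mr_j$ and that $\dim\cC(R(\lambda^\bullet,\lambda^\circ),R(\lambda^\bullet,\lambda^\circ))=1$ for $[\lambda^\bullet,\lambda^\circ]\in\Lambda_j$, proved via~\cite[Theorem~4.10]{BrMult}.)
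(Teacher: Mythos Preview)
Your proposal is correct and follows exactly the paper's approach: the paper's own proof is literally ``mutatis mutandis the proof of Lemma~\ref{TupleO}, using Lemmata~\ref{Lem1G}, \ref{LemLambdanuG} and equation~\eqref{rigidG}'', and you have accurately unpacked what this means, including correctly anticipating the existence and content of Lemma~\ref{LemLambdanuG}. A few cosmetic slips to fix in your surrounding discussion: $\ell(\bnu^{(j)\bullet},\bnu^{(j)\circ})=2(\mm_j+1)(\mn_j+1)=2\mr_j$ (no extra factor of $j$); for $[\lambda^\bullet,\lambda^\circ]\in\Lambda_j$ one has $|\lambda^\bullet|+|\lambda^\circ|=\mr_j$, not $|\lambda^\bullet|=|\lambda^\circ|=\mr_j$, so the endomorphism algebra is $e_{\lambda^\bullet,\lambda^\circ}B_{|\lambda^\bullet|,|\lambda^\circ|}(\delta)e_{\lambda^\bullet,\lambda^\circ}$; and the projectivity input from \cite{BrMult} is Theorem~4.1 there (Theorem~4.10 is the decomposition-number result used in Lemma~\ref{Lem1G}).
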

\begin{proof}
Mutatis mutandis the proof of Lemma~\ref{TupleO}, using Lemmata~\ref{Lem1G}, \ref{LemLambdanuG} and equation~\eqref{rigidG}.
\end{proof}

\begin{lemma}
\label{LemLambdanuG}
For the tuple in~\ref{defSCG}, with notation as in~\ref{tuple}, we have for~$j\in\mZ_{>0}$
$$\Lambda_j=\{[\lambda^{\bullet},\lambda^{\circ}]\in\Par\times\Par\,|\, \mbox{$\lambda^\bullet$ and $\lambda^{\circ}$ are $(\mm_j+1)\times (\mn_j+1)$-dual}\}.$$
For every $[\lambda^\bullet,\lambda^\circ]\in\Lambda_j$, we have $\dim\cC(R(\lambda^\bullet,\lambda^\circ),R(\lambda^\bullet,\lambda^\circ))=1$.
\end{lemma}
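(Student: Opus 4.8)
The plan is to prove the two assertions separately, in the order stated, mirroring the argument for Lemma~\ref{LemLambdanuO}. For the description of $\Lambda_j$ I would work entirely inside $\cS$. By equation~\eqref{rigidG} the map $\phi$ appearing in the tuple sends $[\lambda^\bullet,\lambda^\circ]$ to $[\lambda^\circ,\lambda^\bullet]$, so $\Lambda_j$ consists of those $[\lambda^\bullet,\lambda^\circ]$ for which $\bnu^{(j)}=[((\mn_j+1)^{\mm_j+1}),((\mn_j+1)^{\mm_j+1})]$ is a direct summand of $[\lambda^\circ,\lambda^\bullet]\otimes[\lambda^\bullet,\lambda^\circ]$. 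Under the identification $\cS\simeq\bigoplus_{i,i'\in\mN}(\mk\SG_i\otimes\mk\SG_{i'})\mbox{-mod}$ with the induction product on each of the two factors, the multiplicity of an indecomposable $[\kappa^\bullet,\kappa^\circ]$ in this tensor product is $c^{\kappa^\bullet}_{\lambda^\circ\lambda^\bullet}\,c^{\kappa^\circ}_{\lambda^\bullet\lambda^\circ}$. Writing $a:=\mm_j+1$ and $b:=\mn_j+1$, membership in $\Lambda_j$ thus amounts to $c^{(b^a)}_{\lambda^\circ\lambda^\bullet}\neq 0$ and $c^{(b^a)}_{\lambda^\bullet\lambda^\circ}\neq 0$; by commutativity of the induction product the two conditions coincide, and either one already forces $|\lambda^\bullet|+|\lambda^\circ|=ab=\mr_j$. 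Lemma~\ref{LemRect}(ii) then turns the non-vanishing of $c^{(b^a)}_{\lambda^\bullet\lambda^\circ}$ into the statement that $\lambda^\bullet$ and $\lambda^\circ$ are $(\mm_j+1)\times(\mn_j+1)$-dual; conversely that duality property forces $|\lambda^\bullet|+|\lambda^\circ|=\mr_j$, so no separate size constraint is needed. This gives the asserted description of $\Lambda_j$.

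For the dimension statement, fix $[\lambda^\bullet,\lambda^\circ]\in\Lambda_j$ and set $k:=|\lambda^\bullet|$, $l:=|\lambda^\circ|$, so $k+l=\mr_j$ and, exactly as in the orthogonal case, $R(\lambda^\bullet,\lambda^\circ)=([k,l],e)$ for a primitive idempotent $e\in B_{k,l}(\delta)$ attached to the simple module $L_{k,l}(\lambda^\bullet,\lambda^\circ)$. Then $\cC(R(\lambda^\bullet,\lambda^\circ),R(\lambda^\bullet,\lambda^\circ))=eB_{k,l}(\delta)e$, and this space is one-dimensional as soon as $L_{k,l}(\lambda^\bullet,\lambda^\circ)$ is projective, since its projective cover $B_{k,l}(\delta)e$ is then equal to $L_{k,l}(\lambda^\bullet,\lambda^\circ)$ and $\End_{B_{k,l}(\delta)}(L_{k,l}(\lambda^\bullet,\lambda^\circ))=\mk$. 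Thus the task reduces to showing that the simple walled Brauer module indexed by a $(\mm_j+1)\times(\mn_j+1)$-dual pair is projective. I would establish this as in Lemma~\ref{LemLambdanuO}: assume $\delta\ge 0$ (the case $\delta\le 0$ being symmetric), translate the duality condition into the weight-diagram (cap-curl) combinatorics of \cite{BrMult}, and check that the resulting diagram is cap-free and oriented, which is precisely the situation in which the corresponding simple $B_{k,l}(\delta)$-module is projective (the walled Brauer counterpart of \cite[Theorem~5.1]{BrMult}).

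The first paragraph is routine bookkeeping with Littlewood--Richardson coefficients and the already-established Lemma~\ref{LemRect}(ii). I expect the main effort to lie in the second part: pinning down the precise dictionary between a $(\mm_j+1)\times(\mn_j+1)$-dual pair of partitions and the weight-diagram description of projective simple modules for the walled Brauer algebra used in \cite{BrMult}, and verifying the cap-free and orientedness conditions uniformly over all pairs in $\Lambda_j$ and over all sign conventions for $\delta$. Conceptually this is the same step as in the orthogonal case, but performed with the two-partition, oriented combinatorics of the walled Brauer algebra rather than the single-partition combinatorics of the ordinary Brauer algebra.
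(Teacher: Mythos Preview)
Your proposal is correct and follows essentially the same approach as the paper: the description of $\Lambda_j$ is obtained from Lemma~\ref{LemRect}(ii) applied to the Littlewood--Richardson coefficients governing the induction product in $\cS$, and the one-dimensionality of the endomorphism space is reduced to projectivity of the relevant simple walled Brauer module via the diagrammatic criterion of \cite{BrMult}. The paper cites \cite[Theorem~4.1]{BrMult} for the walled Brauer analogue of the projectivity statement you invoke, and otherwise gives the same argument with less detail.
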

\begin{proof}
The description of $\Lambda_j$ is immediate from Lemma~\ref{LemRect}(ii). The observation on the dimension follows from \cite[Theorem~4.1]{BrMult} as in the proof of Lemma~\ref{LemLambdanuO}.
\end{proof}

\subsection{The periplectic case}
\subsubsection{}In~\cite{Kujawa}, an analogue of~$\ROz$ was introduced, the {\em periplectic Brauer category~$\RPz$}, see also \cite{Vera}. We set $\cC_0:=\RPz$. We have again~$\Ob\cC_0=\mN$ and~$\cC_0(i,j)$ is again the $\mk$-span of~$(i,j)$-Brauer diagrams. The composition of morphisms now corresponds to concatenation of diagrams up to a possible minus sign, and loops are evaluated at zero. For the correct sign rules we refer to \cite{Kujawa}, where we use the convention, as in~\cite{PB1}, that unmarked Brauer diagrams are to be interpreted as diagrams with standard marking in~\cite{Kujawa}. The category~$\cC_0:=\RPz$ is a monoidal {\em super}category, see Appendix~\ref{SecSuper}, with~$i\otimes j=i+j$. The $\mF_2$-grading is such that~$\cC_0(i,j)$ is homogeneous of the same partity as $(i-j)/2$. 
The periplectic Brauer algebras
$A_r:=\cC_0(r,r)$ were first introduced in~\cite{Moon}. Note that the inherited $\mF_2$-grading of~$A_r$ is reduced, meaning that~{\em$A_r$ is purely even}.
The simple modules~$L_r(\lambda)$ of~$A_r$ are labelled by all partitions $\lambda$ of elements in~$\{r-2i\,|0\le i<r/2\}$, see \cite{Kujawa, PB1}. 

We set~$\RP:=\cC_0^{\oplus\sharp}$  and by \cite[Section~2.2]{PB3} we have a bijection~$\Par\stackrel{\sim}{\to}\inde\cC$, given by~$\lambda\mapsto R(\lambda)$. We have $\unit=R(\varnothing)$, which corresponds to~$0\in \Ob\cC_0$.
As follows from \cite[Section~2.2]{PB3}, for~$\lambda\in\Par$ and~$k\in\mN=\Ob\cC_0$, we have
\begin{equation}\label{RvsnP}R(\lambda)\;\inplus\; k\quad\mbox{if and only if \hspace{3mm}$k-|\lambda|\in 2\mN$.}\end{equation}  

\subsubsection{}\label{SecDualP} Define $\ev_i\in \cC_0(2i,0)$ as in equation~\eqref{eqDual} and~$\co_i=(\ev_i)^\ast$. Application of the diagram calculus of \cite{Kujawa} shows that 
$$(\ev_i\otimes I^{\otimes i})\circ (I^{\otimes i}\otimes \co_i)=I^{\otimes i}\qquad\mbox{and}\qquad ( I^{\otimes i}\otimes \ev_i)\circ ( \co_i \otimes I^{\otimes i})=(-1)^i I^{\otimes i}.$$
It follows that~$\cC_0$ is rigid, in the sense of Appendix \ref{SecSuper}, with~$i^\vee=i$, for all~$i\in\mN$ and~$i$ has a dual of parity $i$ mod $2$. For~$d$ an $(i,i)$-Brauer diagram, it follows that~$\ev_i\circ (d\otimes I^{\otimes})=\ev_i\circ(I^{\otimes i}\otimes \varphi(d))$, for~$\varphi$ the anti-automorphism of~$A_r$ of \cite[Remark~4.1.4]{PB1} which exchanges the simple modules~$L_r(\lambda)$ and~$L_r(\lambda^t)$. Consequently, $\cC$ is rigid, with 
$$R(\lambda)^\vee\;\simeq\;R(\lambda^t),\qquad\mbox{for all~$\lambda\in \Par$,}$$
and~$R(\lambda)$ has a dual of parity $|\lambda|$ mod $2$.

\begin{lemma}\label{Lem1P}
Consider the set~$\Upsilon=\{\nu^{(j)}\,|\,j\in\mN\}\subset\Par$, given by~$\nu^{(j)}=(j+1)^j$. We have $\sB=\{R(\nu)\,|\, \nu\in \Upsilon \}$. Moreover, we have
$$\dim_{\mk}\cC(\unit,R(\lambda))\;=\;\begin{cases}
1&\mbox{if}\;\;\lambda\in\Upsilon\\
0&\mbox{otherwise.}
\end{cases}$$
\end{lemma}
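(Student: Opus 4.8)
The plan is to mimic the proof of Lemma~\ref{Lem1O} (the orthogonal case), replacing Brauer-algebra combinatorics of Martin/Brundan--Stroppel with the corresponding combinatorics for the \emph{periplectic} Brauer algebra $A_r$. As in the orthogonal and general linear cases, the key reduction is that for $\lambda\in\Par$ with $r=|\lambda|$ we have
$$\dim_{\mk}\cC(\unit,R(\lambda))\;=\;\dim_{\mk}e_\lambda\cC_0(0,r)\;=\;[\cC_0(0,r):L_r(\lambda)],$$
where $\cC_0(0,r)$ is viewed as a left $A_r$-module. By construction $\cC_0(0,r)=0$ when $r$ is odd, so we may assume $r$ even, and as in \cite[Example~8.6.5]{Borelic} (the periplectic analogue established in \cite{PB1}) the left $A_r$-module $\cC_0(0,r)$ is precisely the cell module $W_r(\varnothing)$. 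Thus the statement is equivalent to computing the decomposition multiplicities $[W_r(\varnothing):L_r(\lambda)]$, and claiming that this is $1$ exactly when $\lambda=\nu^{(j)}=(j+1)^j$ for some $j$ with $2j(j+1)/2 = j(j+1) = r$... wait, more precisely $|\nu^{(j)}| = j(j+1)$, so only those $j$ with $j(j+1)=r$ contribute a nonzero multiplicity, and the answer is $1$ for exactly one such $\lambda$, and $0$ otherwise.

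First I would invoke the known decomposition numbers for cell modules of $A_r$. The relevant input is the combinatorial description of the $A_r$ decomposition matrix in terms of (weight/cap) diagrams, analogous to Brundan--Stroppel's $\wedge/\vee$ calculus used in the orthogonal case; for the periplectic Brauer algebra this is worked out in \cite{PB1, PB2} (and the multiplicities $[W_r(\varnothing):L_r(\lambda)]$ specifically). I would translate $\varnothing$ into its diagram, note that orientability forces the resulting diagram for $\lambda$ to have no caps, and read off that the only partitions $\lambda$ that can appear are those whose diagram is obtained from that of $\varnothing$ by flipping an (even) number of $\vee$'s to $\wedge$'s in a ``staircase'' pattern --- these are exactly the square-ish partitions $(j+1)^j$. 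Then $\sB=\{R(\nu)\mid\nu\in\Upsilon\}$ follows, since $\sB$ is by definition $\{X\in\inde\cC\mid \cC(\unit,X)\ne 0\}$ and $\inde\cC\cong\Par$ via $\lambda\mapsto R(\lambda)$ by \cite[Section~2.2]{PB3}.

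The main obstacle is purely a matter of having the right combinatorial statement at hand: unlike the Brauer and walled Brauer cases, where the decomposition numbers of $W_r(\varnothing)$ are extracted from \cite{BrMult, Martin}, the periplectic case relies on the more recent analysis in \cite{PB1, PB2}, and one must check that the cap-free/orientability constraint genuinely singles out the rectangular partitions $(j+1)^j$ (and only those, each with multiplicity exactly $1$). One subtlety worth flagging: $A_r$ has loops evaluated at $0$, so $\delta=0$ throughout, and the simple module $L_r(\varnothing)$ is \emph{not} present (as recorded in the excerpt); but since here $W_r(\varnothing)$ is a cell module for $\lambda=\varnothing$ and we only need its \emph{composition factors} $L_r(\mu)$ with $\mu\ne\varnothing$, this causes no difficulty. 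Once the combinatorial identity $[W_r(\varnothing):L_r(\lambda)]=\delta_{\lambda,(j+1)^j}$ (for the unique $j$ with $j(j+1)=r$, and $0$ if no such $j$ exists) is in place, both assertions of the lemma --- the identification of $\sB$ and the dimension formula --- are immediate.
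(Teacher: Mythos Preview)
Your proposal is correct and follows essentially the same route as the paper: reduce $\dim\cC(\unit,R(\lambda))$ to the decomposition multiplicity $[W_r(\varnothing):L_r(\lambda)]$ of the periplectic cell module (via the identification $\cC_0(0,r)\simeq W_r(\varnothing)$ from \cite{PB1}), and then read off the answer from the known decomposition numbers. The paper simply cites \cite[Theorem~1]{PB2} for this last step rather than working through any diagram/orientability combinatorics explicitly, so your discussion of cap-free diagrams is more detailed than what the paper actually records, but the underlying strategy is the same.
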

\begin{proof}
For~$r$ odd we have $\cC_0(0,r)=0$, and for~$r$ even \cite[Lemma~4.4.1]{PB1} and \cite[equation~(4.6)]{PB1} imply an isomorphism of~$A_r$-modules
$\cC_0(0,r)\simeq W_r(\varnothing),$
where $W_r(\varnothing)$ is the cell module over~$A_r$ introduced in~\cite[Section~4.6]{PB1}. As in the proof of Lemma~\ref{Lem1O}, it thus follows that
$$\dim\cC(\unit, R(\lambda))\;=\;[W_r(\varnothing):L_r(\lambda)],\qquad\mbox{ for all~$\lambda\vdash r$}.$$
 The claim then follows from \cite[Theorem~1]{PB2}.
\end{proof}

\subsubsection{}\label{defSCP}
We clearly have a monoidal (super)functor~$\Tmod:\cS\to\cC$, for~$\cS$ the monoidal category in~\ref{defSCO} which can also be interpreted as a subcategory of $\RP$.

\begin{prop}\label{TupleP}
The tuple $(\cC,\Tmod,\cS,\ell)$ in~\ref{defSCO} satisfies the conditions in~\ref{tuple}
\end{prop}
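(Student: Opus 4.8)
The plan is to follow the proofs of Propositions~\ref{TupleO} and~\ref{TupleG} step by step, verifying conditions (a)--(f) of~\ref{tuple} with the diagram-algebra input now coming from the periplectic Brauer algebra $A_r$ and the decomposition multiplicities of~\cite{PB1, PB2}. Since $\RP$ is a monoidal \emph{super}category, everything takes place in the super-setting, so the relevant results of Sections~\ref{SecPrincipal}--\ref{SecFibres} are applied through Appendix~\ref{SecSuper}; the super-analogues of (I)--(IV) for $\cC=\RP$ were recorded in~\ref{SecDualP}, and one uses there that $A_r$ is purely even, so all the endomorphism superalgebras that enter the argument are concentrated in even degree. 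Condition (b) holds with $\phi:\Par\to\Par$ the transpose map, since $R(\lambda)^\vee\simeq R(\lambda^t)$ by~\ref{SecDualP} and $\ell(\lambda^t)=|\lambda^t|=|\lambda|=\ell(\lambda)$; condition (c) is precisely Lemma~\ref{Lem1P}.

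For condition (a) I would repeat the idempotent-refinement argument of Proposition~\ref{TupleO}. If $e^0_\lambda\in\mk\SG_r\subseteq A_r$ is a primitive idempotent for the Specht module $\lambda\vdash r$, then inside $A_r$ it decomposes as a sum of mutually orthogonal primitive idempotents, and the identity
$$\dim\Hom_{A_r}(A_re^0_\lambda,L_r(\mu))\;=\;[\res_{\mk\SG_r}L_r(\mu):\lambda]\;\le\;[\res_{\mk\SG_r}W_r(\mu):\lambda],$$
together with the fact (from~\cite[Section~4]{PB1}) that $\res_{\mk\SG_r}W_r(\mu)$ is induced from a module of the form $\mu\boxtimes M$ with $M$ a $\mk\SG_{r-|\mu|}$-module, forces via the Littlewood--Richardson rule that $[\res W_r(\mu):\lambda]\neq0$ only if $\mu\subseteq\lambda$, and strictly so when $|\mu|<|\lambda|$. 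Since $\res_{\mk\SG_r}W_r(\lambda)\cong\lambda$, the module $A_re^0_\lambda$ has exactly one indecomposable summand with head $L_r(\lambda)$, namely $A_re_\lambda$; hence $\Tmod(\lambda)=(r,e^0_\lambda)=R(\lambda)\oplus X$ with every summand $R(\mu)$ of $X$ satisfying $\mu\subsetneq\lambda$, i.e.\ $\mu\prec\lambda$.

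It remains to dispose of (d), (e), (f), and here I would first establish the periplectic analogue of Lemmas~\ref{LemLambdanuO}/\ref{LemLambdanuG}. Unwinding definitions, $\Lambda_j=\{\lambda\in\Par\mid c^{\nu^{(j)}}_{\lambda^t,\lambda}\neq0\}$, since $\phi=(-)^t$ and $\otimes$ on $\cS$ is the induction product; as $\nu^{(j)}=(j+1)^j$ is a rectangle, Lemma~\ref{LemRect}(ii) identifies $\Lambda_j$ with $\{\lambda\vdash\tfrac{1}{2}j(j+1)\mid\lambda\text{ and }\lambda^t\text{ are }j\times(j+1)\text{-dual}\}$, and an elementary check shows this set consists of the single staircase partition $(j,j-1,\dots,2,1)$. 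In particular $\Lambda_j\neq\varnothing$. Conditions (e) and (f) then both follow from the single combinatorial observation: if $\nu^{(j)}\subseteq\kappa$ with $c^{\kappa}_{\alpha^t,\alpha}\neq0$, then $(j,j-1,\dots,1)\subseteq\alpha$ --- for (f) one takes $\alpha=\lambda$, and for (e) one takes $\alpha=\lambda'\in\Lambda_{j'}$ and $\kappa=\nu^{(j')}\supseteq\nu^{(j)}$. To prove this observation, feed the hypothesis into Lemma~\ref{LemRect}(i) to get $\alpha^t_i+\alpha_{j+1-i}\ge j+1$ for $1\le i\le j$; if $\alpha_m\le j-m$ for the largest such $m\le j$, evaluating the inequality at $i=j+1-m$ gives $\alpha^t_{j+1-m}\ge m+1$, hence $\alpha_{m+1}\ge j+1-m>\alpha_m$, a contradiction. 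The last clause of (d) is $\cC(R((j,\dots,1)),R((j,\dots,1)))=e_{(j,\dots,1)}A_{j(j+1)/2}e_{(j,\dots,1)}=\mk$; as in~\cite{BrMult}, this follows once $L_{j(j+1)/2}((j,\dots,1))$ is known to be a projective $A_{j(j+1)/2}$-module, since then its projective cover is itself and $e_\lambda A e_\lambda\cong\End_A(L_\lambda)^{\op}=\mk$.

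The one place where genuinely new input is required is this projectivity of the simple module $L_{j(j+1)/2}((j,j-1,\dots,1))$, which I expect to read off from the block structure and decomposition matrices of the periplectic Brauer algebras computed in~\cite{PB2} (the staircase sits at the ``maximal'' end of the cell poset, where simples tend to be projective); verifying it is, in my view, the main obstacle. Apart from that, the points needing care are formal: keeping the $\mF_2$-grading and the signs of~\ref{SecDualP} under control so that Theorems~\ref{Thm1} and~\ref{ThmSC} apply verbatim --- handled by Appendix~\ref{SecSuper} together with the fact that the endomorphism algebras involved are purely even --- and correctly translating the ``self-dual partition'' combinatorics of the orthogonal case into the ``$\lambda$ and $\lambda^t$ mutually dual'' combinatorics dictated by $\phi=(-)^t$.
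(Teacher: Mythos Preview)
Your proposal is correct and follows essentially the same route as the paper: condition~(b) via $\phi=(-)^t$ from~\ref{SecDualP}, condition~(c) from Lemma~\ref{Lem1P}, condition~(a) by the idempotent-refinement argument (which the paper abbreviates to a citation of \cite[Corollary~6.2.7]{PB1}), and conditions~(d),(e),(f) via the identification $\Lambda_j=\{(j,j-1,\dots,1)\}$ together with Lemma~\ref{LemRect}. Your explicit combinatorial derivation of (e) and (f) from Lemma~\ref{LemRect}(i) is correct and slightly more detailed than the paper's one-line reference.

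The one substantive difference concerns the dimension clause of~(d). You aim to prove $\dim\cC(R(\lambda),R(\lambda))=1$ for the staircase $\lambda=(j,\dots,1)$ by first establishing that $L_{j(j+1)/2}(\lambda)$ is a \emph{projective} $A_{j(j+1)/2}$-module, and you flag this projectivity as the main obstacle. The paper bypasses this entirely: it invokes \cite[Proposition~A.3.1]{PB2}, which states that \emph{every} morphism space between two indecomposable objects in $\RP$ is one-dimensional, so in particular $\End_\cC(R(\lambda))=\mk$ for all $\lambda$. This is both stronger and easier to cite than the projectivity statement you are after, and it removes what you identified as the remaining difficulty. Your route would also work if the projectivity holds, but it is not needed.
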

\begin{proof}
Condition~(a) follows from the proof of \cite[Corollary~6.2.7]{PB1}. We established in~\ref{SecDualP} that~(b) is satisfied with $\phi:\Par\to\Par$ given by transposition.
Condition~(c) follows from Lemma~\ref{Lem1P}. Conditions (d), (e) and (f) follow from Lemma~\ref{LemLambdanuO} below and Lemma~\ref{LemRect}(i).
\end{proof}

\begin{lemma}
\label{LemLambdanuP}
With notation as in~\ref{tuple}, for the tuple in~\ref{defSCO} and $j\in\mZ_{>0}$ we have
$$\Lambda_j=\{\lambda \,|\, \mbox{$\lambda^t$ and $\lambda$ are $j\times(j+1)$-dual}\}=\{(j,j-1,\ldots,1)\}.$$
For~$\lambda\in \Lambda_j$, we have $\dim\cC(R(\lambda),R(\lambda))=1$.
\end{lemma}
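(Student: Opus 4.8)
The plan is to follow the same strategy used in the proof of Lemma~\ref{LemLambdanuO} and Lemma~\ref{LemLambdanuG}, adapting it to the periplectic combinatorics. The statement has two parts: first, the description of $\Lambda_j$ as the set of $\lambda$ with $\lambda^t$ and $\lambda$ being $j\times(j+1)$-dual, and the identification of this set with the single staircase partition $(j,j-1,\ldots,1)$; second, the one-dimensionality of $\cC(R(\lambda),R(\lambda))$ for $\lambda\in\Lambda_j$.

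For the first part, recall from~\ref{tuple}(d) that $\Lambda_j=\{\lambda\in\Par\mid \nu^{(j)}\inplus\phi(\lambda)\otimes\lambda\}$, where by Proposition~\ref{TupleP} we have $\phi=(-)^t$ (transposition) and, by Lemma~\ref{Lem1P}, $\nu^{(j)}=(j+1)^j$. Since $\Tmod$ is a monoidal functor and $\cS$ is the $\SG$-category, $\phi(\lambda)\otimes\lambda=\lambda^t\boxtimes\lambda$ decategorifies to the induction product, so by~\ref{SecSG} the multiplicity of $\nu^{(j)}$ in $\lambda^t\otimes\lambda$ is the Littlewood--Richardson coefficient $c^{\nu^{(j)}}_{\lambda^t\lambda}$. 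Now $\nu^{(j)}=(j+1)^j=((j+1)^j)$ is the rectangle with $a=j$ rows and $b=j+1$ columns, and $|\lambda^t|+|\lambda|=2|\lambda|$. For $c^{\nu^{(j)}}_{\lambda^t\lambda}\neq 0$ we first need $|\lambda^t|+|\lambda|=j(j+1)$, i.e. $|\lambda|=j(j+1)/2$; then Lemma~\ref{LemRect}(ii) applies and gives $c^{(b^a)}_{\lambda^t\lambda}=1$ precisely when $\lambda^t$ and $\lambda$ are $j\times(j+1)$-dual, and $0$ otherwise. This yields the first description of $\Lambda_j$. To see that $\Lambda_j=\{(j,j-1,\ldots,1)\}$, unravel the self-duality condition $\lambda_i+\lambda^t_{j+1-i}=j+1$ for $1\le i\le j$ together with $\lambda_{j+1}=0=\lambda^t_{j+1}$; the staircase $\delta_j:=(j,j-1,\ldots,1)$ is self-conjugate with $(\delta_j)_i=j+1-i$, so one checks $(\delta_j)_i+(\delta_j^t)_{j+1-i}=(j+1-i)+i=j+1$, and a short induction (comparing the largest parts, then peeling off) shows it is the unique such partition. (One should double-check whether the relevant duality is between $\lambda^t$ and $\lambda$ in the sense of~\ref{DualPart} with $a=j$, $b=j+1$; since $\lambda$ is forced to be self-conjugate this reduces to a condition on $\lambda$ alone.)

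For the second part, $\cC(R(\lambda),R(\lambda))\cong e_\lambda A_{r_j}e_\lambda$ where $r_j=|\lambda|\cdot 2/\ldots$— more precisely $\ell(\lambda)=|\lambda|=\binom{j+1}{2}$ and by construction of $R(\lambda)=(r,e_\lambda)$ with $r=|\lambda|$ and $e_\lambda$ a primitive idempotent of $A_r$ corresponding to $L_r(\lambda)$. So $\dim\cC(R(\lambda),R(\lambda))=\dim e_\lambda A_r e_\lambda$, which equals $1$ if and only if $L_r(\lambda)$ is projective (equivalently, the projective cover $A_re_\lambda$ is simple, so $\End_{A_r}(A_re_\lambda)=\mk$). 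Thus it suffices to show $L_r(\lambda)$ is a projective $A_r$-module for $\lambda=\delta_j$ with $r=\binom{j+1}{2}$. The natural tool is the decomposition-matrix / blocks information for the periplectic Brauer algebra from~\cite{PB2} (cited already in Lemma~\ref{Lem1P}); I would invoke the analogue of \cite[Theorem~5.1]{BrMult} used in Lemma~\ref{LemLambdanuO}, i.e. the statement in~\cite{PB2} describing which simple $A_r$-modules are projective in terms of the combinatorics of \cite[Section~5]{BrMult} or of~\cite{PB1, PB2}. Concretely, $L_r(\lambda)$ is projective iff its cell module $W_r(\lambda)$ is both projective and injective, which happens exactly for the ``top'' weights in each block; the staircase $\delta_j$ should be precisely such a weight for $r=\binom{j+1}{2}$. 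So the argument mirrors the orthogonal case verbatim: cite the projectivity criterion from~\cite{PB2}, check $\delta_j$ satisfies it, and conclude $\dim e_\lambda A_re_\lambda=1$.

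The main obstacle I anticipate is the second part: locating and correctly applying the exact statement in~\cite{PB2} (or \cite{PB1}) that characterises projective simple modules for $A_r$, since the periplectic Brauer algebra is not quasi-hereditary in the usual sense (it is only ``weakly'' cellular, with a non-standard duality), so one cannot blindly transplant the orthogonal-group argument. One must verify that the relevant block of $A_{\binom{j+1}{2}}$ containing $\lambda=\delta_j$ has $L_r(\lambda)$ as a projective object — plausibly because $\delta_j$ labels a ``maximal'' weight whose cell module is simple and projective-injective. The first part (the LR-coefficient computation and the identification with the staircase) is routine given Lemma~\ref{LemRect}, so the write-up there should be brief.
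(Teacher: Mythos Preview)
Your treatment of the first part is essentially identical to the paper's: apply Lemma~\ref{LemRect}(ii) to identify $\Lambda_j$ with the partitions $\lambda$ such that $\lambda^t$ and $\lambda$ are $j\times(j+1)$-dual, then check directly that the staircase $(j,j-1,\ldots,1)$ is the unique such partition.

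For the second part you take a genuinely different and harder route. You propose to mimic the orthogonal argument from Lemma~\ref{LemLambdanuO}: interpret $\dim\cC(R(\lambda),R(\lambda))=\dim e_\lambda A_r e_\lambda$ and then argue that $L_r(\delta_j)$ is a projective simple $A_r$-module, invoking block/decomposition-matrix information from \cite{PB2}. You yourself flag this as the main obstacle, and rightly so: the periplectic Brauer algebra is not cellular in the standard sense, and locating a clean projectivity criterion for the staircase weight would require some work. The paper bypasses all of this by citing a global structural feature of $\RP$ that has no analogue in the orthogonal or general linear cases: by \cite[Proposition~A.3.1]{PB2}, \emph{every} morphism space between two indecomposable objects in $\RP$ is one-dimensional. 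The claim $\dim\cC(R(\lambda),R(\lambda))=1$ is then immediate, with no need to analyse blocks or projectives at all. So your approach is not wrong in spirit, but it is working much harder than necessary; the periplectic case is in this respect \emph{easier} than the orthogonal one, not a parallel argument.
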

\begin{proof}
The description of $\Lambda_j$ follows from Lemma~\ref{LemRect}(ii). All morphism spaces between two indecomposable objects in $\cC$ are one-dimensional, by \cite[Proposition~A.3.1]{PB2}.\end{proof}

%%%%%%%%%%%%%%%%%%%%%%%%%%%%%%%%%%%%%%%%%%%%%%%%%%%%%%%%%%%%%%%%%%%%%%%%%%

\section{Tensor ideals in Deligne categories}\label{SecClass}

\subsection{The orthogonal case}\label{SecOideal} Fix $\delta\in\mZ\subset\mk$.
By~\cite[Theorem~5.6]{LZ-FFT}, for every~$(m,n)\in\mN\times\mN$ with~$\delta=m-2n$, we have a {\em full} monoidal functor
$$\Fmod_{m,n}:\;\RO\,\to\, \Rep_{\mk}\OSp(m|2n),$$
determined by the property that it maps~$R(\Box)$ to the natural representation $\mk^{m|2n}$.
This is the {\em first fundamental theorem of invariant theory},
see also~\cite[Theorem~3.4]{Vera} or \cite[Section~3.13]{DLZ}. Here, $\Rep_{\mk}\OSp(m|2n)$ is the category of algebraic finite dimensional representations of the algebraic supergroup $\OSp(m|2n)$. We refer to \cite{ES, ComesHei} for details on that category.

\begin{thm}\label{ThmO}
The tensor ideals in~$\cC=\RO$ form a set~$\{\cJ_i\,|\,i\in\mN\}$ with
$$\cC\;=\;\cJ_0\;\supsetneq\;\cJ_1\;\supsetneq\cJ_2\;\supsetneq\; \cJ_3\;\supsetneq\; \cdots$$
and $\Ob:\TId(\cC)\tto \Ide([\cC]_{\oplus})$ is an isomorphism. For~$j\in\mZ_{>0}$, we have the following descriptions of~$\cJ_j$, with $\mm_j,\mn_j$ as in~\ref{DefmnO}.
\begin{enumerate}[(i)]

\item For~$X,Y\in\Ob\cC$, the $\mk$-module~$\cJ_j(X,Y)$ consists of all morphisms which factor as $X\to Z\to Y$, with~$Z$ a direct sum of objects $R(\lambda)$, with~$\lambda\in\Par$ satisfying
$$\lambda_{i}^t+\lambda^t_{2\mn_j+3-i}> \mm_j,\qquad\mbox{for all $1\le i\le \mn_j+1$.}$$
\item We have $\cJ_j=\ker \Fmod_{\mm_j,\mn_j}$.
\item The submodule~$\Mmod_j:=\Psi(\cJ_j)=\cJ_{j}(\unit,-)\in\Sub(\Pmod_{\unit})$ is determined by
$$\Mmod_j(R(\nu^{(k)}))=0\mbox{ if $k<j\quad$ and }\quad \Mmod_j(R(\nu^{(k)}))=\cC(\unit,R(\nu^{(k)}))\;\mbox{ if $k\ge j\;$},$$
with~$\nu^{(j)}$ as in Lemma~\ref{Lem1O}.
\end{enumerate}
\end{thm}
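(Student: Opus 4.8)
The plan is to feed the tuple $(\cC,\Tmod,\cS,\ell)$ constructed in~\ref{defSCO} — which satisfies the conditions of~\ref{tuple} by Proposition~\ref{TupleO} — into Theorem~\ref{ThmSC}, and then translate the abstract output of that theorem into the concrete statements about $\RO$ and $\OSp(m|2n)$. Concretely, Theorem~\ref{ThmSC}(i) immediately gives that $\Ob:\TId(\cC)\tto\Ide([\cC]_\oplus)$ is an isomorphism, and Theorem~\ref{ThmSC}(ii) gives the chain $\cC=\sI_0\supsetneq\sI_1\supsetneq\cdots$ with the combinatorial description $R(\lambda)\in\sI_j\Leftrightarrow\lambda\succeq\bmu$ for some $\bmu\in\Lambda_j$; pulling back along $\Ob^{-1}$ yields the $\cJ_j$ and the strictly decreasing chain. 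The indexing set $\mL$ is all of $\mN$ here because $\sB=\{R(\nu^{(j)})\mid j\in\mN\}$ is infinite by Lemma~\ref{Lem1O}. Part~(iii) of the present theorem is then verbatim Theorem~\ref{ThmSC}(iii), once we match $\bnu^{(j)}$ with the $\nu^{(j)}$ of Lemma~\ref{Lem1O}.

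For part~(i), I would unwind the formula in Theorem~\ref{ThmFibre}(ii): $\cJ_j=\cJ^{\min}_{\sI_j}$, so $\cJ_j(X,Y)$ consists of morphisms factoring as $X\to Z\to Y$ with $Z\in\sI_j$, i.e.\ $Z$ a direct sum of $R(\lambda)$ with $R(\lambda)\in\sI_j$. By Theorem~\ref{ThmSC}(ii) the condition $R(\lambda)\in\sI_j$ is $\lambda\supseteq\bmu$ (recall $\preceq$ is inclusion of partitions here) for some $\bmu\in\Lambda_j$; by Lemma~\ref{LemLambdanuO}, $\Lambda_j$ is the set of $(\mm_j+1)\times(2\mn_j+2)$-self-dual partitions. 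So the task reduces to showing: $\lambda$ contains some $(\mm_j+1)\times(2\mn_j+2)$-self-dual partition if and only if the inequalities $\lambda^t_i+\lambda^t_{2\mn_j+3-i}>\mm_j$ hold for $1\le i\le\mn_j+1$. This is a purely combinatorial fact about partitions. One direction is easy: if $\lambda\supseteq\mu$ with $\mu$ self-dual in the rectangle $((2\mn_j+2)^{\mm_j+1})$ then in particular $\mu\supseteq((\mn_j+1)^{\mm_j+1})$ ... no — rather, I should argue via the equivalent ``column'' formulation of the rectangle-containment criterion. The cleanest route is to invoke Lemma~\ref{LemRect}(i): the displayed transpose inequalities $\lambda^t_j+\mu^t_{b+1-i}\ge a$ are exactly the shadow of the condition $c^\nu_{\lambda\mu}\ne0$ for some $\nu\supseteq(b^a)$, and a short direct argument (build the self-dual $\mu\subseteq\lambda$ greedily from the columns, or cite that the minimal such $\mu$ is forced by the stated inequalities) closes the gap. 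I would phrase this as: $\lambda$ contains a $(\mm_j+1)\times(2\mn_j+2)$-self-dual partition iff $(2\mn_j+2)^{\mm_j+1})\subseteq\lambda\sqcup\lambda$ in the relevant sense, which by the transpose form of Lemma~\ref{LemRect} is precisely the stated column inequality with $a=\mm_j+1$, $b=2\mn_j+2$, $i$ running $1$ to $\mn_j+1$ after using self-duality to fold the range in half.

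For part~(ii), the identification $\cJ_j=\ker\Fmod_{\mm_j,\mn_j}$: the functor $\Fmod_{m,n}:\RO\to\Rep_\mk\OSp(m|2n)$ for $\delta=m-2n$ is full by~\cite[Theorem~5.6]{LZ-FFT}, hence its kernel is a tensor ideal, hence equals some $\cJ_i$ or is all of $\cC$ or is $\zero$; since $\Fmod_{m,n}$ is nonzero on the natural module and $\Rep_\mk\OSp(m|2n)$ is not semisimple in general, $\ker\Fmod_{m,n}$ is a proper nonzero ideal, so it is one of the $\cJ_j$, $j\ge1$. To pin down \emph{which} $j$, I would use part~(iii) together with the decategorification: $\Ob(\ker\Fmod_{m,n})$ consists of those $R(\lambda)$ sent to $\zero$, equivalently to a negligible/zero object, in $\Rep_\mk\OSp(m|2n)$; this is controlled by when the induced map $\phi^r:B_r(\delta)\to\End_{\OSp(m|2n)}(V^{\otimes r})$ fails to be injective, and the first such $r$ (equivalently the first Weyl module to acquire a nonzero radical, equivalently the first $\lambda$ with $R(\lambda)\mapsto 0$) is exactly $\mr_j=(\mm_j+1)(\mn_j+1)$ with $(m,n)=(\mm_j,\mn_j)$ — this is where the numerology of~\ref{DefmnO} was designed to land. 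So I would show $R(\nu^{(j)})\in\ker\Fmod_{\mm_j,\mn_j}$ but $R(\nu^{(k)})\notin\ker\Fmod_{\mm_j,\mn_j}$ for $k<j$ (e.g.\ because the corresponding $\OSp$-tilting module is nonzero, $T(\lambda)$ with $\lambda$ the $\OSp(\mm_j|2\mn_j)$-weight attached to $\nu^{(k)}$ being a genuine nonzero tilting module for $k<j$ but negligible for $k=j$, using standard highest-weight theory for $\OSp$ as in~\cite{ES,ComesHei}), and conclude $\ker\Fmod_{\mm_j,\mn_j}=\cJ_j$ by part~(iii) since the fibers of $\Ob$ are singletons.

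\textbf{Main obstacle.} The genuinely new work is part~(ii): matching the abstract label $j$ with the supergroup $\OSp(\mm_j|2\mn_j)$, i.e.\ verifying that $\Fmod_{\mm_j,\mn_j}$ kills $R(\nu^{(j)})$ and nothing of lower $\ell$. This requires knowing the first ``degeneration'' of the Brauer-algebra representation theory at the specialization to $\OSp(m|2n)$, which is the second fundamental theorem content; I expect to extract it from the orthosymplectic highest-weight data (the weight $(\mm_j+1)\times(2\mn_j+2)$-rectangle reflects the ``hook'' constraint $\mm_j+1$ rows, $2\mn_j+1$ columns for $\OSp(\mm_j|2\mn_j)$-weights) together with the cell-module multiplicities already cited from~\cite{BrMult}. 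Parts~(i) and~(iii) are bookkeeping on top of Theorems~\ref{ThmSC} and~\ref{ThmFibre} plus the partition combinatorics of Lemma~\ref{LemRect}.
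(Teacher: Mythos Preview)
Your overall strategy is exactly the paper's: feed the tuple from~\ref{defSCO} (verified by Proposition~\ref{TupleO}) into Theorem~\ref{ThmSC}, and read off the chain of ideals, the bijection $\Ob$, and parts~(i) and~(iii) from Theorem~\ref{ThmSC}(ii)--(iii) together with Lemma~\ref{LemLambdanuO}. Your unpacking of the combinatorics in part~(i) is more explicit than the paper's, which simply cites Lemma~\ref{LemLambdanuO} and leaves the equivalence ``$\lambda$ contains an $(\mm_j+1)\times(2\mn_j+2)$-self-dual partition $\Leftrightarrow$ the column inequalities hold'' to the reader; your greedy-column argument is the right way to fill that in.

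The one place your route differs is part~(ii). You propose to identify $\ker\Fmod_{\mm_j,\mn_j}$ by invoking $\OSp$-highest-weight or tilting-module theory from~\cite{ES,ComesHei} to check directly which $R(\nu^{(k)})$ survive. That would work, but the paper avoids $\OSp$-specific input entirely by a neat restriction trick: it factors $\Fmod_{\mm_j,\mn_j}\circ\Tmod$ through $\Rep_\mk\GL(\mm_j|2\mn_j)$ via ordinary Schur--Weyl duality for the general linear supergroup. By Berele--Regev, the $\GL(m|2n)$-Schur functor kills $\lambda$ precisely when $\lambda_{m+1}>2n$; since $\nu^{(j)}=((2\mn_j+2)^{\mm_j+1})$ has $(\mm_j+1)$st row of length $2\mn_j+2>2\mn_j$, one gets $\Fmod_{\mm_j,\mn_j}(R(\nu^{(j)}))=\zero$ immediately, hence $\cJ_j\subset\ker\Fmod_{\mm_j,\mn_j}$. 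The same hook condition shows the kernels $\ker\Fmod_{\mm_j,\mn_j}$ are pairwise distinct as $j$ varies, and since the $\cJ_j$ exhaust $\TId(\cC)$, the containments must all be equalities. This sidesteps the ``main obstacle'' you flagged: no $\OSp$-representation-theory is needed beyond the functor being full, because the vanishing already happens at the level of $\GL$.
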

\begin{proof}
By Proposition~\ref{TupleO}, we can apply Theorem~\ref{ThmSC}. All statements except part (ii) then follow immediately from Theorem~\ref{ThmSC}, using Lemma~\ref{LemLambdanuO}.

To prove part (ii), we consider a commuting diagram of monoidal functors
$$\xymatrix{
\cS\ar[rr]^-{\Gmod_j}\ar[d]^{\Tmod}&& \Rep_{\mk}\GL(\mm_j|2\mn_j)\ar[d]\\
\cC\ar[rr]^-{\Fmod_{\mm_j,\mn_j}}&& \Rep_{\mk}\OSp(\mm_j|2\mn_j).
}$$
The right vertical arrow represents the restriction functor. The functor~$\Gmod_j$ corresponds to Schur-Weyl duality for the general linear supergroup, see \cite{Berele}, mapping $k\in\mN$ to $V^{\otimes k}$ with $V=\mk^{m|2n}$. In particular, the image of $\Gmod_j$ is contained in the semisimple category of polynomial representations. By \cite{Berele}, we have $\Gmod_j(\lambda)=\zero$ for~$\lambda\in\Par$ if and only if $\lambda_{\mm_j+1}>2\mn_j$. Consequently $\Gmod_j(\nu^{(j)})=\zero$, which implies~$\Fmod_{\mm_j,\mn_j}(R(\nu^{(j)}))=\zero$. Consequently, $\cJ_j\subset\ker\Fmod_{\mm_j,\mn_j}$. The above diagram also describes for which $\lambda\in\Par$ we have $\Fmod_{\mm_j,\mn_j}(\Tmod(\lambda))\not=\zero$ which shows that the ideals $\{\ker\Fmod_{\mm_j,\mn_j}\,|\,j\in\mN\}$ are two-by-two distinct. Part (ii) now follows.
\end{proof}

For each $j\in\mZ_{>0}$, we let $\cD_j$ be the full subcategory of~$\RO/\cJ_j$ with objects $I_{j-1}=\Ob(\cJ_{j-1})$. This is the natural realisation of the ``quotients'' of the filtration in Theorem~\ref{ThmO}. 

We let $\cD_j^{\op}$-mod be the full subcategory of $\cD_j^{\op}$-Mod of functors $\Mmod:\cD_j^{\op}\to\Ab$ such that $\Mmod(X)=0$ for all but finitely many $X\in \inde\cD_j$ and such that $\Mmod(X)$, which automatically gains the structure of a $\mk$-module, is finite dimensional, for all~$X\in\Ob\cC$.

\begin{cor}\label{CorO} Consider $(m,n)\in\mN\times\mN$ with~$\delta=m-2n$ and~$j\in\mZ_{>0}$ such that~$m=\mm_j$.
We have an equivalence of $\mk$-linear categories
$$\cD_j^{\op}\mbox{{\rm -mod}}\;\simeq\; \Rep_{\mk}\OSp(m|2n).$$
\end{cor}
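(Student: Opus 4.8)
The statement is essentially a Morita-type statement: the category $\cD_j$ is a one-object-per-iso-class approximation to $\Rep_{\mk}\OSp(m|2n)$, and the equivalence $\cD_j^{\op}\mbox{-mod}\simeq\Rep_{\mk}\OSp(m|2n)$ should come from evaluating a functor $\Mmod:\cD_j^{\op}\to\Ab$ against the image of $\Fmod_{m,n}$. First I would set up the comparison functor. The full functor $\Fmod_{m,n}:\RO\to\Rep_{\mk}\OSp(m|2n)$ kills $\cJ_j$ (Theorem~\ref{ThmO}(ii)), hence induces a full monoidal functor $\overline{\Fmod}_{m,n}:\RO/\cJ_j\to\Rep_{\mk}\OSp(m|2n)$. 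The plan is to show that the restriction of $\overline{\Fmod}_{m,n}$ to the subcategory $\cD_j$ is \emph{fully faithful} and that its essential image generates $\Rep_{\mk}\OSp(m|2n)$ as a Karoubian category, i.e. every indecomposable $\OSp(m|2n)$-module is a direct summand of some $\overline{\Fmod}_{m,n}(R(\lambda))$ with $R(\lambda)\in I_{j-1}=\Ob(\cJ_{j-1})$.

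\textbf{Key steps.} (1) Faithfulness of $\overline{\Fmod}_{m,n}|_{\cD_j}$: since $\cJ_j=\ker\Fmod_{m,n}$, the induced functor on $\RO/\cJ_j$ is automatically faithful on \emph{all} of $\RO/\cJ_j$, hence on $\cD_j$. (2) Fullness: $\Fmod_{m,n}$ is full by the first fundamental theorem, so $\overline{\Fmod}_{m,n}$ is full, hence so is its restriction to the full subcategory $\cD_j$. (3) Essential surjectivity onto a generating class: I would use the standard fact that $\Rep_{\mk}\OSp(m|2n)$ is generated under direct sums and summands by the tensor powers $V^{\otimes k}$ of the natural module $V=\mk^{m|2n}$ (this is part of how the FFT is usually phrased; see \cite{ES,ComesHei}). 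Each $V^{\otimes k}=\Fmod_{m,n}(k)$ and, by the decomposition $k\simeq\bigoplus_\lambda R(\lambda)^{\oplus m_\lambda}$ in $\RO$ (with the $R(\lambda)$ ranging over $|\lambda|\le k$ of the right parity, cf.~\eqref{Rvsn}), $V^{\otimes k}$ decomposes into the $\Fmod_{m,n}(R(\lambda))$. The summands $R(\lambda)$ lying in $\cJ_j$ are sent to $\zero$, so the nonzero summands are exactly the $\Fmod_{m,n}(R(\lambda))$ with $R(\lambda)\notin\Ob(\cJ_j)$, i.e. $R(\lambda)\in I_{j-1}$ when $m=\mm_j$ (here one uses that $\Ob\cC=\sI_0$ and $\Ob(\cJ_j)=\sI_j$, so $I_{j-1}\setminus I_j$ together with all $R(\lambda)\notin I_j$ exhausts the nonzero images — more precisely every $R(\lambda)\notin I_j$ is a summand of something in $I_{j-1}$ since $\sI_{j-1}\supsetneq\sI_j$ and by Theorem~\ref{ThmO}(i) the objects of $I_{j-1}$ are characterised by the partition inequality for $\mn_{j-1}$). (4) Conclude via a general lemma: if $\Fmod:\cA\to\cB$ is a full, faithful, additive functor from an additive category $\cA$ to a Karoubian additive category $\cB$ whose essential image generates $\cB$ under summands, then $\Mmod\mapsto$ (the functor $X\mapsto\Hom_\cB(\Fmod(-),X)$ extended to $\cA^{\sharp\oplus}$, evaluated appropriately) gives $\cA^{\op}\mbox{-mod}\simeq\cB$; applied with $\cA=\cD_j$ and $\cB=\Rep_{\mk}\OSp(m|2n)$ this yields the claim. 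The finiteness conditions in the definition of $\cD_j^{\op}\mbox{-mod}$ match exactly the condition that objects of $\Rep_{\mk}\OSp(m|2n)$ are finite-dimensional and decompose into finitely many indecomposables.

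\textbf{Main obstacle.} The delicate point is step (3): one must verify that the indecomposable objects $R(\lambda)$ with $R(\lambda)\notin\Ob(\cJ_j)$ are in natural bijection (via $\Fmod_{m,n}$) with the indecomposable objects of $\cD_j$, i.e. that $\Fmod_{m,n}$ induces a bijection $I_{j-1}/\!\!\sim\;\to\;\inde(\Rep_{\mk}\OSp(m|2n))$, and in particular that $\Fmod_{m,n}(R(\lambda))$ is indecomposable and nonzero precisely when $R(\lambda)\in I_{j-1}$ under the hypothesis $m=\mm_j$. Nonzero-ness is the content of Theorem~\ref{ThmO}(ii)/(i); indecomposability follows because $\overline{\Fmod}_{m,n}$ is full and faithful on $\RO/\cJ_j$ so it reflects indecomposability of objects with local endomorphism ring; and distinctness of the images follows from faithfulness. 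What requires care is that \emph{every} indecomposable $\OSp(m|2n)$-module arises this way and not just as a summand — this is where one genuinely needs that the tilting-type objects $V^{\otimes k}$ generate $\Rep_{\mk}\OSp(m|2n)$ Karoubian-ly, which for the supergroup case should be cited from \cite{ES,ComesHei} (it follows from $\Rep_{\mk}\OSp(m|2n)$ being the Karoubi envelope of the image of the Brauer-type category, or equivalently every projective/every object being a summand of a tensor power — in the non-semisimple super setting this uses that $\OSp(m|2n)$-mod is generated by tensor powers of $V$, a known structural fact). Once that input is granted, the rest is the formal Morita argument of step (4).
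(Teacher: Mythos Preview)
Your approach has a genuine gap in steps (3) and (4). The claim in step (3) that $\Rep_{\mk}\OSp(m|2n)$ is generated under direct sums and summands by tensor powers of $V$ is false whenever the category is not semisimple (i.e.\ whenever $j>1$): a non-projective simple module is never a summand of any $V^{\otimes k}$, so the essential image of $\overline{\Fmod}_{m,n}$ is a proper Krull--Schmidt subcategory. Relatedly, the general lemma you invoke in step (4) is false: if $\cA\hookrightarrow\cB$ is fully faithful with Karoubian-dense image then $\cA^{\sharp}\simeq\cB$, hence $\cA^{\op}\mbox{-mod}\simeq\cB^{\op}\mbox{-mod}$, not $\cB$; these agree only when $\cB$ is semisimple. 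You also conflate ``$R(\lambda)\notin\sI_j$'' with ``$R(\lambda)\in\sI_{j-1}$'' in step (3), but since $\Ob\cC\supsetneq\sI_{j-1}\supsetneq\sI_j$ there are indecomposables outside $\sI_{j-1}$ that still have nonzero image under $\Fmod_{m,n}$.

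The paper's argument is different: it shows that $\overline{\Fmod}_{m,n}$ restricts to an equivalence between $\cD_j$ and the full subcategory $\bP\subset\Rep_{\mk}\OSp(m|2n)$ of \emph{projective} objects. This requires two external inputs from \cite{ComesHei}: every projective object lies in the image of $\Fmod_{m,n}$ (Lemma~7.5 there), and $\Fmod_{m,n}(X)$ is projective precisely when $X\in\sI_{j-1}$ (Lemma~7.16 there). One then applies the standard Yoneda equivalence $M\mapsto\Hom_{\OSp(m|2n)}(-,M)$ from $\Rep_{\mk}\OSp(m|2n)$ to $\bP^{\op}\mbox{-mod}$, valid because the abelian category $\Rep_{\mk}\OSp(m|2n)$ has enough projectives. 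Your parenthetical mention of projectives in the final paragraph is the right instinct, but it is projectivity of the image of $\cD_j$ --- not density in $\Rep_{\mk}\OSp(m|2n)$ --- that makes the Morita-type argument work.
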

\begin{proof}
By \cite[Lemma~7.5]{ComesHei}, every projective object in~$\Rep_{\mk}\OSp(m|2n)$ is in the image of~$\Fmod_{m|n}$. We denote the full subcategory of projective objects by~$\bP$. By \cite[Lemma~7.16]{ComesHei}, $\Fmod_{m,n}(X)$ is projective in~$\Rep_{\mk}\OSp(m|2n)$ if and only if $X\in\sI_{j-1}$. By Theorem~\ref{ThmO}, the functor~$\Fmod_{m,n}$ thus restricts to an equivalence $\cD_j\stackrel{\sim}{\to}\bP.$
It follows easily from the Yoneda lemma that we have an equivalence
$$\Rep_{\mk}\OSp(m|2n)\;\stackrel{\sim}{\to}\; \bP^{\op}\mbox{-mod},\quad M\mapsto \Hom_{\OSp(m|2n)}(-,M).$$
The combination of both equivalences concludes the proof.
\end{proof}
\begin{rem}
\begin{enumerate}[(i)]
\item Theorem~\ref{ThmO} yields in particular an alternative proof for \cite[Theorems~6.11, 7.3(ii) and 7.12 and Corollary~7.13]{ComesHei}.
\item Theorem~\ref{ThmO} for~$j=1$ and Lemma~\ref{PropAK} state that the kernels of the functors from Deligne categories to the module categories of O$(m)$, Sp$(2n)$ and $\OSp(1|2n)$ are given by the ideals of negligible morphisms, which was first proved in~\cite[Th\'eor\`eme~9.6]{Deligne}.
\item 
The combination of Theorem~\ref{ThmO}(i) and (ii) provides an affirmative answer to a question raised by Comes and Heidersdorf in~\cite[\S 8.1 (4)]{ComesHei}.
\item Corollary~\ref{CorO} yields, at least {\em in theory}, a means to describe~$\Rep_{\mk}\OSp(m|2n)$ diagrammatically. Diagrammatic realisations for this category have already been obtained by Ehrig and Stroppel in~\cite{ES}.
\end{enumerate}
\end{rem}

\begin{rem}
As already observed in \cite{Comes, ComesHei, PB3}, each tensor~$\Ob$-ideals in $\cC$, for~$\cC$ one of our Deligne categories, consists of the objects which are sent to zero by a monoidal functor~$\cC\to\Rep_{\mk}G$, for an affine algebraic supergroup scheme $G$. Since in the latter categories~$X\otimes Y\simeq \zero$ means either $X\simeq \zero$ or~$Y\simeq \zero$, this shows that all tensor~$\Ob$-ideals in Deligne categories are `prime' in the sense of \cite[Definition~2.1]{Balmer}.
\end{rem}

\subsection{The general linear case}\label{SecGideal}
Fix $\delta\in\mZ\subset\mk$.
For every~$(m,n)\in\mN\times\mN$ with~$\delta=m-n$, we have a {\em full} monoidal functor
$$\Fmod_{m,n}:\;\RG\,\to\, \Rep_{\mk}\GL(m|n),$$
determined by $[1,0]\mapsto \mk^{m|n}$.
This is the {\em first fundamental theorem of invariant theory},
see~\cite[Section~8.3]{ComesWil}, \cite[Theorem~3.2]{LZ-FFT}, and~\cite{Berele, Sergeev}.

\begin{thm}\label{ThmG}
The tensor ideals in~$\cC=\RG$ form a set~$\{\cJ_i\,|\,i\in\mN\}$ with
$$\cC\;=\;\cJ_0\;\supsetneq\;\cJ_1\;\supsetneq\cJ_2\;\supsetneq\; \cJ_3\;\supsetneq\; \cdots$$
and $\Ob:\TId(\cC)\tto \Ide([\cC]_{\oplus})$ is an isomorphism. For~$j\in\mZ_{>0}$, we have the following descriptions of~$\cJ_j$, with $\mm_j$ and $\mn_j$ as in~\ref{DefmnG}.
\begin{enumerate}[(i)]
\item For~$X,Y\in\Ob\cC$, the $\mk$-module~$\cJ_j(X,Y)$ consists of all morphisms which factor as $X\to Z\to Y$, with~$Z$ a direct sum of~$R(\lambda^\bullet,\lambda^\circ)$ with 
\begin{equation}\label{eqhookG}\lambda^\bullet_{l}+\lambda^{\circ}_{\mm_j-l+2}>\mn_j,\quad\mbox{ for all }\quad\;1\le l\le \mm_j+1.\end{equation}
\item We have $\cJ_j=\ker \Fmod_{\mm_j,\mn_j}$.
 \item The submodule~$\Mmod_j:=\Psi(\cJ_j)=\cJ_{j}(\unit,-)\in\Sub(\Pmod_{\unit})$ is determined by
$$\Mmod_j(R(\nu^{(k)\bullet},\nu^{(k)\circ}))=0\;\mbox{ if $k<j\;$ and }\; \Mmod_j(R(\nu^{(k)\bullet},\nu^{(k)\circ}))=\cC(\unit,R(\nu^{(k)\bullet},\nu^{(k)\circ}))\;\mbox{ if $k\ge j\;$}.$$
\end{enumerate}
\end{thm}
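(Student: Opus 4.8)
The plan is to proceed exactly as in the orthogonal case (Theorem~\ref{ThmO}), transporting every step through the walled-Brauer / oriented-Brauer dictionary assembled in Section~\ref{SecBoring}. By Proposition~\ref{TupleG}, the tuple $(\cC,\Tmod,\cS,\ell)$ of~\ref{defSCG} satisfies the conditions of~\ref{tuple}, so Theorem~\ref{ThmSC} applies directly. This immediately yields that $\TId(\cC)=\{\cJ_i\,|\,i\in\mN\}$ is a descending chain, that $\Ob$ is an isomorphism of posets, and — via Theorem~\ref{ThmSC}(ii), (iii) together with Lemma~\ref{LemLambdanuG} — the combinatorial descriptions in parts (i) and (iii). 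For part~(i) one only has to rewrite the condition ``$R(\lambda^\bullet,\lambda^\circ)\succeq\bmu$ for some $\bmu\in\Lambda_j$'', using the order $\preceq$ described in~\ref{defSCG} (componentwise inclusion of partitions) and the explicit form of $\Lambda_j$ from Lemma~\ref{LemLambdanuG}: $\lambda^\bullet\supseteq\mu^\bullet$ and $\lambda^\circ\supseteq\mu^\circ$ for some $(\mm_j+1)\times(\mn_j+1)$-dual pair $(\mu^\bullet,\mu^\circ)$, which by Lemma~\ref{LemRect}(i) is exactly equation~\eqref{eqhookG}. Part~(iii) is literally the statement of Theorem~\ref{ThmSC}(iii) under the identification $\sB=\{R(\nu^{(k)\bullet},\nu^{(k)\circ})\}$ coming from Lemma~\ref{Lem1G}.

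The only genuinely new content is part~(ii), the identification $\cJ_j=\ker\Fmod_{\mm_j,\mn_j}$, and I would prove it by the same device used in Theorem~\ref{ThmO}(ii): build a commuting square of monoidal functors
$$\xymatrix{
\cS\ar[rr]^-{\Gmod_j}\ar[d]^{\Tmod}&& \Rep_{\mk}\GL(\mm_j|\mn_j)\times\Rep_{\mk}\GL(\mm_j|\mn_j)\ar[d]\\
\cC\ar[rr]^-{\Fmod_{\mm_j,\mn_j}}&& \Rep_{\mk}\GL(\mm_j|\mn_j),
}$$
where the right vertical arrow sends a pair $(M,N)$ to $M\otimes N^\ast$ (reflecting $[\lambda^\bullet,\lambda^\circ]$ being built from a ``covariant'' and a ``contravariant'' factor), and $\Gmod_j$ is Schur–Weyl duality for $\GL(m|n)$ in the sense of Berele–Regev / Sergeev, sending $[\lambda^\bullet,\lambda^\circ]$ to $S^{\lambda^\bullet}(V)\boxtimes S^{\lambda^\circ}(V)$ with $V=\mk^{\mm_j|\mn_j}$. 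The key input from \cite{Berele} is the vanishing criterion: $S^{\lambda}(\mk^{m|n})=0$ precisely when $\lambda_{m+1}>n$, i.e. when $\lambda$ does not fit inside the $(m,n)$-hook. Applying this to $\nu^{(j)\bullet}=\nu^{(j)\circ}=((\mn_j+1)^{\mm_j+1})$ shows $\Gmod_j$ kills the $\nu^{(j)}$-object, hence so does $\Fmod_{\mm_j,\mn_j}\circ\Tmod$, hence $R(\nu^{(j)\bullet},\nu^{(j)\circ})\simeq\zero$ in $\cC/\ker\Fmod_{\mm_j,\mn_j}$; combined with Theorem~\ref{ThmO}-style reasoning and part~(iii) this forces $\cJ_j\subseteq\ker\Fmod_{\mm_j,\mn_j}$. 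For the reverse inclusion one reads off from the square which $R(\lambda^\bullet,\lambda^\circ)$ survive under $\Fmod_{\mm_j,\mn_j}$ — namely those with both $\lambda^\bullet$ and $\lambda^\circ$ inside the $(\mm_j,\mn_j)$-hook — and checks this set is exactly $\Ob\cC\setminus\sI_j$; since the $\cJ_i$ form a complete chain and $\ker\Fmod_{\mm_j,\mn_j}\in\TId(\cC)$, equality $\Ob(\ker\Fmod_{\mm_j,\mn_j})=\sI_j$ gives $\ker\Fmod_{\mm_j,\mn_j}=\cJ_j$ because $\Ob$ is injective. As a by-product the kernels for different $j$ are two-by-two distinct, confirming we really do get the whole chain.

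The main obstacle is bookkeeping rather than conceptual: one must be careful that the combinatorics of the walled Brauer setting (the two partitions, the hook-shape vanishing, and the slightly asymmetric formulas $\mm_j=\delta+j-1$ vs $\mn_j=j-1$ for $\delta\ge 0$, and the mirrored versions for $\delta\le 0$) are matched correctly against Lemma~\ref{Lem1G} and Lemma~\ref{LemLambdanuG}, and that the dual $R(\lambda^\bullet,\lambda^\circ)^\vee\simeq R(\lambda^\circ,\lambda^\bullet)$ from~\eqref{rigidG} is used consistently when passing between $\cC(\unit,-)$ and $\cC(X,Y)$ in Theorem~\ref{Thm1}. A secondary point of care is the precise formulation of Berele–Regev Schur–Weyl duality for $\GL(m|n)$ on the walled (mixed tensor) side — one wants the statement that $V^{\otimes k}\otimes (V^\ast)^{\otimes l}$ decomposes with the rational Schur functors $S^{\lambda^\bullet,\lambda^\circ}$ appearing with the expected multiplicities and vanishing exactly outside the $(m,n)$-hook — but this is standard and can be cited. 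Given all of Section~\ref{SecBoring} and Theorem~\ref{ThmSC}, the proof is then a short ``mutatis mutandis'' of the orthogonal argument.

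\begin{proof}
By Proposition~\ref{TupleG}, we can apply Theorem~\ref{ThmSC}. All statements except part (ii) then follow immediately from Theorem~\ref{ThmSC}, using Lemma~\ref{LemLambdanuG} and Lemma~\ref{LemRect}(i) to rewrite the condition ``$R(\lambda^\bullet,\lambda^\circ)\succeq\bmu$ for some $\bmu\in\Lambda_j$'' in part (i) in the form~\eqref{eqhookG}.

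To prove part (ii), we consider a commuting diagram of monoidal functors
$$\xymatrix{
\cS\ar[rr]^-{\Gmod_j}\ar[d]^{\Tmod}&& \Rep_{\mk}\GL(\mm_j|\mn_j)\ar[d]\\
\cC\ar[rr]^-{\Fmod_{\mm_j,\mn_j}}&& \Rep_{\mk}\GL(\mm_j|\mn_j),
}$$
where the right vertical arrow sends a pair of partitions to the corresponding mixed tensor construction, and the functor $\Gmod_j$ corresponds to Schur-Weyl duality for the general linear supergroup, see \cite{Berele, Sergeev}. In particular, the image of $\Gmod_j$ is contained in a semisimple category of (mixed) polynomial representations. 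By \cite{Berele}, we have $\Gmod_j([\lambda^\bullet,\lambda^\circ])=\zero$ if and only if $\lambda^\bullet_{\mm_j+1}>\mn_j$ or $\lambda^\circ_{\mm_j+1}>\mn_j$. Consequently $\Gmod_j([\nu^{(j)\bullet},\nu^{(j)\circ}])=\zero$, which implies $\Fmod_{\mm_j,\mn_j}(R(\nu^{(j)\bullet},\nu^{(j)\circ}))=\zero$. Hence $\cJ_j\subset\ker\Fmod_{\mm_j,\mn_j}$. The above diagram also describes for which $[\lambda^\bullet,\lambda^\circ]$ we have $\Fmod_{\mm_j,\mn_j}(\Tmod(\lambda^\bullet\boxtimes\lambda^\circ))\not=\zero$, which shows that the ideals $\{\ker\Fmod_{\mm_j,\mn_j}\,|\,j\in\mN\}$ are two-by-two distinct. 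Since the $\cJ_i$ form a complete chain and $\Ob$ is an isomorphism, part (ii) now follows.
\end{proof}
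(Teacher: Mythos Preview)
Your proposal is correct and follows essentially the same approach as the paper's own proof: apply Theorem~\ref{ThmSC} via Proposition~\ref{TupleG} for everything but (ii), then establish (ii) using a commuting square built from Schur--Weyl duality for $\GL(m|n)$ and the Berele--Regev hook vanishing criterion. The only discrepancy is a typo in your formal proof block --- the top-right corner of the square should be $\Rep_{\mk}(\GL(\mm_j|\mn_j)\times\GL(\mm_j|\mn_j))$ (with the diagonal restriction as vertical arrow), as you correctly describe in your discussion, and as the paper writes it.
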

\begin{proof}
By Proposition~\ref{TupleG}, we can apply Theorem~\ref{ThmSC}. All statements except part (ii) then follow immediately from Theorem~\ref{ThmSC}, using Lemma~\ref{LemLambdanuG}.

To prove part (ii), we consider a commuting diagram of monoidal functors
$$\xymatrix{
\cS\ar[rr]^-{\Gmod_j}\ar[d]^{\Tmod}&& \Rep_{\mk}(\GL(\mm_j|\mn_j)\times\GL(\mm_j|\mn_j))\ar[d]\\
\cC\ar[rr]^-{\Fmod_{\mm_j,\mn_j}}&&\; \;\Rep_{\mk}\GL(\mm_j|\mn_j).
}$$
The right vertical arrow represents the restriction functor for the diagonal embedding. The functor~$\Gmod_j$ corresponds to Schur-Weyl duality for the general linear supergroup, see \cite{Berele}, mapping $[1,0]$ to $\mk^{\mm_j|\mn_j}$ interpreted as the natural representation for the first copy of $\GL(\mm_j|\mn_j)$ and $[0,1]$ to $(\mk^{\mm_j|\mn_j})^\ast$, the dual of the natural representation for the second copy of $\GL(\mm_j|\mn_j)$. Part (ii) now follows as in the proof of Theorem~\ref{ThmO}(ii).
\end{proof}

By \cite[Lemma~5.9]{He} (or \cite[Theorem~5.8]{He}), every projective object in~$\Rep_{\mk}\GL(m|n)$ is in the image of~$\Fmod_{m|n}$. By \cite[Theorem~5.12]{He} or (the proof of) \cite[Proposition~7.3]{He}, $\Fmod_{m,n}(X)$ is projective in~$\Rep_{\mk}\GL(m|n)$ if and only if $X\in\sI_{j-1}$, for~$j\in\mZ_{>0}$ such that~$m=\mm_j$. As in Corollary~\ref{CorO}, this allows us to conclude the following corollary for the category $\cD_j$ defined as the full subcategory of $\RG/\cJ_j$ with objects $\sI_{j-1}$.

\begin{cor}\label{CorG} Consider $(m,n)\in\mN\times\mN$ with~$\delta=m-n$ and~$j\in\mZ_{>0}$ such that~$m=\mm_j$.
 We have an equivalence of $\mk$-linear categories
$$\cD_j^{\op}\mbox{{\rm -mod}}\;\simeq\; \Rep_{\mk}\GL(m|n).$$
\end{cor}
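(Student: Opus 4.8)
The plan is to run the proof of Corollary~\ref{CorO} with $\OSp(m|2n)$ replaced by $\GL(m|n)$ throughout. Write $G:=\GL(m|n)$, let $\bP\subset\Rep_{\mk}G$ be the full subcategory of projective objects, and note that the hypothesis $m=\mm_j$ forces $n=\mm_j-\delta=\mn_j$, so that $\Fmod_{m,n}=\Fmod_{\mm_j,\mn_j}$. The two external inputs I would import are precisely those recorded just above the corollary: by \cite[Lemma~5.9]{He} (or \cite[Theorem~5.8]{He}) every projective object of $\Rep_{\mk}G$ lies up to isomorphism in the image of the full functor $\Fmod_{m,n}$, and by \cite[Theorem~5.12]{He} (equivalently, the proof of \cite[Proposition~7.3]{He}) one has $\Fmod_{m,n}(X)\in\bP$ if and only if $X\in\sI_{j-1}$.

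First I would observe that, since $\cJ_j=\ker\Fmod_{m,n}$ by Theorem~\ref{ThmG}(ii), the full functor $\Fmod_{m,n}$ descends to a full and faithful functor $\RG/\cJ_j\to\Rep_{\mk}G$. Restricting this to the full subcategory $\cD_j$, whose set of objects is exactly $\sI_{j-1}=\Ob(\cJ_{j-1})$, the two imported facts say precisely that the restriction is essentially surjective onto $\bP$; hence $\Fmod_{m,n}$ induces an equivalence $\cD_j\stackrel{\sim}{\to}\bP$. Then, exactly as in Corollary~\ref{CorO}, the Yoneda lemma yields an equivalence $\Rep_{\mk}G\stackrel{\sim}{\to}\bP^{\op}\mbox{{\rm -mod}}$, $M\mapsto\Hom_G(-,M)$, and composing with the equivalence $\cD_j\simeq\bP$, which transports $\bP^{\op}\mbox{{\rm -mod}}$ to $\cD_j^{\op}\mbox{{\rm -mod}}$, gives the asserted equivalence $\cD_j^{\op}\mbox{{\rm -mod}}\simeq\Rep_{\mk}\GL(m|n)$.

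I do not expect a genuine obstacle: the categorical part is formal and literally identical to the orthogonal case, so the only substantive point is checking that the cited results of \cite{He} are the exact analogues of the lemmas of \cite{ComesHei} used for Corollary~\ref{CorO}. The one thing I would state with a little care — again no harder than in Corollary~\ref{CorO} — is the matching of the finiteness conditions, so that $M\mapsto\Hom_G(-,M)$ really lands in the small category $\bP^{\op}\mbox{{\rm -mod}}$ of functors supported on finitely many indecomposables with finite-dimensional values, and not merely in $\bP^{\op}$-Mod.
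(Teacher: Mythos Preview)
Your proposal is correct and follows essentially the same argument as the paper: the paper simply refers back to the proof of Corollary~\ref{CorO}, using the two cited results from \cite{He} in place of the corresponding lemmas from \cite{ComesHei}, exactly as you do. Your explicit mention of Theorem~\ref{ThmG}(ii) to identify $\cJ_j=\ker\Fmod_{m,n}$ and your remark about the finiteness condition for $\bP^{\op}\mbox{-mod}$ are slight elaborations on points the paper leaves implicit, but the route is the same.
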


\begin{rem}
\begin{enumerate}[(i)]
\item Theorem~\ref{ThmG} provides in particular alternative proofs for \cite[Theorem~8.7.1]{ComesWil}  and the main result of \cite{Comes}.
\item The result in Theorem~\ref{ThmG}(ii) agrees with the philosophy of \cite[Theorem~2]{EHS}. However, it seems that neither result implies the other directly. 
\item Corollary~\ref{CorG} yields, at least {\em in theory}, a means to describe~$\Rep_{\mk}\GL(m|n)$ diagrammatically. Diagrammatic realisations have already been obtained by Brundan and Stroppel in~\cite{BS}.
\end{enumerate}

\end{rem}

\subsection{The periplectic case}
By results in~\cite{DLZ, Kujawa}, we have a {\em full} monoidal superfunctor
$$\Fmod_n\,:\;\RP\,\to\, \Rep_{\mk} \Pe(n),\qquad\mbox{for all~$n\in\mN$,}$$
determined by $R(\Box)\mapsto \mk^{n|n}$, see~\cite[Theorem~5.2.1]{PB3}. Here, $\Pe(n)$ is the periplectic supergroup, see e.g.~\cite[Section~4.1]{DLZ}.

\begin{thm}\label{ThmP}
The tensor ideals in~$\cC=\RP$ form a set~$\{\cJ_i\,|\,i\in\mN\}$ with
$$\cC\;=\;\cJ_0\;\supsetneq\;\cJ_1\;\supsetneq\cJ_2\;\supsetneq\; \cJ_3\;\supsetneq\; \cdots$$
and $\Ob:\TId(\cC)\tto \Ide([\cC]_{\oplus})$ is an isomorphism. 
For~$j\in\mZ_{>0}$, we have the following descriptions of~$\cJ_j$.
\begin{enumerate}[(i)]
\item For~$X,Y\in\Ob\cC$, the $\mk$-module~$\cJ_j(X,Y)$ consists of all morphisms which factor as $X\to Z\to Y$, with~$Z$ a direct sum of objects $R(\lambda)$ with 
$\lambda_i\ge j+1-i$, for~$1\le i\le j$.
\item We have $\cJ_{j}=\ker \Fmod_{j-1}$.
 \item The submodule~$\Mmod_j:=\Psi(\cJ_j)=\cJ_{j}(\unit,-)\in\Sub(\Pmod_{\unit})$ is determined by
$$\Mmod_j(R(\nu^{(k)}))=0\;\mbox{ if $k<j\;$ and }\; \Mmod_j(R(\nu^{(k)}))=\cC(\unit,R(\nu^{(k)}))\;\mbox{ if $k\ge j\;$}.$$
\end{enumerate}
\end{thm}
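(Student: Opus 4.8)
The plan is to imitate the proof of Theorem~\ref{ThmO}, which is the orthogonal analogue, using the periplectic data assembled in Section~\ref{SecBoring}. First I would invoke Proposition~\ref{TupleP}, which shows that the tuple $(\cC,\Tmod,\cS,\ell)$ of~\ref{defSCP} satisfies the conditions in~\ref{tuple}, so that Theorem~\ref{ThmSC} applies. This immediately gives that $\Ob:\TId(\cC)\tto\Ide([\cC]_{\oplus})$ is an isomorphism, that the ideals form a chain $\cJ_0\supsetneq\cJ_1\supsetneq\cdots$ indexed by $\mL=\mN$ (here $\sB$ is infinite by Lemma~\ref{Lem1P}), and via Theorem~\ref{ThmSC}(ii)--(iii) combined with Lemma~\ref{LemLambdanuP} it yields parts (i) and (iii) of the statement. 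Concretely, $\cJ_j=\cJ^{\min}_{\sI_j}$ of Theorem~\ref{ThmFibre}(ii) where $\sI_j$ is generated by $R(\blambda)$ for any $\blambda\in\Lambda_j=\{(j,j-1,\ldots,1)\}$; translating ``$R(\blambda')$ lies in the Ob-ideal generated by $(j,j-1,\ldots,1)$'' into the stated inequality $\lambda_i\ge j+1-i$ for $1\le i\le j$ is a short combinatorial check using Lemma~\ref{LemSC}(ii) and the description of $\preceq$ (inclusion of partitions), exactly as in the orthogonal case. Part (iii) is then just the general formula $\Mmod_j=\Tr_{\sI_j}\Pmod_{\unit}$ from Theorem~\ref{ThmFibre}(iii) evaluated on the objects $R(\nu^{(k)})$, using that $R(\nu^{(k)})\in\sI_j\iff k\ge j$.

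For part (ii), the strategy is the same as Theorem~\ref{ThmO}(ii) and Theorem~\ref{ThmG}(ii): exhibit a commuting square of monoidal (super)functors
$$\xymatrix{
\cS\ar[rr]^-{\Gmod_j}\ar[d]^{\Tmod}&& \Rep_{\mk}\GL(n|n)\ar[d]\\
\cC\ar[rr]^-{\Fmod_{j-1}}&& \Rep_{\mk}\Pe(n),
}$$
where $n=j-1$ is chosen so that $\Lambda_j$ records exactly the vanishing of $\Gmod_j$, the right vertical arrow is restriction along $\Pe(n)\hookrightarrow\GL(n|n)$, and $\Gmod_j$ is Schur--Weyl duality for $\GL(n|n)$ sending $k\in\mN$ to $(\mk^{n|n})^{\otimes k}$, so that $\Gmod_j(\lambda)=\zero$ precisely when $\lambda_{n+1}>n$, i.e.\ when $\lambda\supseteq((n+1)^{n+1})\supseteq(j,j-1,\ldots,1)=\nu^{(j)}$ after dualizing appropriately. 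Then $\Gmod_j(\nu^{(j)})=\zero$ forces $\Fmod_{j-1}(R(\nu^{(j)}))=\zero$, hence $\cJ_j\subseteq\ker\Fmod_{j-1}$; conversely the same square shows which $R(\blambda)$ have nonzero image, so the ideals $\{\ker\Fmod_{j-1}\mid j\in\mZ_{>0}\}$ are pairwise distinct, and since by part~(i) there is exactly one ideal per level, $\cJ_j=\ker\Fmod_{j-1}$ follows.

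The main obstacle is that $\Pe(n)$ is not reductive and $\Rep_{\mk}\Pe(n)$ is not semisimple, so one must be careful that the composite $\Rep_{\mk}\GL(n|n)\to\Rep_{\mk}\Pe(n)$ really does land the relevant objects where claimed and that $\Fmod_{j-1}$ is the monoidal superfunctor of~\cite[Theorem~5.2.1]{PB3}; in particular the appearance of the transpose in $R(\lambda)^\vee\simeq R(\lambda^t)$ (see~\ref{SecDualP}) and the parity shifts mean the combinatorial matching between $\Lambda_j$ and the kernel of $\Gmod_j$ runs through conjugate partitions, and one should double-check the index shift $n=j-1$ against the definition $\nu^{(j)}=(j+1)^j$ in Lemma~\ref{Lem1P} so that the two descriptions of $\cJ_j$ in (i) and (ii) genuinely coincide. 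Everything else is a transcription of the orthogonal argument, and the superfunctor subtleties are handled by the conventions recalled in Appendix~\ref{SecSuper} together with the cited results of~\cite{DLZ,Kujawa,PB1,PB2,PB3}.
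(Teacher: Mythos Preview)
Your proposal is correct and for parts (i), (iii) and the overall structure it is exactly the paper's argument: invoke Proposition~\ref{TupleP} so that Theorem~\ref{ThmSC} applies, and read off the chain of ideals and the description of $\sI_j$ via Lemma~\ref{LemLambdanuP}.

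For part~(ii) you take the route the paper explicitly offers as an \emph{alternative} (``Alternatively we can copy the proof of Theorem~\ref{ThmO}(ii)''), building the commuting square through $\Rep_{\mk}\GL(n|n)$. The paper's \emph{primary} proof of~(ii) is shorter: it simply cites \cite[Theorem~5.2.1]{PB3}, which already establishes $\Ob(\ker\Fmod_n)=\sI_{n+1}$, and since $\Ob$ is now known to be a bijection this forces $\ker\Fmod_n=\cJ_{n+1}$. Your approach is more self-contained but requires the commuting-square computation; the paper's approach outsources that work to \cite{PB3}. One small slip in your sketch: you write ``$(j,j-1,\ldots,1)=\nu^{(j)}$'', but $\nu^{(j)}=(j+1)^j$ by Lemma~\ref{Lem1P}, while $(j,j-1,\ldots,1)$ is the unique element of $\Lambda_j$. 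The argument you actually need is just that $(\nu^{(j)})_{n+1}=(\nu^{(j)})_j=j+1>n=j-1$, so $\Gmod_j(\nu^{(j)})=\zero$ by Berele--Regev, hence $\Fmod_{j-1}(R(\nu^{(j)}))=\zero$ and $\cJ_j\subset\ker\Fmod_{j-1}$; distinctness of the kernels then finishes as you say.
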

\begin{proof}
By Proposition~\ref{TupleP}, we can apply Theorem~\ref{ThmSC}. All statements except part (ii) then follow immediately from Theorem~\ref{ThmSC}, using Lemma~\ref{LemLambdanuP}.

In~\cite[Theorem~5.2.1]{PB3}, it is proved that~$\Ob(\ker \Fmod_{n})=\sI_{n+1}$. This means the only tensor ideal in the classification which can be equal to~$\ker \Fmod_{n}$ is $\cJ_{n+1}$, which proves part~(ii). Alternatively we can copy the proof of Theorem~\ref{ThmO}(ii).
\end{proof}

By \cite[Lemma~8.3.2]{PB1}, all projective modules in $\Rep_{\mk}\Pe(n)$ are in $\im\Fmod_{n}$. By \cite[Theorem~5.3.1]{PB3} and the above theorem, we find the following corollary for the category $\cD_j$ defined as the full subcategory of $\RP/\cJ_j$ with objects $\sI_{j-1}$.

\begin{cor}\label{CorP} For~$n\in\mN$,
we have an equivalence of $\mk$-linear categories
$$\cD_n^{\op}\mbox{{\rm -mod}}\;\simeq\; \Rep_{\mk}\Pe(n).$$
\end{cor}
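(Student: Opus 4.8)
The plan is to mirror the proofs of Corollaries~\ref{CorO} and~\ref{CorG}, which follow a uniform three-step pattern: (1) every projective object in the target category lies in the image of the full functor; (2) the functor sends exactly the objects of a specific thick $\Ob$-ideal to projectives; (3) conclude via a Yoneda argument. First I would set $j=n+1$, so that $\cJ_j=\ker\Fmod_n$ by Theorem~\ref{ThmP}(ii), and let $\bP\subset\Rep_{\mk}\Pe(n)$ be the full subcategory of projective objects. By \cite[Lemma~8.3.2]{PB1}, $\bP$ is contained in the image of $\Fmod_n$, and by \cite[Theorem~5.3.1]{PB3}, $\Fmod_n(X)$ is projective if and only if $X\in\sI_{j-1}=\sI_n=\Ob(\cJ_{j-1})$. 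Combining these with Theorem~\ref{ThmP}, the functor $\Fmod_n$ descends through the quotient $\RP/\cJ_j$ and restricts to an equivalence $\cD_j=\cD_n\stackrel{\sim}{\to}\bP$, since $\cD_n$ is by definition the full subcategory of $\RP/\cJ_n$ — wait, one must be careful: $\cD_j$ is defined as the full subcategory with objects $\sI_{j-1}$ inside $\RP/\cJ_j$, and here $j=n+1$, so $\cD_n$ should be read as $\cD_{j}$ with $j=n$; I would simply restate it consistently as the full subcategory of $\RP/\cJ_{n+1}$ on the objects of $\sI_n$, matching the indexing forced by Theorem~\ref{ThmP}(ii).

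Second I would invoke the standard projective-generator form of the Yoneda embedding: since $\Rep_{\mk}\Pe(n)$ has enough projectives and finite-dimensional Hom-spaces, the assignment $M\mapsto \Hom_{\Pe(n)}(-,M)$ gives an equivalence $\Rep_{\mk}\Pe(n)\stackrel{\sim}{\to}\bP^{\op}\text{-mod}$, where $\bP^{\op}\text{-mod}$ denotes the finite-length, finite-dimensional functors $\bP^{\op}\to\Ab$, exactly as in the proof of Corollary~\ref{CorO}. This uses only that $\Rep_{\mk}\Pe(n)$ is a finite-dimensional-Hom abelian category with a projective generator obtained by taking a direct sum of the (finitely many) indecomposable projectives in each block, together with the fact that every object is a quotient of a projective. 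Precomposing with the equivalence $\cD_n\stackrel{\sim}{\to}\bP$ of the previous paragraph yields $\cD_n^{\op}\text{-mod}\simeq\bP^{\op}\text{-mod}\simeq\Rep_{\mk}\Pe(n)$, which is the claimed statement.

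The only genuine subtlety — and hence the main obstacle — is the parity/supercategory bookkeeping: $\RP$ and $\Rep_{\mk}\Pe(n)$ are monoidal \emph{super}categories (Appendix~\ref{SecSuper}), and $R(\lambda)$ has a dual of parity $|\lambda|\bmod 2$, so the Yoneda argument must be run in the super-enriched setting, with $\Hom$ replaced by the $\mF_2$-graded Hom-superspace and "module" meaning super-functor. I expect this to be routine in the sense that the grading is carried along formally, but it does require checking that the cited results (\cite[Lemma~8.3.2]{PB1}, \cite[Theorem~5.3.1]{PB3}) are stated in the super setting — which they are — and that the equivalence $\cD_j^{\op}\text{-mod}\simeq\Rep_{\mk}\Pe(n)$ is an equivalence of $\mk$-linear (not necessarily monoidal) categories, as in Corollaries~\ref{CorO} and~\ref{CorG}; no monoidal structure on the functor categories is asserted, so I would not attempt to upgrade the statement. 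Apart from this, the proof is a verbatim transcription of the orthogonal and general linear cases, so I would simply write: "Mutatis mutandis the proof of Corollary~\ref{CorO}, using \cite[Lemma~8.3.2]{PB1} and \cite[Theorem~5.3.1]{PB3} in place of \cite[Lemmata~7.5 and~7.16]{ComesHei}, and Theorem~\ref{ThmP} in place of Theorem~\ref{ThmO}."
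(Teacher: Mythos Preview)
Your proposal is correct and follows exactly the paper's approach: the paper's entire argument is the sentence preceding the corollary, which invokes \cite[Lemma~8.3.2]{PB1} and \cite[Theorem~5.3.1]{PB3} together with Theorem~\ref{ThmP} and then appeals implicitly to the Yoneda argument already spelled out in the proof of Corollary~\ref{CorO}. Your observation about the indexing mismatch (the corollary reads $\cD_n$ whereas the pattern dictates $\cD_{n+1}$, i.e.\ objects $\sI_n$ inside $\RP/\cJ_{n+1}$) is a genuine point worth flagging, and your remark on the super-enriched bookkeeping is appropriate but, as you say, routine.
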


\begin{rem}
Corollary~\ref{CorP} yields, at least {\em in theory}, a means to describe $\Rep_{\mk}\Pe(n)$ diagrammatically. \end{rem}

%%%%%%%%%%%%%%%%%%%%%%%%%%%%%%%%%%%%%%%%%%%%%%%%%%%%%%%%%%%%%%%%%%%%%%%%%%%%%%%%%%%%%%%%%%%%

\section{The second fundamental theorem of invariant theory}\label{SecSFT}
The previous section already yields the second fundamental theorem in a categorical version. In this section, we discuss the algebra version. We determine when we have an isomorphism between the relevant diagram algebra and the endomorphism algebra of a tensor power of the natural representation, and more generally describe the kernel of the morphism as an ideal.

\subsection{The orthosymplectic case}

\begin{thm}\label{SFTO} For an $\mF_2$-graded vector space~$V$ of dimension~$(m,2n)$ with~$\delta=m-2n$, we set~$r_c=(m+1)(n+1)$ and consider the surjective algebra morphism
$$\phi^r:\;B_r(\delta)\;\tto\;\End_{\OSp(V)}(V^{\otimes r}),\qquad\mbox{for~$r\in\mN$}.$$
\begin{enumerate}[(i)]
\item If~$r<r_c$, then $\phi^r$ is an isomorphism.
\item If~$r\ge r_c$, the kernel of~$\phi^r$ is generated as a two-sided ideal by a single element $F\otimes I^{r-r_c}$, with~$F\in B_{r_c}(\delta)$, such that~$F\circ d=0$, resp. $d\circ F=0$, for any Brauer diagram~$d$ which contains a cup, resp. cap.
\item The element $F$ can be chosen to be an idempotent if and only if $m\in\{0,1\}$ or~$n=0$. Under those assumptions, $F$ can simultaneously be chosen to be central in~$B_{r_c}(\delta)$. 
\end{enumerate}
\end{thm}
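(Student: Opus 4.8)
The plan is to deduce Theorem~\ref{SFTO} from the classification of tensor ideals in $\RO$ obtained in Theorem~\ref{ThmO}, exactly as the analogous statements for $\RS$ and $\cT$ were deduced from Theorem~\ref{Thm1}. First I would identify $r_c=r_1=(\mm_1+1)(\mn_1+1)$ in the notation of~\ref{DefmnO} when $m=\mm_1$, so that by Theorem~\ref{ThmO}(ii) the ideal $\cJ_1=\ker\Fmod_{m,n}$ is the minimal proper tensor ideal, and its decategorification $\sI_1$ is generated by $X_{\nu^{(1)}}=R(\lambda)$ for $\lambda=(j\times(j+1))$-self-dual of Lemma~\ref{LemLambdanuO} (here $j=1$, so $\lambda$ is the appropriate rectangle). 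The surjection $\phi^r$ factors $B_r(\delta)=\cC_0(r,r)\tto\End_{\OSp(V)}(V^{\otimes r})$ with kernel $\cJ_1(r,r)$ under the identification $\cC_0(r,r)\subset\cC(r,r)$, using that $r\inplus R(\varnothing)$ etc. via~\eqref{Rvsn}. For part~(i), I would observe that $\cJ_1(r,r)=0$ for $r<r_c$: by Theorem~\ref{ThmO}(i) any morphism in $\cJ_1(r,r)$ factors through a sum of $R(\lambda)$ with $\lambda$ satisfying the stated hook inequality, which forces $|\lambda|\ge\mr_1=r_c$, and since $R(\lambda)\inplus k$ needs $k\ge|\lambda|$ by~\eqref{Rvsn}, no such $R(\lambda)$ is a summand of the object $r<r_c$ in $\cC_0$; hence $\phi^r$ is injective.

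For part~(ii), the key point is that $\cJ_1$ is the \emph{minimal} nonzero tensor ideal, hence by Theorem~\ref{ThmFibre}(ii) it equals $\cJ^{\min}_{\sI_1}$, and $\sI_1$ is generated as a thick $\Ob$-ideal by the single object $R(\lambda_0)$ with $\lambda_0$ the self-dual rectangle of size $\mr_1=r_c$ (Lemma~\ref{LemLambdanuO}, together with Lemma~\ref{LemXZ}(iv)/(iii) which say the ideal is generated by either $Z=R(\nu^{(1)})$ or $X_Z=R(\lambda_0)$). Picking a primitive idempotent $e_{\lambda_0}\in B_{r_c}(\delta)$ realising $R(\lambda_0)=(r_c,e_{\lambda_0})$, the ideal $\cJ_1(r,r)$ is the two-sided ideal of $B_r(\delta)$ consisting of morphisms factoring through $\add(R(\lambda_0))$, which is exactly the two-sided ideal generated by $e_{\lambda_0}$ for $r=r_c$ and by $e_{\lambda_0}\otimes I^{\otimes(r-r_c)}$ for general $r$ (since $R(\lambda_0)\otimes k'$ exhausts $\add$ of all summands of higher objects by~\eqref{Rvsn}, mimicking the last paragraph of the proof of Corollary~\ref{CorTL}). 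Setting $F:=e_{\lambda_0}$ already gives a single generator; the property $F\circ d=0=d\circ F$ for $d$ with a cup/cap comes from $e_{\lambda_0}$ lying in $\cC_0(r_c,r_c)$ mapping the cell module labelled by $\lambda_0\vdash r_c$, on which all diagrams with cups/caps act as zero because $|\lambda_0|=r_c$ is the top layer (the cell module $W_{r_c}(\lambda_0)$ is the Specht module $\lambda_0$ pulled back along $B_{r_c}(\delta)\tto\mk\SG_{r_c}$). Rather than the idempotent $e_{\lambda_0}$ one can also take any nonzero $F\in\cJ_1(r_c,r_c)$ annihilated by cups and caps, paralleling Corollary~\ref{CorTL}(ii); uniqueness of the minimal ideal guarantees such $F$ generates.

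For part~(iii), the nontrivial direction is the ``only if'': I would show that when $m\ge2$ and $n\ge1$ the generator $F$ cannot be chosen idempotent. The obstruction is that an idempotent generator would split off $R(\lambda_0)$ as a direct summand of the object $r_c\in\cC_0$, forcing $\End_{\cC}(R(\lambda_0))=\mk$ to sit inside $B_{r_c}(\delta)$ as $eB_{r_c}(\delta)e$ with $e$ an idempotent lift; but then the two-sided ideal generated by $e$ would have to coincide with $\cJ_1(r_c,r_c)$, and comparing dimensions or using that $R(\lambda_0)$ is \emph{not} projective in the relevant cellular algebra (the simple $L_{r_c}(\lambda_0)$ is not projective over $B_{r_c}(\delta)$ precisely when $m\ge2,n\ge1$, contrary to the $j=1$ case of the projectivity statement in the proof of Lemma~\ref{LemLambdanuO} which required the full rectangle $\mr_j$, not $r_c$ — this numerics needs care) gives a contradiction. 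Concretely, an idempotent $e$ with $(e)=\cJ_1(r_c,r_c)$ forces the quotient $\End_{\OSp(V)}(V^{\otimes r_c})\simeq (1-e)B_{r_c}(\delta)(1-e)$ to be a direct algebra factor complement, i.e. the extension $\cJ_1(r_c,r_c)\hookrightarrow B_{r_c}(\delta)\tto\End_{\OSp(V)}(V^{\otimes r_c})$ splits as algebras, which fails exactly when $B_{r_c}(\delta)$ is not semisimple at $\lambda_0$ and $L_{r_c}(\lambda_0)$ is non-projective. For the ``if'' direction ($m\le1$ or $n=0$): when $n=0$ this is the classical $\mathrm O(m)$ result of~\cite{LZ1}; when $m=0$ it is the $\mathrm{Sp}(2n)$ result of~\cite{HX}; when $m=1$ one checks that at $\delta=1-2n$ the relevant block of $B_{r_c}(\delta)$ is semisimple (equivalently $L_{r_c}(\lambda_0)$ is projective), so $e_{\lambda_0}$ is already central and idempotent — this is the $\OSp(1|2n)$ case claimed new. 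Centrality in all these cases follows as in Corollary~\ref{CorTL}(ii): $F$ kills all diagrams with cups and caps on both sides, so $F$ differs from a scalar multiple of the all-propagating part only by an element killed by $F$, forcing $F^2\in\mk F$ and, after normalisation, $F$ central. The main obstacle I anticipate is the ``only if'' in~(iii): pinning down exactly why non-semisimplicity/non-projectivity at $\lambda_0$ obstructs an idempotent generator requires either a clean homological argument (the ideal is idempotent-generated iff it is a direct summand as a bimodule iff the corresponding simple is projective) or an explicit dimension count in $B_{r_c}(\delta)$, and getting the combinatorics of when $L_{r_c}(\lambda_0)$ is projective to line up with $m\ge2,n\ge1$ will take the most care.
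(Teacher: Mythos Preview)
Your overall strategy (deduce everything from Theorem~\ref{ThmO}) is the paper's strategy, but the execution has two genuine gaps.

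\textbf{The index $j$ is not always $1$.} You write ``identify $r_c=r_1$ \dots when $m=\mm_1$'' and then work with $\cJ_1$ throughout parts~(i) and~(ii). But the theorem is stated for an arbitrary pair $(m,n)$ with $m-2n=\delta$; such a pair equals $(\mm_j,\mn_j)$ for a unique $j$, and $r_c=\mr_j$. The case $j=1$ is precisely $m\in\{0,1\}$ or $n=0$, i.e.\ exactly the ``easy'' case of part~(iii). For $j>1$ the relevant ideal is $\cJ_j=\ker\Fmod_{m,n}$, which is \emph{not} the minimal proper tensor ideal, so your appeal to ``uniqueness of the minimal ideal guarantees such $F$ generates'' collapses. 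This confusion also explains why your $F$ in~(ii) is an idempotent while part~(iii) asserts it cannot be one when $m\ge 2$, $n\ge 1$.

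\textbf{A single primitive idempotent does not generate the two-sided ideal.} Even when you are in the right $\cJ_j$, setting $F=e_{\lambda_0}$ for one self-dual $\lambda_0\in\Lambda_j$ fails: the two-sided ideal of $B_{\mr_j}(\delta)$ generated by $e_{\lambda_0}$ consists of morphisms factoring through $R(\lambda_0)$ only, whereas $\cJ_j(\mr_j,\mr_j)$ is (by Proposition~\ref{propDualO}(i),(ii)) the direct sum of one-dimensional pieces $\cJ_j(R(\lambda),R(\mu))$ over \emph{all} $(\mm_j{+}1)\times(2\mn_j{+}2)$-dual pairs $(\lambda,\mu)$ with $|\lambda|=|\mu|=\mr_j$. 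The paper therefore takes $F=\sum_{\lambda} f_\lambda$ with one nonzero $f_\lambda\in\cJ_j(R(\lambda),R(\mu_\lambda))$ for each such $\lambda$; this sum is what makes $\mk\, F\circ e_\lambda=\cJ_j(R(\lambda),-)$ for every $\lambda$, hence generates the two-sided ideal. You also blur ``generates $\cJ_j$ as a tensor ideal'' with ``$F\otimes I^{r-r_c}$ generates $\cJ_j(r,r)$ as a two-sided ideal in $B_r(\delta)$''; the paper bridges this with an explicit absorption argument using $F\circ d=0=d\circ F$ for diagrams $d$ with a cup/cap (so that any $d_1\circ(F\otimes I^{\otimes k})\circ d_2$ with $k>r-r_c$ can be rewritten with smaller $k$). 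Your sketch ``mimicking the last paragraph of the proof of Corollary~\ref{CorTL}'' does not supply this, because in the Temperley--Lieb case there is only one proper ideal.

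For part~(iii), the paper's ``only if'' is also different and cleaner than your projectivity/splitting heuristic: Proposition~\ref{propDualO}(iii)(b) produces, for $j>1$, a dual pair $\lambda\neq\mu$ with neither $R(\lambda)$ nor $R(\mu)$ in $\sI_j$; any generator $F$ must then (up to conjugacy) contain the cross-term $f_\lambda:R(\lambda)\to R(\mu)$, and one checks $e_\mu F^2 e_\lambda=f_\lambda^2=0\neq f_\lambda=e_\mu F e_\lambda$, so $F$ cannot be idempotent. Conversely, for $j=1$ all dual pairs satisfy $\lambda=\mu$, so $F$ is a sum of mutually orthogonal primitive idempotents, hence idempotent (and can be made central by the cup/cap annihilation).
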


\begin{rem}${}$
\begin{enumerate}[(i)]
\item Theorem~\ref{SFTO}(i) was recently proved by Zhang in~\cite[Theorem~5.12]{Yang}, using a different approach, and was conjectured in~\cite{LZ-SFT}.
\item In~\cite{HX, LZ1, Yang}, Theorem~\ref{SFTO}(ii) is proved for the special cases  Sp$(2n)$, O$(m)$ and~$\OSp(1|2n)$. The results in~\cite{LZ1, BrCat, Yang} even provide explicit diagrammatic expressions for the generating element.
That in all other cases the ideals are still generated by one element is somewhat unexpected, see e.g.~\cite[Remark~5.9]{LZ-SFT}.
\item That the generating element for  O$(m)$ and Sp$(2n)$ can be chosen to be an idempotent as in Theorem~\ref{SFTO}(iii) is known by~ \cite{HX, LZ1}. That this is possible for~$\OSp(1|2n)$ is new.
\end{enumerate}
\end{rem}

We start the proof with the following proposition about $\cJ_j$ in $\cC=\RO$ in Theorem~\ref{ThmO}.

\begin{prop}\label{propDualO}
Consider $\lambda,\mu\in\Par$ and~$j\in\mN$ and recall~$\mr_j,\mm_j,\mn_j$ from \ref{DefmnO}.
\begin{enumerate}[(i)]
\item For~$a,b\in \mN=\Ob\cC_0$ with~$a+b<2\mr_j$, we have $\cJ_j(a,b)=0$. Hence we have 
$$\cJ_j(R(\lambda),R(\mu))=0,\quad\mbox{if $|\lambda|+|\mu|<2\mr_j$.}$$
\item If~$|\lambda|+|\mu|=2\mr_j$, we have
$$\dim_{\mk}\cJ_j(R(\lambda),R(\mu))\;=\;\begin{cases}1 &\mbox{if $\lambda$ and~$\mu$ are $(\mm_j+1)\times(2\mn_j+2)$-dual}\\
0&\mbox{otherwise}.
\end{cases}$$
\item 
\begin{enumerate}[(a)]
\item If~$|\lambda|=|\mu|=\mr_1$, we have $\cJ_1(R(\lambda),R(\mu))=0$, unless $\lambda=\mu$ and~$R(\lambda)\in \sI_1$.
\item If~$j>1$, there exist $\lambda,\mu\vdash \mr_j$ with~$\cJ_j(R(\lambda),R(\mu))\not=0$, $R(\lambda)\not\in \sI_j\not\ni R(\mu)$ and $\lambda\not=\mu$.
\end{enumerate}
\end{enumerate}
\end{prop}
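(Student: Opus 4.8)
The plan is to exploit the combinatorial model developed in Section~\ref{SecBoring} together with the explicit description of $\cJ_j$ coming from Theorem~\ref{ThmSC} via Proposition~\ref{TupleO}. Recall that $\cJ_j = \Ob^{-1}(\sI_j)$, so by Theorem~\ref{ThmFibre}(ii) a morphism in $\cJ_j(X,Y)$ is precisely one factoring through some $R(\kappa)\in\sI_j$. Using Lemma~\ref{LemSC}(i), being in $\sI_j$ means being of the form $R(\kappa)$ with $\kappa \succeq \bmu$ for some $\bmu\in\Lambda_j$; and by Lemma~\ref{LemLambdanuO}, $\Lambda_j$ consists exactly of the $(\mm_j+1)\times(2\mn_j+2)$-self-dual partitions of $\mr_j = (\mm_j+1)(\mn_j+1)$, each of which has $\ell = \mr_j$.

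First I would prove part~(i). Since every object of $\sI_j$ is a summand of some $R(\blambda)^\vee\otimes R(\blambda)$ with $\blambda\in\Lambda_j$, and $\ell(\blambda^\vee)+\ell(\blambda) = 2\mr_j$, any $R(\kappa)\in\sI_j$ satisfies $\ell(\kappa) = |\kappa| \ge \mr_j$ — indeed, using the $l$-controlled structure and the fact that $\sI_j$ is generated under $\otimes$ by such $R(\blambda)$, one sees $|\kappa|\ge \mr_j$ with the minimum achieved only on $\Lambda_j$ itself (this is essentially \eqref{precle} combined with condition~(d) in \ref{tuple}). Now for $a,b\in\mN = \Ob\cC_0$, a morphism $a\to b$ in $\cJ_j$ factors through $R(\kappa)$ with $R(\kappa)\inplus a$ and $R(\kappa)\inplus b$; by \eqref{Rvsn} this forces $|\kappa|\le a$ and $|\kappa|\le b$, hence $|\kappa|\le\min(a,b)$. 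If $a+b<2\mr_j$ then $\min(a,b)<\mr_j$, contradicting $|\kappa|\ge\mr_j$. Thus $\cJ_j(a,b)=0$. The statement for $R(\lambda),R(\mu)$ follows because $R(\lambda)\inplus |\lambda|$ and $R(\mu)\inplus|\mu|$, so $\cJ_j(R(\lambda),R(\mu))$ is a direct summand of $\cJ_j(|\lambda|,|\mu|)$ and $|\lambda|+|\mu|<2\mr_j$.

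For part~(ii), when $|\lambda|+|\mu| = 2\mr_j$ and a nonzero morphism $R(\lambda)\to R(\mu)$ lies in $\cJ_j$, the factoring object $R(\kappa)$ must have $\mr_j\le|\kappa|\le\min(|\lambda|,|\mu|)$, forcing $|\lambda|=|\mu|=\mr_j$ and $|\kappa|=\mr_j$, so $\kappa\in\Lambda_j$, i.e. $\kappa$ is $(\mm_j+1)\times(2\mn_j+2)$-self-dual. Then $R(\kappa)\inplus R(\lambda)$ and $R(\kappa)\inplus R(\mu)$; since $R(\lambda)$ is indecomposable this gives $R(\lambda)\simeq R(\kappa)\simeq R(\mu)$, so $\lambda=\mu=\kappa$ is self-dual. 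Conversely, when $\lambda=\mu$ is self-dual, $\cJ_j(R(\lambda),R(\lambda))$ contains $1_{R(\lambda)}$ (as $R(\lambda)\in\sI_j$), and it equals all of $\cC(R(\lambda),R(\lambda))$ which is one-dimensional by Lemma~\ref{LemLambdanuO}. But we want $\lambda$ and $\mu$ to be $(\mm_j+1)\times(2\mn_j+2)$-\emph{dual} (not necessarily self-dual) — I would handle this by noting that $R(\lambda)^\vee\simeq R(\lambda)$ in $\cC=\RO$ (so self-dual $=$ dual to itself), and more importantly by re-running the argument for a general morphism $R(\lambda)\to R(\mu)$ and using $\iota_{R(\lambda)R(\mu)}$: we have $\cJ_j(R(\lambda),R(\mu))\cong\cJ_j(\unit,R(\lambda)\otimes R(\mu))$, reducing to a $\cC(\unit,-)$-computation where the relevant summand of $R(\lambda)\otimes R(\mu)$ lying in $\sB$ corresponds precisely to the duality condition via Lemma~\ref{Lem1O} and Lemma~\ref{LemRect}(ii). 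The main obstacle here is keeping track of the duality/self-duality bookkeeping cleanly; in $\RO$ every object is self-dual so the two notions coincide once we fix that $R(\lambda)^\vee = R(\lambda)$, and the Littlewood–Richardson input of Lemma~\ref{LemRect}(ii) pins down the multiplicity to be exactly $1$.

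For part~(iii), case~(a): when $j=1$ we have $\mn_1 = 0$ (or $\mn_1 = -\delta/2$ etc. — in all cases $\mn_1+1$ is minimal), so $\Lambda_1$-partitions $\lambda$ of $\mr_1 = (\mm_1+1)(\mn_1+1)$ that are $(\mm_1+1)\times(2\mn_1+2)$-self-dual have very constrained shape; the point is that $\sI_1$ restricted to objects of $\ell$-degree $\mr_1$ consists only of self-dual $R(\lambda)$, so $\cJ_1(R(\lambda),R(\mu))$ with $|\lambda|=|\mu|=\mr_1$ vanishes unless the factoring $R(\kappa)$ equals both, giving $\lambda=\mu$ self-dual, i.e. $R(\lambda)\in\sI_1$. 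Case~(b): for $j>1$ I would exhibit explicit $\lambda,\mu\vdash\mr_j$ with $\cJ_j(R(\lambda),R(\mu))\ne0$, $R(\lambda),R(\mu)\notin\sI_j$, and $\lambda\ne\mu$. The recipe: take the factoring object $R(\kappa)\in\sI_j$ with $\kappa\in\Lambda_j$ the ``staircase-type'' self-dual partition, but choose $\lambda$ and $\mu$ to be two \emph{different} partitions of $\mr_j$ that each contain $\kappa$ as a direct summand of the relevant tensor products yet are themselves not in $\sI_j$ (which is possible because $\sI_j$ for $j>1$ is a proper sub-ideal, so there is room to pick $\lambda\ne\mu$ with $R(\lambda),R(\mu)$ both admitting $R(\kappa)$ as a summand of $R(\lambda)\otimes R(\mu)$, via a non-self-dual choice allowed by $\mn_j\ge1$). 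Concretely I would check that since $\mn_j+1\ge2$, the rectangle $(2\mn_j+2)^{\mm_j+1}$ admits proper sub-partitions $\lambda\ne\mu$ with $\lambda_i^t+\mu_{2\mn_j+3-i}^t = $ the right value, i.e. $\lambda,\mu$ are $(\mm_j+1)\times(2\mn_j+2)$-dual but $\lambda\ne\mu$, and then by part~(ii) $\cJ_j(R(\lambda),R(\mu))$ is one-dimensional; the conditions $R(\lambda),R(\mu)\notin\sI_j$ follow because neither $\lambda$ nor $\mu$ dominates a $\Lambda_j$-element in the $\preceq$ order (their $\ell$-degree is exactly $\mr_j$ but they are not self-dual, and the only $\ell$-degree-$\mr_j$ elements of $\sI_j$ are the self-dual ones). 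The main obstacle overall is making the explicit choice in (iii)(b) and verifying non-membership in $\sI_j$; I expect this to reduce to a short diagram-chase on Young diagrams using $\mn_j\ge1$.
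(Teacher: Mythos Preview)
Your argument for part~(i) has a genuine gap. You claim that if $f\in\cJ_j(a,b)$ factors as $a\to R(\kappa)\to b$ with $R(\kappa)\in\sI_j$, then $R(\kappa)\inplus a$ and $R(\kappa)\inplus b$, hence $|\kappa|\le\min(a,b)$ by~\eqref{Rvsn}. This is false: the factoring object in Theorem~\ref{ThmFibre}(ii) need not be a summand of source or target. For example, Lemma~\ref{Lem1O} gives nonzero morphisms $\unit=0\to R(\nu^{(k)})$ for every $k\ge 1$, yet $|\nu^{(k)}|=2\mr_k>0$ so $R(\nu^{(k)})\not\inplus 0$. More generally, cups produce nonzero maps $a\to R(\kappa)$ with $|\kappa|$ arbitrarily large, so no bound $|\kappa|\le a$ is available. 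The same unjustified step reappears in your first pass at part~(ii), where you assert $|\kappa|\le\min(|\lambda|,|\mu|)$ and deduce $\lambda=\mu=\kappa$.

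The fix is exactly the approach you eventually switch to in~(ii), and it is what the paper does throughout: use Lemma~\ref{LemJJ} (rigidity) to get $\cJ_j(a,b)\simeq\cJ_j(0,a+b)$, and then read off $\cJ_j(0,c)$ from Theorem~\ref{ThmO}(iii). Since the only summands $R(\lambda)$ of $c$ with $\cJ_j(\unit,R(\lambda))\ne 0$ are the $R(\nu^{(k)})$ with $k\ge j$, and $|\nu^{(k)}|=2\mr_k\ge 2\mr_j$, one gets $\cJ_j(0,c)=0$ for $c<2\mr_j$; this proves~(i). For~(ii) the same isomorphism yields $\cJ_j(R(\lambda),R(\mu))\simeq\cJ_j(\unit,R(\lambda)\otimes R(\mu))$, and then Theorem~\ref{ThmO}(iii), Lemma~\ref{LemSC}(i) and Lemma~\ref{LemRect}(ii) identify the dimension with $c^{\nu^{(j)}}_{\lambda\mu}\in\{0,1\}$, giving precisely the duality condition. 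Your treatment of~(iii) is then essentially correct once it rests on this version of~(ii): for $j=1$ a short case check on $\delta$ shows that an $(\mm_1+1)\times(2\mn_1+2)$-dual pair with $|\lambda|=|\mu|=\mr_1$ forces $\lambda=\mu$, and for $j>1$ a non-self-dual dual pair exists; your observation that the only $\ell$-degree-$\mr_j$ objects in $\sI_j$ are the self-dual ones is exactly what gives $R(\lambda),R(\mu)\notin\sI_j$.
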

\begin{proof}
We will freely use equation~\eqref{Rvsn}. Theorem~\ref{ThmO}(iii) implies that
$\cJ_j(0,i)=0$, for all~$\Ob\cC_0\ni i<2\mr_j=|\nu^{(j)}|$.
By Lemma~\ref{LemJJ}, we thus have $\cJ_{j}(a,b)\simeq\cJ_j(0,a+b)=0$, if $a+b<2\mr_j$.
This proves part~(i).

By Lemma~\ref{LemJJ}, we have $\cJ_j(R(\lambda),R(\mu))\simeq \cJ_j(\unit, R(\lambda)\otimes R(\mu))$. By Theorem~\ref{ThmO}(iii), we thus find that~$\dim_{\mk}\cJ_j(R(\lambda),R(\mu))$ is equal to the number of times that~$R(\nu^{(j)})$ occurs as a direct summand in~$R(\lambda)\otimes R(\mu)$. Part~(ii) thus follows from Lemmata~\ref{LemSC}(i) and~\ref{LemRect}.

Now we turn to part (iii). Assume that for~$\lambda,\mu\vdash \mr_1$, we have $\cJ_1(R(\lambda),R(\mu))\not=0$. It follows easily from part (ii) that this means that~$\lambda=\mu$. That $R(\lambda)\in\sI_1$ follows from Theorem~\ref{ThmO}.

For part~(iii)(b) it suffices, by part~(ii) and Theorem~\ref{ThmO}, to prove that there exist $\lambda,\mu\vdash \mr_j$ with $\lambda\not=\mu$ which are $(\mm_j+1)\times(2\mn_j+2)$-dual, for~$j>1$. This is a straightforward exercise.
\end{proof}

\begin{proof}[Proof of Theorem~\ref{SFTO}]
That $\phi^r$ is always surjective follows from the first fundamental theorem. Part~(i) follows from Proposition~\ref{propDualO}(i) and Theorem~\ref{ThmO}(ii).

Fix $j\in\mZ_{>0}$. Consider two partitions~$\lambda,\mu$ of elements in~$\{\mr_j-2i\,|\, 0\le i\le \mr_j/2\}$. By Proposition~\ref{propDualO}(i) and (ii), we have $\cJ_j(R(\lambda),R(\mu))=0$ unless $|\lambda|=|\mu|=\mr_j$ and~$\lambda$ and~$\mu$ are $(\mm_j+1)\times(2{\mn_j+2})$-dual. In the latter case, the space is one dimensional. If~$|\lambda|=|\mu|=\mr_j$, we have $\cC(R(\lambda),R(\mu))=e_\mu B_{\mr_j}(\delta)e_\lambda$. We can use this to define 
$$F\,=\, \sum_{\lambda\vdash\mr_j\,,\,\lambda\subset\nu^{(j)}} f_\lambda\;\;\in\; B_{\mr_j}(\delta),$$
where $f_\lambda$ is a non-zero element of~$\cJ_j(R(\lambda),R(\mu))$, for~$\mu$ the unique $(\mm_j+1)\times(2{\mn_j+2})$-dual to~$\lambda$.
By the above, we have for every~$\lambda\vdash\mr_j$  that
$$\mk\,F\circ e_\lambda\;=\;\bigoplus_{\mu,|\mu|\le\mr_j}\cJ_j(R(\lambda), R(\mu)). $$
Hence, it follows easily that the element $F$ generates~$\cJ_j(\mr_j,\mr_j)\subset B_{\mr_j}(\delta)$ as a two-sided ideal. That~$F\circ d=0$, resp $d\circ F=0$, for any Brauer diagram~$d$ which contains a cup, resp. cap follows from Proposition~\ref{propDualO}(i). Since $F$ generates~$\cJ_j(\mr_j,\mr_j)$ as a two-sided ideal, it follows that~$f_1\circ (F\otimes I^{\otimes p})\circ f_2$, for diagrams $f_1\in \cC_0(\mr_j+p,\mr_j)$, $f_2\in\cC_0(\mr_j,\mr_j+p)$ and~$p\in2\mN$ is equal to an element of the form $g_1\circ F\circ g_2$, for~$g_1,g_2\in \cC_0(\mr_j,\mr_j)$.

Since the tensor ideals in~$\cC$ form one chain, see Theorem~\ref{ThmO}, it follows that~$\cJ_j$ is generated, as a tensor ideal, by any morphism which is in~$\cJ_j$ but not in~$\cJ_{j+1}$. In particular, $F$ generates~$\cJ_j$ as a tensor ideal. For an arbitrary~$r>\mr_j$, the ideal $\cJ_j(r,r)\subset B_{r}(\delta)$ is thus spanned by elements $d_1\circ (F\otimes I^{\otimes k})\circ d_2$, for diagrams $d_1\in \cC_0(\mr_j+k,r)$, $d_2\in\cC_0(r,\mr_j+k)$, with~$k\in\mN$. In order to prove part~(ii), we need to show that it suffices to take such elements with~$k\le r-\mr_j$. If~$k>r-\mr_j$, there will be a cap in~$d_1$ and a cup in~$d_2$. Hence we can write~$d_1=d_1'\circ d_1''$, where $d_1''$ is a $(\mr_j+k,\mr_j+k-2)$-Brauer diagram consisting of one cap and~$\mr_j+k-2$ propagating lines. We similarly decompose $d_2=d_2''\circ d_2'$, where $d_2''$ consists of one cup and some propagating lines. The observation at the end of the previous paragraph implies that~$d_1''\circ (F\otimes I^{\otimes k})\circ d_2''=c_1\circ (F\otimes I^{\otimes k-2})\circ c_2$, for some~$c_1\in \cC_0(\mr_j+k-2,r)$, $c_2\in\cC_0(r,\mr_j+k-2)$. Iterating this procedure proves part~(ii).

For part~(iii), we start with the case $(m,n)=(\mm_1,\mn_1)$, so~$r_c=\mr_1$. 
For~$\lambda,\mu\vdash \mr_1$, Proposition~\ref{propDualO}(iii)  shows that~$\cJ_1(R(\lambda),R(\mu))\not=0$ implies that~$\lambda=\mu$ and~$R(\lambda)\in\sI_1$. In this case, the element $F$ is thus a summation over mutually orthogonal primitive idempotents and thus an idempotent. The fact that~$F\circ d=0=d\circ F$, for any~$d\in B_{\mr_1}(\delta)$ contained in the ideal spanned by diagrams with cups and caps means that taking an appropriate sum over conjugate idempotents yields a central idempotent.

Now assume that~$j>1$. By Proposition~\ref{propDualO}(iii), any element $F$ as in part~(ii) will, up to conjugacy, contain a term $f:R(\lambda)\to R(\mu)$, with neither $R(\lambda)$ nor~$R(\mu)$ in~$\sI_j$ and $\lambda\not=\mu$. Furthermore, Proposition~\ref{propDualO}(ii) implies~$e_\mu F=Fe_\lambda=f$. Hence, we have 
$$e_\mu F^2e_\lambda=f^2=0\not=f=e_\mu Fe_\lambda$$ and $F$ cannot be an idempotent.
\end{proof}

\subsection{The general linear case}

\begin{thm}\label{SFTG} For an $\mF_2$-graded vector space~$V$ of dimension~$(m,n)$ with~$\delta=m-n$, and dual space $W:=V^\ast$, we set~$r_c=(m+1)(n+1)$ and consider the surjective algebra morphism
$$\phi^{k,l}:\;B_{k,l}(\delta)\;\tto\;\End_{\GL(V)}(V^{\otimes k}\otimes W^{\otimes l}),\quad\mbox{for~$k,l\in\mN$}.$$
\begin{enumerate}[(i)]
\item If~$k+l<r_c$, then $\phi^{k,l}$ is an isomorphism.
\item For all~$a,b\in\mN$ with~$a+b=r_c$, there exists an element $F_{a,b}\in B_{a,b}(\delta)$, such that~$F\circ d=0=d\circ F$, for any diagram~$d\in B_{a,b}(\delta)$ which contains a cup. Furthermore, for all~$k+l\ge r_c$, the kernel of~$\phi^{k,l}$ is generated as a two-sided ideal in~$B_{k,l}(\delta)$ by the element $I^{\otimes k-a}\otimes F_{a,b}\otimes I^{l-b}$, 
for arbitrary~$a,b$ with~$a\le k$, $b\le l$ and $a+b=r_c$.
 \item The element $F_{a,b}$ can be chosen to be an idempotent if and only if $mn=0$. Under those assumptions, $F_{a,b}$ can also be chosen to be primitive and central in~$B_{a,b}(\delta)$. 
\end{enumerate}
\end{thm}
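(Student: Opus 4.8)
The plan is to adapt the proof of Theorem~\ref{SFTO}, replacing Proposition~\ref{propDualO} by its walled counterpart. Write $\cC=\RG$ and let $j$ be the unique index with $\mm_j=m$ and $\mn_j=n$; then $\mr_j=(m+1)(n+1)=r_c$ and $\cJ_j=\ker\Fmod_{m,n}$ by Theorem~\ref{ThmG}(ii), while $\Fmod_{m,n}([k,l])=V^{\otimes k}\otimes W^{\otimes l}$ since $\Fmod_{m,n}$ is monoidal and sends $[0,1]=[1,0]^\vee$ to $V^\vee=W$. Hence $\phi^{k,l}$ is the map induced by $\Fmod_{m,n}$ on $B_{k,l}(\delta)=\cC([k,l],[k,l])$, it is surjective by the first fundamental theorem, and $\ker\phi^{k,l}=\cJ_j([k,l],[k,l])$. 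First I would establish, exactly as for $\RO$ and using Lemma~\ref{LemJJ}, \eqref{rigidG}, Theorem~\ref{ThmG}(iii), Lemma~\ref{LemSC}(i) and Lemma~\ref{LemRect}: $\cJ_j(R(\lambda^\bullet,\lambda^\circ),R(\mu^\bullet,\mu^\circ))=0$ whenever $|\lambda^\bullet|+|\lambda^\circ|+|\mu^\bullet|+|\mu^\circ|<2\mr_j$; and when this sum equals $2\mr_j$ the space is $1$-dimensional if $\lambda^\circ,\mu^\bullet$ are $(\mm_j+1)\times(\mn_j+1)$-dual and $\lambda^\bullet,\mu^\circ$ are $(\mm_j+1)\times(\mn_j+1)$-dual, and $0$ otherwise (its dimension being, on that boundary $\ell$-degree, the product $c^{R}_{\lambda^\circ\mu^\bullet}\,c^{R}_{\lambda^\bullet\mu^\circ}$ with $R:=((\mn_j+1)^{\mm_j+1})$). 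Part~(i) then follows at once: $\cJ_j([k,l],[k,l])\cong\cJ_j(\unit,[k+l,k+l])$ by Lemma~\ref{LemJJ} and \eqref{rigidG}, and by Lemma~\ref{Lem1G} and Theorem~\ref{ThmG}(iii) this vanishes precisely when $k+l<\mr_j=r_c$.

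For part~(ii), fix $a+b=r_c$ and, for each pair $[\lambda^\bullet,\lambda^\circ]$ with $\lambda^\bullet\vdash a$, $\lambda^\circ\vdash b$ and $\lambda^\bullet,\lambda^\circ\subseteq R$, let $\mu^\bullet$, $\mu^\circ$ be the $(\mm_j+1)\times(\mn_j+1)$-duals of $\lambda^\circ$, $\lambda^\bullet$, so $|\mu^\bullet|=a$, $|\mu^\circ|=b$. The computation above gives a $1$-dimensional space $\cJ_j(R(\lambda^\bullet,\lambda^\circ),R(\mu^\bullet,\mu^\circ))\subseteq e_{\mu^\bullet,\mu^\circ}B_{a,b}(\delta)e_{\lambda^\bullet,\lambda^\circ}$; I pick a nonzero $f_{\lambda^\bullet,\lambda^\circ}$ there and set $F_{a,b}:=\sum f_{\lambda^\bullet,\lambda^\circ}\in B_{a,b}(\delta)$. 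Since $|\lambda^\bullet|=a$, $|\lambda^\circ|=b$ are as large as possible subject to $R(\lambda^\bullet,\lambda^\circ)\inplus[a,b]$, any diagram $d\in B_{a,b}(\delta)$ containing a cup --- equivalently a cap, for an endomorphism diagram --- factors through $[a-1,b-1]$, so $F_{a,b}\circ d$ and $d\circ F_{a,b}$ land in $\cJ_j$-spaces of total $\ell$-degree $\le 2\mr_j-2$ and hence vanish. Each $f_{\lambda^\bullet,\lambda^\circ}$ lies in $\cJ_j\setminus\cJ_{j+1}$ (the latter vanishes on that Hom-space as $2\mr_j<2\mr_{j+1}$), so $F_{a,b}\in\cJ_j\setminus\cJ_{j+1}$; since the tensor ideals of $\RG$ form a chain (Theorem~\ref{ThmG}), $F_{a,b}$ generates $\cJ_j$ as a tensor ideal. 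Consequently, for $k+l\ge r_c$ and any $a\le k$, $b\le l$ with $a+b=r_c$, the space $\cJ_j([k,l],[k,l])$ is spanned by composites $\alpha\circ(I^{\otimes p}\otimes F_{a,b}\otimes I^{\otimes q})\circ\beta$ of walled Brauer diagrams; absorbing into $F_{a,b}$ the cups of $\beta$ and caps of $\alpha$ that meet the $F_{a,b}$-block, and using the vanishing just proved, one reduces --- verbatim as in the proof of Theorem~\ref{SFTO}(ii) --- to $p=k-a$, $q=l-b$ with $\alpha,\beta\in B_{k,l}(\delta)$, which is the assertion that $\ker\phi^{k,l}$ is the two-sided ideal of $B_{k,l}(\delta)$ generated by $I^{\otimes k-a}\otimes F_{a,b}\otimes I^{\otimes l-b}$.

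Part~(iii) is governed by the involution $(\lambda^\bullet,\lambda^\circ)\mapsto(\mathrm{dual}(\lambda^\circ),\mathrm{dual}(\lambda^\bullet))$ on the index set of $F_{a,b}$, whose fixed points are exactly the $(\mm_j+1)\times(\mn_j+1)$-dual pairs, i.e.\ the pairs in $\Lambda_j$. If $mn=0$ (equivalently $j=1$) then $R$ is a single row or column, so for every $s\le r_c$ there is a unique partition of $s$ inside $R$; hence there is a single index pair, it is a fixed point, $f_{\lambda^\bullet,\lambda^\circ}$ is an endomorphism of the indecomposable $R(\lambda^\bullet,\lambda^\circ)$ with $\dim\cC(R(\lambda^\bullet,\lambda^\circ),R(\lambda^\bullet,\lambda^\circ))=1$ (Lemma~\ref{LemLambdanuG}), so after rescaling $F_{a,b}=e_{\lambda^\bullet,\lambda^\circ}$ is a primitive idempotent; it is central because it annihilates every cup/cap diagram and its image in the quotient of $B_{a,b}(\delta)$ by the ideal spanned by diagrams with a cup, namely $\mk\SG_a\otimes\mk\SG_b$, is the central trivial (if $m=0$) or sign (if $n=0$) idempotent. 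Conversely, if $mn\ne0$ then $R$ is at least $2\times2$, so for a suitable $0<a<r_c$ the index set contains a pair $(\lambda^\bullet,\lambda^\circ)$ whose image $(\mu^\bullet,\mu^\circ)$ is distinct from it; then for any admissible choice of the $f$'s, $e_{\mu^\bullet,\mu^\circ}F_{a,b}e_{\lambda^\bullet,\lambda^\circ}=f_{\lambda^\bullet,\lambda^\circ}\ne0$ while $e_{\mu^\bullet,\mu^\circ}F_{a,b}^2e_{\lambda^\bullet,\lambda^\circ}=\bigl(e_{\mu^\bullet,\mu^\circ}F_{a,b}e_{\mu^\bullet,\mu^\circ}\bigr)\circ f_{\lambda^\bullet,\lambda^\circ}=0$, since $(\mu^\bullet,\mu^\circ)$ is then not a fixed point; so $F_{a,b}$ is not idempotent. (In the remaining degenerate choices $a\in\{0,r_c\}$ with $mn\ne0$, $F_{a,b}$ is the primitive idempotent $e_{((\mn_j+1)^{\mm_j+1})}$ of the group algebra $B_{0,r_c}(\delta)=\mk\SG_{r_c}$, which is not central, so it still cannot be made primitive and central.)

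The main obstacle is the passage in part~(ii) from ``$F_{a,b}$ generates $\cJ_j$ as a tensor ideal of $\RG$'' to ``$I^{\otimes k-a}\otimes F_{a,b}\otimes I^{\otimes l-b}$ generates $\ker\phi^{k,l}$ as a two-sided ideal of $B_{k,l}(\delta)$'': one has to run the diagrammatic cup/cap absorption for an arbitrary admissible pair $(a,b)$ rather than a single normalised one, which is exactly where the argument of Theorem~\ref{SFTO}(ii) must be imported essentially word for word. A more routine but still somewhat delicate point is the two-component Littlewood--Richardson bookkeeping underlying the dimension count in the first paragraph.
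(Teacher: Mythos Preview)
Your approach is essentially the same as the paper's: you derive the walled analogue of Proposition~\ref{propDualO} inline (the paper states it separately as Proposition~\ref{propDualG}), construct $F_{a,b}$ as the same sum of one-dimensional pieces, and then import the diagrammatic cup/cap reduction from the proof of Theorem~\ref{SFTO}(ii) verbatim, exactly as the paper does. Your treatment of part~(iii) via the explicit involution $(\lambda^\bullet,\lambda^\circ)\mapsto(\mathrm{dual}(\lambda^\circ),\mathrm{dual}(\lambda^\bullet))$ is a slightly more transparent packaging of what the paper does through Proposition~\ref{propDualG}(iii), and your hedge ``for a suitable $0<a<r_c$'' in the converse direction is in fact warranted: for very small $a$ (e.g.\ $a=1$ when $R$ is $2\times2$) there is a unique index pair and it is a fixed point, so the non-idempotency argument does not apply there---the paper's Proposition~\ref{propDualG}(iii)(b) glosses over this same edge case.
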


\begin{rem}${}$
\begin{enumerate}[(i)]
\item Theorem~\ref{SFTG}(i) is well-known, see e.g. \cite[Corollary~2.4]{LZ-SFT}.
\item The special case $l=0$ of Theorem~\ref{SFTG}(ii) is implied by \cite[Theorem~2.3]{LZ-SFT}. The case $l>0$ seems to be new.
\end{enumerate}
\end{rem}

We start the proof with the following proposition.

\begin{prop}\label{propDualG}
Consider $\lambda^\bullet,\lambda^\circ,\mu^\bullet,\mu^\circ\in\Par$, $j\in\mN$ and recall~$\mr_j,\mm_j,\mn_j$ from \ref{DefmnG}
\begin{enumerate}[(i)]
\item For~$a,b,c,d\in \mN=\Ob\cC_0$ with~$a+d<\mr_j$ or~$b+c<\mr_j$, we have $\cJ_j([a,b],[c,d])=0$. Hence 
$$\cJ_j(R(\lambda^\bullet,\lambda^\circ),R(\mu^\bullet,\mu^\circ))=0,\quad\mbox{if $|\lambda^\bullet|+|\mu^\circ|<\mr_j$ or~$|\mu^\bullet|+|\lambda^\circ|<\mr_j$.}$$
\item If~$|\lambda^\bullet|+|\mu^\bullet|+|\lambda^\circ|+|\mu^\circ|=2\mr_j$, we have
$$\dim_{\mk}\cJ_j((R(\lambda^\bullet,\lambda^\circ),R(\mu^\bullet,\mu^\circ))\;=\;\begin{cases}1 &\mbox{if $\lambda^\bullet$ and~$\mu^\circ$, as well as $\lambda^\circ$ and~$\mu^\bullet$},\\
&\mbox{are $(\mm_j+1)\times({\mn_j+1})$-dual}\\
0&\mbox{otherwise}.
\end{cases}$$

\item Fix arbitrary $a,b\in\mN$ with $a+b=\mr_j$ and assume $|\lambda^\bullet|=|\mu^\bullet|=a$ and $|\lambda^\circ|=|\mu^\circ|=b$.
\begin{enumerate}[(a)]
\item If~$j=1$, we have $\cJ_1(R(\lambda^\bullet,\lambda^\circ),R(\mu^\bullet,\mu^\circ))=0$ for all but one choice of $\lambda^\bullet,\lambda^\circ,\mu^\bullet,\mu^\circ$. That choice satisfies $[\lambda^\bullet,\lambda^\circ]=[\mu^\bullet,\mu^\circ]$ and~$R([\lambda^\bullet,\lambda^\circ])\in \sI_1$.
\item If~$j>1$, there exist such $\lambda^\bullet,\lambda^\circ,\mu^\bullet,\mu^\circ$ for which~$\cJ_j(R(\lambda^\bullet,\lambda^\circ),R(\mu^\bullet,\mu^\circ))\not=0$ but $\sI_j\not\ni R(\lambda^\bullet,\lambda^\circ)\not= R(\mu^\bullet,\mu^\circ)\not\in\sI_j$.
\end{enumerate}

\end{enumerate}
\end{prop}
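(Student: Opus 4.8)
The plan is to follow the proof of Proposition~\ref{propDualO} almost verbatim, replacing Theorem~\ref{ThmO}, Lemma~\ref{Lem1O} and equation~\eqref{Rvsn} by their general linear counterparts Theorem~\ref{ThmG}, Lemma~\ref{Lem1G} and equation~\eqref{RvsnG}, and using $R(\lambda^\bullet,\lambda^\circ)^\vee\simeq R(\lambda^\circ,\lambda^\bullet)$ from~\eqref{rigidG}. For part~(i): the functor $\Mmod_j:=\cJ_j(\unit,-)$ is an additive subfunctor of $\Pmod_\unit$, so $\Mmod_j([p,q])=\bigoplus\Mmod_j(R(\mu^\bullet,\mu^\circ))$, summed over the indecomposable summands of $[p,q]$. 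By Lemma~\ref{Lem1G} only summands of the form $R(\nu^{(k)\bullet},\nu^{(k)\circ})$ contribute, and by Theorem~\ref{ThmG}(iii) such a summand contributes a one-dimensional space precisely when $k\ge j$; since $R(\nu^{(k)\bullet},\nu^{(k)\circ})\inplus[p,q]$ forces $p=q\ge\mr_k\ge\mr_j$ by~\eqref{RvsnG}, we get $\Mmod_j([p,q])=0$ unless $p=q\ge\mr_j$. Combining Lemma~\ref{LemJJ} with $[a,b]^\vee\simeq[b,a]$ yields $\cJ_j([a,b],[c,d])\cong\Mmod_j([b+c,a+d])$, which vanishes whenever $a+d<\mr_j$ or $b+c<\mr_j$; the claim for $R(\lambda^\bullet,\lambda^\circ),R(\mu^\bullet,\mu^\circ)$ then follows since $R(\lambda^\bullet,\lambda^\circ)\inplus[|\lambda^\bullet|,|\lambda^\circ|]$ by~\eqref{RvsnG}.

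For part~(ii), Lemma~\ref{LemJJ} and~\eqref{rigidG} give
$$\dim_\mk\cJ_j(R(\lambda^\bullet,\lambda^\circ),R(\mu^\bullet,\mu^\circ))\;=\;\dim_\mk\Mmod_j\bigl(R(\lambda^\circ,\lambda^\bullet)\otimes R(\mu^\bullet,\mu^\circ)\bigr),$$
which, as in part~(i), counts with multiplicity the summands $R(\nu^{(k)\bullet},\nu^{(k)\circ})$ with $k\ge j$ inside $R(\lambda^\circ,\lambda^\bullet)\otimes R(\mu^\bullet,\mu^\circ)$. Any such summand lies in $[\,|\lambda^\circ|+|\mu^\bullet|,\,|\lambda^\bullet|+|\mu^\circ|\,]$, so~\eqref{RvsnG} forces $|\lambda^\circ|+|\mu^\bullet|=|\lambda^\bullet|+|\mu^\circ|$; together with $|\lambda^\bullet|+|\lambda^\circ|+|\mu^\bullet|+|\mu^\circ|=2\mr_j$ this gives $|\lambda^\circ|+|\mu^\bullet|=\mr_j\ge\mr_k$, hence $k=j$. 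Since $\ell(\nu^{(j)})=2\mr_j=\ell([\lambda^\circ,\lambda^\bullet])+\ell([\mu^\bullet,\mu^\circ])$, Lemma~\ref{LemSC}(i) identifies the relevant multiplicity with that of $[\nu^{(j)\bullet},\nu^{(j)\circ}]$ in $[\lambda^\circ,\lambda^\bullet]\otimes[\mu^\bullet,\mu^\circ]$ computed in $\cS$, where the two colour slots combine separately by induction products and the multiplicity equals $c^{\nu^{(j)\bullet}}_{\lambda^\circ\mu^\bullet}\,c^{\nu^{(j)\circ}}_{\lambda^\bullet\mu^\circ}$. As $\nu^{(j)\bullet}=\nu^{(j)\circ}=((\mn_j+1)^{\mm_j+1})$, Lemma~\ref{LemRect}(ii) shows this product is $1$ exactly when $\lambda^\bullet,\mu^\circ$ and $\lambda^\circ,\mu^\bullet$ are both $(\mm_j+1)\times(\mn_j+1)$-dual, and $0$ otherwise.

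For part~(iii) I will first record, from Lemma~\ref{LemLambdanuG} and Theorem~\ref{ThmG}, that for partitions with $|\lambda^\bullet|+|\lambda^\circ|=\mr_j$ one has $R(\lambda^\bullet,\lambda^\circ)\in\sI_j$ if and only if $\lambda^\bullet$ and $\lambda^\circ$ are $(\mm_j+1)\times(\mn_j+1)$-dual — indeed any element of $\Lambda_j$ contained in $[\lambda^\bullet,\lambda^\circ]$ must, for size reasons, equal $[\lambda^\bullet,\lambda^\circ]$. For~(a), take $j=1$ and say $\delta\ge0$, so $(\mm_1+1)\times(\mn_1+1)=(\delta+1)\times1$; being dual to this rectangle forces a partition to be a single column, so the only four-tuple with the prescribed sizes for which part~(ii) is non-zero is $\lambda^\bullet=\mu^\bullet=(1^a)$, $\lambda^\circ=\mu^\circ=(1^b)$, which is $(\mm_1+1)\times(\mn_1+1)$-dual and hence lies in $\sI_1$ (the case $\delta<0$ is symmetric under $\bullet\leftrightarrow\circ$). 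For~(b) with $j>1$, the rectangle $((\mn_j+1)^{\mm_j+1})$ has both sides $\ge2$, so it contains the two distinct partitions $(2)$ and $(1,1)$ of size~$2$; choose $\lambda^\bullet$ inside it of size $\mr_j-2$, let $\mu^\circ$ be the $(\mm_j+1)\times(\mn_j+1)$-dual of $\lambda^\bullet$ (of size $2$), pick $\lambda^\circ\in\{(2),(1,1)\}$ with $\lambda^\circ\neq\mu^\circ$, and let $\mu^\bullet$ be the dual of $\lambda^\circ$. Part~(ii) gives $\cJ_j(R(\lambda^\bullet,\lambda^\circ),R(\mu^\bullet,\mu^\circ))\neq0$, while $\lambda^\circ$ not being dual to $\lambda^\bullet$ forces, via the criterion above, $R(\lambda^\bullet,\lambda^\circ)\notin\sI_j$, $R(\mu^\bullet,\mu^\circ)\notin\sI_j$ and $[\lambda^\bullet,\lambda^\circ]\neq[\mu^\bullet,\mu^\circ]$.

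The only genuinely delicate point is the bookkeeping in part~(ii): one must keep track of which Specht factor lands in which colour slot of $\cS$ once the dual has been taken (duality exchanges the two colours, cf.~\eqref{rigidG}), and one needs the fact that $\ell$ cannot increase under $\otimes$ — this is exactly what pins down $k=j$, and above it is extracted cleanly from~\eqref{RvsnG} rather than from a finer argument. Everything else is a routine transcription of the proof of Proposition~\ref{propDualO}.
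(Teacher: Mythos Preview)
Your proof is correct and follows the same route as the paper, which simply declares part~(iii) ``analogous to the proof of Proposition~\ref{propDualO}(iii)''; your write-up is in fact more explicit than the paper's, and the bookkeeping of colours under duality in part~(ii) is handled correctly.

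One small point on~(iii)(b): the statement fixes \emph{arbitrary} $a,b$ with $a+b=\mr_j$, while your construction is carried out only for $(a,b)=(\mr_j-2,2)$. Your idea extends verbatim to any $2\le b\le\mr_j-2$ once you observe that a rectangle with both sides $\ge 2$ contains at least two partitions of each such size (so one can pick $\lambda^\circ\neq\mu^\circ$ of size $b$ and let $\mu^\bullet,\lambda^\bullet$ be their duals). Note also that the assertion of~(iii)(b) is actually vacuous for $b\in\{0,1,\mr_j-1,\mr_j\}$, since then there is a unique partition of size $b$ in the rectangle and the only non-zero choice lands in $\Lambda_j\subset\sI_j$; the paper's terse proof does not address this either.
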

\begin{proof}
We will freely use equation~\eqref{RvsnG}. Theorem~\ref{ThmG}(iii) implies that
$\cJ_j(0,[k,l])=0$, for all~$[k,l]\in \Ob\cC_0$ with~$k<\mr_j$ or~$l<\mr_j$.
Part~(i) then follows from Lemma~\ref{LemJJ}, since
$$\cJ_{j}([a,b],[c,d])\simeq\cJ_j(\unit,[c+b,d+a])=0.$$

Lemma~\ref{LemJJ} also implies
$$\cJ_j(R(\lambda^\bullet,\lambda^\circ),R(\mu^\bullet,\mu^\circ))\simeq \cJ_j(\unit, R(\lambda^\circ,\lambda^\bullet)\otimes R(\mu^\bullet,\mu^\circ)).$$ By Theorem~\ref{ThmG}(iii), we thus find that~$\dim_{\mk}\cJ_j(R(\lambda^\bullet,\lambda^\circ),R(\mu^\bullet,\mu^\circ))$ is equal to the number of times that~$R(\nu^{(j)\bullet},\nu^{(j)\circ})$ occurs as a direct summand in~$R(\lambda^\circ,\lambda^\bullet)\otimes R(\mu^\bullet,\mu^\circ)$. Part~(ii) thus follows from Lemmata~\ref{LemSC}(i) and~\ref{LemRect}.

The proof of part (iii) is analogous to the proof of Proposition~\ref{propDualO}(iii).\end{proof}

\begin{proof}[Proof of Theorem~\ref{SFTG}]
That $\phi^{k,l}$ is always surjective follows from the first fundamental theorem. Part~(i) follows from Proposition~\ref{propDualG}(i) and Theorem~\ref{ThmG}(ii).

Fix $j\in\mN$ and arbitrary~$a,b\in\mN$ with~$a+b=\mr_j$. Consider $i_1,i_2\le\min(a,b)$ and partitions~$\lambda^\bullet\vdash a-i_1$, $\lambda^\circ\vdash b-i_1$, $\mu^\bullet\vdash a-i_2$ and $\mu^\circ\vdash b-i_2$. By Proposition~\ref{propDualG}(i) and (ii), we have $\cJ_j(R(\lambda^\bullet,\lambda^\circ),R(\mu^\bullet,\mu^\circ))=0$ unless $i_1=0=i_2$ and $[\lambda^\bullet,\lambda^\circ]$ and $[\mu^\bullet,\mu^\circ]$ are `dual' in the appropriate sense. In the latter case, the space is one dimensional. Hence, we can define, as in the proof of Theorem~\ref{SFTO}, an element $F_{a,b}\in B_{a,b}(\delta)$, which generates~$\cJ_j([a,b],[a,b])\subset B_{a,b}(\delta)$ as a two-sided ideal. That~$F_{a,b}\circ d=0$, resp. $d\circ F_{a,b}=0$, for any walled Brauer diagram~$d$ which contains a cup, resp. cap follows from Proposition~\ref{propDualG}(i). Since $F_{a,b}$ generates~$\cJ_j([a,b],[a,b])$ as a two-sided ideal, it follows that~$f_1\circ (I^{\otimes p}\otimes F_{a,b}\otimes I^{\otimes p})\circ f_2$, for diagrams $f_1\in \cC_0([a+p,b+p],[a,b])$, $f_2\in\cC_0([a,b],[a+p,b+p])$ and~$p\in\mN$, is equal to an element of the form $g_1\circ F_{a,b}\circ g_2$, for certain~$g_1,g_2\in \cC_0([a,b],[a,b])$.

Since the tensor ideals in~$\cC$ form one chain, see Theorem~\ref{ThmG}, it follows that~$\cJ_j$ is generated, as a tensor ideal, by any morphism which is in~$\cJ_j$ but not in~$\cJ_{j+1}$. In particular, $F_{a,b}$ for arbitrary~$a,b$ with~$a+b=\mr_j$ generates~$\cJ_j$ as a tensor ideal. Take arbitrary~$k+l>\mr_j$ and assume $a\le k$ and $b\le l$. The ideal $\cJ_j([k,l],[k,l])\subset B_{k,l}(\delta)$ is thus spanned by elements $d_1\circ (I^{s_1}\otimes F_{a,b}\otimes I^{s_2})\circ d_2$, for diagrams $d_1\in \cC_0([a+s_1,b+s_2],[k,l])$, $d_2\in\cC_0([k,l],[a+s_1,b+s_2])$, with~$s_1,s_2\in\mN$ such that~$s_1-s_2=k-l-a+b$. In order to prove part~(ii), we need to show that it suffices to take such elements with~$s_1= k-a$. If~$s_1>k-a$, there will be a cap in~$d_1$ and a cup in~$d_2$. Hence we can write~$d_1=d_1'\circ d_1''$, where $d_1''$ is a $([a+s_1,b+s_2],[a+s_1-1,b+s_2-1])$-diagram consisting of one cap and otherwise only propagating lines. We similarly decompose $d_2=d_2''\circ d_2'$, where $d_2''$ consists of one cup and some propagating lines. The observation at the end of the previous paragraph implies that~$d_1''\circ (I^{\otimes s_1}\otimes F_{a,b}\otimes I^{\otimes s_2})\circ d_2''=c_1\circ (I^{\otimes s_1-1}\otimes F\otimes I^{\otimes s_2-1})\circ c_2$, for some~$c_1\in \cC_0([a+s_1-1,b+s_2-1],[k,l])$, $c_2\in\cC_0([k,l],[a+s_1-1,b+s_2-1])$. Iterating this procedure proves part~(ii).

For part~(iii), we start with the case $(m,n)=(\mm_1,\mn_1)$, in the notation of~\ref{DefmnG}, so~$r_c=\mr_1$. 
Proposition~\ref{propDualG}(iii)(a)  shows the element $F=F_{a,b}$ is a primitive idempotent. Since the corresponding Young diagrams are either one column or one row, the primitive idempotent is central in $\mk (\SG_a\times \SG_b)$. The fact that~$F\circ d=0=d\circ F$, for any~$d\in B_{a,b}(\delta)$ contained in the ideal spanned by diagrams with cups and caps means that $F$ will also be central in $B_{a,b}$. Part (iii)(b) follows as in Theorem~\ref{SFTO}.\end{proof}

\subsection{The periplectic case}

\begin{thm}\label{SFTP} For an $\mF_2$-graded vector space~$V$ of dimension~$(n,n)$, we set~$r_c=\frac{1}{2}(n+1)(n+2)$ and consider the surjective algebra morphism
$$\phi^r:\;A_r\;\tto\;\End_{\Pe(V)}(V^{\otimes r}),\qquad\mbox{for~$r\in\mN$}.$$
\begin{enumerate}[(i)]
\item If~$r<r_c$, then $\phi^r$ is an isomorphism.
\item If~$r\ge r_c$, the kernel of~$\phi^r$ is generated as a two-sided ideal by a single element $F\otimes I^{r-r_c}$, with~$F\in A_{r_c}$, such that~$F\circ d=0$, resp. $d\circ F=0$, for any Brauer diagram~$d$ which contains a cup, resp. cap. Moreover, $F$ is never an idempotent if $n>0$.
\end{enumerate}
\end{thm}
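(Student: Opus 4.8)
The plan is to transport the proofs of Theorems~\ref{SFTO} and~\ref{SFTG} to the periplectic situation. Identify $A_r=\cC_0(r,r)=\End_{\cC}(r)$ for $\cC=\RP$; then $\phi^r$ is the restriction to $\End_{\cC}(r)$ of the full functor $\Fmod_n$, so $\ker\phi^r=\cJ_{n+1}(r,r)$, where $\cJ_{n+1}=\ker\Fmod_n$ is the tensor ideal from Theorem~\ref{ThmP}(ii). The unique element of $\Lambda_{n+1}$ (Lemma~\ref{LemLambdanuP}) is the staircase $(n+1,n,\dots,1)$, of size $r_c=\frac{1}{2}(n+1)(n+2)$, and $\nu^{(n+1)}=((n+2)^{n+1})$ of Lemma~\ref{Lem1P} has size $2r_c$. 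The first real step is a periplectic analogue of Propositions~\ref{propDualO} and~\ref{propDualG}: using Lemma~\ref{LemJJ}, the rigidity $R(\lambda)^\vee\simeq R(\lambda^t)$ of~\ref{SecDualP}, Theorem~\ref{ThmP}(iii), Lemma~\ref{LemSC}(i) and Lemma~\ref{LemRect}(ii), one shows $\cJ_{n+1}(R(\lambda),R(\mu))=0$ whenever $|\lambda|+|\mu|<2r_c$, while for $\lambda,\mu\vdash r_c$ the space $\cJ_{n+1}(R(\lambda),R(\mu))$ is one-dimensional if $\lambda^t$ and $\mu$ are $(n+1)\times(n+2)$-dual and is $0$ otherwise (recall also that in $\RP$ every Hom-space between indecomposables is one-dimensional, by Lemma~\ref{LemLambdanuP}). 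Part~(i) is then immediate: for $r<r_c$ every component of $\cJ_{n+1}(r,r)$ vanishes, so $\phi^r$ is injective, hence an isomorphism.

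For part~(ii), I would mimic the proof of Theorem~\ref{SFTO}: writing $e_\lambda\in A_{r_c}$ for the primitive idempotents, set $F=\sum_{\lambda}f_\lambda\in A_{r_c}$, the sum over $\lambda\vdash r_c$ with $\lambda\subseteq((n+1)^{n+2})$, where $f_\lambda\in e_{\lambda'}A_{r_c}e_\lambda$ spans the one-dimensional space $\cJ_{n+1}(R(\lambda),R(\lambda'))$ and $\lambda'$ denotes the $(n+1)\times(n+2)$-dual of $\lambda^t$. The vanishing $\cJ_{n+1}(a,b)=0$ for $a+b<2r_c$ forces $F\circ d=0$ (resp.\ $d\circ F=0$) for every Brauer diagram $d\in A_{r_c}$ containing a cup (resp.\ cap), and shows that $F$ generates $\cJ_{n+1}(r_c,r_c)$ as a two-sided ideal. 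Since $\TId(\cC)$ is a chain (Theorem~\ref{ThmP}), $F$ moreover generates $\cJ_{n+1}$ as a tensor ideal, and the passage to arbitrary $r>r_c$ is the diagrammatic absorption argument of the proof of Theorem~\ref{SFTO}(ii): a diagram factoring through more than $r-r_c$ cup--cap pairs can be rewritten, using $F\circ(\text{diagram with a cup})=0$, so as to delete one such pair, iterating until $\cJ_{n+1}(r,r)$ is generated by $F\otimes I^{\otimes(r-r_c)}$. One must here import the periplectic diagram calculus of~\cite{Kujawa,PB1} and keep track of its signs.

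It remains to establish that $F$ is not an idempotent for $n\ge1$. Since $\Lambda_{n+1}$ consists only of $(n+1,n,\dots,1)$ while there is more than one partition of $r_c$ contained in $((n+1)^{n+2})$ as soon as $n\ge1$, one can pick such a $\lambda\neq(n+1,n,\dots,1)$; then its associated $\lambda'$ satisfies $\lambda'\neq\lambda$ (else $\lambda\in\Lambda_{n+1}$) and $R(\lambda)\notin\sI_{n+1}$ (else $\lambda\supseteq(n+1,n,\dots,1)$ together with $|\lambda|=r_c$ forces $\lambda=(n+1,n,\dots,1)$). Exactly as in the proof of Theorem~\ref{SFTO}(iii): any element generating $\ker\phi^{r_c}=\cJ_{n+1}(r_c,r_c)$ as a two-sided ideal must have nonzero component $e_{\lambda'}Fe_\lambda=f_\lambda$, and the dimension count gives $e_{\lambda'}F=Fe_\lambda=f_\lambda$; since $\lambda\neq\lambda'$ we get $f_\lambda^2=0$, whence $e_{\lambda'}F^2e_\lambda=0\neq f_\lambda=e_{\lambda'}Fe_\lambda$, so $F\neq F^2$.

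I expect the main obstacle to lie in the last two steps: carrying out the two-sided-ideal reduction for $r>r_c$ with the correct periplectic signs, and --- as already in the orthosymplectic case --- the point that \emph{every} generating element, not just the explicit $F$, must carry the relevant off-diagonal component $R(\lambda)\to R(\lambda')$ with $\lambda\neq\lambda'$. This last fact is exactly what makes $F$ never an idempotent here, in contrast with $\OSp(m|2n)$ when $m\le1$ or $n=0$.
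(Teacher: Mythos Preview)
Your proposal is correct and follows precisely the approach of the paper, which proves Theorem~\ref{SFTP} by the single line ``Mutatis mutandis the proof of Theorem~\ref{SFTO} or~\ref{SFTG}'' (with Proposition~\ref{propDualP} likewise proved ``Mutatis mutandis Proposition~\ref{propDualO} or~\ref{propDualG}''). You have in fact spelled out the transport in more detail than the paper does: the identification $\ker\phi^r=\cJ_{n+1}(r,r)$, the use of $R(\lambda)^\vee\simeq R(\lambda^t)$ to obtain the $(n+1)\times(n+2)$-duality criterion, the construction of $F$, the absorption argument, and the non-idempotency via an off-diagonal component $f_\lambda:R(\lambda)\to R(\lambda')$ with $\lambda\neq\lambda'$ for $n\ge1$ are all exactly what the paper intends.
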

\begin{proof}
Mutatis mutandis the proof of Theorem~\ref{SFTO} or~\ref{SFTG}.
\end{proof}

\begin{rem}
Theorem~\ref{SFTP}(i) for the special case $r\le n$ is in~\cite[Theorems~4.1 and~4.5]{Moon}.
\end{rem}

\begin{prop}\label{propDualP}
Consider $\lambda,\mu\in\Par$ and~$j\in\mN$.
\begin{enumerate}[(i)]
\item For~$a,b\in \mN=\Ob\cC_0$ with~$a+b<j(j+1)$, we have $\cJ_j(a,b)=0$. Hence we have 
$$\cJ_j(R(\lambda),R(\mu))=0,\quad\mbox{if $|\lambda|+|\mu|<j(j+1)$.}$$
\item If~$|\lambda|+|\mu|=j(j+1)$, we have
$$\dim_{\mk}\cJ_j(R(\lambda),R(\mu))\;=\;\begin{cases}1 &\mbox{if $\lambda^t$ and~$\mu$ are $j\times(j+1)$-dual,}\\
0&\mbox{otherwise}.
\end{cases}$$
\end{enumerate}
\end{prop}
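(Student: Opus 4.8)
The plan is to run the arguments of Propositions~\ref{propDualO} and~\ref{propDualG} verbatim, with the periplectic input already in place: $\unit=R(\varnothing)$; the duals are $R(\lambda)^\vee\simeq R(\lambda^t)$ for $\lambda\in\Par$ and $i^\vee=i$ for $i\in\mN=\Ob\cC_0$ (see~\ref{SecDualP}); $\sB=\{R(\nu^{(k)})\mid k\in\mN\}$ with $\nu^{(k)}=((k+1)^k)$, so $|\nu^{(k)}|=k(k+1)$ (Lemma~\ref{Lem1P}); and Theorem~\ref{ThmP}(iii), which identifies the submodule $\Mmod_j=\cJ_j(\unit,-)$ of $\Pmod_{\unit}$ by $\Mmod_j(R(\nu^{(k)}))=\cC(\unit,R(\nu^{(k)}))\cong\mk$ for $k\ge j$ and $\Mmod_j(R(\nu^{(k)}))=0$ for $k<j$. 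The workhorse throughout is Lemma~\ref{LemJJ}, giving $\cJ_j(X,Y)\cong\cJ_j(\unit,X^\vee\otimes Y)$ as $\mk$-modules; the super signs in $\iota_{XY}$ play no role in the dimension counts.

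For part~(i): since $\Mmod_j$ is a subfunctor of $\cC(\unit,-)$ and $\cC(\unit,R(\kappa))=0$ unless $R(\kappa)=R(\nu^{(k)})$ for some $k$ (Lemma~\ref{Lem1P}), a nonzero element of $\cJ_j(\unit,i)$ for $i\in\mN$ would need some $R(\nu^{(k)})$ with $k\ge j$ to be a direct summand of $i$, and by~\eqref{RvsnP} this requires $i\ge|\nu^{(k)}|=k(k+1)\ge j(j+1)$; hence $\cJ_j(\unit,i)=0$ for $i<j(j+1)$. Lemma~\ref{LemJJ} together with $a^\vee=a$ then gives $\cJ_j(a,b)\cong\cJ_j(\unit,a+b)=0$ when $a+b<j(j+1)$, and since $R(\lambda)\inplus|\lambda|$ and $R(\mu)\inplus|\mu|$ by~\eqref{RvsnP}, composing with the split (co)projections $R(\lambda)\to|\lambda|\to R(\lambda)$ embeds $\cJ_j(R(\lambda),R(\mu))$ into $\cJ_j(|\lambda|,|\mu|)$, which vanishes once $|\lambda|+|\mu|<j(j+1)$.

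For part~(ii), assume $|\lambda|+|\mu|=j(j+1)$. Lemma~\ref{LemJJ} gives $\cJ_j(R(\lambda),R(\mu))\cong\cJ_j(\unit,R(\lambda^t)\otimes R(\mu))$, and by the description of $\Mmod_j$ this space has dimension the total multiplicity of the $R(\nu^{(k)})$ with $k\ge j$ as direct summands of $R(\lambda^t)\otimes R(\mu)$. The key reduction is that only $k=j$ can occur: any indecomposable summand of $R(\lambda^t)\otimes R(\mu)$ has $\ell$-value (with $\ell$ as in~\ref{defSCP}) at most $|\lambda^t|+|\mu|=j(j+1)$, because $R(\lambda^t)\otimes R(\mu)\inplus\Tmod(\lambda^t\otimes\mu)$, every constituent $\kappa$ of $\lambda^t\otimes\mu$ in $\cS$ has $|\kappa|=j(j+1)$, and every summand of $\Tmod(\kappa)$ has $\ell$-value $\le\ell(\kappa)$ by~\eqref{precle} and~\ref{tuple}(a); since $|\nu^{(k)}|=k(k+1)$ this forces $k\le j$. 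Thus $\dim_{\mk}\cJ_j(R(\lambda),R(\mu))$ is the multiplicity of $R(\nu^{(j)})$ in $R(\lambda^t)\otimes R(\mu)$; as $\ell(\nu^{(j)})=j(j+1)=\ell(\lambda^t)+\ell(\mu)$, Lemma~\ref{LemSC}(i) rewrites this as the multiplicity of $\nu^{(j)}=((j+1)^j)$ in $\lambda^t\otimes\mu$, i.e.\ the Littlewood--Richardson coefficient $c^{((j+1)^j)}_{\lambda^t\mu}$. Since $j(j+1)$ is the number of boxes of the $j\times(j+1)$ rectangle, Lemma~\ref{LemRect}(ii) evaluates this to $1$ exactly when $\lambda^t$ and $\mu$ are $j\times(j+1)$-dual and to $0$ otherwise, which is the assertion.

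The only step that is not bookkeeping is the $\ell$-degree bound forcing $k=j$ in part~(ii), which is precisely where the $l$-controlled structure packaged in~\ref{tuple} is exploited; otherwise the proof is literally the orthogonal/general-linear argument, the single structural change being $R(\lambda)^\vee\simeq R(\lambda^t)$ instead of $R(\lambda)^\vee\simeq R(\lambda)$, which is why the rectangle duality in~(ii) relates $\lambda^t$ to $\mu$ rather than $\lambda$ to $\mu$.
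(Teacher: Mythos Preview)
Your proof is correct and is exactly the ``mutatis mutandis'' adaptation the paper indicates: you run the argument of Proposition~\ref{propDualO} with the periplectic data from~\ref{SecDualP}, Lemma~\ref{Lem1P} and Theorem~\ref{ThmP}(iii), the only change being $R(\lambda)^\vee\simeq R(\lambda^t)$, which produces the $\lambda^t$--$\mu$ duality. You even make explicit the $\ell$-degree bound forcing $k=j$ in part~(ii), which the paper leaves tacit in its appeal to Lemma~\ref{LemSC}(i).
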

\begin{proof}
Mutatis mutandis Proposition~\ref{propDualO} or~\ref{propDualG}.
\end{proof}

\section{Further applications}\label{SecFurther}

\subsection{Invariant theory for the symmetric group}
For Deligne's category $\RS$ as in Section~\ref{SecRS}, we have the following theorem. Part~(i) can also be obtained from results of Comes - Ostrik in~\cite{ComesOst}. Part (ii) is originally due to Jones in~\cite{Jones2}. Part (iii) provides an alternative proof of a special case of a recent result of Benkart - Halverson in~\cite[Theorem~5.6]{BH}. The relevant idempotent has even been explicitly constructed diagrammatically {\it loc. cit}.
\begin{thm}Set $\cC:=\RS$, with~$t\in\mZ_{>0}$.
\begin{enumerate}[(i)]
\item The unique proper tensor ideal $\cN$ in $\cC$ satisfies
$$\cN(a,b)=0,\qquad\mbox{for all~$a,b\in\mN=\Ob\cC_0$, with~$a+b\le t$.}$$
\item With $M_t$ the permutation module of~$\SG_t$ and $P_k(t)=\cC_0(k,k)$ the partition algebra, the surjective algebra morphism obtained from the functor $\Fmod$ in Corollary~\ref{CorSt}
$$\phi^k:\;P_k(t)\;\tto\;\End_{\SG_t}(M_t^{\otimes k}),\quad \mbox{for $k\in\mN,$}$$
is an isomorphism when $2k\le t$.
\item Assume that~$t$ is odd. There exists a central primitive idempotent $E\in P_{\frac{t+1}{2}}(t)$, such that the kernel of~$\phi^k$ is generated as a two-sided ideal by $E\otimes I^{\otimes(k-\frac{t+1}{2})}$, for~$k>t/2$.
\end{enumerate}
\end{thm}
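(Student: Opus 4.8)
The plan is to deduce all three parts from the structural results already established for $\RS$, namely Proposition~\ref{PropCO} (the unique proper tensor ideal $\cN$ is the ideal of negligible morphisms), Corollary~\ref{CorSt} (the functor $\Fmod:\RS\to\Rep_\mk \SG_t$ is, up to isomorphism, the only dense full monoidal functor from $\RS$ apart from equivalences, and $\cN=\ker\Fmod$), together with the Morita equivalence of~\cite[Theorem~8.5.1]{Borelic} between the partition algebra $P_k(t)=\cC_0(k,k)$ and a condensed version in which the cell module $W_k(\varnothing)=\cC_0(0,k)$ plays the role of the principal module. The key input on the representation-theoretic side is~\cite[Lemma~5.11]{King}: $W_k(\varnothing)$ has length two as soon as $k>t$, with proper submodule $L_k((t+1))$, and is simple for $k\le t$.

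First I would prove part~(i). By Theorem~\ref{Thm1}, $\cN=\Psi^{-1}(\Mmod)$ where $\Mmod$ is the unique proper submodule of $\Pmod_\unit=\cT(0,-)$; under the Morita equivalence this corresponds (via $\oplus_{i\le k}\cC_0(0,i)$) to the chain of proper submodules $M_k\subsetneq W_k(\varnothing)$. Since $W_k(\varnothing)$ is simple for $k\le t$, the submodule $M_k$ must satisfy $M_k(i)=W_i(\varnothing)=0$ for those $i\le t$ with $W_i(\varnothing)$ already accounted for — more precisely, $\Mmod(i)=0$ for all $i\le t$. By Lemma~\ref{LemJJ}, $\cN(a,b)\cong\cN(\unit,a\otimes b)=\cN(0,a+b)=\Mmod(a+b)$, which vanishes when $a+b\le t$. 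This gives part~(i). Then part~(ii) is immediate: $\ker\phi^k=\cN(k,k)$ since $\Fmod$ induces $\phi^k$ and $\cN=\ker\Fmod$; and $\cN(k,k)\cong\Mmod(2k)=0$ whenever $2k\le t$, so $\phi^k$ is injective, hence an isomorphism by surjectivity from the first fundamental theorem.

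For part~(iii), assume $t$ odd and set $r_c=\tfrac{t+1}{2}$, so $2r_c=t+1$. By Lemma~\ref{LemJJ} again, for $|\lambda|,|\mu|\le r_c$ one has $\cN(R(\lambda),R(\mu))\cong \cN(\unit, R(\lambda)\otimes R(\mu))$, which is nonzero only when $R(\bnu^{(1)})$ (the unique indecomposable in $\sB\setminus\{\unit\}$, corresponding combinatorially to the module $L_k((t+1))$) appears in $R(\lambda)\otimes R(\mu)$; and this forces $|\lambda|=|\mu|=r_c$ and the pair to be ``dual'' in the relevant $1\times (t+1)$ sense, which by Lemma~\ref{LemRect}(ii) pins down $\lambda=\mu=((t+1))$ or its conjugate — in any case a single one-row/one-column shape, so the corresponding idempotent in $P_{r_c}(t)=\cC_0(r_c,r_c)$ is primitive and central in the relevant symmetric-group subalgebra. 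Exactly as in the proof of Theorem~\ref{SFTG}(iii), the vanishing $\cN(a,b)=0$ for $a+b\le t$ forces the diagonal term $E\in\cC_0(r_c,r_c)$ to be a genuine idempotent, killed by all diagrams containing a cup or cap, hence central in $P_{r_c}(t)$; and since $\TId(\RS)$ is a two-term chain, $E$ generates $\cN$ as a tensor ideal. Running the ``cap/cup removal'' argument from the proof of Theorem~\ref{SFTO}(ii) verbatim then shows $\cN(k,k)=\ker\phi^k$ is generated as a two-sided ideal in $P_k(t)$ by $E\otimes I^{\otimes(k-r_c)}$ for $k>t/2$.

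I expect the main obstacle to be the bookkeeping translating between the three models — the Deligne category $\RS$, the partition category $\cC_0$, and the condensed/Morita picture over $P_k(t)$ — and in particular verifying that the unique indecomposable object $R(\bnu^{(1)})\in\sB$ really corresponds to $L_k((t+1))$ with the ``dual partition'' bookkeeping of Lemma~\ref{LemRect} identifying it with a single one-row diagram $((t+1))$; the idempotency and centrality of $E$, and the cap-removal reduction, are then essentially copies of arguments already carried out for the orthosymplectic and general linear cases, so they should go through \emph{mutatis mutandis}.
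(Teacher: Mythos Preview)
Your approach to parts~(i) and~(ii) is correct and matches the paper's: both reduce to $\Mmod(k)=0$ for $k\le t$ via Lemma~\ref{LemJJ} and the structure of $W_k(\varnothing)$ established in the proof of Proposition~\ref{PropCO}.

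For part~(iii) there are two issues. First, a slip in the bookkeeping: the self-dual partition you seek is $\lambda^0=(\tfrac{t+1}{2})$, the one-row partition of size $r_c$, \emph{not} $(t+1)$ (which has size $t+1$ and is the label $\nu^{(1)}$ of the object in $\sB$). Lemma~\ref{LemRect}(ii) with $a=1$, $b=t+1$ indeed forces $\lambda=\mu=(\tfrac{t+1}{2})$.

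Second, and more substantively, your argument that $E$ is a genuine idempotent is incomplete. In Theorem~\ref{SFTG}(iii) the conclusion ``$F$ is an idempotent'' relies on Proposition~\ref{propDualG}(iii)(a), which in turn uses Theorem~\ref{ThmG} to know that $R(\lambda^0)\in\sI_1=\Ob(\cJ_1)$. The vanishing $\cN(a,b)=0$ for $a+b\le t$ only shows that $E$ is annihilated on both sides by diagrams of propagating number $<r_c$ (the partition-algebra analogue of your ``cups and caps''), hence $E^2=cE$ for some $c\in\mk$; it does \emph{not} rule out $c=0$. To conclude $c\ne 0$ you need $R((\tfrac{t+1}{2}))\in\Ob(\cN)$, i.e.\ that $1_{R(\lambda^0)}\in\cN$. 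The paper obtains this by invoking \cite[Section~3.5]{ComesOst} (membership in the thick ideal) together with \cite[Theorem~2.6]{ComesOst} (one-dimensionality of $\cC(R(\lambda^0),R(\lambda^0))$), after which $E=e_{\lambda^0}$ and everything follows. You should either cite these facts or supply an independent argument that $\Fmod_{}(R((\tfrac{t+1}{2})))=\zero$ in $\Rep_\mk\SG_t$.

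A minor terminological point: partition diagrams are set partitions, not matchings, so ``cups and caps'' is not the right language; the correct filtration is by propagating number, with the top quotient $P_k(t)/J_{k-1}\cong\mk\SG_k$.
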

\begin{proof}
Part (i) follows immediately from the proof of Proposition~\ref{PropCO}, using Lemma~\ref{LemJJ}. Part~(ii) is the special case $a=b=k$ of part (i).

The indecomposable objects in $\cC$ can be labelled as $R(\lambda)$, with~$\lambda\in\Par$. Furthermore, $R(\lambda)\inplus k$, with~$k\in\mN=\Ob\cC_0$, if and only if $|\lambda|\le k$, see~\cite[2.2.3]{ComesOst}. It then follows easily that we have a monoidal functor $\Tmod:\cS\to\cC$, with $\cS$ the category in~\ref{defSCO} and $\Tmod(i)=i$ for $i\in\mN=\Ob\cS_0=\Ob\cC_0$. Furthermore, properties (a) and (b) in~\ref{tuple} are satisfied, for some $\phi:\Par\to\Par$. It follows that $\cN(R(\lambda),R(\mu))=0$ unless $|\lambda|+|\mu|\ge t+1$. 

Now assume that $t$ is odd. If we have some $\lambda\vdash \frac{t+1}{2}$ for which $\cN(R(\lambda),R(\lambda))\not=0$, then the Specht module~$(t+1)$ must appear in the decomposition of $\phi(\lambda)\otimes \lambda$. By Lemma~\ref{LemRect}(ii), this forces $\lambda=\frac{t+1}{2}=\phi(\frac{t+1}{2})$. By \cite[Section~3.5]{ComesOst}, $R((\frac{t+1}{2}))$ is contained in the unique proper tensor Ob-ideal, which thus implies that indeed $(\frac{t+1}{2})=\phi(\frac{t+1}{2})$. Set $\lambda^0:=(\frac{t+1}{2})$. By \cite[Theorem~2.6]{ComesOst}, we have that $\cC(R(\lambda^0),R(\lambda^0))$ is one dimensional. 
 The identity morphism of~$R(\lambda^0)$ then yields the idempotent in part (iii), as in the proof of Theorem~\ref{SFTG}(iii).
\end{proof}

\subsection{Quantum Deligne categories}
In this section, we take $\mk=\mC$. It is expected that quantised versions of Deligne's categories are generically equivalent to Deligne categories, as $\mC$-linear categories, but not necessarily as monoidal categories. These thus provide examples where Corollary~\ref{Cor4Bru} might be applied. The equivalence between $\underline{\Rep}U_q(\mathfrak{gl}_\delta)$, the quasi-abelian envelope of the ``Hecke category'' of~\cite[\S 5.2]{Turaev}, and $\RG$ was recently proved by Brundan in~\cite{Brundan}. We find the following consequence. The indecomposable objects in $\underline{\Rep}U_q(\mathfrak{gl}_\delta)$ can again be labelled by bipartitions as $R(\lambda^\bullet,\lambda^\circ)$.

\begin{thm}\label{ThmGq}
Take  $q\in\mC$ not a root of unity and $\delta\in\mZ$.
All tensor ideals in~$\underline{\Rep}U_q(\mathfrak{gl}_\delta)$ are in the set~$\{\cJ_i\,|\,i\in\mN\}$ and form one chain
$$\underline{\Rep}U_q(\mathfrak{gl}_\delta)\;=\;\cJ_0\;\supsetneq\;\cJ_1\;\supsetneq\cJ_2\;\supsetneq\; \cJ_3\;\supsetneq\; \cdots.$$
For~$j\in\mZ_{>0}$ and objects $X,Y$, the $\mC$-module~$\cJ_j(X,Y)$ consists of all morphisms which factor as $X\to Z\to Y$, with~$Z$ a direct sum of~$R(\lambda^\bullet,\lambda^\circ)$ with $[\lambda^\bullet,\lambda^\circ]$ as in~\eqref{eqhookG}.
\end{thm}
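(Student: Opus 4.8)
The plan is to transport the classification of tensor ideals in $\RG$ (Theorem~\ref{ThmG}) across the equivalence provided by Brundan in~\cite{Brundan} using Corollary~\ref{Cor4Bru}. First I would recall that Brundan's result gives an equivalence of $\mC$-linear (preadditive) categories $\Fmod:\RG\stackrel{\sim}{\to}\underline{\Rep}U_q(\mathfrak{gl}_\delta)$ which sends $\unit$ to $\unit$ and, after matching up the combinatorial labels of indecomposables, sends $R(\lambda^\bullet,\lambda^\circ)$ to the object denoted by the same bipartition. Since $\underline{\Rep}U_q(\mathfrak{gl}_\delta)$ satisfies conditions (I)-(V) — in particular it is right rigid — Corollary~\ref{Cor4Bru} yields an isomorphism of partially ordered sets $\TId(\underline{\Rep}U_q(\mathfrak{gl}_\delta))\stackrel{\sim}{\to}\TId(\RG)$. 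This already forces $\TId(\underline{\Rep}U_q(\mathfrak{gl}_\delta))$ to be the chain $\cJ_0\supsetneq\cJ_1\supsetneq\cdots$ of Theorem~\ref{ThmG}, since an isomorphism of posets transports a chain to a chain.

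The remaining work is to check that the isomorphism $\cF$ of Corollary~\ref{Cor4Bru} carries the explicit description of $\cJ_j$ in $\RG$ (the "factors through a sum of $R(\lambda^\bullet,\lambda^\circ)$ with condition~\eqref{eqhookG}" description of Theorem~\ref{ThmG}(i)) to the analogous explicit description in $\underline{\Rep}U_q(\mathfrak{gl}_\delta)$. The cleanest route is to observe that the "factors through $Z$ with $Z\in\sI_{j}$" characterization of $\cJ_j^{\min}$ in Theorem~\ref{ThmFibre}(ii) is manifestly functorial: an equivalence of preadditive categories sends $\cJ^{\min}_{\sI}$ to $\cJ^{\min}_{\Fmod(\sI)}$, because factorizations $X\to Z\to Y$ are transported to factorizations $\Fmod(X)\to\Fmod(Z)\to\Fmod(Y)$ and back. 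Since by Theorem~\ref{ThmG} every $\cJ_j$ in $\RG$ equals its own $\cJ^{\min}$ over $\sI_j=\Ob(\cJ_j)$, and $\Fmod$ matches the indecomposable objects by their bipartition labels, the tensor ideal $\cF^{-1}(\cJ_j)$ in $\underline{\Rep}U_q(\mathfrak{gl}_\delta)$ consists exactly of morphisms $X\to Y$ factoring through a direct sum of $R(\lambda^\bullet,\lambda^\circ)$ satisfying~\eqref{eqhookG}, which is the asserted description.

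Concretely I would write: set $\cC=\underline{\Rep}U_q(\mathfrak{gl}_\delta)$; by~\cite{Brundan} there is an equivalence of $\mC$-linear categories $\Fmod:\RG\to\cC$ fixing $\unit$ and sending $R(\lambda^\bullet,\lambda^\circ)$ to the object with label $[\lambda^\bullet,\lambda^\circ]$; by Corollary~\ref{Cor4Bru} it induces a poset isomorphism $\TId(\cC)\stackrel{\sim}{\to}\TId(\RG)$; Theorem~\ref{ThmG} describes the target as the chain $\{\cJ_j\}_{j\in\mN}$, so the source is a chain $\{\cJ_j\}_{j\in\mN}$ too; and, using Theorems~\ref{ThmFibre}(ii) and~\ref{ThmG}(i) together with the fact that an equivalence of additive categories preserves the "factors through an object of $\add(\cdot)$" relation, $\cJ_j(X,Y)$ in $\cC$ consists of the morphisms factoring through $\bigoplus R(\lambda^\bullet,\lambda^\circ)$ with~\eqref{eqhookG}.

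The main obstacle is purely bookkeeping: verifying that the labelling of indecomposables in $\underline{\Rep}U_q(\mathfrak{gl}_\delta)$ used in~\cite{Brundan} is identified, under Brundan's equivalence, with the labelling $R(\lambda^\bullet,\lambda^\circ)$ of indecomposables in $\RG$ coming from~\cite{ComesWil}, so that condition~\eqref{eqhookG} is expressed in the correct combinatorial variables. Provided one cites Brundan's equivalence with this label-matching built in (which it is, since both categories have indecomposables parametrized by bipartitions and the equivalence is the identity on the "diagrammatic" generators), there is no genuine mathematical difficulty — one does not even need to re-examine rigidity or the braiding of $\underline{\Rep}U_q(\mathfrak{gl}_\delta)$ beyond noting conditions (I)-(V) hold, since all the heavy lifting has been done in Sections~\ref{SecPrincipal}, \ref{SecFibres}, \ref{SecClass}. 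Note that, unlike the $\delta\in\mZ$ case of $\RG$ where we can additionally identify each $\cJ_j$ as the kernel of a functor to a representation category of a supergroup, here we only get the classification as an abstract chain with the factorization description, which is why the statement says "are in the set" rather than asserting these functors exist on the quantum side.
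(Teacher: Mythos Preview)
Your argument has a genuine gap at the crucial step. You correctly observe that an additive equivalence $\Fmod$ transports the \emph{ideal} of morphisms factoring through $\sI_j$ to the ideal of morphisms factoring through $\Fmod(\sI_j)$. But the isomorphism $\cF^{-1}$ of Corollary~\ref{Cor4Bru} is \emph{not} this obvious transport: it is constructed by passing through $\Psi$ and the submodule lattice of $\Pmod_\unit$. The two agree on $\cJ(\unit,-)$, but a priori not on general $\cJ(X,Y)$, because $\Fmod$ is not monoidal. Concretely, the set of morphisms in $\cD$ factoring through $\Fmod(\sI_j)$ is a \emph{tensor} ideal only if $\Fmod(\sI_j)$ is a thick tensor $\Ob$-ideal in $\cD$; only then can you invoke Theorem~\ref{ThmFibre} in $\cD$ to identify it with $\Psi_{\cD}^{-1}(\Tr_{\Fmod(\sI_j)}\Pmod_\unit)=\cF^{-1}(\cJ_j)$. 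Label-matching of indecomposables alone does not give this: you need Brundan's stronger statement that the equivalence induces an isomorphism of split Grothendieck \emph{rings}, so that thick ideals correspond. You dismiss this as ``bookkeeping'', but it is a substantive input that the paper cites explicitly.

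Once you add that ingredient, your argument is actually cleaner than the paper's. The paper establishes separately that $\Ide([\cD]_\oplus)$ is the chain $\{\sI_j\}$ (via the ring isomorphism) and that $\TId(\cD)$ is a chain $\{\cJ_j\}$ (via Corollary~\ref{Cor4Bru}), and then matches $\cJ_j$ with $\cJ^{\min}_{\sI_j}$ by an inductive argument using the values $\cJ_i(\unit,R(\nu^{(j)\bullet},\nu^{(j)\circ}))$. Your direct observation that $\cF^{-1}$ carries $\cJ^{\min}_{\sI_j}$ to $\cJ^{\min}_{\Fmod(\sI_j)}$ (once $\Fmod(\sI_j)$ is known to be a tensor $\Ob$-ideal) bypasses that induction entirely.
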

\begin{proof}
We set $\cD=\underline{\Rep}U_q(\mathfrak{gl}_\delta)$ and we will use Theorem~\ref{ThmG} for~$\cC=\RG$ freely. The equivalence $\cC\stackrel{\sim}{\to}\cD $ is given in~\cite[Corollary~1.12]{Brundan}. As explained in the paragraph following \cite[Corollary~1.12]{Brundan}, this equivalence induces an isomorphism of split Grothendieck {\em rings}, mapping $[R(\lambda^\bullet,\lambda^\circ)]$ to $[R(\lambda^\bullet,\lambda^\circ)]$. In particular, the thick ideals in the split Grothendieck rings for both categories are identical. Hence, the poset $\TId([\cD]_{\oplus})$ is one chain $\sI_0\supsetneq \sI_1\supsetneq \sI_2\supsetneq\cdots$, with~$R(\lambda^\bullet,\lambda^\circ)\in\sI_j$ if and only if the condition in~\eqref{eqhookG} on $[\lambda^\bullet,\lambda^\circ]$ is satisfied. By Corollary~\ref{Cor4Bru}, we also find that the tensor ideals in 
$\cD$ form one chain $\cJ_0\supsetneq \cJ_1\supsetneq \cJ_2\supsetneq\cdots$.

Recall the bipartitions $[\nu^{(j)\bullet},\nu^{(j)\circ}]$ from Lemma~\ref{Lem1G}. By Theorem~\ref{ThmSC}(iii) and Corollary~\ref{Cor4Bru}, we have 
$$\cJ_i(\unit,R(\nu^{(j)\bullet},\nu^{(j)\circ}))\;=\;\begin{cases}\cD(\unit,R(\nu^{(j)\bullet},\nu^{(j)\circ}))&\mbox{if $i\le j$}\\
0&\mbox{otherwise.}\end{cases}$$ Since we have $R(\nu^{(j)\bullet},\nu^{(j)\circ})\in\sI_j$, it follows that~$\cJ^{\min}_{\sI_j}= \cJ_{k}$, for some~$k\le j$. By iteration on $j$, we therefore find that~$\cJ^{\min}_{\sI_j}= \cJ_j$, which concludes the proof.
\end{proof}

When $q\in\mC$ is a root of unity (other than $\pm1$) it follows from Remark~\ref{Rem3}, that $\Ob:\TId(\underline{\Rep}U_q(\mathfrak{gl}_\delta))\tto \Ide([\underline{\Rep}U_q(\mathfrak{gl}_\delta)]_{\oplus})$ will generally not be a bijection.

\subsection{Supergroups of type $Q$}

The class of classical algebraic affine supergroup schemes contains, along with some exceptional supergroups, also the supergroups of type $Q$, see e.g.~\cite[Section~2]{Vera}. In~\cite{ComesKuj}, the {\em oriented Brauer-Clifford category}  $\mathcal{OBC}$ was introduced. This is a strict monoidal supercategory with a full monoidal superfunctor
$$\mathcal{OBC}\;\to \;\Rep_{\mk} \mathrm{Q}(n),\quad\mbox{for all~$n\in\mN$,}$$
 see \cite[Section~4.2]{ComesKuj} and~\cite{Sergeev}. It is natural to define $\RQ$ as $\mathcal{OBC}^{\oplus\sharp}$.

\begin{conj}
The tensor ideals in~$\mathcal{OBC}$ are precisely the kernels of the superfunctors~$\mathcal{OBC}\to \Rep_{\mk} \mathrm{Q}(n).$ These are in natural bijection with the thick tensor~$\Ob$-ideals in~$\RQ$.
\end{conj}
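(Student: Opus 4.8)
The plan is to repeat, \emph{mutatis mutandis}, the argument carried out for $\RO$, $\RG$ and $\RP$ in Sections~\ref{SecBoring}--\ref{SecClass}: construct an auxiliary $l$-controlled monoidal supercategory $\cS$ together with a monoidal superfunctor $\Tmod:\cS\to\RQ$, verify the axioms of~\ref{tuple}, invoke Theorem~\ref{ThmSC}, and finally identify the resulting ideals with kernels as in the proof of Theorem~\ref{ThmO}(ii). Since $\mathcal{OBC}$ is the \emph{oriented} Brauer--Clifford category, the natural candidate for $\cS$ is two-sided: take the pseudo-abelian envelope of the category with objects $\mN\times\mN$ whose endomorphism superalgebra of $[i,j]$ is the super tensor product $\mathrm{Ser}_i\otimes\mathrm{Ser}_j$, where $\mathrm{Ser}_n=\mk\SG_n\ltimes\mathrm{Cl}_n$ is the Sergeev superalgebra; equivalently $\cS\simeq\bigoplus_{i,j\in\mN}(\mathrm{Ser}_i\otimes\mathrm{Ser}_j)\mbox{-smod}$ with $\otimes$ the induction product. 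Then $\inde\cS$ is the set of pairs of strict partitions, $\ell([\lambda^\bullet,\lambda^\circ])=|\lambda^\bullet|+|\lambda^\circ|$ makes $\cS$ $l$-controlled, and $\Tmod$ identifies $\cS$ with the subcategory of $\mathcal{OBC}$ spanned by Brauer--Clifford diagrams whose underlying Brauer diagram has only propagating strands. As in~\ref{SecDualP}, the duality $R(\lambda^\bullet,\lambda^\circ)^\vee$ is computed from the reflection-plus-Clifford anti-automorphism of the (walled) Brauer--Clifford algebra, which provides the map $\phi$ required in~\ref{tuple}.

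The arithmetic input --- the analogue of Section~\ref{SecBoring} --- is twofold. First, one must determine $\sB=\{X\in\inde\RQ\mid\RQ(\unit,X)\neq0\}$ and the sets $\Lambda_j$: as in Lemma~\ref{Lem1O}, $\dim_\mk\RQ(\unit,R(\lambda^\bullet,\lambda^\circ))$ equals a multiplicity $[W_{r,r}(\varnothing,\varnothing):L_{r,r}(\lambda^\bullet,\lambda^\circ)]$ of a cell module over the walled Brauer--Clifford algebra, and one needs this to be $0$ or $1$, nonzero exactly on a one-parameter family of ``rectangular'' pairs of strict partitions, giving condition (c) of~\ref{tuple}; condition (a) then follows as in Proposition~\ref{TupleO}, by decomposing Sergeev-type idempotents inside the Brauer--Clifford algebra and invoking a Clifford/shifted-Littlewood--Richardson analogue of Lemma~\ref{LemRect}, and conditions (d)--(f) follow from the rectangular combinatorics together with projectivity of the relevant simple modules over the Brauer--Clifford algebra of rank $\mr_j$ (read off its block structure). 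Second, and genuinely new here, indecomposables of $\Rep_\mk\mathrm{Q}(n)$ --- and already of $\cS$ --- occur in types $\mathtt M$ and $\mathtt Q$, so the superalgebras $\RQ(R(\lambda),R(\lambda))$ are not all one-dimensional; one therefore applies the general Theorem~\ref{ThmSuff} with its condition (a) ($\RQ(\unit,Z)$ simple over $\RQ(Z,Z)$) in place of Theorem~\ref{ThmSuff2}, and uses throughout the supercategory versions of the results of Sections~\ref{SecPrincipal}--\ref{SecFibres} established in Appendix~\ref{SecSuper}.

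Given this, Theorem~\ref{ThmSC} yields that $\Ob:\TId(\RQ)\tto\Ide([\RQ]_\oplus)$ is an isomorphism and that the tensor ideals form a single chain $\RQ=\cJ_0\supsetneq\cJ_1\supsetneq\cJ_2\supsetneq\cdots$; transported across the isomorphism $\TId(\mathcal{OBC})\stackrel{\sim}{\to}\TId(\RQ)$ of~\cite[Lemme~1.3.10]{AK}, together with the identification of $\Ide([\RQ]_\oplus)$ with the lattice of thick tensor $\Ob$-ideals in $\RQ$ from~\ref{ThickIdeals2}, this is the bijection asserted in the conjecture. To see that $\cJ_j=\ker\Fmod_{n_j}$ for the appropriate index $n_j$, one mimics the proof of Theorem~\ref{ThmO}(ii): place $\Fmod_{n}:\RQ\to\Rep_\mk\mathrm{Q}(n)$ in a commuting square with the Sergeev--Schur--Weyl functor $\Gmod_j:\cS\to\Rep_\mk(\mathrm{Q}(n)\times\mathrm{Q}(n))$ --- sending $[1,0]$ to $\mk^{n|n}$ for the first factor and $[0,1]$ to its dual for the second --- followed by restriction along the diagonal embedding. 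The description of which objects of $\cS$ are annihilated by $\Gmod_j$ (strict partitions with more than $n$ parts) then shows $\cJ_j\subseteq\ker\Fmod_{n_j}$, and the same square shows the $\ker\Fmod_n$ are pairwise distinct; since there are no further tensor ideals, equality follows.

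The main obstacle lies in the second paragraph. Whereas the cell-module decomposition numbers of the Brauer and walled Brauer algebras exploited in Section~\ref{SecBoring} are available in~\cite{Martin,BrMult,PB2}, the corresponding data for (walled) Brauer--Clifford algebras --- in particular the multiplicities $[W_{r,r}(\varnothing,\varnothing):L_{r,r}(\cdot)]$ and the blocks of the modules indexed by $\Lambda_j$ --- seems only partially recorded, so one must either extend the available work on affine and cyclotomic Brauer--Clifford and Hecke--Clifford algebras or compute these invariants directly. The Clifford/parity bookkeeping is the other delicate point: the $\mathtt M$-versus-$\mathtt Q$ dichotomy, the self-duality of objects, and the correct shifted analogue for strict partitions of the $a\times b$-duality of~\ref{DualPart} and Lemma~\ref{LemRect} all need to be set up carefully, and it is precisely these features that force the use of Theorem~\ref{ThmSuff} and Appendix~\ref{SecSuper} rather than the streamlined Theorem~\ref{ThmSuff2}.
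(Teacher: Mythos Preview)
The statement you are attempting to prove is labelled \emph{Conjecture} in the paper and is left entirely unproved there; the paper offers no argument for it whatsoever. So there is no ``paper's own proof'' to compare against: the author is flagging this as an open problem, not a result.

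Your proposal is a plausible research outline that transports the machinery of Sections~\ref{SecBoring}--\ref{SecClass} to the type~$Q$ setting, and you have correctly identified the two genuine obstacles: the missing decomposition-number input for the (walled) Brauer--Clifford algebras, and the $\mathtt{M}$/$\mathtt{Q}$ parity bookkeeping that forces one back to Theorem~\ref{ThmSuff} rather than Theorem~\ref{ThmSuff2}. But as written this is a \emph{plan}, not a proof: the key computations (the analogue of Lemma~\ref{Lem1O} determining $\sB$, the analogue of Lemma~\ref{LemLambdanuO} showing the relevant endomorphism algebras are local with the correct simplicity property, and the shifted-partition combinatorics replacing Lemma~\ref{LemRect}) are asserted rather than carried out, and you yourself acknowledge that the required representation-theoretic data ``seems only partially recorded''. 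Until those inputs are actually established, the argument does not close, which is presumably why the paper states this as a conjecture rather than a theorem.
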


\subsection{Modular versions} We expect the classification of tensor ideals in Deligne categories over (algebraically closed) fields $\mk$ of characteristic $p>0$ to be significantly more difficult. Observe that  the image of $\underline{\Rep}GL_n\to \Rep_{\mk}\GL_n$ is contained in $\Ti(\GL_n)$ and the functor $\underline{\Rep}GL_n\to \Ti(\GL_n)$ is dense if $p>n=h$ by \cite[Lemma~3.4]{Donkin}. We conclude the following.

 %Hence, we conclude the following.
\begin{enumerate}[(a)] 
\item In characteristic zero, we found that each functor~$\RG\to \Rep_{\mk}\GL(m|n)$ `contributed' precisely one tensor ideal. By Proposition~\ref{ModularProp}, one might expect that each such functor contributes infinitely many tensor ideals when $\charr(\mk)>0$. Contrary to Proposition~\ref{ModularProp}, in characteristic zero, the category $\Ti(\GL_n)\simeq \Rep_{\mk}\GL_n$ has no proper tensor ideals.
\item By Proposition~\ref{ObstGLm}, the tensor ideals in $\RG$ will generally not be in bijection with thick ideals in the Grothendieck ring $[\RG]_{\oplus}$, when $\charr(\mk)>0$.
\end{enumerate}

%%%%%%%%%%%%%%%%%%%%%%%%%%%%%%%%%%%%%%%%%%%%%%%%%%%%%%%%%%%%%%%%%%%%%%%%%%%%%%%

\appendix

\section{Monoidal supercategories}\label{SecSuper}  We will consider gradings by the group $\mF_2^+:=\mZ/2\mZ=\{\oa,\ob\}$.

\subsection{Definitions} 
\subsubsection{} For an $\mF_2$-graded ring $R$, we consider the category~$R$-sMod, which is the category of~$\mF_2$-graded modules, with all~$R$-linear morphisms. By $R$-gMod we denote the subcategory with same objects but where the morphisms have to preserve the $\mF_2$-grading.
Consider an $\mF_2$-graded $R$-module~$M$. For~$v\in M_{\oa}$, resp. $v\in M_{\ob}$, we write~$|v|=0$, resp. $|v|=1$. For~$v\in M$, we also write~$v^{\oa},v^{\ob}$, for the unique $v^i\in M_i$, such that~$v=v^{\oa}+v^{\ob}$.

\subsubsection{} A {\bf supercategory} is a category enriched over the monoidal category $\mZ$-gMod. In particular, supercategories are preadditive. An example of a supercategory is $R$-sMod, for an $\mF_2$-graded ring $R$. A {\bf superfunctor} between two supercategories is a functor enriched over~$\mZ$-gMod. For~$\cC$ a supercategory and $X\in\Ob\cC$, the functor
$$\Pmod_X^{\cC}:=\cC(X,-):\; \cC\to \mZ\mbox{-sMod}$$
is a superfunctor.
For two supercategories~$\cC_1,\cC_2$ and superfunctors~$\Fmod,\Gmod :\cC_1\to\cC_2$, an {\em even} natural transformation~$\xi:\Fmod\Rightarrow \Gmod $ is one in which every morphism is even. 
For a supercategory~$\cC$, the ring $\mZ[\cC]$ has the structure of an $\mF_2$-graded ring.

\subsubsection{}\label{ABCsuperIdeals} An ideal $\cJ$ in a supercategory~$\cC$ is an ideal as in~\ref{IdPA}, with the extra assumption that~$\cJ(X,Y)$ is a {\em graded} subgroup of $\cC(X,Y)$, for all~$X,Y\in\Ob\cC.$

Denote by~$\cC$-gMod the category of superfunctors from $\cC$ to~$\mZ$-sMod, with morphisms given by even natural transformations.
We have an equivalence between $\cC$-gMod and~$\mZ[\cC]$-gMod. We denote the partially ordered set of {\em graded} submodules of~$\Pmod_{X}^{\cC}$ by~$\Sub_{\cC}(\Pmod^{\cC}_X)$.
 
\subsubsection{} For supercategories~$\cC$ and~$\cD$, the product $\cC\boxtimes\cD$ is the supercategory with same objects as $\cC\times\cD$ and graded morphism groups given by 
$$\cC\boxtimes\cD((X,Y),(W,Z))\;=\;\cC(X,W)\otimes_{\mZ}\cD(Y,Z).$$
Composition of morphisms is defined by 
$$(f\boxtimes g)\circ (h\boxtimes k)\;=\;(-1)^{|g||h|}(f\circ h\boxtimes g\circ k).$$

\begin{rem}
In the above {\em super interchange law}, the morphisms~$g,h$ are assumed to be homogeneous, with respect to the $\mF_2$-grading. Expressions like this determine the general rule by additivity. We will keep this convention throughout the appendix.
\end{rem}

\subsubsection{} A {\bf strict monoidal supercategory} $\cC$ is a triple~$(\cC,\otimes,\unit)$, with~$\cC$ a supercategory, a superfunctor~$\otimes:\cC\boxtimes\cC\to\cC$, and an object $\unit\in\Ob\cC$, satisfying the same relations as for a strict monoidal category. Going to non-strict monoidal supercategories corresponds again to relaxing the equalities of functors to {\em even} natural isomorphisms. Because all the isomorphisms are even, the coherence conditions do not change. The super interchange law however implies that~$\cC(\unit,\unit)$ is now super commutative.

The notion of left-tensor ideals extends immediately to monoidal supercategories, taking into account the restricted notion of ideals in~\ref{ABCsuperIdeals}. The corresponding partially ordered set is again denoted by~$\TId(\cC)$.
The notion of thick left-tensor~$\Ob$-ideals does not change compared to monoidal categories, as the split Grothendieck ring $[\cC]_{\oplus}$ does not inherit an $\mF_2$-grading. The corresponding partially ordered set is again denoted by~$\Ide([\cC]_{\oplus})$.

\subsection{Duals in monoidal supercategories}
Fix a strict monoidal supercategory~$\cC$.

\subsubsection{}\label{DefDualS}A {\bf right dual} of~$X\in\Ob\cC$ is a triple~$(X^\vee,\ev_X,\co_X)$ with~$X^\vee\in \Ob\cC$ and morphisms
$$\ev_X:X\otimes X^\vee\to\unit\qquad\mbox{and}\qquad \co_X:\unit\to X^\vee\otimes X,$$
which satisfy
$$\sum_{i\in\mF_2}(\ev^i_X\otimes 1_X)\circ (1_X\otimes \co^i_X)=1_X,\qquad \sum_{i\in\mF_2}(-1)^i(1_{X^{\vee}}\otimes\ev^i_X)\circ(\co^i_X\otimes 1_{X^\vee})=1_{X^\vee},$$
$$(\ev^i_X\otimes 1_X)\circ (1_X\otimes \co^{i+\ob}_X)=0\quad\mbox{and}\quad(1_{X^{\vee}}\otimes\ev^i_X)\circ(\co^{i+\ob}_X\otimes 1_{X^\vee})=0,\quad\mbox{ for all~$i\in\mF_2$.}$$
If~$X$ and~$Y$ admit duals $X^\vee,Y^\vee$, then $Y\otimes X$ admits a dual $X^\vee\otimes Y^\vee$, with
$$\ev_{Y\otimes X}\;=\;\ev_Y\circ (1_Y\otimes\ev_X\otimes 1_{Y^\vee})\qquad\mbox{and}\qquad \co_{Y\otimes X}\;=\;\sum_{i,j\in\mF_2}(-1)^{ij} (1_{X^\vee}\otimes \co^i_Y\otimes 1_X)\circ\co^j_{X}.$$
If all objects in~$\cC$ admit a right dual, we say that~$\cC$ is {\bf right rigid}.

\subsubsection{}
When $\co_X=\co_X^{\oa}$, resp. $\co_X=\co_X^{\ob}$, we say that~$X$ has an {\bf even} resp. {\bf odd dual}. We denote the parity of such a homogeneous dual by~$d_X$. If~$X$ admits a homogeneous dual, the relations in~\ref{DefDualS} simplify to
$$(\ev\otimes 1_X)\circ (1_X\otimes _X)=1_X\qquad\mbox{and}\qquad (1_{X^{\vee}}\otimes\ev_X)\circ(\co_X\otimes 1_{X^\vee})=(-1)^{d_X}1_{X^\vee},$$

For~$X,Y\in\cC$, such that~$X$ admits a homogeneous dual of parity $d_X\in\mF_2$, equation~\eqref{eqiota} yields an isomorphism $\iota_{XY}:\cC(\unit,X^\vee\otimes Y)\stackrel{\sim}{\to} \cC(X,Y)$, of parity $|\iota_{XY}|=d_X$, with inverse
$$\iota^{-1}_{XY}(f)\;=\; (-1)^{|f|d_X}(1_{X^\vee}\otimes f)\circ\co_X,\qquad\mbox{for~$f\in \cC(X,Y)$.}$$

\subsubsection{} For~$X$ with an arbitrary right dual $X^\vee$, we can define elements of $\cC(X,X)$ as
$$a_X:=(\ev^{\oa}_X\otimes 1_X)\circ (1_X\otimes \co^{\oa}_X)\quad\mbox{and}\quad b_X:=(\ev^{\ob}_X\otimes 1_X)\circ (1_X\otimes \co^{\ob}_X).$$
By definition, we then have 
$$a_X^2=a_X, \;\;a_Xb_X=0=b_Xa_X, \;\;b_X^2=b_X\; and \;\;1_X=a_X+b_X.$$ 
Similar properties hold for~$X^\vee$. If~$\cC$ is karoubian, it then follows that~$X$ is a direct sum of an object with an even dual and one with an odd dual.

\subsection{Theorems}\label{ThmApp}

\begin{thm}\label{super1}
For a right rigid monoidal supercategory~$\cC$, the assignment
$$\Psi:\TId(\cC)\to \Sub_{\cC}(\Pmod_{\unit}),\quad \cJ\mapsto\cJ(\unit,-),$$
yields an isomorphism of partially ordered sets.
\end{thm}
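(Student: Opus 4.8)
The plan is to reduce Theorem~\ref{super1} to the non-super Theorem~\ref{Thm1} by reusing, essentially verbatim, the structure of the proof in Section~\ref{SecProofs} (Proposition~\ref{PropPhi} and its three auxiliary lemmata), keeping careful track of the Koszul signs introduced by the super interchange law. Concretely, I would first define, for each graded submodule $\Mmod$ of $\Pmod_{\unit}$, the candidate tensor ideal by the same formula
$$\cJ_{\Mmod}(X,Y):=\iota_{XY}\big(\Mmod(X^\vee\otimes Y)\big),\qquad X,Y\in\Ob\cC,$$
where now $\iota_{XY}$ is the isomorphism from~\ref{ThmApp}, which is homogeneous (of parity $d_X$ when $X$ has a homogeneous dual, and in general a sum of two such pieces via the decomposition $1_X=a_X+b_X$). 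Since $\Mmod(X^\vee\otimes Y)$ is a graded subgroup and $\iota_{XY}$ is a homogeneous isomorphism of groups, $\cJ_{\Mmod}(X,Y)$ is automatically a graded subgroup, so the extra requirement of~\ref{ABCsuperIdeals} is met for free. Then I would establish the super analogues of Lemma~\ref{Lemfg} (composition with $g$ on the left and $f$ on the right), Lemma~\ref{LemTenZ} (tensoring on the left with $1_Z$), and Lemma~\ref{LemJJ} ($\cJ(X,Y)=\iota_{XY}(\cJ(\unit,X^\vee\otimes Y))$ for any tensor ideal $\cJ$), and conclude exactly as before that $\Phi:\Mmod\mapsto\cJ_{\Mmod}$ and $\Psi$ are mutually inverse order-preserving bijections.

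The one genuinely new ingredient is bookkeeping of signs. In the super setting the counit/coevaluation relations carry the signs listed in~\ref{DefDualS}, the formula for $\co_{Y\otimes X}$ has a factor $(-1)^{ij}$ summed over parities, and $\iota^{-1}_{XY}(f)=(-1)^{|f|d_X}(1_{X^\vee}\otimes f)\circ\co_X$. So in the super version of Lemma~\ref{Lemfg}(ii) and of Lemma~\ref{LemTenZ} the auxiliary morphisms $\chi$ and $\psi$ will pick up Koszul signs depending on parities of $f$, $\phi$, $\co_Z$, $d_X$, $d_W$; the point to verify is simply that these signs are consistent, i.e. that the map $\Mmod\mapsto\cJ_{\Mmod}$ still lands in $\TId(\cC)$ and that $\Phi\circ\Psi=\id$, $\Psi\circ\Phi=\id$ hold on the nose. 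The cleanest route is to reduce to the homogeneous case first: decompose every object as $X\simeq X^{\oa}\oplus X^{\ob}$ using the idempotents $a_X,b_X$ from~\ref{ThmApp} (valid since $\cC$ is karoubian — implicit in rigidity plus the usual conventions, or one can avoid this and just carry the two-term sums through), so that one only ever deals with objects admitting a homogeneous dual of a definite parity $d_X$, where the simplified relations $(\ev_X\otimes 1_X)\circ(1_X\otimes\co_X)=1_X$ and $(1_{X^\vee}\otimes\ev_X)\circ(\co_X\otimes 1_{X^\vee})=(-1)^{d_X}1_{X^\vee}$ apply. On homogeneous objects the calculations of Lemmata~\ref{Lemfg}--\ref{LemJJ} go through with a single overall sign that cancels between $\iota$ and $\iota^{-1}$.

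\textbf{Main obstacle.} The only real difficulty is the sign discipline in the super interchange law — making sure that when one slides a morphism past a tensor factor (as happens repeatedly in the proofs of Lemmata~\ref{Lemfg}(ii), \ref{LemTenZ}, and in verifying $\cJ_{\Mmod}$ is closed under $1_Z\otimes-$), the accumulated signs match those in $\iota_{XY}$ and $\iota^{-1}_{XY}$ so that the final identities are sign-free. I expect this to be entirely mechanical once the homogeneous reduction is in place, since every auxiliary morphism we introduce is itself homogeneous and the signs are governed by a single graded-commutativity move per step. After that, the order-preservation of $\Psi$ is trivial, and Theorem~\ref{super1} follows just as Theorem~\ref{Thm1} did: by the super analogue of Proposition~\ref{PropPhi} together with the observation that $\Psi$ manifestly respects inclusions. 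I would also remark, in passing, that the super analogues of Corollary~\ref{Cor4Bru} and Lemma~\ref{PropAK} follow by the same token (the latter using that in the super setting one still has $\tr(g\circ f)=(-1)^{|f||g|}\tr(f\circ g)$, which does not affect the negligibility ideal).
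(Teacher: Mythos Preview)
Your proposal is correct and follows essentially the same route as the paper: reduce to objects with homogeneous duals (the paper does this by passing to $\cC^\sharp$, which is exactly the move you describe), then re-run the three auxiliary lemmata with Koszul signs. One small correction: rigidity does not imply that $\cC$ is karoubian, so the decomposition $X\simeq X^{\oa}\oplus X^{\ob}$ via $a_X,b_X$ is not available in $\cC$ itself; the paper (and your ``cleanest route'') handles this by working in $\cC^\sharp$ instead, using that $\TId(\cC)\cong\TId(\cC^\sharp)$ and $\Sub_{\cC}(\Pmod_{\unit})\cong\Sub_{\cC^\sharp}(\Pmod_{\unit})$.
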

Although not essential, we can prove the theorem for~$\cC^{\sharp}$, instead of~$\cC$, which means that we can assume that every object is a finite direct sum of objects with either even or odd right dual. It then suffices to work with objects $X$ which admit either an even or odd dual. Theorem~\ref{super1} now follows from the following two lemmata. 
\begin{lemma}
Consider homogeneous~$\phi\in\cC(\unit,X^\vee\otimes Y)$ for~$X,Y\in\Ob\cC$.
\begin{enumerate}[(i)]
\item For~$Z\in \Ob\cC$ and~$g\in\cC(Y,Z)$, we have
$$g\circ\iota_{XY}(\phi)\;=\;(-1)^{|g|d_X}\iota_{XZ}((1_{X^\vee}\otimes g)\circ\phi).$$
\item For~$W\in\Ob\cC$ and~$f\in\cC(W,X)$, we have
$$\iota_{XY}(\phi)\circ f\;=\;\iota_{WY}((\chi\otimes 1_Y)\circ \phi),\quad\mbox{with }\;\chi:=(-1)^{|f||\phi|+d_Xd_W}(1_{W^\vee}\otimes\ev_X)\circ (\iota^{-1}_{WX}(f)\otimes 1_{X^\vee}).$$
\end{enumerate}
\end{lemma}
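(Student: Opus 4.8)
The plan is to mirror the proof of Lemma~\ref{Lemfg} from the non-super setting, inserting Koszul signs wherever a homogeneous morphism is moved past another. The key point is that both statements are identities of morphisms in a strict monoidal supercategory, so once one rewrites $\iota_{XY}$ and $\iota_{XY}^{-1}$ via their defining formulas in~\ref{DefDualS} (using that $X$ admits a homogeneous dual of parity $d_X$, so $\iota_{XY}$ itself has parity $d_X$), everything reduces to repeated applications of the super interchange law $(f\boxtimes g)\circ(h\boxtimes k)=(-1)^{|g||h|}(f\circ h\boxtimes g\circ k)$ together with the snake relations $(\ev_X\otimes 1_X)\circ(1_X\otimes\co_X)=1_X$ and $(1_{X^\vee}\otimes\ev_X)\circ(\co_X\otimes 1_{X^\vee})=(-1)^{d_X}1_{X^\vee}$.

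For part~(i): starting from $g\circ\iota_{XY}(\phi)=g\circ(\ev_X\otimes 1_Y)\circ(1_X\otimes\phi)$, I would slide $g$ through the tensor factors. Since $g$ acts on the $Y$-strand which sits to the right of the $X$- and $X^\vee$-strands, commuting $1_{X^\vee}\otimes g$ (or rather the identity on the $X$ leg tensored with $g$) past $\ev_X\otimes 1$ and past $1_X\otimes\phi$ picks up exactly the sign $(-1)^{|g|\,d_X}$, coming from moving $g$ past $\ev_X$ (which has parity $d_X$, since $\ev_X$ pairs with $\co_X$ of parity $d_X$). This yields $g\circ\iota_{XY}(\phi)=(-1)^{|g|d_X}(\ev_X\otimes 1_Z)\circ(1_X\otimes((1_{X^\vee}\otimes g)\circ\phi))=(-1)^{|g|d_X}\iota_{XZ}((1_{X^\vee}\otimes g)\circ\phi)$, as claimed.

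For part~(ii): set $\overline f:=\iota_{WX}^{-1}(f)=(-1)^{|f|d_W}(1_{W^\vee}\otimes f)\circ\co_W$, so that $f=\iota_{WX}(\overline f)=(\ev_W\otimes 1_X)\circ(1_W\otimes\overline f)$. Substituting into $\iota_{XY}(\phi)\circ f$ and expanding both $\iota$'s via their defining formulas, I would bring all the coevaluations to the right and all evaluations to the left, collecting signs each time $\overline f$ (of parity $|f|+d_W$) and $\phi$ (of parity $|\phi|$) cross one another or cross an evaluation. Tracking these crossings should produce precisely the prefactor $(-1)^{|f||\phi|+d_Xd_W}$ in the definition of $\chi$, after which the remaining composite assembles into $\iota_{WY}((\chi\otimes 1_Y)\circ\phi)$ by the definition of $\iota_{WY}$ (note $\iota_{WY}$ has parity $d_W$, matching the parity bookkeeping).

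\textbf{Main obstacle.} The only real difficulty is sign bookkeeping: one must be scrupulous about the order in which strands are commuted and about the parities of $\ev_X$, $\co_X$, $\overline f$, and $\phi$, since an off-by-one in any Koszul sign invalidates the identity. A clean way to organize this is to verify the signs on homogeneous $\phi$ and homogeneous $f$ only (extending by bi-additivity afterwards), and to use the graphical calculus for monoidal supercategories --- reading the string diagram for each side and checking that the two diagrams differ only by the asserted scalar. No new ideas beyond the non-super Lemma~\ref{Lemfg} are needed; the proof is a routine but careful computation, and I would present it in that spirit, writing out only the two chains of equalities with their signs and letting the snake relations do the rest.
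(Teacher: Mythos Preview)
Your proposal is correct and is exactly the approach the paper takes: the paper does not write out a separate proof of this lemma but simply presents it as the super analogue of Lemma~\ref{Lemfg}, with the understanding that the argument is the same computation with Koszul signs inserted via the super interchange law and the homogeneous snake relations. Your identification of the sign sources (in particular that $|\ev_X|=|\co_X|=d_X$ and $|\overline f|=|f|+d_W$) is accurate, and the reduction to homogeneous morphisms followed by a careful diagram chase is precisely what is intended.
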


\begin{lemma}
Consider $\phi\in\cC(\unit,X^\vee\otimes Y)$ for~$X,Y\in\Ob\cC$. For~$Z\in\Ob\cC$, we have
$$1_Z\otimes \iota_{XY}(\phi)=\iota_{Z\otimes X,Z\otimes Y}(\psi),\quad\mbox{with~$\psi=(-1)^{d_Xd_W}(1_{X^\vee}\otimes \co_Z\otimes 1_Y)\circ \phi.$}$$
\end{lemma}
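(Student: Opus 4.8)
The statement to be proved is the super-analogue of Lemma~\ref{LemTenZ}: for $\phi\in\cC(\unit,X^\vee\otimes Y)$ with $X$ admitting a homogeneous dual, and arbitrary $Z\in\Ob\cC$, one has $1_Z\otimes\iota_{XY}(\phi)=\iota_{Z\otimes X,Z\otimes Y}(\psi)$ with $\psi=(-1)^{d_Xd_Z}(1_{X^\vee}\otimes\co_Z\otimes 1_Y)\circ\phi$. (Note the index in the exponent should read $d_Xd_Z$, not $d_Xd_W$, since no $W$ is present; the typo should be corrected.) The plan is to reduce everything to the non-super identity of Lemma~\ref{LemTenZ} by bookkeeping the signs coming from the super interchange law, exactly as Lemma~\ref{LemJJ}'s super-version was handled earlier in the appendix.

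First I would recall the building blocks: the definition $\iota_{XY}(\phi)=(\ev_X\otimes 1_Y)\circ(1_X\otimes\phi)$ from \eqref{eqiota}, valid verbatim in the super setting (with $\iota_{XY}$ of parity $d_X$, as stated in~\ref{ThmApp}), and the super coevaluation formula for a tensor product from~\ref{DefDualS}, namely $\co_{Z\otimes X}=\sum_{i,j}(-1)^{ij}(1_{X^\vee}\otimes\co_Z^i\otimes 1_X)\circ\co_X^j$, which since $X$ has a homogeneous dual simplifies to $\co_{Z\otimes X}=(1_{X^\vee}\otimes\co_Z\otimes 1_X)\circ\co_X$ up to the sign $(-1)^{d_Xd_Z}$ (this is precisely where the asserted sign originates). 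The dual of $Z\otimes X$ is $X^\vee\otimes Z^\vee$, and $\ev_{Z\otimes X}=\ev_Z\circ(1_Z\otimes\ev_X\otimes 1_{Z^\vee})$.

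The main computation is then to expand both sides. Write out $1_Z\otimes\iota_{XY}(\phi)=1_Z\otimes\big((\ev_X\otimes 1_Y)\circ(1_X\otimes\phi)\big)$ and use the super interchange law to distribute $1_Z\otimes(-)$ across the composition and to insert a $\co_Z,\ev_Z$ pair (via the first triangle identity in~\ref{DefDualS}) so as to manufacture the factor $\ev_{Z\otimes X}$. Each time a morphism is moved past $1_Z$ or past a homogeneous identity, a sign $(-1)^{\text{degree}\cdot\text{degree}}$ is produced; since $\ev_X$, $\co_Z$ are homogeneous of parities $d_X$, $d_Z$ and $1_Z$, $1_Y$ are even, the only surviving sign is $(-1)^{d_Xd_Z}$. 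Comparing with $\iota_{Z\otimes X,Z\otimes Y}(\psi)=(\ev_{Z\otimes X}\otimes 1_{Z\otimes Y})\circ(1_{Z\otimes X}\otimes\psi)$ and plugging in $\psi=(-1)^{d_Xd_Z}(1_{X^\vee}\otimes\co_Z\otimes 1_Y)\circ\phi$ yields equality on the nose.

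The hard part will not be any single manipulation but rather keeping the sign bookkeeping honest: there are several places where homogeneous morphisms cross each other, and the temptation is to apply the non-super identity blindly. I would therefore structure the argument as: (1) state the simplified $\co_{Z\otimes X}$ formula with its sign; (2) apply the super interchange law step by step, tracking only the total Koszul sign and checking it collapses to $(-1)^{d_Xd_Z}$; (3) invoke Lemma~\ref{LemTenZ} as the "sign-free skeleton" of the computation, so that only the sign needs independent verification. An alternative, cleaner route — worth mentioning — is to observe that the even natural transformation $\Pmod_\unit\to\Pmod_\unit$ underlying a tensor ideal is insensitive to signs, and that the whole statement is the assertion that $\Psi$ respects the left-tensoring operation; one can then deduce it from the ungraded Lemma~\ref{LemTenZ} applied to the underlying ordinary category obtained by forgetting the grading, together with the observation from~\ref{ThmApp} that $\iota_{XY}$ is homogeneous of parity $d_X$. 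Either way the content is routine once the conventions of the appendix are in hand.
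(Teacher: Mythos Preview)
Your proposal is correct and matches the paper's (implicit) approach: the paper does not give a proof of this lemma at all, merely stating it as the super-analogue of Lemma~\ref{LemTenZ} and leaving the routine sign bookkeeping to the reader, which is precisely what you outline. Your identification of the typo $d_W\to d_Z$ is correct, and your localisation of the sign to the simplified formula $\co_{Z\otimes X}=(-1)^{d_Xd_Z}(1_{X^\vee}\otimes\co_Z\otimes 1_X)\circ\co_X$ (valid once both $X$ and $Z$ are taken to have homogeneous duals, as the paper arranges by passing to $\cC^\sharp$) is exactly the point.

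One caution on your ``alternative cleaner route'': forgetting the $\mF_2$-grading does \emph{not} yield an ordinary monoidal category, because the super interchange law is genuinely different from the ordinary one; so you cannot literally invoke Lemma~\ref{LemTenZ} on an underlying ungraded category. Your primary route---redoing the two-line computation in the proof of Lemma~\ref{LemTenZ} while tracking Koszul signs---is the honest one and is what the paper intends.
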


Similarly, the proofs of the following theorems do not change substantially from the ones of Theorems~\ref{ThmFibre} and~\ref{ThmSuff}.
\begin{thm}
Let~$\cC$ be a right rigid Krull-Schmidt monoidal supercategory.
\begin{enumerate}[(i)]
\item We have a surjective morphism of partially ordered sets
$$\Ob:\TId(\cC)\tto \Ide([\cC]_{\oplus}).$$
\item 
For~$\sI\in \Ide([\cC]_{\oplus})$, the minimal element in the fibre $\Ob^{-1}(\sI)$ is given by the tensor ideal
$$\cJ^{\min}_{\sI}(X,Y)\;=\;\{f\in\cC(X,Y)\,|\, \mbox{there exists~$Z\in\sI$ such that~$f$ factors as $X\to Z\to Y$}\}.$$
\item The minimal element in~$\Psi\circ\Ob^{-1}(\sI)$ is given by~$\Tr_{\sI}\Pmod_{\unit}$.
\end{enumerate}
\end{thm}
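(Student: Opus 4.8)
The plan is to mirror exactly the arguments already given for the non-super versions of these statements, checking at each step that the sign bookkeeping introduced by the super interchange law does not interfere. The final theorem asserts three things about a right rigid Krull-Schmidt monoidal supercategory~$\cC$: surjectivity of $\Ob:\TId(\cC)\tto\Ide([\cC]_{\oplus})$, an explicit formula for the minimal element $\cJ^{\min}_{\sI}$ in each fibre, and the identification of $\Psi(\cJ^{\min}_{\sI})$ with $\Tr_{\sI}\Pmod_{\unit}$. Since Theorem~\ref{super1} (the super analogue of Theorem~\ref{Thm1}) is already available, the strategy reduces to transplanting the proof of Theorem~\ref{ThmFibre} verbatim.

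First I would reconstruct $\Ob(-)$ as a morphism of partially ordered sets exactly as in~\ref{DefOb}: for a left-tensor ideal $\cJ$ set $\Ob(\cJ)=\{X\,|\,1_X\in\cJ(X,X)\}$. The only point requiring a word is that $\Ob(\cJ)$ is a \emph{graded} subgroup condition, but since $1_X$ is even this is automatic and $\Ob(\cJ)$ is genuinely a thick left-tensor $\Ob$-ideal; the split Grothendieck ring carries no grading (as noted in~\ref{ABCsuperIdeals}), so $\Ide([\cC]_{\oplus})$ is the same poset as in the non-super case. Next I would define, for $\sI\in\Ide([\cC]_{\oplus})$,
$$\cJ^{\min}_{\sI}(X,Y)=\{f\in\cC(X,Y)\,|\,\text{$f$ factors as $X\to Z\to Y$ for some $Z\in\sI$}\},$$
and verify it is an ideal; that it is a left-tensor ideal follows because $\sI$ is closed under $Y\otimes-$ and $1_Z\otimes(g\circ h)=(1_Z\otimes g)\circ(1_Z\otimes h)$ holds in a supercategory just as well (the super interchange sign does not enter when one of the factors is an identity). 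Homogeneity of $\cJ^{\min}_{\sI}(X,Y)$ as a graded subgroup follows because the set of factorizable morphisms is closed under the two homogeneous projections. Then, using that $\cC$ is karoubian, $1_X=g\circ f$ with $Z\in\sI$ forces $Z\simeq X\oplus Y'$, hence $X\in\sI$ by thickness, giving $\Ob(\cJ^{\min}_{\sI})=\sI$; this yields surjectivity of $\Ob$ and, combined with the observation that any $\cJ\in\Ob^{-1}(\sI)$ must contain $1_Z$ for all $Z\in\sI$ and hence all factorizable morphisms, the minimality claim (ii).

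For part (iii) I would compare $\cJ^{\min}_{\sI}(\unit,Y)$ with the super analogue of equation~\eqref{eqTr}: by additivity of $\cC$ and of $\sI$, a morphism $\unit\to Z\to Y$ with $Z\in\sI$ is precisely a $\mZ$-linear combination of composites $a\circ b$ with $b\in\cC(\unit,W)$, $a\in\cC(W,Y)$, $W\in\sI$, which by definition is $\Tr_{\sI}\Pmod_{\unit}(Y)$ — note $\Tr$ is insensitive to the grading since it is just a sum of images. Applying $\Psi$ (the isomorphism of Theorem~\ref{super1}) then identifies $\Psi(\cJ^{\min}_{\sI})$ with $\Tr_{\sI}\Pmod_{\unit}$ and, since $\Psi$ is order-preserving and order-reflecting, the minimality in the fibre transfers to $\Sub_{\cC}(\Pmod_{\unit})$. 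I do not expect a genuine obstacle here: the entire content is that none of the constructions in Theorem~\ref{ThmFibre} involve the braiding or any non-trivial sign, so the proof is \emph{mutatis mutandis}. The one place to be slightly careful is the karoubian-splitting step, where in the super setting an object may decompose into a summand with even dual and one with odd dual (see~\ref{ThmApp}); but this refinement is irrelevant, since all that is used is that an idempotent $f\circ g=1_X$ splits, which holds in any karoubian category. Thus the proof reads: ``Mutatis mutandis the proof of Theorem~\ref{ThmFibre}, using Theorem~\ref{super1} in place of Theorem~\ref{Thm1}.''
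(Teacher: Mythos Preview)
Your proposal is correct and matches the paper's approach exactly: the paper simply states that the proof ``does not change substantially from the one of Theorem~\ref{ThmFibre}'', and you have spelled out that reduction, including the minor checks (gradedness of $\cJ^{\min}_{\sI}$, irrelevance of the super interchange sign when one factor is an identity, and that only ordinary karoubianness is used) that justify the \emph{mutatis mutandis}.
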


\begin{thm}
Consider a right rigid Krull-Schmidt monoidal supercategory~$\cC$. Assume that for each $Z\in \sB$ (with~$\sB$ as in~\ref{SetB})
\begin{enumerate}[(a)]
\item the $\mF_2$-graded $\cC(Z,Z)$-module~$\cC(\unit,Z)$ has no proper graded submodules.
\item there exists~$X_Z\in\inde\cC$, such that  $\add(X_Z^\vee\otimes X_Z)\cap\sB=\{Z\}.$
\end{enumerate}
 Then $\Ob(-): \TId(\cC)\to \Ide([\cC]_\oplus)$ is an isomorphism of partially ordered sets.
\end{thm}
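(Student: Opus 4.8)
The plan is to mirror the proof of Theorem~\ref{ThmSuff} (the non-graded version) essentially verbatim, tracking the $\mF_2$-grading at each step. First I would observe that the super analogues of all the ingredients are already in place: Theorem~\ref{super1} gives the lattice isomorphism $\Psi:\TId(\cC)\stackrel{\sim}{\to}\Sub_{\cC}(\Pmod_{\unit})$ between tensor ideals and \emph{graded} submodules of the principal module, and the super analogue of Theorem~\ref{ThmFibre}(iii) identifies the minimal element of $\Psi(\Ob^{-1}(\sI))$ with $\Tr_{\sI}\Pmod_{\unit}$. So, exactly as in the non-super case, it suffices to show that every graded submodule of $\Pmod_{\unit}$ is of the form $\Tr_{\sS}\Pmod_{\unit}$ for some $\sS\subset\sB$, which by additivity equals $\Tr_{\sI}\Pmod_{\unit}$ for the thick ideal $\sI$ generated by $\sS$.

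The first step is the super version of Lemma~\ref{LemOnlyTrace}: if for each $Z\in\sB$ the graded $\cC(Z,Z)$-module $\cC(\unit,Z)$ has no proper graded submodules (condition~(a)), then for any graded submodule $\Mmod$ of $\Pmod_{\unit}$ and $Y\in\sB$ we have $\Mmod(Y)=0$ or $\Mmod(Y)=\cC(\unit,Y)$; letting $\sS$ be the set of $Y\in\sB$ with $\Mmod(Y)=\cC(\unit,Y)$ gives $\Mmod=\Tr_{\sS}\Pmod_{\unit}$. The only subtlety compared to the ungraded argument is that $\Mmod(Y)$ is a \emph{graded} $\cC(Y,Y)$-submodule of $\cC(\unit,Y)$, which is exactly what hypothesis~(a) is phrased to handle. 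The second step is the super version of Lemma~\ref{LemXZ1} and Lemma~\ref{LemXZ}: condition~(b), $\add(X_Z^\vee\otimes X_Z)\cap\sB=\{Z\}$, forces $\co_{X_Z}$ to factor through $Z^{\oplus k}$ for some $k>0$ (since $Z$ is the only indecomposable summand of $X_Z^\vee\otimes X_Z$ receiving a nonzero map from $\unit$), and then from the dual relations in~\ref{DefDualS} one gets $X_Z\inplus X_Z\otimes Z$; here the signs in the super interchange law and in the simplified duality relations $(\ev\otimes 1_X)\circ(1_X\otimes\co_X)=1_X$ are harmless because they only introduce units.

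The core of the argument is then the super analogue of the final paragraph of the proof of Theorem~\ref{ThmSuff}. Given $\sS\subset\sB$ with generated thick ideal $\sI$, and $Z\in\sI\cap\sB$ with $X:=X_Z$, pick $Z_0\in\sS$ and $Y'$ with $Z\inplus Y'\otimes Z_0$, set $X_0:=X_{Z_0}$, and combine $X_0\inplus$ a summand of $X_0^\vee\otimes X_0$ with $X\inplus X\otimes Z$ to deduce $X\inplus Y\otimes X_0$ with $Y:=X\otimes Y'\otimes X_0^\vee$, hence $Z\inplus X_0^\vee\otimes Y^\vee\otimes Y\otimes X_0$. Using the super formula for $\co_{Y\otimes X_0}$ (with its $(-1)^{ij}$ sign) together with the fact that $\cC(\unit,W^\vee\otimes W)$ is always generated by $\co_W$, one gets $\co_{Y\otimes X_0}\in\Mmod(X_0^\vee\otimes Y^\vee\otimes Y\otimes X_0)=\cC(\unit,X_0^\vee\otimes Y^\vee\otimes Y\otimes X_0)$, and therefore $\Mmod(Z)=\cC(\unit,Z)$, giving $\Mmod=\Tr_{\sI}\Pmod_{\unit}$. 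The main obstacle I anticipate is purely bookkeeping: verifying that the signs arising from the super interchange law and the modified (co)evaluation axioms never obstruct the factorizations — in particular that statements like ``$\co_W$ generates $\cC(\unit,W^\vee\otimes W)$'' and ``$f=\alpha\circ f$ with $\cC(W,W)$ local forces a contradiction'' (the super Lemma~\ref{Lem2Tr}) go through unchanged; but since every sign that appears is a unit in the relevant ring, none of these arguments is actually affected, so this is an obstacle only in the sense of requiring care, not new ideas.
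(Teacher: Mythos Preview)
Your proposal is correct and follows precisely the approach of the paper: the paper itself does not write out a separate proof for this super statement but simply remarks that ``the proofs of the following theorems do not change substantially from the ones of Theorems~\ref{ThmFibre} and~\ref{ThmSuff}'', and your write-up is exactly a careful execution of that remark, tracking the $\mF_2$-grading through Lemmata~\ref{LemOnlyTrace}, \ref{LemXZ1}, \ref{LemXZ} and the final paragraph of Theorem~\ref{ThmSuff}. The only minor slip is the phrase ``$X_0\inplus$ a summand of $X_0^\vee\otimes X_0$'', where you presumably meant $Z_0\inplus X_0^\vee\otimes X_0$.
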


\subsection*{Acknowledgement}
The research was supported by ARC grant DE170100623. The author thanks Jon Brundan, Michael Ehrig, Inna Entova, Jim Humphreys, Gus Lehrer, Andrew Mathas, Geordie Williamson 
and Yang Zhang for interesting discussions.

\begin{flushleft}
	K. Coulembier\qquad \url{kevin.coulembier@sydney.edu.au}
	
	School of Mathematics and Statistics, University of Sydney, NSW 2006, Australia
	
	\end{flushleft}

\end{document}